\pdfoutput=1
\documentclass[a4paper]{amsart}
\usepackage{amssymb}
\pagestyle{headings}
\title{$P$-adic Asai $L$-functions of Bianchi modular forms}
\date{}
\author{David Loeffler}
\address{Mathematics Institute, University of Warwick, Coventry CV4 7AL, United Kingdom}
\email{d.a.loeffler@warwick.ac.uk}
\urladdr{http://orcid.org/0000-0001-9069-1877}
\thanks{Supported by Royal Society University Research Fellowship ``$L$-functions and Iwasawa theory'' (Loeffler).}

\author{Chris Williams}
\address{Imperial College London, South Kensington Campus, London SW7 2AZ, United Kingdom}
\curraddr{Mathematics Institute, University of Warwick, Coventry CV4 7AL, United Kingdom}
\email{Christopher.D.Williams@warwick.ac.uk}
\urladdr{http://orcid.org/0000-0001-8545-0286}

\usepackage[margin=1.32in]{geometry}

\usepackage{preamble}
\usepackage{enumerate, mathtools}

\numberwithin{equation}{section}

\setlength{\parskip}{1.5ex}
\setlength{\parindent}{0pt}


\begin{document}

\begin{abstract}
The Asai (or twisted tensor) $L$-function of a Bianchi modular form $\Psi$ is the $L$-function attached to the tensor induction to $\Q$ of its associated Galois representation. In this paper, when $\Psi$ is ordinary at $p$ we construct a $p$-adic analogue of this $L$-function: that is, a $p$-adic measure on $\Zp^\times$ that interpolates the critical values of the Asai $L$-function twisted by Dirichlet characters of $p$-power conductor. The construction uses techniques analogous to those used by Lei, Zerbes and the first author in order to construct an Euler system attached to the Asai representation of a quadratic Hilbert modular form.
\end{abstract}

\subjclass[2010]{Primary 11F41, 11F67, 11F85, 11S40; Secondary 11M41}

\maketitle

%
%

\section{Introduction}

\subsection{Background}

 Several of the most important conjectures in modern number theory, such as the Bloch--Kato and Beilinson conjectures, relate the special values of $L$-functions to arithmetic data. In much of the work on these conjectures to date, an important role has been played by \emph{$p$-adic $L$-functions}: measures or distributions on $\Zp^\times$, for a prime $p$, interpolating the special values of a given complex $L$-function and its twists by Dirichlet characters of $p$-power conductor. Such functions are expected to exist in wide generality, but in practice they can be difficult to construct, and there are large classes of $L$-functions which at present are not known to have a $p$-adic analogue. In this paper, we provide such a construction for a new class of $L$-functions: the \emph{Asai}, or \emph{twisted tensor}, $L$-functions attached to Bianchi modular forms (automorphic forms for $\GLt / F$, where $F$ is imaginary quadratic).

 In order to construct our $p$-adic $L$-function, we use the Betti cohomology of a locally symmetric space associated to $\GLt/F$. Work of Ghate \cite{Gha99} shows that the critical values of the Bianchi Asai $L$-function and its twists are computed by certain special elements in Betti cohomology, which can be reinterpreted as pushforwards of cohomology classes for $\GLt/\Q$ associated to Eisenstein series. However, interpolating such classes $p$-adically is not straightforward. The key novelty in our construction is to \emph{simultaneously} vary two parameters: the choice of Eisenstein series, and the choice of embedding of $\GLt/\Q$ in $\GLt/F$. This allows us to reduce the interpolation problem to a (much simpler) compatibility property of the $\GLt/\Q$ Eisenstein series.

 Our construction uses techniques that are closely related to those those found in \cite{LLZ14} and \cite{LLZ16}, in which Lei, Zerbes and the first author constructed Euler systems (certain compatible families of \'etale cohomology classes) for Rankin--Selberg convolutions of modular forms, and for the Asai representation of a Hilbert modular form over a real quadratic field. In the Bianchi setting, there is no \'etale cohomology to consider, since Bianchi manifolds (the symmetric spaces associated to $\GLt / F$) are not algebraic varieties. However, we show in this article that applying the same techniques in this setting instead gives compatible families of classes in the Betti cohomology of these spaces. Hence the same techniques used to construct an Euler system for $\GLt / F$ when $F$ is real quadratic also give rise to a $p$-adic $L$-function when $F$ is imaginary quadratic.

 We hope that these techniques can be extended to build other new $p$-adic $L$-functions as ``Betti counterparts'' of known Euler system constructions; in particular, we are presently exploring applications of this method to the standard $L$-function of (possibly non-self-dual) cohomological automorphic representations of $\operatorname{GL}_3 / \Q$.

 \subsubsection*{Note} While working on this project, we learned that Balasubramanyam, Ghate and Vangala have also been working on a construction of $p$-adic Asai $L$-functions for Bianchi cusp forms \cite{BGV}. Their work is independent of ours, although both constructions rely on the same prior work \cite{Gha99} of Ghate.

\subsection{Statement of the main theorem}

 Let $\Psi$ be a Bianchi modular form of weight $(k, k)$, for some $k \ge 0$, which is an eigenform for the Hecke operators. We assume that the level $\n$ of $\Psi$ is divisible by all primes $\mathfrak{p} \mid p$ of $F$; this leads to no loss of generality, since we may replace $\Psi$ by a $\mathfrak{p}$-stabilisation if necessary. We let $N$ be the integer generating the ideal $\n \cap \Z$.

 The Asai $L$-function of $\Psi$ is defined by
 \[
  L^{\mathrm{As}}(\Psi, s) \defeq L^{(N)}(\varepsilon_{\Psi, \Q}, 2s-2k-2) \sum_{n\ge 1}c(n\roi_F, \Psi) n^{-s},
 \]
 where $\varepsilon_{\Psi, \Q}$ is the restriction to $\widehat{\Z}^\times$ of the nebentypus character of $\Psi$, $L^{(N)}(-)$ denotes the Dirichlet $L$-function with its Euler factors at primes dividing $N$ omitted, and $c(\m, \Psi)$ is the Hecke eigenvalue of $\Psi$ at the ideal $\m$.  We define similarly the twisted Asai $L$-function $L^{\mathrm{As}}(\Psi, \chi, s)$, for a Dirichlet character $\chi$ (see Definition \ref{def:asaiLS} below).

 This function $L^{\mathrm{As}}(\Psi, s)$ has an Euler product, in which the factors at primes $\ell \nmid N$ can be interpreted in terms of Galois representations: they are the local Euler factors of the tensor induction to $\operatorname{Gal}(\overline{\Q}/\Q)$ of the compatible family of 2-dimensional $\ell$-adic representations of $\operatorname{Gal}(\overline{\Q}/F)$ attached to $\Psi$. (We shall discuss the Euler factors at the ``bad'' primes in \S \ref{sect:primitiveL} below.) The Asai $L$-function should not be confused with the standard $L$-function $L^{\mathrm{std}}(\Psi, s) \defeq \sum_{\m \trianglelefteqslant \roi_F}c(\m, \Psi) \operatorname{Nm}(\m)^{-s}$, which corresponds to the usual (rather than tensor) induction of the same family of $\operatorname{Gal}(\overline{F}/F)$-representations.

 We choose a finite extension $L/\Qp$, containing $F$ and the Hecke eigenvalues of $\Psi$, with ring of integers $R$, and we assume that $\Psi$ is \emph{ordinary} at $p$, i.e.~$c(p\roi_F, \Psi)$ is a unit in $R$.

 \begin{theorem}
  For any integer $c > 1$ coprime to $6\n$, there exists a $p$-adic measure
  \[
  \subc L_p^{\mathrm{As}}(\Psi) \in R[[\Zp^\times]]
  \]
  on $\Zp^\times$ satisfying the following interpolation property: if $\chi$ is a Dirichlet character of conductor $p^r$, and $0 \le j \le k$, then we have
  \[
  \int_{\Zp^\times}\chi(x)x^j\, \mathrm{d}\subc L_p^{\mathrm{As}}(\Psi)(x) = \left\{\begin{array}{ll}(*)L^{\mathrm{As}}(\Psi,\chibar,j+1) &: \chi(-1)(-1)^j = 1,\\
  0 &: \chi(-1)(-1)^j = -1,
  \end{array}\right.
  \]
  where $(*)$ is an explicit factor (which is always non-zero if $r \ge 1$).
 \end{theorem}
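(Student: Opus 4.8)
The plan is to realise $\subc L_p^{\mathrm{As}}(\Psi)$ as a cup-product pairing, in the Betti cohomology of Bianchi $3$-manifolds, between the class attached to $\Psi$ and a norm-compatible tower of Eisenstein classes for $\GLt/\Q$ pushed forward along a varying family of embeddings.

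First I would fix the geometry. Write $Y_F(\n p^r)$ for the Bianchi $3$-manifold of level $\n p^r$ attached to $\GLt/F$, and $Y(Mp^r)$ for the modular curve of level $Mp^r$ attached to $\GLt/\Q$, for an appropriate integer $M$. The inclusion $\GLt/\Q \hookrightarrow \GLt/F$ induces a closed embedding of a real surface into a real $3$-manifold, and for each $u$ in a suitable finite set of coset representatives --- obtained by translating this embedding by a unipotent or diagonal element at $p$, which is one of the two ``moving'' parameters --- I get a twisted embedding $\iota_u$. Being codimension one, the Gysin pushforwards $(\iota_u)_\ast$, combined with a fixed $\GLt/\Q$-equivariant branching map on coefficient systems, raise cohomological degree by one. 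The ordinary eigenform $\Psi$ of weight $(k,k)$ defines a class $\phi_\Psi$ in the $U_p$-ordinary part of the compactly supported Betti cohomology $H^{\bullet}_c\bigl(Y_F(\n p^\infty), \mathcal{V}_{k,k}\bigr)$; ordinarity is precisely what forces $\phi_\Psi$, and everything paired against it, to stay integral along the tower.

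Next I would supply the Eisenstein input. On the tower $\{Y(Mp^r)\}_r$ there is a canonical norm-compatible ``Eisenstein--Iwasawa'' class, built from the Siegel units (with the $c$-smoothing, which is why we require $(c,6\n)=1$), living in $H^1$ of the tower with coefficients in the Iwasawa algebra $R[[\Zp^\times]]$ and specialising at $x^j$ to the weight-$j$ Eisenstein class on $Y(Mp^r)$; the choice of this class is the second ``moving'' parameter. Pushing it forward along $\sum_u (\iota_u)_\ast$ and organising over $r$ yields a class $\subc\mathcal{Z} \in H^{\bullet}\bigl(Y_F(\n p^\infty), \mathcal{V}_{k,k}\bigr) \widehat{\otimes} R[[\Zp^\times]]$. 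The crux --- and the step I expect to be the main obstacle --- is to check that this family really is compatible under the tower: one must show that, for the twisted embeddings $\iota_u$ and the correct choice of cosets and levels, the trace maps $Y_F(\n p^{r+1}) \to Y_F(\n p^r)$ on the Bianchi side are intertwined with the norm maps on the modular-curve side by $\sum_u (\iota_u)_\ast$. As anticipated in the introduction, this reduces to an essentially elementary distribution/trace identity for the $\GLt/\Q$ Eisenstein series; but arranging the combinatorics of the cosets $u$, the smoothing factor, and the $U_p$-normalisations so that it holds on the nose --- and so that the output is a \emph{bounded} measure, which is where ordinarity re-enters --- is the technical heart.

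Finally I would set $\subc L_p^{\mathrm{As}}(\Psi) \defeq \langle \subc\mathcal{Z}, \phi_\Psi \rangle \in R[[\Zp^\times]]$, using the Poincar\'e duality pairing on $Y_F(\n p^\infty)$, and compute its moments. Evaluating at $\chi x^j$ collapses $\subc\mathcal{Z}$ to the pushforward of the level-$p^r$ weight-$j$ Eisenstein class along the twisted embedding, and the resulting pairing against $\phi_\Psi$ is precisely the cohomological period integral evaluated by Ghate \cite{Gha99}; unwinding his formula for the twisted embedding $\iota_u$ gives $(*)\,L^{\mathrm{As}}(\Psi,\chibar,j+1)$, with $(*)$ an explicit product of the $c$-smoothing factor, a power of the $U_p$-eigenvalue, and archimedean and ramified local constants, which is non-zero as soon as $r \ge 1$ since the Euler factor at $p$ is then trivial. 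The dichotomy $\chi(-1)(-1)^j = \pm 1$ is a parity statement: complex conjugation acts on $H^{\bullet}_c(Y_F(\n p^\infty), -)$, the class $\phi_\Psi$ and the Eisenstein pushforward lie in definite eigenspaces for this action, and when $\chi(-1)(-1)^j = -1$ these eigenspaces are orthogonal under the pairing, forcing the corresponding moment to vanish.
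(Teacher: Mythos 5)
Your proposal correctly identifies the two moving parameters (the twist of the embedding and the choice of Eisenstein class), the use of Siegel units with $c$-smoothing, the role of $(U_p)_*$-ordinarity in making the tower limit integral, the Poincar\'e-duality pairing against the Bianchi eigenclass, and the eventual appeal to Ghate's period computation. You also correctly flag the norm-compatibility of the pushed-forward Eisenstein family as the technical crux, which matches the paper's Theorem \ref{norm relation}.

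There is, however, a genuine gap at the very first step. You assert that the inclusion $\GLt/\Q \hookrightarrow \GLt/F$ ``induces a closed embedding of a real surface into a real $3$-manifold,'' and you push forward along it. But for the symmetric spaces $Y_{F,1}(\n)$ attached to $G = \mathrm{Res}_{F/\Q}\GLt$ this is \emph{false}: the composition $Y_{\Q,1}(N) \to Y_{F,1}(\n)$ is never injective, because $\smallmatrd{-1}{0}{0}{1} \in \Gamma_{F,1}(\n)$ stabilises the image of $\uhp$ inside $\uhs$ and acts there by $x+iy \mapsto -x+iy$, identifying pairs of distinct points. The construction only works after introducing the intermediate group $G^* = G \times_{D} \mathbb{G}_m$ and its symmetric space $Y_{F,1}^*(\n)$: the map $Y_{\Q,1}(N) \hookrightarrow Y_{F,1}^*(\n)$ \emph{is} a closed immersion (Proposition \ref{prop:goodinclusion}), and the Bianchi class $\phi_\Psi$ is then pulled back along $\jmath : Y_{F,1}^*(\n) \to Y_{F,1}(\n)$ to obtain $\phi_\Psi^*$. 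This is not a cosmetic point: the Cartesian-diagram argument (Lemma \ref{cartesian}) that proves the norm relation uses this injectivity in an essential way, and it is also why the Asai representation -- which factors through ${}^L G^*$ -- appears rather than the full Rankin--Selberg. Without $G^*$ the proof of the norm relation does not go through.

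A second, smaller omission: you do not state the condition that governs when the twisted embeddings are actually disjoint. The unipotent parameter $a$ must be chosen so that it generates the cyclic group $\roi_F/(p\roi_F + \Z) \cong \Z/p\Z$; this is the hypothesis of Lemma \ref{cartesian} and the whole norm relation fails without it (for instance $a = 0$ factors through the wrong level). Your description of ``a suitable finite set of coset representatives'' leaves this unstated, but it is the concrete arithmetic condition that makes the distribution/trace identity you correctly anticipate actually hold. Finally, your parity argument is framed as orthogonality of eigenspaces, whereas the paper's mechanism is slightly different: a single Eisenstein pushforward $\subc\Xi_{m,\n,a}$ is \emph{not} in an eigenspace for $\smallmatrd{-1}{}{}{1}^*$ -- rather this involution sends it to $\subc\Xi_{m,\n,-a}$ (Proposition \ref{prop:xi-sign}), which at the level of the measure is translation by $[-1]$; since $\phi_\Psi^*$ is fixed, the measure is even and so annihilates odd characters. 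This gives the same conclusion but your description misidentifies where the sign comes from.
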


See Theorem \ref{thm:interpolation} of the main text for a precise statement. Note that $\subc L_p^{\mathrm{As}}$ interpolates all critical values in the left half of the critical strip. The critical values in the right half are related to these via a functional equation, but we do not make this explicit.

It is possible to remove the dependence on $c$ entirely if the restriction to $\Q$ of the character of $\Psi$ is non-trivial and does not have $p$-power conductor. If this condition is not satisfied, then we can only remove $c$ at the cost of passing to a slightly larger space of ``pseudo-measures'', which may be interpreted as meromorphic (rather than analytic) functions on $p$-adic weight space; this is a $p$-adic counterpart of the fact that, for certain eigenforms $\Psi$, the Asai $L$-function and its twists can have poles. The details of this are contained in \S\ref{sec:getting rid of c}.

 \subsection{Outline of the construction} We first give a brief outline of the construction in the simplest case, when $\Psi$ is a normalised Bianchi modular eigenform of weight $0$ (i.e.~contributing to cohomology with trivial coefficients) for some imaginary quadratic field $F$.
 From $\Psi$ we construct a class $\phi_\Psi^* \in \h^1_{\mathrm{c}}(Y_{F, 1}^*(\n), R)$, where $Y_{F, 1}^*(\n)$ is a Bianchi manifold with appropriate level structure. This cohomology group is a free $R$-module of finite rank, and its $\Zp$-linear dual is $\h^2(Y_{F, 1}^*(\n), R) / (\text{torsion})$. In \cite{Gha99}, Ghate showed that critical values of the Asai $L$-function can be obtained by pairing $\phi_\Psi^*$ with certain classes in this $\h^2$ coming from classical weight 2 Eisenstein series. The main new ideas in the present paper arise in controlling integrality of these Eisenstein classes as the level varies, thus putting them into a compatible family from which we build a $p$-adic measure.

 The first input in our construction is a collection of maps, one for each $m \ge 1$ and $a \in \roi_F$, defined by
 \[
  Y_{\Q, 1}(m^2 N) \labelrightarrow{\iota} Y_{F, 1}^*(m^2 \n) \labelrightarrow{\kappa_{a/m}} Y_{F, 1}^*(\n),
 \]
 where $\iota$ is the natural embedding, and $\kappa_{a/m}$ is obtained by twisting the natural quotient map by $\smallmatrd{1}{a/m}{0}{1}$. Here $Y_{\Q, 1}(m^2 N)$ is the usual (open) modular curve for $\GLt / \Q$ of level $m^2 N$, where $N = \n \cap \Z$.

 The second input is a collection of special cohomology classes (``Betti Eisenstein classes'') ${}_c C_{m^2N} \in \h^1( Y_{\Q, 1}(m^2N), \Z)$. These are constructed using Siegel units. The theory of Siegel units shows that these classes satisfy norm-compatibility properties as $m$ varies, and that their images in de Rham cohomology are related to the Eisenstein series used in \cite{Gha99}. (The factor $c$ refers to an auxiliary choice of integer which serves to kill off denominators from these classes).

 With these definitions, we set
 \begin{align*}
  {}_c \Xi_{m, \n, a} \defeq (\kappa_{a/m} \circ \iota)_*\left( {}_c C_{m^2 N} \right) &\in \h^2(Y_{F, 1}^*(\n), \Z),\\
  \subc \Phi_{\n, a}^r \defeq \sum_{t \in (\Z / p^r \Z)} {}_c \Xi_{p^r, \n, at} \otimes [t] &\in \h^2(Y_{F, 1}^*(\n), \Z) \otimes \Zp[(\Z / p^r)^\times].
 \end{align*}
 The key theorem in our construction (Theorem \ref{norm relation}) is that the classes $\subc \Phi_{\n, a}^r$ satisfy a norm-compatibility relation in $r$. Both the statement of this norm-compatibility relation, and its proof, are very closely analogous to the norm-compatibility relations for Euler system classes in \cite{LLZ14, LLZ16}.

 From this, it follows that after renormalising using the Hecke operator $(U_p)_*$ (the transpose of the usual $U_p$) the classes $\subc\Phi_{\n, a}^r$ form an inverse system. In particular, they fit together to define an element
 \[
  \subc\Phi_{\n,a}^\infty \in e_{\mathrm{ord}, *}\h^2(Y_{F,1}^*(\n),\Zp)\otimes_{\Zp} \Zp[[\Zp^\times]],
 \]
where $e_{\mathrm{ord}, *}$ is Hida's ordinary projector asociated to $(U_p)_*$. We view this as a bounded measure on $\Zp^\times$ with values in the $(U_p)_*$-ordinary part of $\h^2(Y_{F, 1}^*(\n), \Zp)$. We then define the $p$-adic Asai $L$-function to be the measure
\[
 \subc L_p^{\mathrm{As}}(\Psi) \defeq \langle \phi_{\Psi}^*, \subc\Phi_{\n,a}^\infty\rangle \in R[[\Zp^\times]].
\]
That this measure interpolates the critical values of the (complex) Asai $L$-function then follows from \cite{Gha99} together with certain twisting maps (to obtain twisted $L$-values).

The case of higher-weight Bianchi forms (contributing to cohomology with non-constant coefficients) is similar, although unavoidably a little more technical. Suppose $\Psi$ is such a form of weight $(k,k)$. Using \cite{Gha99} and the same twisting methods as in the weight $(0,0)$ case, one can prove algebraicity for the critical value $L^{\mathrm{As}}(\Psi,\chi,j+1)$, where $0 \le j \le k$ and $\chi(-1)(-1)^j = 1$, by pairing with classes in $\h^2$ arising from Eisenstein series of weight $2k-2j+2$. For each such $j$, we define a compatible system of cohomology classes with coefficients in a suitable algebraic representation of $\GLt / F$ by applying a $p$-adic moment map to our Siegel-unit classes, obtaining classes $\subc\Phi_{\n,a}^{\infty,j}$ analogous to $\subc\Phi_{\n,a}^\infty$ in the weight $(0,0)$ construction. Again, this is a ``Betti analogue'' of a construction for \'etale cohomology which is familiar in the theory of Euler systems \cite{kings15,KLZ17}.

Pairing $\phi_\Psi^*$ with $\subc\Phi_{\n,a}^{\infty,j}$ gives a $p$-adic measure on $\Zp^\times$, as above. Using Kings' theory of $p$-adic interpolation of polylogarithms, it turns out that after a twist by the norm this measure is actually independent of $j$, and we define the $p$-adic Asai $L$-function $\subc L_p^{\mathrm{As}}(\Psi)$ to be the measure for $j=0$. Moreover, the class $\subc\Phi_{\n,a}^{\infty,j}$ can be explicitly related to weight $2k-2j+2$ Eisenstein series, so that integrating the function $\chi(x)x^j$ against $\subc L_p^{\mathrm{As}}(\Psi)$ computes the value $L^{\mathrm{As}}(\Psi,\chi,j+1)$ (under the parity condition above).

\subsection{Acknowledgements}

 The authors would like to thank Aurel Page for suggesting the proof of Proposition \ref{prop:goodinclusion}; and the two anonymous referees, who provided valuable comments and corrections on an earlier draft of the paper.

%
%
\section{Preliminaries and notation}\label{preliminaries}
\subsection{Basic notation}

We fix notation for a general number field $K$, which will either be $\Q$ or an imaginary quadratic field. (We'll generally denote this imaginary quadratic field by $F$ to distinguish it from the rationals in the notation). Denote the ring of integers by $\roi_K$, the adele ring by $\A_K$ and the finite adeles by $\A_K^f$. We let $\roikhat \defeq \widehat{\Z}\otimes_{\Z}\roi_K$ be the finite integral adeles, and $K^{\times +}$ the totally-positive elements of $K^\times$ (so that $K^{\times +} = K^\times$ for $K = F$).

Let $\uhp \defeq \{z \in \C: \mathrm{Im}(z)>0\}$ be the usual upper half-plane, with $\GLt(\R)_+$ (the group of $2 \times 2$ matrices of positive determinant) acting by M\"obius transformations in the usual way; we extend this to all of $\GLt(\R)$ by letting $\smallmatrd{-1}{}{}1$ act via $x + iy \mapsto -x + iy$.

Define the \emph{upper half-space} to be
\[ \uhs \defeq \{(z,t)\in \C\times\R_{>0}\}, \]
with $\GLt(\C)$ acting via
\[ \smallmatrd{a}{b}{c}{d} \cdot (z, t) = \left( \frac{(az + b)\overline{(cz + d)} + a\bar{c} t^2}{|cz + d|^2 + |c|^2 t^2}, \frac{ |ad-bc|t}{|cz + d|^2 + |c|^2 t^2}\right).\]
We embed $\uhp$ in $\uhs$ via $x + iy \mapsto (x, y)$, which is compatible with the actions of $\GLt(\R)$ on both sides.

Throughout, $p$ will denote a rational prime. Let $F$ be an imaginary quadratic field of discriminant $-D$, with different $\mathcal{D} = (\sqrt{-D})$, and fix a choice of $\sqrt{-D}$ in $\C$. Let $\n \subset \roi_F$ be an ideal of $F$, divisible by all the primes of $F$ above $p$; this will be the level of our Bianchi modular form. We assume throughout that $\n$ is small enough to ensure that the relevant locally symmetric space attached to $\n$ is smooth (see Proposition \ref{prop:goodinclusion}). Let $N$ be the natural number with $(N) = \Z\cap \n$ as ideals in $\Z$ (noting that $p \mid N$).

For an integer $n \ge 0$ and a ring $R$, define $V_n^{(r)}(R)$ to be the space of homogeneous polynomials of degree $n$ in two variables $X, Y$ with coefficients in $R$, with $\GLt(R)$ acting on the right via $(f \mid \gamma)(X, Y) = f(aX + bY, cX + dY)$. Similarly, we write $V_n^{(\ell)}(R)$ for the same space with $\GLt$ acting on the left, so $(\gamma \cdot f)(X, Y) = f(aX + cY, bX + dY)$.


\subsection{Locally symmetric spaces}

\begin{definition}\label{def:G*}
 Let $G$ be the algebraic group $\mathrm{Res}_{F/\Q}\GLt$ over $\Q$, and let $G^*$ be the subgroup
 $G \times_{D}\mathbb{G}_m$, where $D \defeq\mathrm{Res}_{F/\Q}\mathbb{G}_m$ and the map $G \rightarrow D$ is determinant.
\end{definition}

(Compare \cite[Definition 2.1.1]{LLZ16} in the totally-real case.)

\begin{definition}
 We define locally symmetric spaces attached to the groups $\GLt$, $G$ and $G^*$ as follows:
\begin{itemize}
 \item If $U \subset \GLt(\A_\Q^f)$ is an open compact subgroup, we set
 \[ Y_\Q(U) \defeq \GLt(\Q)_+\backslash \left[\GLt(\A_\Q^f)\times \uhp \right]/U,\]
 where $\GLt(\Q)_+$ acts from the left on both factors in the usual way, and $U$ acts on the right of $\GLt(\A_\Q^f)$.

 \item If $U \subset G(\A_\Q^f) = \GLt(\A_F^f)$ is open compact, we set
 \[ Y_F(U) \defeq \GLt(F)\backslash \left[\GLt(\A_F^f)\times \uhs \right]/U.\]

 \item If $U \subset G^*(\A_\Q^f) = \left\{ g \in \GLt(\A_F^f) : \det(g) \in (\A_\Q^f)^\times\right\}$ is open compact, we set
 \[ Y_F^*(U) \defeq G^*(F)_+ \backslash \left[G^*(\A_\Q^f) \times \uhs\right] / U, \]
 where $G^*(F)_+ = \{ g \in G^*(F) : \det(g) > 0\}$ is the intersection of $G^*(F)$ with the identity component of $G^*(\R)$.

\end{itemize}
\end{definition}

Each of these spaces has finitely many connected components, each of which is the quotient of $\uhp$ or $\uhs$ by a discrete subgroup of $\operatorname{PSL}_2(\R)$ (resp.~$\operatorname{PGL}_2(\C)$). If $U$ is sufficiently small, these discrete subgroups act freely, so in particular the quotient is a manifold.

\begin{definition}
 Let $K$ be either $\Q$ or $F$, and $\m$, $\n$, $\aaa$ be ideals in $\roi_K$. We define:
\begin{itemize}
\item[(i)] $U_K(\m,\n) \defeq \{\gamma \in \GLt(\roikhat): \gamma \equiv I \newmod{\smallmatrd{\m}{\m}{\n}{\n}}\},$
\item[(ii)] $U_K(\m(\aaa),\n) \defeq \{\gamma \in \GLt(\roikhat): \gamma \equiv I \newmod{\smallmatrd{\m}{\m\aaa}{\n}{\n}}\},$
\end{itemize}
We write $Y_K(\m, \n) \defeq Y_K(U(\m, \n))$ and similarly $Y_K(\m(\aaa), \n)$. We will be particularly interested $Y_K(\m, \n)$ for $\m = (1)$, which we abbreviate as $Y_{K, 1}(\n)$.

In the case $K = F$, we write $U_F^*(\m, \n)$, $U_F^*(\m(\aaa), \n)$, $U_{F, 1}^*(\n)$ for the intersections of the above groups with $G^*$.
\end{definition}

\begin{example}
The following three locally symmetric spaces are of particular importance in the sequel, so here we describe them explicitly (and record some of their other basic properties) for reference later in the paper.
\begin{enumerate}[(i)]
\item $Y_{\Q,1}(N)$ is the usual (open) modular curve of level $\Gamma_1(N)$. It has one connected component, isomorphic to $\Gamma_{1}(N)\backslash\uhp$.
\item The space $Y_{F,1}^*(\n)$ also has a single connected component, isomorphic to $\Gamma_{F,1}^*(\n)\backslash\uhs$, where
\begin{align*}
 \Gamma_{F,1}^*(\n) &\defeq G^*(F)_+ \cap U^*_{F,1}(\n)\\
 &= \left\{ \smallmatrd{a}{b}{c}{d} \in \SLt(\roi_F): c = 0, a = d = 1 \bmod \n\right\}.
\end{align*}
\item Since $\det(U_{F,1}(\n)) = \roihat^\times$, the space $Y_{F,1}(\n)$ has $h_F$ connected components, where $h_F$ is the class number of $F$. The identity component is isomorphic to $\Gamma_{F,1}(\n)\backslash\uhs$, where
\[\Gamma_{F,1}(\n) \defeq \GLt(F) \cap U_{F,1}(\n).\]
\end{enumerate}
\end{example}

If $N = \n\cap \Z$, then there are natural maps
\[Y_{\Q,1}(N) \labelrightarrow{\iota} Y_{F,1}^*(\n) \labelrightarrow{\jmath} Y_{F,1}(\n)\]
induced by the natural maps $\uhp \hookrightarrow \uhs$ and $\GLt(\A_\Q^f) \hookrightarrow G^*(\A_\Q^f) \hookrightarrow G(\A_\Q^f)$. The map $\iota$ is injective in most cases:

\begin{proposition}
 \label{prop:goodinclusion}
 If $\n$ is divisible by some integer $q \ge 4$, then $Y_{F, 1}^*(\n)$ is a smooth manifold, and
 \[ \iota: Y_{\Q, 1}(N) \hookrightarrow Y_{F, 1}^*(\n) \]
 is a closed immersion.
\end{proposition}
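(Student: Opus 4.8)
The plan is to establish, in turn: that $\Gamma_{F,1}^*(\n)$ acts freely on $\uhs$, so that $Y_{F,1}^*(\n)$ --- which is connected, equal to $\Gamma_{F,1}^*(\n)\backslash\uhs$ --- is a manifold and $\iota$ is an immersion; that $\iota$ is injective; and that $\iota$ is proper, whence it is a closed immersion. I would first record the elementary fact that any $\gamma\in\SLt(\roi_F)$ fixing a point of $\uhs$ has finite order: such a $\gamma$ is elliptic or $\pm I$, hence diagonalisable with eigenvalues on the unit circle, so $\operatorname{tr}(\gamma)$ is a real algebraic integer of absolute value $\le 2$ lying in $\roi_F\cap\R=\Z$; thus its eigenvalues are roots of unity and $\gamma$ has finite order. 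Now if $\gamma\in\Gamma_{F,1}^*(\n)$ is a nontrivial torsion element it has a fixed point in $\uhs$ (the barycentre of any orbit), so $\operatorname{tr}(\gamma)\in\{-2,\dots,2\}$; but $\gamma\equiv\smallmatrd{1}{*}{0}{1}\bmod\n$ forces $\operatorname{tr}(\gamma)\equiv 2\bmod\n$, and since $\n\subseteq(q)$ with $q\ge 4$ we get $q\mid\operatorname{tr}(\gamma)-2$, hence $\operatorname{tr}(\gamma)=2$ (the value $-2$, i.e.\ $\gamma=-I$, is excluded, since $-I\in\Gamma_{F,1}^*(\n)$ would force $2\in\n\subseteq(q)$). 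Then $\gamma$ is unipotent of finite order, so $\gamma=I$: contradiction. The same argument shows that $\Gamma_1(N)$ and the congruence subgroup $\Gamma_{F,1}^*((q))$ (with $(q)=q\roi_F$) are torsion-free; in particular $Y_{\Q,1}(N)$ is a manifold, and since $\iota$ is locally modelled on the inclusion $\uhp\hookrightarrow\uhs$ it is an immersion.

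The key tool for injectivity and properness is the antiholomorphic involution $\sigma\colon\uhs\to\uhs$, $(z,t)\mapsto(\overline z,t)$: its fixed locus is exactly the image of $\uhp$, and a direct check against the formula for the $\GLt(\C)$-action shows $\sigma(\gamma\cdot P)=\overline{\gamma}\cdot\sigma(P)$ for all $\gamma\in\GLt(\C)$, $P\in\uhs$, where $\overline{\gamma}$ denotes entrywise conjugation. I would also use that $\overline{\Gamma_{F,1}^*(\n)}=\Gamma_{F,1}^*(\overline{\n})$, and that both $\Gamma_{F,1}^*(\n)$ and $\Gamma_{F,1}^*(\overline{\n})$ are contained in $\Gamma_{F,1}^*((q))$ (because $\n,\overline{\n}\subseteq(q)$), which by the above is torsion-free and acts properly discontinuously on $\uhs$. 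For injectivity, suppose $z_1,z_2\in\uhp$ satisfy $\gamma z_1=z_2$ for some $\gamma\in\Gamma_{F,1}^*(\n)$. Applying $\sigma$ and using $\sigma(z_i)=z_i$ gives $\overline{\gamma} z_1=z_2$, so $\gamma^{-1}\overline{\gamma}$ fixes $z_1$; since $\gamma^{-1}\overline{\gamma}\in\Gamma_{F,1}^*((q))$ is torsion-free and, fixing a point of $\uhs$, has finite order, we get $\gamma^{-1}\overline{\gamma}=I$, i.e.\ $\gamma=\overline{\gamma}$, so $\gamma\in\SLt(\Z)$; its congruence conditions then put $\gamma\in\Gamma_1(N)$. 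Hence $z_1$ and $z_2$ represent the same point of $Y_{\Q,1}(N)$.

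For properness (which, as $\iota$ is a continuous injective immersion of manifolds, suffices to conclude it is a closed immersion): if $\iota$ were not proper there would be a sequence $[z_n]$ in $Y_{\Q,1}(N)$ with no convergent subsequence such that $\iota([z_n])$ converges; lifting, take $z_n\in\uhp$ and $\gamma_n\in\Gamma_{F,1}^*(\n)$ with $\gamma_n z_n\to w$ in $\uhs$. Put $\epsilon_n=\overline{\gamma_n}\,\gamma_n^{-1}\in\Gamma_{F,1}^*((q))$; then $\epsilon_n(\gamma_n z_n)=\overline{\gamma_n}z_n=\sigma(\gamma_n z_n)$, so $\epsilon_n$ carries the compact set $K=\{w\}\cup\{\gamma_n z_n:n\ge1\}$ into the compact set $\sigma(K)$. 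By proper discontinuity of $\Gamma_{F,1}^*((q))$ the $\epsilon_n$ take only finitely many values, so after passing to a subsequence $\epsilon_n=\epsilon$ is constant, i.e.\ $\overline{\gamma_n}=\epsilon\gamma_n$ for all $n$. Consequently $\overline{\gamma_m^{-1}\gamma_n}=\gamma_m^{-1}\gamma_n$, so $\gamma_m^{-1}\gamma_n\in\SLt(\Z)$, and its congruence conditions give $\gamma_m^{-1}\gamma_n\in\Gamma_1(N)$. Fixing $m$ and writing $\gamma_n=\gamma_m\mu_n$ with $\mu_n\in\Gamma_1(N)$, one gets $\mu_n z_n=\gamma_m^{-1}(\gamma_n z_n)\to\gamma_m^{-1}w$, which lies in $\uhp$ since $\uhp$ is closed in $\uhs$ and $\SLt(\Z)$-stable; hence $[z_n]=[\mu_n z_n]$ converges along this subsequence, contradicting the choice of the sequence. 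Therefore $\iota$ is proper.

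I expect the properness step to be the main obstacle: an injective immersion of locally symmetric spaces need not be an embedding, so one has to rule out ``gluing at infinity'', and the mechanism is the interplay of (i) the involution $\sigma$, which converts the hypothesis $z_n\in\uhp$ into the relation $\overline{\gamma_n}=\epsilon_n\gamma_n$; (ii) proper discontinuity of the auxiliary group $\Gamma_{F,1}^*((q))$, which makes $\epsilon_n$ eventually constant; and (iii) the resulting identity $\overline{\gamma_m^{-1}\gamma_n}=\gamma_m^{-1}\gamma_n$, which forces the $\gamma_n$ into a single coset of $\Gamma_1(N)\subseteq\SLt(\Z)$.
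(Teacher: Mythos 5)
Your proof is correct, and for the first two parts (torsion-freeness of $\Gamma_{F,1}^*(\n)$, hence smoothness; and injectivity of $\iota$ via the fact that the antiholomorphic involution $\sigma$ on $\uhs$ intertwines $\gamma$ with $\bar\gamma$, forcing $\gamma^{-1}\bar\gamma$ to be a torsion element of $\Gamma_{F,1}^*((q))$ and hence trivial) it follows essentially the same route as the paper. Your torsion argument is phrased a little differently --- you go through $\operatorname{tr}(\gamma)\in\roi_F\cap\R=\Z$ with $|\operatorname{tr}(\gamma)|\le 2$, while the paper considers the eigenvalues $\zeta,\zeta^{-1}$ directly and checks $\n\mid(\zeta+\zeta^{-1}-2)$ case by case --- but these are two wordings of the same computation, and both correctly dispose of the exceptional $\gamma=-I$ case.

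The genuine difference, and the added value of your write-up, is the properness step. The paper's proof only establishes injectivity (``Hence $z=z'$ as elements of $Y_{\Q,1}(N)$'') and does not explicitly address why the image is closed or why $\iota$ is a homeomorphism onto its image; it implicitly leaves this to the reader or to general theory of inclusions of arithmetic locally symmetric spaces. Your sequential argument --- using proper discontinuity of $\Gamma_{F,1}^*((q))$ on $\uhs$ to pin down the $\epsilon_n=\bar{\gamma_n}\gamma_n^{-1}$ after a subsequence, and then showing the $\gamma_n$ lie in a single coset of $\Gamma_1(N)$ so that convergence upstairs forces convergence in $Y_{\Q,1}(N)$ --- is a clean, self-contained way to supply this, and it reuses precisely the same mechanism ($\sigma$, torsion-freeness, congruence conditions) as the injectivity step. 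This makes the proof logically complete where the paper's is elliptic.
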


\begin{proof}
 First, the smoothness assertion. It suffices to prove that $\Gamma_{F, 1}^*(\n)$ has no non-trivial torsion elements. Since $\Gamma_1^*(\n)$ is a subgroup of $\SLt(\roi_F)$, any torsion element $\gamma$ must have eigenvalues $\zeta, \zeta^{-1}$ where $\zeta$ is a (non-trivial) root of unity, defined over an extension of $F$ of degree at most 2. Since $\zeta + \zeta^{-1} = a + d = 2 \bmod \n$, we conclude that $\n$ divides $\zeta + \zeta^{-1} - 2$. A case-by-case check shows that this implies $\zeta$ has order $2, 3, 4$ or $6$, and $\n$ must contain one of the integers $1$,$2$,$3$.

 Let us now prove the injectivity assertion. Let $z, z' \in \uhs$ be such that $\gamma z = z'$, for some $\gamma \in \Gamma_1^*(\n)$. Then $\gamma^{-1}\bar\gamma  z = z$, so either $\gamma^{-1}\bar\gamma = \mathrm{id}$, or $\gamma^{-1}\bar\gamma$ is a non-trivial torsion element in $\SLt(\roi_F)$. Since $\gamma$ is upper-triangular modulo some integer $q \ge 4$, the same is true of $\bar\gamma$ and thus also of $\gamma^{-1}\bar\gamma$; but we have just seen that $\Gamma_{F, 1}^*(q)$ has no torsion elements for $q \ge 4$.

 We can therefore conclude that $\gamma^{-1}\bar\gamma = \mathrm{id}$, in other words that $\gamma \in \Gamma_{F, 1}^*(\n) \cap \SLt(\Z) = \Gamma_{\Q, 1}(N)$. Hence $z = z'$ as elements of $Y_{\Q, 1}(N)$.
\end{proof}

\begin{remark-numbered}
 Henceforth, we will always assume that $\n$ is divisible by such a $q$, or, more generally, is small enough to avoid the possibility that these spaces are (non-smooth) orbifolds.
\end{remark-numbered}

In contrast, the composition $\jmath\circ\iota$ is \emph{never} injective, since $\smallmatrd{-1}{0}{0}{1} \in \Gamma_1(\n)$ preserves the image of $\uhs$ in $\uhp$, and acts on $\uhp$ by $x+iy \mapsto -x + iy$, so the points $-x +iy$ and $x + iy$ of $Y_{\Q, 1}(N)$ (which are distinct for generic $x, y$) are identified when mapped to $Y_{F,1}(\n)$. This failure of injectivity is a key reason for introducing the space $Y_{F,1}^*(\n)$. In fact, one can see directly that:

\begin{proposition}\label{prop:map-j}
The map $\jmath : Y_{F,1}^*(\n) \rightarrow Y_{F,1}(\n)$ has image equal to the identity component $\Gamma_{F,1}\backslash\uhs$. Its fibres are the orbits of the finite group $\left\{\smallmatrd{\epsilon}{0}{0}{1} \hspace{2pt} : \hspace{2pt} \epsilon \in \roi_F^\times\right\}$ acting on $\Gamma^*_{F, 1} \backslash \uhs$.
\end{proposition}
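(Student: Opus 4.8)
The plan is to reduce the proposition to an elementary computation with the arithmetic groups $\Gamma_{F,1}^*(\n)$ and $\Gamma_{F,1}(\n)$, by first making $\jmath$ explicit in the classical (non-adelic) picture. As recalled above, $Y_{F,1}^*(\n)$ is connected, uniformised by $\uhs \to Y_{F,1}^*(\n)$, $z \mapsto [(1,z)]$, with covering group $\Gamma_{F,1}^*(\n)$; and the identity component of $Y_{F,1}(\n)$ is uniformised in the same way, $z \mapsto [(1,z)]$, with covering group $\Gamma_{F,1}(\n)$. Since $\jmath$ is induced by the identity on $\uhs$ together with the inclusion $G^*(\A_\Q^f) \hookrightarrow G(\A_\Q^f)$, which fixes the base point $1$, under these uniformisations $\jmath$ is simply the natural projection $\Gamma_{F,1}^*(\n)\backslash\uhs \twoheadrightarrow \Gamma_{F,1}(\n)\backslash\uhs$ coming from the inclusion $\Gamma_{F,1}^*(\n) \subseteq \Gamma_{F,1}(\n)$. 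This projection is surjective onto $\Gamma_{F,1}(\n)\backslash\uhs$, which is the first assertion.

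The next step is to analyse the finite quotient $\Gamma_{F,1}(\n)/\Gamma_{F,1}^*(\n)$. The determinant gives a homomorphism $\Gamma_{F,1}(\n) \to \roi_F^\times$. Since the congruence conditions defining $U_{F,1}(\n)$ only constrain the bottom row of a matrix (the top row being unrestricted, as $\m = (1)$ in the definition of $U_K(\m,\n)$), every matrix $\smallmatrd{\epsilon}{0}{0}{1}$ with $\epsilon \in \roi_F^\times$ already lies in $\Gamma_{F,1}(\n)$; hence $\det : \Gamma_{F,1}(\n) \to \roi_F^\times$ is surjective, with splitting $\epsilon \mapsto \smallmatrd{\epsilon}{0}{0}{1}$. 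Its kernel is the group of determinant-one elements of $\Gamma_{F,1}(\n)$, which is $\Gamma_{F,1}^*(\n)$ (indeed $ad-bc = 1$ together with $c \equiv 0$, $d \equiv 1 \bmod \n$ forces $a \equiv 1 \bmod \n$). So $\Gamma_{F,1}^*(\n)$ is normal in $\Gamma_{F,1}(\n)$ and
\[ \Gamma_{F,1}(\n) \;=\; \bigsqcup_{\epsilon \in \roi_F^\times}\; \Gamma_{F,1}^*(\n)\smallmatrd{\epsilon}{0}{0}{1}. \]
In particular the finite group $E \defeq \{\smallmatrd{\epsilon}{0}{0}{1} : \epsilon \in \roi_F^\times\}$ normalises $\Gamma_{F,1}^*(\n)$, and hence acts on $\Gamma_{F,1}^*(\n)\backslash\uhs$, as in the statement.

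Combining the two steps finishes the argument. For $[z'] \in \Gamma_{F,1}(\n)\backslash\uhs$, the fibre $\jmath^{-1}([z'])$ is the set of classes $[z] \in \Gamma_{F,1}^*(\n)\backslash\uhs$ with $z \in \Gamma_{F,1}(\n)\cdot z'$, i.e.\ it is $\Gamma_{F,1}^*(\n)\backslash(\Gamma_{F,1}(\n)\cdot z')$. By the coset decomposition above, $\Gamma_{F,1}(\n)\cdot z' = \bigcup_{\epsilon}\Gamma_{F,1}^*(\n)\smallmatrd{\epsilon}{0}{0}{1} z'$, so $\jmath^{-1}([z'])$ is precisely the $E$-orbit of $[z']$ in $\Gamma_{F,1}^*(\n)\backslash\uhs$.

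I do not expect a genuine obstacle here — the argument is essentially bookkeeping. The one point requiring care is matching the adelic double-coset descriptions of $Y_{F,1}^*(\n)$ and $Y_{F,1}(\n)$ with their classical models, so that $\jmath$ really is the ``keep $z$, enlarge the group'' map with image the identity component; the key input to the determinant computation is that the $\Gamma_1(\n)$-level structure constrains only the lower row, so $\det : \Gamma_{F,1}(\n) \twoheadrightarrow \roi_F^\times$ is onto with an obvious diagonal splitting. Note that the statement is a purely set-theoretic coset computation and uses no freeness hypothesis; generically the fibre $\jmath^{-1}([z'])$ has $|\roi_F^\times|$ points, the exceptions being the points fixed by some $\smallmatrd{\epsilon}{0}{0}{1}$ with $\epsilon \neq 1$, consistent with the failure of injectivity of $\jmath\circ\iota$ discussed before the proposition.
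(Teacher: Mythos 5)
Your proof is correct, and since the paper states this proposition without proof (``one can see directly that''), your argument is exactly the direct verification the authors had in mind. The two key observations — that $\jmath$ becomes the natural projection $\Gamma_{F,1}^*(\n)\backslash\uhs \twoheadrightarrow \Gamma_{F,1}(\n)\backslash\uhs$ under the classical uniformisations, and that $\det : \Gamma_{F,1}(\n) \to \roi_F^\times$ is a split surjection with kernel $\Gamma_{F,1}^*(\n)$ — are set up correctly and the coset bookkeeping is right.
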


\subsection{Hecke correspondences}
\label{hecke operators}

We can define Hecke correspondences on the symmetric spaces $Y_{K, 1}(\n)$, for $\n$ an ideal of $\roi_K$, as follows. Firstly, we have diamond operators $\langle w \rangle$ for every $w \in (\roi_K / \n)^\times$, which define an action of $(\roi_K / \n)^\times$ on $Y_{K,1}(\n)$; this even extends to an action of the narrow ray class group modulo $\n$, although we shall not use this.

Secondly, let $\aaa$ be a square-free ideal of $\roi_K$. Consider the diagram
\[
\xymatrix{
& Y_K(1(\aaa),\n) \ar[ld]^{\pi_2} \ar[rd]^{\pi_1}&\\
Y_{K, 1}(\n)&& Y_{K, 1}(\n),
}
\]
where $\pi_1$ is the natural projection map, and $\pi_2$ is the `twisted' map given by the right-translation action of $\smallmatrd{\varpi}{}{}{1}$ on $\GLt(\A^f_K)$, where $\varpi \in \roikhat$ is any integral ad\`ele which generates the ideal $\aaa \roikhat$. (If $K = \Q$ and $\varpi = a$ is the positive integer generating $\aaa$, then this map $\pi_2$ corresponds to $z \mapsto z/a$ on $\uhp$.) We then define
\[(T_{\aaa})_* \defeq (\pi_2)_* \circ (\pi_1)^*\]
\[(T_{\aaa})^* \defeq (\pi_1)_* \circ (\pi_2)^*\]
as correspondences on $Y_{K, 1}(\n)$. When $\aaa$ divides the level $\n$, we denote these operators instead by $(U_\aaa)_*$ and $(U_\aaa)^*$. The definition may be extended to non-squarefree $\aaa$ in the usual way.

The same construction is valid for the more general symmetric spaces $Y_K(\m, \n)$, but it is no longer independent of the choice of generator $\varpi$ of $\aaa$ (it depends on the class of $\varpi$ modulo $1 + \m \roikhat$). We will only use this in the case where $\aaa$ is generated by a positive integer $a$, in which case we of course take $\varpi = a$. With this convention, the Hecke operators $(T_a)_*$ and $(T_a)^*$ for positive integers $a$ also make sense on the ``hybrid'' symmetric spaces $Y_{F, 1}^*(\m, \n)$.

\begin{remark-numbered}
 The maps $(T_\aaa)^*$ and $(U_\aaa)^*$ are perhaps more familiar, as their action on automorphic forms is given by simple formulae in terms of Fourier expansions, as we shall recall below. The lower-star versions $(T_\aaa)_*$ and $(U_\aaa)_*$ are the transpose of the upper-star versions with respect to Poincar\'e duality; this duality explains the key role played by $(U_p)_*$ in our norm relation computations.
\end{remark-numbered}

%
%
\subsection{Bianchi modular forms}
\label{bianchi modular forms}

We briefly recall the definition of Bianchi modular forms; for further details following our exact conventions, see \cite[\S 1]{Wil17}, or \cite{Gha99} for a more general treatment. As above, let $F$ be an imaginary quadratic field, and $U$ an open compact subgroup of $\GLt(\A^f_F)$. Then, for any $k \ge 0$, there is a finite-dimensional $\C$-vector space $S_{k,k}(U)$ of \emph{Bianchi cusp forms} of weight $(k, k)$ and level $U$, which are functions
\[ \Psi: \GLt(F) \backslash \GLt(\A_F) / U \longrightarrow V_{2k+2}^{(r)}(\C) \]
transforming appropriately under right-translation by the group $\C^\times \cdot \SUt(\C)$, and satisfying suitable harmonicity and growth conditions.

These forms can be described by an appropriate analogue of $q$-expansions (cf.~\cite[\S 1.2]{Wil17}). Let $e_F : \A_F/F \rightarrow \C^\times$ denote the unique continuous character whose restriction to $F \otimes \R \cong \C$ is
\[ x_\infty \longmapsto e^{2\pi i\mathrm{Tr}_{F/\Q}(x_\infty)}, \]
and let $W_\infty: \C^\times \to V_{2k+2}(\C)$ be the real-analytic function defined in 1.2.1(v) of \emph{op.cit.} (involving the Bessel functions $K_n$).

\begin{theorem}
 Let $\Psi$ be a Bianchi modular form of weight $(k,k)$ and level $U$. Then there is a \emph{Fourier--Whittaker expansion}
 \[\Psi\left(\matrd{\mathbf{y}}{\mathbf{x}}{0}{1}\right) = |\mathbf{y}|_{\A_F}\sum_{\zeta \in F^\times}W_f(\zeta \mathbf{y}_{f}, \Psi) W_{\infty}(\zeta \mathbf{y}_\infty) e_F(\zeta \mathbf{x}),\]
 where $W_{f}(-, \Psi)$, the ``Kirillov function'' of $\Psi$, is a locally constant function on $(\A_{F}^f)^\times$, with support contained in a compact subset of $\A_F^f$.
\end{theorem}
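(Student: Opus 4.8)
The plan is to run the standard argument producing Fourier--Whittaker expansions for cusp forms on $\GLt$, adapted to the Bianchi setting. Since $F$ is imaginary quadratic it has a single archimedean place, which is complex, so the expansion is governed by harmonic analysis on the compact abelian group $\A_F/F$ together with an analysis of the archimedean Whittaker function; all the finite-place statements, and the passage from cuspidality to the expansion, are soft, and the only genuinely analytic point is the identification of the archimedean factor with the Bessel-function expression $W_\infty$.

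First I would fix $\mathbf{y}\in\A_F^\times$ and study the function $\mathbf{x}\mapsto\Psi\!\left(\smallmatrd{\mathbf{y}}{\mathbf{x}}{0}{1}\right)=\Psi\!\left(\smallmatrd{1}{\mathbf{x}}{0}{1}\smallmatrd{\mathbf{y}}{0}{0}{1}\right)$. Left $\GLt(F)$-invariance of $\Psi$, applied to the elements $\smallmatrd{1}{\alpha}{0}{1}$ with $\alpha\in F$, shows that this function descends to the compact group $\A_F/F$, and hence has an absolutely convergent Fourier expansion; Pontryagin duality identifies $\widehat{\A_F/F}$ with $F$ via $\zeta\mapsto e_F(\zeta\,\cdot\,)$, so we may write it as $\sum_{\zeta\in F}c_\zeta(\mathbf{y})\,e_F(\zeta\mathbf{x})$ with $c_\zeta(\mathbf{y})=\int_{\A_F/F}\Psi\!\left(\smallmatrd{1}{\mathbf{x}}{0}{1}\smallmatrd{\mathbf{y}}{0}{0}{1}\right)e_F(-\zeta\mathbf{x})\,\mathrm{d}\mathbf{x}$. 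Smoothness of $\Psi$ in the finite variables and real-analyticity in the archimedean variable (a consequence of the harmonicity condition) give enough decay of the $c_\zeta$ to justify convergence and the manipulations below, and the vanishing of the constant term $c_0(\mathbf{y})$ is precisely the cuspidality hypothesis.

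Next I would collapse the sum onto a single global Whittaker function. Using the identity $\smallmatrd{1}{\zeta^{-1}\mathbf{x}}{0}{1}=\smallmatrd{\zeta^{-1}}{0}{0}{1}\smallmatrd{1}{\mathbf{x}}{0}{1}\smallmatrd{\zeta}{0}{0}{1}$, left-invariance under $\smallmatrd{\zeta^{-1}}{0}{0}{1}\in\GLt(F)$, and the product formula $|\zeta|_{\A_F}=1$ (so that multiplication by $\zeta$ preserves Haar measure on $\A_F/F$), one finds $c_\zeta(\mathbf{y})=W_\Psi\!\left(\smallmatrd{\zeta\mathbf{y}}{0}{0}{1}\right)$, where $W_\Psi(g)\defeq\int_{\A_F/F}\Psi\!\left(\smallmatrd{1}{\mathbf{x}}{0}{1}g\right)e_F(-\mathbf{x})\,\mathrm{d}\mathbf{x}$. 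Thus it remains to factor $W_\Psi\!\left(\smallmatrd{\mathbf{t}}{0}{0}{1}\right)=|\mathbf{t}|_{\A_F}\,W_f(\mathbf{t}_f,\Psi)\,W_\infty(\mathbf{t}_\infty)$, after which substituting $\mathbf{t}=\zeta\mathbf{y}$ and using $|\zeta\mathbf{y}|_{\A_F}=|\mathbf{y}|_{\A_F}$ together with the vanishing of $c_0$ recovers the displayed formula. The separation of the finite and archimedean variables is forced by $\GLt(\A_F)=\GLt(\A_F^f)\times\GLt(\C)$: for fixed $\mathbf{t}_f$, the function $\mathbf{t}_\infty\mapsto W_\Psi$ transforms according to the prescribed weight-$(k,k)$ rule for right translation by $\C^\times\cdot\SUt(\C)$ and satisfies the differential equations imposed by harmonicity, and the space of such functions of moderate growth is one-dimensional, spanned by the Bessel-function expression $W_\infty$ of \cite[\S1.2]{Wil17}. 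Hence $W_\Psi$ equals $|\mathbf{t}|_{\A_F}\,W_\infty(\mathbf{t}_\infty)$ times a scalar depending only on $\mathbf{t}_f$, which we call $W_f(\mathbf{t}_f,\Psi)$; this scalar is locally constant in $\mathbf{t}_f$ because $\Psi$ is invariant under the open compact subgroup $U$.

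Finally, for the support statement, choose a fractional ideal $\mathfrak{c}$ of $F$ small enough that $\smallmatrd{1}{u}{0}{1}\in U$ for all $u\in\widehat{\mathfrak{c}}\defeq\mathfrak{c}\otimes\widehat{\Z}$ (possible as $U$ is open). Right-translating $\Psi\!\left(\smallmatrd{\mathbf{y}}{\mathbf{x}}{0}{1}\right)$ by $\smallmatrd{1}{u}{0}{1}$ replaces $\mathbf{x}$ by $\mathbf{x}+\mathbf{y}u$, so comparison of Fourier coefficients forces $e_F(\zeta\mathbf{y}u)=1$ for all $u\in\widehat{\mathfrak{c}}$ whenever $c_\zeta(\mathbf{y})\neq0$; since $e_F$ restricted to $\A_F^f$ has conductor the inverse different, this confines $\zeta\mathbf{y}_f$ to the compact open subgroup $\widehat{(\mathfrak{c}\mathcal{D})^{-1}}$ of $\A_F^f$, so $W_f(-,\Psi)$ vanishes outside this set and therefore has compact support. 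I expect the one real obstacle to be the archimedean input --- showing that the harmonicity equations on $\GLt(\C)$ in weight $(k,k)$ admit a one-dimensional space of moderate-growth Whittaker solutions, and matching it explicitly to $W_\infty$ --- which is a concrete computation with Bessel's equation; in practice one cites \cite[\S1.2]{Wil17} (or \cite{Gha99}) for it, while everything else is formal.
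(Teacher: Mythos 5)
The paper states this theorem without proof, taking both the result and the definition of $W_\infty$ from \cite[\S 1.2]{Wil17}, so there is no in-paper argument to compare against. Your argument is the standard adelic Fourier--Whittaker unfolding for $\GLt$ cusp forms, and each step is correct: Fourier expansion on the compact group $\A_F/F$, vanishing of the constant term by cuspidality, collapse to a single Whittaker function $W_\Psi$ via left-invariance under $\smallmatrd{\zeta^{-1}}{0}{0}{1}$ together with the product formula, factorization into finite and archimedean parts using uniqueness of moderate-growth archimedean Whittaker functions, and compact support of the Kirillov function from right-invariance under the open compact level subgroup. The one genuinely analytic ingredient --- matching the archimedean Whittaker line to the explicit Bessel-function expression $W_\infty$ --- is correctly isolated, and it is standard to take this from \cite{Wil17} or \cite{Hid94}.

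One small bookkeeping imprecision worth flagging: $|\mathbf{t}|_{\A_F}=|\mathbf{t}_f|\cdot|\mathbf{t}_\infty|$ is not purely archimedean, so the sentence ``$W_\Psi$ equals $|\mathbf{t}|_{\A_F}\,W_\infty(\mathbf{t}_\infty)$ times a scalar depending only on $\mathbf{t}_f$'' quietly absorbs the finite factor $|\mathbf{t}_f|$ into what you then call $W_f(\mathbf{t}_f,\Psi)$. The archimedean uniqueness argument by itself only gives you a function of $\mathbf{t}_\infty$ up to scalar; the normalization in which that function is $|\mathbf{t}_\infty|W_\infty(\mathbf{t}_\infty)$ and the remaining $|\mathbf{t}_f|$ is folded into $W_f$ is precisely the convention of \cite[\S 1.2]{Wil17}. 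Stating this explicitly would make the step airtight; as written the conclusion is correct but the logic of the factorization is slightly elided.
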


If $U = U_{F, 1}(\n)$ for some $\n$, then $W_f(-, \Psi)$ is supported in $\mathcal{D}^{-1}\roihat$. For $\m$ an ideal of $\roi_F$, we define a coefficient $c(\m, \Psi)$ as the value $W_{f}(\mathbf{y}_f, \Psi)$ for any $\mathbf{y}_f$ generating the fractional ideal $\mathcal{D}^{-1} \m \roihat$; this is independent of the choice of $\mathbf{y}_f$.

Exactly as for elliptic modular forms, the space $S_{k, k}(U_{F, 1}(\n))$ has an action of (commuting) Hecke operators $(T_\m)^*$ for all ideals $\m$; and if $\Psi$ is an eigenvector for all these operators, normalized such that $c(1, \Psi) = 1$, then the eigenvalue of the $\m$-th Hecke operator on $\Psi$ is $c(\m, \Psi)$. Moreover, the space $S_{k, k}(U_{F, 1}(\n))$ is a direct sum of ``new'' and ``old'' parts, and the Hecke operators $(T_\m)^*$ are simultaneously diagonalisable on the new part.

\subsection{The Asai $L$-function}
 \label{sect:asaiLfcn}
 We now define the principal object of study in this paper, the Asai $L$-function of a Bianchi eigenform.

 The space $S_{k, k}(U_{F, 1}(\n))$ has an action of diamond operators $\langle d \rangle$, for all $d \in (\roi_F / \n)^\times$; and on any Hecke eigenform $\Psi$ these act via a character $\varepsilon_{\Psi}: (\roi_F / \n)^\times \to \C^\times$. Let $\varepsilon_{\Psi, \Q}$ denote the restriction of this character to $(\Z / N\Z)^\times$.

 \begin{definition}
  \label{def:asaiLS}
  Let $\Psi$ be a normalized eigenform in $S_{k,k}(U_{F, 1}(\n))$, and $\chi$ a Dirichlet character of conductor $m$. Define the \emph{Asai $L$-function} of $\Psi$ by
  \[
   L^{\mathrm{As}}(\Psi, \chi, s) \defeq L^{(mN)}(\chi^2\varepsilon_{\Psi, \Q},2s-2k-2) \cdot \sum_{\substack{n \ge 1 \\ (m, n) = 1}}c(n\roi_F,\Psi)\chi(n)n^{-s},
  \]
  where $N = \n \cap \Z$ and $L^{(mN)}(-,s)$ is the Dirichlet $L$-function with its Euler factors at primes dividing $mN$ removed. If $\chi$ is trivial we write simply $L^{\mathrm{As}}(\Psi,s)$.
 \end{definition}

 This Dirichlet series is absolutely convergent for $\Re(s)$ sufficiently large\footnote{Since the eigenvalues $c(\mathfrak{l}, \Psi)$ for $\l$ prime satisfy $|c(\mathfrak{l}, \Psi)| \le 2N_{F/\Q}(\mathfrak{l})^{k/2 + 1}$ \cite[\S 11]{jacquetlanglands}, it suffices to take $\Re(s) > k+3$. This bound is not optimal, but is suffices for our purposes.}, and has meromorphic continuation to all $s \in \C$. (This is proved in \cite{Fli88} or \cite{Gha99} for $\chi$ trivial, and the result for general $\chi$ can be proved similarly.) For $s$ in the half-plane of convergence, it can be written as an Euler product
 \[ L^{\mathrm{As}}(\Psi, \chi, s) = \prod_{\text{$\ell$ prime}} L_\ell^{\mathrm{As}}(\Psi, \chi, s), \]
 where $L_\ell^{\mathrm{As}}(\Psi, \chi, s)$ depends only on $\chi(\ell)$ and the Hecke and diamond eigenvalues of $\Psi$ at the primes above $\ell$. If $\ell \mid m$ then $L_\ell^{\mathrm{As}}(\Psi, \chi, s) = 1$; if $\ell \nmid m$, then a case-by-case check shows that $L_\ell^{\mathrm{As}}(\Psi, \chi, s)$ has the form $P_\ell(\Psi, \ell^{-s} \chi(\ell))^{-1}$, where $P_\ell(\Psi, -)$ is a polynomial of degree $\le 4$.

 If $\Psi$ is a new eigenform, and $\chi$ is trivial, then  $L^{\mathrm{As}}(\Psi, \chi, s)$ coincides with the function denoted by $G(s, f)$ in \cite{Gha99}.

 \begin{lemma}
  Suppose $\Psi$ is a normalised eigenform of level $\n$ coprime to $p$, $\n' = \n \prod_{\mathfrak{p} \mid p} \mathfrak{p}$, and $\Psi_\alpha$ is a normalised eigenform of level $\n'$ such that $c(\m, \Psi_\alpha) = c(\m, \Psi)$ for all $\m$ coprime to $p$. Let $\alpha = c(p\roi_F, \Psi_\alpha)$. Then we have
  \[
   L^{\mathrm{As}}(\Psi_\alpha, s) = (1 - \alpha p^{-s})^{-1} L^{\mathrm{As}, (p)}(\Psi, s),
  \]
  and $L^{\mathrm{As}}(\Psi_\alpha, \chi, s) = L^{\mathrm{As}}(\Psi, \chi, s)$ for every non-trivial $\chi$ of $p$-power conductor.\qed
 \end{lemma}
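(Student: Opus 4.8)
The plan is to compare the two Asai $L$-functions one Euler factor at a time. For any normalised eigenform the series $\sum_{(n,m)=1} c(n\roi_F, \Psi)\chi(n) n^{-s}$ of Definition \ref{def:asaiLS} is itself an Euler product over rational primes, because $n \mapsto n\roi_F$ carries coprime integers to coprime ideals and $c(-,\Psi)$ is multiplicative on coprime ideals; combined with the Euler product of the Dirichlet $L$-factor, this writes both $L^{\mathrm{As}}(\Psi,\chi,s)$ and $L^{\mathrm{As}}(\Psi_\alpha,\chi,s)$ as a product $\prod_\ell L_\ell^{\mathrm{As}}$ whose factor at $\ell$, as recorded above, depends only on $\chi(\ell)$ and on the Hecke and diamond eigenvalues at the primes of $F$ above $\ell$.

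First I would dispose of the primes $\ell\ne p$. The hypotheses force $\Psi_\alpha$ to be a $\mathfrak{p}$-stabilisation of $\Psi$ at each $\mathfrak{p}\mid p$ (the two forms generate the same automorphic representation, and $\n'$ has $\mathfrak{p}$-adic valuation one), so it has the same Hecke and diamond eigenvalues as $\Psi$ at all primes of $F$ not above $p$, and the same nebentypus inflated to level $\n'$; one can also see the latter directly from the bare hypotheses via the Hecke recursion at primes $\mathfrak{l}\nmid\n'$, which recovers $\langle\mathfrak{l}\rangle$ from $c(\mathfrak{l},-)$ and $c(\mathfrak{l}^2,-)$. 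In particular $\varepsilon_{\Psi_\alpha,\Q}$ and $\varepsilon_{\Psi,\Q}$ agree at every prime not above $p$, whence $L_\ell^{\mathrm{As}}(\Psi_\alpha,\chi,s) = L_\ell^{\mathrm{As}}(\Psi,\chi,s)$ for all $\ell\ne p$ and all $\chi$.

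It remains to treat the factor at $p$, where the two assertions of the lemma diverge. If $\chi$ is non-trivial of conductor $p^r$, the constraint $(n,p^r)=1$ restricts the sum to $n$ prime to $p$, for which $c(n\roi_F,\Psi_\alpha)=c(n\roi_F,\Psi)$, while $L^{(p^rN)}(\chi^2\varepsilon_{\Psi,\Q},2s-2k-2)$ and $L^{(p^rN')}(\chi^2\varepsilon_{\Psi_\alpha,\Q},2s-2k-2)$ (with $N'\defeq\n'\cap\Z=Np$) coincide, since their characters agree away from $p$ and the moduli $p^rN$ and $p^rN'$ have the same prime divisors; so $L^{\mathrm{As}}(\Psi_\alpha,\chi,s)=L^{\mathrm{As}}(\Psi,\chi,s)$ term by term. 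If $\chi$ is trivial, the $p$-Euler factor of $\sum_n c(n\roi_F,\Psi_\alpha)n^{-s}$ is $\sum_{j\ge0}c\big((p\roi_F)^j,\Psi_\alpha\big)p^{-js}$, and the one substantive input is the identity $c\big((p\roi_F)^j,\Psi_\alpha\big)=\alpha^j$: factoring $p\roi_F=\prod_{\mathfrak{p}\mid p}\mathfrak{p}^{e_{\mathfrak{p}}}$ and using multiplicativity of $c$ together with $c(\mathfrak{p}^j,\Psi_\alpha)=c(\mathfrak{p},\Psi_\alpha)^j$ (valid because each $\mathfrak{p}\mid p$ divides the level $\n'$) gives $c\big((p\roi_F)^j,\Psi_\alpha\big)=c(p\roi_F,\Psi_\alpha)^j=\alpha^j$ uniformly, with no case analysis on the splitting of $p$. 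Thus this factor equals $(1-\alpha p^{-s})^{-1}$. Finally, as $L^{(N')}(\varepsilon_{\Psi_\alpha,\Q},2s-2k-2)$ omits the Euler factor at $p$ that is present in $L^{(N)}(\varepsilon_{\Psi,\Q},2s-2k-2)$, a short bookkeeping check identifies $\prod_{\ell\ne p}L_\ell^{\mathrm{As}}(\Psi_\alpha,s)$ with $L^{\mathrm{As}, (p)}(\Psi,s)$, and multiplying by $(1-\alpha p^{-s})^{-1}$ yields the stated formula.

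The only step with real content is the identity $c\big((p\roi_F)^j,\Psi_\alpha\big)=\alpha^j$, which rests on the standard fact that the Hecke eigenvalues of a stabilisation are multiplicative in $j$ at primes dividing the level. The remainder is formal manipulation of Euler products, the one place calling for care being the bookkeeping of which Euler factors have been removed from the Dirichlet $L$-factor in passing from $\Psi$ to $\Psi_\alpha$.
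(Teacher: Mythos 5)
Your proof is correct. The paper states this lemma without proof (it is marked \qed\ as routine), and the argument you give — writing each side as an Euler product over rational primes, matching factors at $\ell\neq p$ via the hypothesis plus Hecke recursion for the nebentypus, and computing the $p$-Euler factor of $\Psi_\alpha$ as $(1-\alpha p^{-s})^{-1}$ using $c\big((p\roi_F)^j,\Psi_\alpha\big)=\alpha^j$ — is precisely the bookkeeping the authors leave to the reader. Your observation that the identity $c\big((p\roi_F)^j,\Psi_\alpha\big)=\alpha^j$ needs no case split on the splitting type of $p$, since $c(\mathfrak{p}^n,\Psi_\alpha)=c(\mathfrak{p},\Psi_\alpha)^n$ for every $\mathfrak{p}\mid\n'$ and multiplicativity then factors the product through $p\roi_F=\prod_{\mathfrak{p}\mid p}\mathfrak{p}^{e_\mathfrak{p}}$, is a clean way to organise the one non-trivial step.
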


\subsection{The primitive Asai $L$-function}
 \label{sect:primitiveL}

 The ``imprimitive'' Asai $L$-function of Definition \ref{def:asaiLS} is closely related to another $L$-function, the ``primitive'' Asai $L$-function. This is slightly more difficult to define, but in many ways more fundamental.

 \subsubsection*{Automorphic construction:} The Langlands $L$-group of $G = \operatorname{Res}_{F/\Q}\GLt$ is the semidirect product ${}^L G = (\GLt(\C) \times \GLt(\C)) \rtimes \operatorname{Gal}(\overline{\Q}/\Q)$, with the Galois group acting on $\GLt \times \GLt$ by permuting the factors via its quotient $\operatorname{Gal}(F/\Q)$. There is a 4-dimensional representation of ${}^L G$, the \emph{Asai representation} $r^{\mathrm{As}}$, whose restriction to $\GLt \times \GLt$ is the tensor product map $\GLt \times \GLt \to \operatorname{GL}_4$ (see e.g.~\cite[\S 0]{Fli88}).

 \begin{remark-numbered}
  The representation $r^{\mathrm{As}}$ of ${}^L G$ factors through the quotient ${}^L G^*$, which explains the prominent role the group $G^*$ plays in our constructions.
 \end{remark-numbered}

 If $\Pi$ is the automorphic representation of $\GLt(\A_F)$ generated by an eigenform $\Psi$ as above, then for each rational prime $\ell$, the local factors $\Pi_v$ for primes $v \mid \ell$ of $F$ determine (via the local Langlands correspondence for $\GLt$) a Weil--Deligne representation $w_{\Pi, \ell}$ of $\Ql$ with values in ${}^L G$. Moreover, a Dirichlet character $\chi$ modulo $m$ determines uniquely a character $\chi_\A = \prod_\ell \chi_\ell$ of $\A_\Q^\times / \Q^\times$ such that for $\ell \nmid m$, $\chi_\ell$ is unramified and maps a uniformiser to $\chi(\ell)$. We may interpret $\chi_\ell$ also as a character of the Weil group of $\Ql$, via the Artin reciprocity map (normalised to send uniformisers to geometric Frobenius elements).

 \begin{definition}
  \label{def:local-lfactor}
  We let $L_\ell^{\mathrm{As}}(\Pi, \chi, s)$ denote the local $L$-factor of the 4-dimensional Weil-Deligne representation $(r^{\mathrm{As}} \circ w_{\Pi, \ell}) \otimes \chi_\ell$, where $r^{\mathrm{As}}$ is the 4-dimensional tensor product representation of ${}^L G$. We define the \emph{primitive Asai $L$-function} of $\Pi$ by
  \[
   L^{\mathrm{As}}(\Pi, \chi, s) = \prod_{\text{$\ell$ prime}} L_\ell^{\mathrm{As}}(\Pi, \chi, s).
  \]
  If $\chi$ is trivial we write simply $L_\ell^{\mathrm{As}}(\Pi, s)$ and $L^{\mathrm{As}}(\Pi, s)$.
 \end{definition}

 \begin{remark-numbered} \
  \begin{enumerate}[(i)]
   \item Theorem 1.4 of \cite{Ram02} shows that $L^{\mathrm{As}}(\Pi, \chi, s)$ is an automorphic $L$-function: more precisely, there exists an automorphic representation $\mathrm{As}(\Pi)$ of $\operatorname{GL}_4(\A_{\Q})$ such that $L^{\mathrm{As}}(\Pi, \chi, s)$ coincides with the $L$-function of $\mathrm{As}(\Pi) \otimes \chi$, for every Dirichlet character $\chi$.
   \item The $L$-factor $L_\ell^{\mathrm{As}}(\Pi, \chi, s)$ can also be defined as the ``lowest common denominator'' of the values of a local zeta-integral, as in \cite{Fli88}. This alternative definition is known to be equivalent to Definition \ref{def:local-lfactor}: if $\ell$ is split in $F$, this equivalence is one of the defining properties of the local Langlands correspondence (see condition (2) in the introduction of \cite{harristaylor01}); if $\ell$ is inert or ramified in $F$ the equivalence is given by \cite[Theorem 4.2]{matringe10}.
  \end{enumerate}
 \end{remark-numbered}

 \subsubsection*{Galois representations:} This primitive $L$-function $L^{\mathrm{As}}(\Pi, \chi, s)$ can also be understood in terms of Galois representations, as follows. As we shall see in the next section, for an eigenform $\Psi$ as above, the coefficients $c(\n, \Psi)$ all lie in a number field $E \subset \C$. If $\mathfrak{p}$ is a prime of $E$, above some rational prime $p$, there is a unique semisimple 2-dimensional $E_\mathfrak{p}$-linear representation $V_{\mathfrak{p}}(\Pi)$  of $\operatorname{Gal}(\overline{\Q} / F)$, unramified outside $\n p$, on which geometric Frobenius at a prime $\mathfrak{l} \nmid \n p$ of $F$ has trace $c(\mathfrak{l}, \Psi)$. Then we may form the \emph{tensor induction} $\operatorname{As} V_{\mathfrak{p}}(\Pi)$, which is a 4-dimensional representation of $\operatorname{Gal}(\overline{\Q} / \Q)$, isomorphic as a representation of $\operatorname{Gal}(\overline{\Q} / F)$ to the tensor product of $V_{\mathfrak{p}}(\Pi)$ and its Galois conjugate.

 The local Euler factor at $\ell$ of this Galois representation is given by
 \begin{equation}
  \label{eq:localEF}
  L_\ell(\operatorname{As} V_{\mathfrak{p}}(\Pi) \otimes \chi, s) \coloneqq \det\left( 1 - \ell^{-s} \operatorname{Frob}_{\ell}^{-1}:  \left(\operatorname{As} V_{\mathfrak{p}}(\Pi) \otimes \chi\right)^{I_\ell} \right)^{-1},
 \end{equation}
 where $I_\ell$ is the inertia group at $\ell$, $\operatorname{Frob}_{\ell}$ is an arithmetic Frobenius element, and $\mathfrak{p}$ is any prime not dividing $\ell$. If $V_{\mathfrak{p}}(\Pi)$ satisfies local-global compatibility at the primes above $\ell$ (which is expected, and known in many cases by the results of \cite{mok14} and \cite{varma}), then this local $L$-factor $L_\ell(\operatorname{As} V_{\mathfrak{p}}(\Pi) \otimes \chi, s)$ coincides with the automorphic local $L$-factor $L_\ell^{\mathrm{As}}(\Pi, \chi, s)$.

 \subsubsection*{Relation to the imprimitive $L$-function:} The imprimitive $L$-function $L^{\mathrm{As}}(\Psi, \chi, s)$ can be seen as an ``approximation'' to the primitive $L$-function, in the following sense:

 \begin{proposition}
  We have
  \[
    L^{\mathrm{As}}(\Psi, \chi, s) = L^{\mathrm{As}}(\Pi, \chi, s) \prod_{\ell \mid mN} C_\ell(\Psi, \chi, s),
  \]
  where the local error terms $C_\ell(\Psi, \chi, s)$ are polynomials in $\ell^{-s}$, of degree $\le 4$. In particular, the primitive and imprimitive $L$-functions have the same local factors at all but finitely many primes.
 \end{proposition}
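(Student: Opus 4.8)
The plan is to deduce the statement from a purely local identity, one rational prime at a time. Both $L$-functions are Euler products --- the imprimitive one because $\m \mapsto c(\m,\Psi)$ is multiplicative on ideals of $\roi_F$ and $L^{(mN)}(\chi^2\varepsilon_{\Psi,\Q},2s-2k-2)$ factors in the obvious way, the primitive one by Definition~\ref{def:local-lfactor} --- so it suffices to show that for every prime $\ell$ the ratio $C_\ell(\Psi,\chi,s) \defeq L^{\mathrm{As}}_\ell(\Psi,\chi,s)/L^{\mathrm{As}}_\ell(\Pi,\chi,s)$ is a polynomial in $\ell^{-s}$ of degree $\le 4$, and equals $1$ when $\ell \nmid mN$. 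Write $L^{\mathrm{As}}_\ell(\Psi,\chi,s) = P_\ell(\ell^{-s})^{-1}$, with $P_\ell$ a polynomial of degree $\le 4$ and $P_\ell(0) = 1$ (as noted above; and $P_\ell = 1$ when $\ell \mid m$), and $L^{\mathrm{As}}_\ell(\Pi,\chi,s) = Q_\ell(\ell^{-s})^{-1}$; the latter has the same shape because the local $L$-factor of a Weil--Deligne representation of dimension $\le 4$ is the reciprocal of the characteristic polynomial of Frobenius on the subspace of inertia-invariants annihilated by the monodromy operator. Then $C_\ell = Q_\ell/P_\ell$, and once we know this is a polynomial the bound $\deg C_\ell = \deg Q_\ell - \deg P_\ell \le 4$ (and $C_\ell(0) = 1$) is automatic; so the entire content is the divisibility $P_\ell \mid Q_\ell$, with $P_\ell = Q_\ell$ when $\ell \nmid mN$.

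For $\ell \nmid mN$ this is a direct computation. Each $\Pi_v$ with $v \mid \ell$ is unramified, with Satake parameters read off from the $c(\mathfrak{l},\Psi)$, and $r^{\mathrm{As}} \circ w_{\Pi,\ell}$ is the tensor product of two unramified $\GLt$-parameters when $\ell$ splits in $F$, and the tensor induction of one such when $\ell$ is inert or ramified. One verifies that $Q_\ell^{-1}$ equals the product of the local Dirichlet series $\sum_{j \ge 0} c(\ell^j\roi_F,\Psi)(\chi(\ell)\ell^{-s})^j$ with the $\ell$-Euler factor of $L^{(mN)}(\chi^2\varepsilon_{\Psi,\Q},2s-2k-2)$. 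The one point requiring care is a matter of bookkeeping: in the split case, expanding the Rankin--Selberg product $\sum_j c(\mathfrak{l}^j,\Psi)c(\bar{\mathfrak{l}}^j,\Psi)T^j$ over a common denominator produces a numerator $1 - \varepsilon_{\Psi,\Q}(\ell)\ell^{2k+2}T^2$ which cancels exactly against the Dirichlet $L$-factor, since $\varepsilon_{\Psi,\Q}(\ell)\ell^{2k+2}$ is the product of the four Satake parameters of $\Pi$ above $\ell$; in the inert or ramified case the same factor instead supplies the ``square-root'' Frobenius eigenvalues of the tensor induction not visible in the Dirichlet series, and in the ramified case it also accounts for the one of the four dimensions that is a twist of the quadratic character $\eta_{F/\Q}$, which is ramified at $\ell$ and hence invisible to $Q_\ell$. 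So $P_\ell = Q_\ell$.

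Now let $\ell \mid mN$. If $\ell \mid m$, then $P_\ell = 1$, so $C_\ell = Q_\ell$ is already a polynomial of degree $\le 4$ and there is nothing to prove. So assume $\ell \mid N$ and $\ell \nmid m$, whence $P_\ell^{-1} = \prod_{v \mid \ell}\bigl(\sum_{j \ge 0}c(v^j,\Psi)(\chi(\ell)\ell^{-s})^j\bigr)$. For each $v \mid \ell$, the series $\sum_j c(v^j,\Psi)V^j$ is a rational function $\prod_i(1 - a^{(v)}_i V)^{-1}$ of degree $\le 2$ whose poles $\{a^{(v)}_i\}$ form a sub-multiset of the Frobenius eigenvalues on the subspace of $w_{\Pi,v}$ cut out by its local $L$-factor (the inertia-invariants killed by monodromy): when $\Psi$ is new at $v$ this is the standard description of $L_v(\pi_v,s)$ as the greatest common denominator of the local zeta integrals, evaluated on the new vector, and for a $v$-stabilisation the pole set only shrinks. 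Since $r^{\mathrm{As}} \circ w_{\Pi,\ell}$ is the tensor product (for $\ell$ split) or tensor induction (for $\ell$ inert or ramified) of the $w_{\Pi,v}$, and since a tensor --- respectively tensor-induced --- product of vectors each fixed by inertia and annihilated by the relevant monodromy is again fixed by inertia and annihilated by the total monodromy, every pole of $P_\ell$ --- each product $a^{(\mathfrak{l})}_i a^{(\bar{\mathfrak{l}})}_k$ when $\ell$ splits, and each $a^{(\mathfrak{l})}_i$ (up to the square-root adjustment above) otherwise --- occurs among the Frobenius eigenvalues cut out by $Q_\ell$. Hence $P_\ell \mid Q_\ell$.

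The obstacle here is not conceptual but organisational. The previous two paragraphs each conceal a finite case check, over the local types at the places $v \mid \ell$ (unramified, possibly $v$-stabilised; special; ramified principal series; supercuspidal) and the splitting behaviour of $\ell$ in $F$. In each case one must (i) compute $\sum_j c(v^j,\Psi)V^j$ and match it with the local $L$-factor of $w_{\Pi,v}$ --- which is precisely local--global compatibility for $\Psi$ at $v$, known in the cases relevant here by \cite{mok14,varma}, or else read off directly from the Flicker--Ghate zeta-integral description of the local factors (the route taken, for trivial $\chi$, in \cite{Fli88,Gha99}); and (ii) track normalisations carefully --- the shift by $2k+2$ in the argument of the Dirichlet $L$-factor, the $\ell^{\pm 1/2}$ discrepancies between arithmetic and unitary normalisations of $\GLt$ local parameters, and the square-root eigenvalues introduced by tensor induction at the inert and ramified primes. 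No single manipulation dispatches all of these at once, but none of the individual cases is hard.
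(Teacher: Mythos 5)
Your overall strategy is the same as the paper's: reduce to a prime-by-prime comparison of Euler factors, identify the good primes with Ghate's Proposition 1 (you re-derive it rather than cite it, but the computation is the same), dispose of $\ell \mid m$ trivially, and reduce the remaining bad primes to a case check over local types. The paper itself leaves that case check unperformed (``can be verified in a case-by-case check''), while you attempt a uniform ``pole-containment'' argument. So you are if anything trying to say more than the paper does.

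However, two points in the uniform argument need repair. First, when $\ell$ is split you write $P_\ell^{-1} = \prod_{v\mid\ell}\bigl(\sum_j c(v^j,\Psi)(\chi(\ell)\ell^{-s})^j\bigr)$, but this is not correct: since $\ell^j\roi_F=\mathfrak{l}^j\bar{\mathfrak{l}}^j$, the imprimitive local factor is the \emph{diagonal} Rankin--Selberg series $\sum_j c(\mathfrak{l}^j,\Psi)c(\bar{\mathfrak{l}}^j,\Psi)V^j$, not a product of two independent series. (You plainly know this --- the subsequent sentence describes the poles as the products $a^{(\mathfrak{l})}_i a^{(\bar{\mathfrak{l}})}_k$, which is what the diagonal gives --- so this is a slip of notation rather than of understanding, but as written the displayed formula is false and the degree count built on it is off.) Second, and more seriously, the tensor-vector argument does not by itself settle the inert and ramified cases. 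When $\ell$ is inert, $\mathrm{Frob}_\ell$ does not preserve the two factors of $V\otimes V^\sigma$ individually, so a tensor of inertia-fixed, monodromy-killed vectors is still fixed and killed, but its $\mathrm{Frob}_\ell$-eigenvalue is determined only up to sign (it squares to the relevant product of $\mathrm{Frob}_v$-eigenvalues); showing that the sign comes out so that $P_\ell \mid Q_\ell$ is precisely the normalisation question for the Asai transfer that the ``square-root adjustment'' alludes to but does not resolve. That sign check (and the corresponding issue in the ramified case, where $I_\ell$ is strictly larger than $I_{F_v}$) is genuine content of the case-by-case verification; your argument as stated gestures at it without closing it. Since the paper likewise defers this, your proposal is not less complete than the paper's, but you should not present the divisibility $P_\ell\mid Q_\ell$ as following formally from the tensor-vector observation.
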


 \begin{proof}
  The equality of the local factors for $\ell \nmid mN$ is Proposition 1 of \cite{Gha99}; so it remains to show that if $\ell \mid mN$, the ratio $\frac{L_\ell^{\operatorname{As}}(\Psi, \chi, s)}{L_\ell^{\operatorname{As}}(\Pi, \chi, s)}$ is a polynomial in $\ell^{-s}$ of degree $\le 4$.

  If $\ell \mid m$, then $L_\ell^{\operatorname{As}}(\Psi, \chi, s)$ is identically 1. The same holds if $\Pi_\ell$ is supercuspidal at some prime above $\ell$, since in this case $c(n \roi_F, \Psi)$ is zero for every $n$ divisible by $\ell$. Since $L_\ell^{\operatorname{As}}(\Pi, \chi, s)$ is (by definition) the reciprocal of a polynomial of degree $\le 4$, the result is automatic in these cases. This leaves the primes $\ell$ such that $\ell \mid N$, $\ell \nmid m$, and the local factors of $\Pi$ at primes above $\ell$ are principal series or special, which can be verified in a case-by-case check.
 \end{proof}

 Perhaps surprisingly, the local error terms $C_\ell(\Psi, \chi, s)$ at the primes $\ell \mid mN$ may be non-trivial, even if $\chi = 1$ and $\Psi$ is the unique \emph{new} eigenform generating $\Pi$. In Galois-theoretic terms, this corresponds to the fact that the inertia invariants of $\operatorname{As} V_{\mathfrak{p}}(\Pi)$ at a prime $\ell$ may be strictly larger than the tensor product of the inertia invariants of $V_{\mathfrak{p}}(\Pi)$ at the primes above $\ell$. However, there are some simple criteria that are sufficient to rule this out:
 \begin{itemize}
  \item If $\ell \mid m$ and $\ell \nmid ND$, then both $L_\ell^{\mathrm{As}}(\Psi, \chi, s)$ and $L_\ell^{\mathrm{As}}(\Pi, \chi, s)$ are identically 1, so in particular $C_\ell(\Psi, \chi, s) = 1$.
  \item If $\ell \nmid m$, $\ell$ is split in $F$, and only one of the two primes above $\ell$ divides $\n$, then a direct computation shows that $C_\ell(\Psi, \chi, s) = 1$ (compare Remark 2.7.1 of \cite{KLZ17} in the Rankin--Selberg case).
 \end{itemize}


\subsection{The Coates--Perrin-Riou conjecture}
\label{sec:coates-perrin-riou}

In \cite{coatesperrinriou89} and \cite{coates89}, a general conjecture is formulated predicting the existence of $p$-adic $L$-functions associated to any motive over $\Q$ whose $L$-function has at least one critical value. In this section, we shall explain what this conjecture predicts for the (conjectural) Asai motive associated to a Bianchi eigenform.

Let us fix an automorphic representation $\Pi$ of $\GLt(\A_F)$, generated by some Bianchi eigenform $\Psi$ of weight $(k, k)$ with coefficients in a number field $E$. We shall assume (in this section only) that there exists a Chow motive $M^{\mathrm{As}}(\Pi)$ over $\Q$, with coefficients in $E$, whose $L$-function is $L^{\mathrm{As}}(\Pi, s)$, and whose $\mathfrak{p}$-adic realisation is $\operatorname{As} V_{\mathfrak{p}}(\Pi)$, for every prime $\mathfrak{p} $ of $E$. (Strictly speaking the conjectures are formulated only for motives with coefficients in $\Q$, but the extension to general $E$ is immediate.)

We shall apply the conjectures of \cite{coates89} to the motive $M = M^{\mathrm{As}}(\Pi)(1)$. Note that $M$ has weight $2k$, and its Hodge decomposition at $\infty$ is given by $\operatorname{dim} M^{(-1, 2k+1)} = \operatorname{dim} M^{(2k+1, -1)} = 1$, $\operatorname{dim} M^{(k,k)} = 2$. Moreover, complex conjugation acts as $-1$ on $M^{(k,k)}$, so $d^+(M) = 1$ and $d^-(M) = 3$. Thus $s = 0$ is a critical value of $L(M(j)(\chi), s) = L^{\mathrm{As}}(\Pi, \chi^{-1}, s + j)$, for integers $0 \le j \le k$ and Dirichlet characters $\chi$ such that $(-1)^j\chi(-1) = 1$.\footnote{These are all the critical values left of the centre of symmetry of the functional equation. Right of this line, we also have critical values for $k+1 \le j \le 2k+2$ with $(-1)^j\chi(-1) = -1$.} We choose our square root of $-1$ (the $\rho$ of \emph{op.cit.}) to be $+i$.

We shall also assume that $p$ is unramified in $F$ and $\Pi$ has conductor coprime to $p$, and is ordinary at the primes above $p$, so there is a unique eigenvalue $\alpha$ of $\operatorname{Frob}_p^{-1}$ on $\operatorname{As} V_{\mathfrak{q}}(\Pi)$, for $\mathfrak{q}$ not dividing $p$, such that $\alpha$ is a unit at $\mathfrak{p}$. We shall take $\chi$ to be a Dirichlet character of $p$-power conductor.

\begin{proposition}
 In the notation of \emph{op.cit.}, for $(j, \chi)$ as above we have
 \begin{align*}
  \mathcal{L}_\infty^{(\rho)}(M(j)(\chi)) &= 2(2 \pi i)^{-1-j} j!, \\
  \mathcal{L}_p^{(\rho)}(M(j)(\chi)) &=
  \begin{cases}
  (1 - \tfrac{p^j}{\alpha}) \cdot (1 - \tfrac{\alpha}{p^{1+j}})^{-1}
  & \text{if $\chi$ is trivial}, \\[2mm]
  G(\chi) \left( \tfrac{p^{j}}{\alpha}\right)^{r}
  & \text{if $\chi$ has conductor $p^r > 1$},
  \end{cases}
 \end{align*}
 where $G(\chi) \coloneqq \sum_{a \in (\Z/p^r)^\times} \chi(a) e^{2\pi i a / p^r}$.\qed
\end{proposition}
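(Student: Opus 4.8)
The plan is to unwind the definitions of the local factors $\mathcal{L}_\infty^{(\rho)}$ and $\mathcal{L}_p^{(\rho)}$ from \cite{coates89} for the specific motive $M = M^{\mathrm{As}}(\Pi)(1)$, using the Hodge and Frobenius data recorded just before the statement. For the archimedean factor: the quantity $\mathcal{L}_\infty^{(\rho)}(M(j)(\chi))$ is a product of $\Gamma$-factors and powers of $2\pi i$ determined by the Hodge type of $M(j)(\chi)$ at $\infty$ together with the action of complex conjugation. Twisting by $j$ shifts the Hodge filtration and twisting by a Dirichlet character $\chi$ changes the sign of complex conjugation according to $\chi(-1)$; under our running hypothesis $(-1)^j \chi(-1) = 1$ the relevant Hodge--Tate weights contributing to $d^+$ are the pair $(-1-j, 2k+1-j)$ with $d^+ = 1$, $d^- = 3$. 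Feeding this into the recipe of \emph{op.~cit.} for $\mathcal{L}_\infty$ (a single $\Gamma_\C$-type factor, since the sub-Hodge-structure $M^{(k,k)}$ on which complex conjugation is $-1$ contributes trivially to $\mathcal{L}_\infty^{(\rho)}$ after the twist) produces $\Gamma(1+j) = j!$ up to the normalising power of $2\pi i$, giving exactly $2(2\pi i)^{-1-j} j!$; the leading $2$ comes from the factor $\rho^{?}$-convention in \emph{op.~cit.} (choice $\rho = +i$).

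For the $p$-adic factor, $\mathcal{L}_p^{(\rho)}(M(j)(\chi))$ is defined from the Frobenius eigenvalues on the $p$-adic realisation $\operatorname{As}V_{\mathfrak p}(\Pi)(1)$, via a modified Euler factor at $p$ depending on whether $\chi$ is ramified. Since $\Pi$ is ordinary at the primes above $p$ and $p$ is unramified in $F$, the crystalline Frobenius on $\operatorname{As}V_{\mathfrak q}(\Pi)$ has eigenvalues $\{\alpha, \beta, \gamma, \delta\}$ of which exactly one, $\alpha$, is a $p$-adic unit, and the Hodge--Tate weights force $\alpha\delta = $ (a power of $p$ times a unit) in a way that pins down the "unit root" contribution. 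In the $\chi$ trivial case the Coates recipe gives the product of $(1 - p^{j}/\alpha)$ — the interpolation/Euler-factor term at $p$ coming from the unit-root line of $M(j)$ — and the inverse Euler factor $(1 - \alpha/p^{1+j})^{-1}$ coming from the dual/complementary line; one checks these are precisely the two factors singled out by the formula for $\mathcal{L}_p$ in terms of $\Phi$-eigenvalues of absolute value $<1$ versus $\ge 1$ after the Tate twist by $1+j$. In the ramified case, the recipe replaces the Euler factor at $p$ by a Gauss sum: $\mathcal{L}_p^{(\rho)}$ becomes $G(\chi)$ times $(p^j/\alpha)^r$, the power $r$ being the conductor exponent and $p^j/\alpha$ the unit-root eigenvalue after the twist. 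This is a direct, if bookkeeping-heavy, substitution into Definition (of $\mathcal{L}_p$) in \cite{coates89}.

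The main obstacle is purely one of normalisation: reconciling our conventions (the choice $\rho = +i$, the normalisation of the reciprocity map sending uniformisers to geometric Frobenius, the Tate-twist convention in passing from $M^{\mathrm{As}}(\Pi)$ to $M = M^{\mathrm{As}}(\Pi)(1)$, and the precise form of the "modified $p$-Euler factor" in \emph{op.~cit.}) with those of \cite{coates89}, so that the constants $2$, the sign of the exponent on $2\pi i$, and the exact shape $(1-p^j/\alpha)(1-\alpha/p^{1+j})^{-1}$ (rather than, say, its inverse or a variant with $\alpha \leftrightarrow p^{1+j}/\alpha$) come out correctly. I expect the cleanest route is to compute $\mathcal{L}_\infty$ and $\mathcal{L}_p$ directly for the $2$-dimensional pieces of $\operatorname{As}V_{\mathfrak p}(\Pi)$ — namely the "unit-root" sub and the rank-one quotients of the associated filtered $(\varphi,N)$-module — and multiply, rather than treating the $4$-dimensional representation monolithically; the ordinarity hypothesis makes this decomposition available, and the $M^{(k,k)}$-part (on which complex conjugation acts by $-1$) is seen to contribute nothing to $\mathcal{L}_\infty^{(\rho)}$ and only a unit to $\mathcal{L}_p^{(\rho)}$, which explains why the final answer depends only on $\alpha$, $j$, and $\chi$.
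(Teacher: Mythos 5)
The paper offers no proof of this proposition (it is closed with $\qed$, being a direct read-off from the definitions in \cite{coates89}), and your outline is precisely the intended computation: extract $\mathcal{L}_\infty^{(\rho)}$ from the Hodge data of $M(j)(\chi)$, noting that the unique Hodge type $(p,q)$ with $p<0$ is $(-1-j,\,2k+1-j)$, which contributes $\Gamma_\C(1+j)\cdot i^{-(1+j)} = 2(2\pi i)^{-1-j}j!$; and extract $\mathcal{L}_p^{(\rho)}$ from the one-dimensional unit-root subspace $D^+\subset D_{\mathrm{cris}}$, whose geometric Frobenius eigenvalue is $\lambda = \alpha/p^{1+j}$, so that the Coates recipe gives $\frac{1-p^{-1}\lambda^{-1}}{1-\lambda} = \frac{1-p^j/\alpha}{1-\alpha/p^{1+j}}$ in the unramified case and $G(\chi)\,(p^{-1}\lambda^{-1})^r = G(\chi)(p^j/\alpha)^r$ in the ramified case (the exponent on $G(\chi)$ being $d^+(M)=1$). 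Two small points worth tightening when you write this out: the overall factor of $2$ is simply the $2$ in $\Gamma_\C(s) = 2(2\pi)^{-s}\Gamma(s)$, not an artefact of the $\rho$-convention (the choice $\rho=+i$ only determines the sign of the $i^{-(1+j)}$ factor); and the pairing $\alpha\delta = \beta\gamma = p^{2k+2}$ among the four Frobenius eigenvalues is a consequence of Poincar\'e duality (equivalently the functional equation of the Asai $L$-function), not of the Hodge--Tate weights per se, though weak admissibility of course ties the two together. Also note that what you call ``crystalline Frobenius on $\operatorname{As}V_{\mathfrak q}(\Pi)$'' is really the action of geometric Frobenius at $p$ on the $\mathfrak q$-adic ($\ell\ne p$) realisation; compatibility with the crystalline side is used implicitly but your phrasing conflates the two.
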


The conjecture of Coates and Perrin-Riou thus predicts that there should exist a non-zero constant $\Omega_M$, and a $p$-adic measure $\mu$ on $\Zp^\times$, such that
\[ \int_{\Zp^\times} x^j \chi(x)\, \mathrm{d}\mu(x) =  \mathcal{L}_p^{(\rho)}(M(j)(\chi)) \cdot \frac{j! L^{\mathrm{As}, (p)}(\Pi, \chi^{-1}, j+1)}{(2 \pi i)^{(1 + j)} \Omega_M}, \]
for $(j, \chi)$ as above. Stated in this form, the conjecture is independent of the existence of the motive $M^{\mathrm{As}}(\Pi)$.


\subsection{The modular symbol attached to $\Psi$}
\label{sect:modsymb}

The Bianchi modular forms we consider in this paper are \emph{cohomological}: they contribute to the cohomology of local systems on $Y_{F, 1}(\n)$, as we shall now explain.

\begin{definition}
 Let $E$ be any field extension of $F$, and let $k \ge 0$. We define the left $E[\GLt(F)]$-module $V_{kk}(E) \defeq V_k^{(\ell)}(E)\otimes_{E} V_k^{(\ell)}(E)^\sigma$, where $\gamma \in \GLt(F)$ acts in the usual way on the first component and via its complex conjugate $\gamma^{\sigma}$ on the second component. Via this action, the space $V_{kk}(E)$ gives rise to a local system of $E$-vector spaces on $Y_{F,1}(\n)$, which we also denote by $V_{kk}(E)$.
\end{definition}

\begin{theorem}[Eichler--Shimura--Harder] There is a canonical, Hecke-equivariant injection
\[S_{k,k}(U_{F,1}(\n)) \hookrightarrow \h^1_{\mathrm{c}}(Y_{F,1}(\n),V_{kk}(\C)). \]
 Moreover, if $\Psi \in S_{k,k}(U_{F,1}(\n))$ is a normalised eigenform, this map induces isomorphisms of 1-dimensional $\C$-vector spaces
\[ S_{k,k}(U_{F,1}(\n))[\Psi] \xrightarrow{\cong} \h^1_{\mathrm{c}}(Y_{F,1}(\n),V_{kk}(\C))[\Psi] \xrightarrow{\cong} \h^1(Y_{F,1}(\n),V_{kk}(\C))[\Psi].
\]
\end{theorem}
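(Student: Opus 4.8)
The plan is to follow the classical recipe of Eichler, Shimura and Harder, realising Bianchi cusp forms as harmonic differential forms on the Bianchi manifold $Y_{F,1}(\n)$, which has real dimension $3$ since $\uhs$ is the symmetric space of $\SLt(\C)$. Given $\Psi \in S_{k,k}(U_{F,1}(\n))$, valued in $V_{2k+2}^{(r)}(\C)$, one combines $\Psi$ with the ``weight'' polynomials — concretely, the branching of the minimal $\SUt(\C)\cdot\C^\times$-type of the relevant cohomological representation of $\GLt(\A_F)$, tensored with $V_{kk}(\C)$ — to produce a $V_{kk}(\C)$-valued $1$-form $\omega_\Psi$ on $\GLt(F)\backslash\GLt(\A_F)$, right-invariant under $U_{F,1}(\n)$ and the archimedean maximal compact, hence descending to $Y_{F,1}(\n)$. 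The harmonicity conditions in the definition of a Bianchi modular form translate exactly into $d\omega_\Psi = 0$, so $[\omega_\Psi] \in \h^1_{\mathrm{dR}}(Y_{F,1}(\n),V_{kk}(\C))$. The formula for $\omega_\Psi$ is canonical, and Hecke-equivariant, the Hecke action on both sides being given by the same correspondences $Y_F(1(\aaa),\n) \rightrightarrows Y_{F,1}(\n)$ used in \S\ref{hecke operators}.

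Next I would lift $[\omega_\Psi]$ to compactly supported cohomology and prove injectivity. Cuspidality of $\Psi$ — the vanishing of the constant terms of its Fourier--Whittaker expansion — forces $\omega_\Psi$ to decay rapidly at every cusp; passing to the Borel--Serre compactification $\overline{Y}_{F,1}(\n)$ and inspecting a neighbourhood of the boundary shows that $[\omega_\Psi]$ lies canonically in the image of $\h^1_{\mathrm{c}}(Y_{F,1}(\n),V_{kk}(\C)) \to \h^1(Y_{F,1}(\n),V_{kk}(\C))$, and one checks the chosen lift is canonical. (Equivalently, $\omega_\Psi$ is square-integrable, so one may invoke Borel's comparison of cuspidal and $L^2$-cohomology together with the fact that in degree $1$ cuspidal, $L^2$, interior and compactly-supported cohomology all agree for this group.) For injectivity one pairs $[\omega_\Psi]\in\h^1_{\mathrm{c}}$ against the $\h^2$-class attached via the Hodge star to the complex-conjugate form, using Poincar\'e--Lefschetz duality $\h^3_{\mathrm{c}}(Y_{F,1}(\n),\C)\cong\C$; the result is a non-zero multiple of the Petersson norm $\langle\Psi,\Psi\rangle$, so $\Psi\mapsto[\omega_\Psi]$ is injective.

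For the eigenform statement, fix a normalised eigenform $\Psi$. The kernel and cokernel of $\h^1_{\mathrm{c}}(Y_{F,1}(\n),V_{kk}(\C))\to\h^1(Y_{F,1}(\n),V_{kk}(\C))$ are subquotients of the boundary cohomology $\h^\bullet(\partial\overline{Y}_{F,1}(\n),V_{kk}(\C))$, whose Hecke eigensystems are all Eisenstein and hence differ from the cuspidal eigensystem $\m\mapsto c(\m,\Psi)$ (by the Ramanujan-type bound recalled in \S\ref{bianchi modular forms} versus the larger boundary eigenvalues, or by the classification of the Eisenstein spectrum of $\GLt/F$); localising at $\Psi$ therefore kills them, giving $\h^1_{\mathrm{c}}[\Psi]\cong\h^1[\Psi]$. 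To identify these with $S_{k,k}(U_{F,1}(\n))[\Psi]$ and see that all three are one-dimensional, combine: (i) multiplicity one for cuspidal automorphic representations of $\GLt/\A_F$, so that both $S_{k,k}(U_{F,1}(\n))[\Psi]$ and the space of $U_{F,1}(\n)$-fixed vectors in the finite part $\Pi_f$ of the generating representation $\Pi$ are one-dimensional; (ii) the $(\mathfrak g,K)$-cohomology computation $H^1(\mathfrak{sl}_2(\C),\SUt(\C)\cdot\C^\times;\Pi_\infty\otimes V_{kk}(\C))\cong\C$ for the cohomological principal series $\Pi_\infty$ of weight $(k,k)$; and (iii) the decomposition of Borel and Franke identifying the cuspidal part of $\h^1_{\mathrm{c}}(Y_{F,1}(\n),V_{kk}(\C))$ with $\bigoplus_{\Pi}(\Pi_f)^{U_{F,1}(\n)}\otimes H^1(\mathfrak g,K;\Pi_\infty\otimes V_{kk}(\C))$. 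Comparing $\Psi$-parts yields all the claimed isomorphisms.

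The main obstacle is the second step: producing the \emph{canonical} lift of $[\omega_\Psi]$ to compactly supported cohomology, i.e.\ the precise degree-$1$ comparison between cuspidal, $L^2$, interior and compactly-supported cohomology at the boundary. Everything else is either an explicit, if intricate, differential-geometric computation at the cusps or a citation of now-standard automorphic machinery (the Borel--Serre compactification, Borel's and Franke's theorems, multiplicity one for $\GLt$, and the $(\mathfrak g,K)$-cohomology of principal series of $\SLt(\C)$).
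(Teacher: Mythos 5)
Your sketch is a faithful unpacking of the standard Eichler--Shimura--Harder machinery; the paper itself does not reprove the theorem but simply cites Harder \cite{Har87} and, for a treatment in matching conventions, Hida \cite{Hid94} (Proposition 3.1 for cusp forms $\cong$ cuspidal cohomology, the start of \S 5 for the injection into $\h^1_{\mathrm{c}}$, and \S 8 for one-dimensionality). So you are taking the same route as the references the paper relies on, just writing it out; the one small wrinkle is that the bound $|c(\mathfrak l,\Psi)| \le 2N(\mathfrak l)^{k/2+1}$ recalled in \S\ref{bianchi modular forms} is the Hecke/Jacquet--Langlands bound rather than Ramanujan and is not by itself sharp enough to separate the cuspidal eigensystem from every boundary eigensystem, so of the two alternatives you offer, the cleaner one is the classification of the Eisenstein/boundary spectrum (equivalently, that the boundary cohomology is built from induced representations whose Satake parameters cannot match those of a cuspidal $\Pi$), which is indeed how Hida's \S 8 and Harder proceed.
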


\begin{proof}
This was initially proved in \cite{Har87}. A treatment closer to our conventions is \cite{Hid94}, where Proposition 3.1 gives an isomorphism between the cusp forms and cuspidal cohomology, the injection of cuspidal cohomology into $\h_{\mathrm{c}}^1$ is explained at the start of \S5, and the 1-dimensionality of each of these spaces (hence the isomorphism) is explained in \S8.
\end{proof}

If $\Psi \in S_{k,k}(U_{F,1}(\n))$, write $\omega_{\Psi}$ for its image in $\h^1_{\mathrm{c}}(Y_{F,1}(\n),V_{kk}(\C))$. This can be described concretely as the class of a harmonic $V_{kk}$-valued differential form constructed from $\Psi$; for a summary of the construction using our conventions, see \cite[\S2.4]{Wil17}.

We now consider integral structures at $p$. Let $\Psi \in S_{kk}(U_{F, 1}(\n))$ be a normalised eigenform, $E$ a finite extension of $F$ containing the Hecke eigenvalues of $\Psi$, and $\mathscr{P}$ a prime of $E$ above $p$. We write $R' = \roi_{F, (\mathscr{P})}$ for the valuation ring of $E$ at $\mathscr{P}$, and $R$ for its completion (so $R$ is a finite-rank free $\Zp$-algebra). The same construction as before gives a finite-rank free $R'$-module $V_{kk}(R')$, with a left action of $\GLt(\roi_{F, (p)})$, whose base-extension to $E$ is $V_{kk}(E)$.

If $\widehat{\roi}_{F, (p)} = \prod_{v \mid p} \roi_{F, v} \times \prod_{v \nmid p}' F_v$ denotes the ring of finite ad\`eles of $F$ integral above $p$, then the natural map
\[ \GLt(\roi_{F, (p)}) \backslash \left[ \GLt(\widehat{\roi}_{F, (p)}) \times \uhs \right] / U \longrightarrow Y_F(U), \]
is a bijection for any level $U$ (by the weak approximation theorem). Thus $V_{kk}(R')$ gives a local system of $R'$-modules on $Y_F(U)$ for any sufficiently small $U$, whose base-extension to $E$ is $V_{kk}(E)$ as previously defined.

Since every connected component of $Y_F(U)$ is non-compact, its compactly-supported cohomology (with coefficients in any locally constant sheaf) vanishes in degree 0. From the cohomology long exact sequence associated to multiplication by a non-zero integer $m$, we conclude that its degree 1 compactly-supported cohomology with coefficients in a torsion-free sheaf is torsion-free. Hence we can regard $\h^1_{\mathrm{c}}(Y_{F,1}(\n),V_{kk}(R'))$ as an $R'$-lattice in $\h^1_{\mathrm{c}}(Y_{F,1}(\n),V_{kk}(E))$, preserved by the action of the Hecke operators $(T_{\aaa})^*, (U_{\aaa})^*$.

\begin{definition}
 We let $\h^1_{\mathrm{c}}(Y_{F,1}(\n),V_{kk}(R'))[\Psi]$ be the intersection of the $\Psi$-eigenspace $\h^1_{\mathrm{c}}(Y_{F,1}(\n),V_{kk}(E))[\Psi]$ with the lattice $\h^1_{\mathrm{c}}(Y_{F,1}(\n),V_{kk}(R'))$.
\end{definition}

\begin{proposition}
 There exists a complex period $\Omega_{\Psi} \in \C^\times$, uniquely determined up to multiplication by $(R')^\times$, such that the quotient
 \[
 	\phi_{\Psi} \defeq \omega_\Psi/\Omega_\Psi
 \]
 is an $R'$-basis of $\h^1_{\mathrm{c}}(Y_{F,1}(\n),V_{kk}(R'))[\Psi]$.
\end{proposition}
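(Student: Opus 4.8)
The plan is to deduce this from the Eichler--Shimura--Harder theorem together with the integral structures already set up. The key point is that $\h^1_{\mathrm{c}}(Y_{F,1}(\n),V_{kk}(E))[\Psi]$ is a one-dimensional $E$-vector space (by the cited theorem), and $\h^1_{\mathrm{c}}(Y_{F,1}(\n),V_{kk}(R'))[\Psi]$ is by definition its intersection with the $R'$-lattice $\h^1_{\mathrm{c}}(Y_{F,1}(\n),V_{kk}(R'))$. First I would observe that this intersection is a finitely generated torsion-free $R'$-module: it is contained in the finitely generated module $\h^1_{\mathrm{c}}(Y_{F,1}(\n),V_{kk}(R'))$ (which is torsion-free by the long-exact-sequence argument recalled just before the definition), and $R'$ is a discrete valuation ring, so the submodule is free. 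Since it spans the one-dimensional space $\h^1_{\mathrm{c}}(Y_{F,1}(\n),V_{kk}(E))[\Psi]$ after inverting $p$, it has rank exactly $1$; hence it is free of rank $1$ over $R'$. Fix a generator $\phi_\Psi$ of this rank-one module.

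Next I would relate $\phi_\Psi$ to $\omega_\Psi$. Both lie in the one-dimensional $E$-vector space $\h^1_{\mathrm{c}}(Y_{F,1}(\n),V_{kk}(E))[\Psi]$ after extending scalars to $\C$ (note $\omega_\Psi$ a priori lives in the $\C$-cohomology, but the $\Psi$-eigenspace there is the base change of the $E$-eigenspace by the multiplicity-one statement in Eichler--Shimura--Harder, combined with the fact that the Hecke eigenvalues lie in $E$). Therefore there is a scalar $\Omega_\Psi \in \C^\times$ with $\omega_\Psi = \Omega_\Psi \cdot \phi_\Psi$; equivalently $\phi_\Psi = \omega_\Psi / \Omega_\Psi$. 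By construction $\phi_\Psi$ is an $R'$-basis of $\h^1_{\mathrm{c}}(Y_{F,1}(\n),V_{kk}(R'))[\Psi]$, which is the required statement. The uniqueness of $\Omega_\Psi$ up to $(R')^\times$ is then immediate: two generators of a rank-one free $R'$-module differ by a unit of $R'$, and $\Omega_\Psi$ is determined by the choice of generator.

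The only genuinely delicate point — and the one I would state carefully rather than wave at — is the compatibility between the $\C$-coefficient object $\omega_\Psi$ produced by Eichler--Shimura--Harder and the $E$-rational (hence $R'$-integral after localisation) structure on the $\Psi$-eigenspace. This rests on: (a) the Hecke eigenvalues of $\Psi$ lie in $E$, so the system of equations cutting out the $\Psi$-eigenspace has coefficients in $E$; (b) the eigenspace is one-dimensional over $\C$ and hence, by descent, the base change of a one-dimensional $E$-subspace of $\h^1_{\mathrm{c}}(Y_{F,1}(\n),V_{kk}(E))$; and (c) $\omega_\Psi$ lies in this eigenspace. Points (a) and (b) are exactly the content of the Eichler--Shimura--Harder theorem as stated (multiplicity one), so the argument is essentially bookkeeping; I expect no serious obstacle, just the need to track which field the cohomology is taken over at each stage. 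I would close by remarking that $\Omega_\Psi$ depends on $\Psi$ only through its Hecke eigensystem and the choice of lattice, which is why it is well-defined up to $(R')^\times$.
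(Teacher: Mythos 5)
Your proof is correct and matches the paper's argument almost exactly: both use Eichler--Shimura--Harder to get one-dimensionality of the $\Psi$-eigenspace with $E$-coefficients, then the DVR structure of $R'$ to conclude the integral eigenspace is free of rank $1$, then compare the chosen generator $\phi_\Psi$ with $\omega_\Psi$ inside the one-dimensional $\C$-eigenspace. The only cosmetic difference is that you spell out the $E\to\C$ compatibility in more detail, whereas the paper compresses this into the remark that compactly-supported cohomology commutes with flat base extension.
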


\begin{proof}
 Since compactly-supported cohomology commutes with flat base extension, it follows from the Eichler--Shimura--Harder theorem that $\h^1_{\mathrm{c}}(Y_{F,1}(\n),V_{kk}(E))[\Psi]$ is one-dimensional over $E$. If $\phi_\Psi$ is any basis of this space, then it is also a $\C$-basis of $\h^1_{\mathrm{c}}(Y_{F,1}(\n),V_{kk}(\C))[\Psi]$, so $\omega_{\Psi}$ must be a $\C^\times$-multiple of $\phi_\Psi$.

 As $R'$ is a discrete valuation ring with field of fractions $E$, the lattice $\h^1_{\mathrm{c}}(Y_{F,1}(\n),V_{kk}(R'))[\Psi]$ must be free of rank 1 over $R'$. We can therefore choose $\phi_\Psi$ to be a generator of this module, and this determines $\phi_\Psi$ (and hence $\Omega_{\Psi}$) uniquely up to units in $R'$.
\end{proof}

\begin{remarks}\
\begin{enumerate}[(i)]
\item Our normalisation of the period $\Omega_{\Psi}$ is a little different from that used by Hida in \cite[\S 8]{Hid94}: we have used the lattice given by $\h^1_{\mathrm{c}}$ with integral coefficients, while Hida uses the ordinary $\h^1$ with integral coefficients. Hence our $\phi_{\Psi}$ may not map to an $R'$-basis of $\h^1(Y_{F,1}(\n), V_{kk}(R'))[\Psi]$. We expect that the difference between these two $R'$-lattices ``detects'' congruences modulo $\mathscr{P}$ between $\Psi$ and Eisenstein series, but we have not checked this.

\item There is a convenient explicit presentation for $\h^1_{\mathrm{c}}(Y_{F, 1}(\n), M)$, for very general local systems $M$, using ``modular symbols'' (see \cite[Lemma 8.4]{BW17}, generalising \cite[Proposition 4.2]{AS86}). From this description the torsion-freeness of the cohomology, and its compatibility with base-extension, is immediate.
\end{enumerate}
\end{remarks}

The pullback $\phi_{\Psi}^* \defeq \jmath^*(\phi_{\Psi})$ lies in $\h^1_{\mathrm{c}}(Y^*_{F,1}(\n), V_{kk}(R'))$. It is this class which we shall use in the definition of our $p$-adic Asai $L$-function.

%
%
\section{Siegel units and weight 2 Asai--Eisenstein elements}
\label{siegel units}

\subsection{Modular units}

Let $U \subset \GLt(\widehat{\Z})$ be an open compact subgroup, with associated symmetric space $Y_\Q(U)$. In this section, we work exclusively over $\Q$, so we shall drop the subscript $\Q$ from the notation. As is well known, the manifolds $Y(U)$ are naturally the complex points of algebraic varieties defined over $\Q$.

\begin{definition}
A \emph{modular unit} on $Y(U)$ is an element of $\roi(Y(U))^\times$, that is, a regular function on $Y(U)$ with no zeros or poles. (This corresponds to a rational function on the compactification $X(U)$ whose divisor is supported on the cusps).
\end{definition}
Modular units are \emph{motivic} in the sense that there are \emph{realisations} of modular units in various cohomology theories. In particular, to a modular unit $\phi\in \roi(Y_1(N))^\times$ one can attach:
\begin{itemize}
\item its \emph{de Rham} realisation $C_{\mathrm{dR}}(\phi) \in \h^1_{\mathrm{dR}}(Y_1(N),\Q)$, which is the class of the differential form $d\log \phi = \frac{\mathrm{d}\phi}{\phi}$;
\item its \emph{Betti} realisation $C(\phi) \in \h^1(Y_1(N),\Z)$, which is the pullback along $\phi: Y_1(N)(\C) \to \C^\times$ of the generator $C$ of $\h^1(\C^\times, \Z) \cong \Z$ that pairs to $1$ with the homology class of a positively-oriented loop around 0.
\end{itemize}

These are closely related:

\begin{proposition}
The canonical comparison isomorphism
\[
	  \h^1\left(Y_1(N),\Z\right)\otimes_{\Z} \C \xrightarrow{\cong} \h^1_{\mathrm{dR}}(Y_1(N),\Q)\otimes_{\Q} \C
\]
maps $C(\phi)$ to $\tfrac{1}{2\pi i} C_{\mathrm{dR}}(\phi)$.
\end{proposition}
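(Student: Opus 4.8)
The plan is to compute both realisations explicitly on $\C^\times$ and then transport the computation to $Y_1(N)$ via the map $\phi$, using functoriality of the comparison isomorphism. First I would reduce to the universal case: since $C(\phi) = \phi^*(C)$ by definition, and $C_{\mathrm{dR}}(\phi) = \phi^*\!\left(\tfrac{\mathrm{d}z}{z}\right)$ where $z$ is the standard coordinate on $\C^\times$ (because $\phi^*(\mathrm{d}\log z) = \mathrm{d}\log\phi$), and the comparison isomorphism is compatible with pullback along the morphism $\phi \colon Y_1(N) \to \mathbb{G}_m$, it suffices to prove the corresponding identity on $\mathbb{G}_m$ itself: namely that the comparison isomorphism $\h^1(\C^\times,\Z)\otimes\C \xrightarrow{\sim} \h^1_{\mathrm{dR}}(\mathbb{G}_m,\Q)\otimes\C$ sends $C$ to $\tfrac{1}{2\pi i}\left[\tfrac{\mathrm{d}z}{z}\right]$.

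Next I would verify this universal identity by pairing against homology. The de Rham class $\left[\tfrac{\mathrm{d}z}{z}\right]$ generates $\h^1_{\mathrm{dR}}(\mathbb{G}_m,\Q)$, and under the period pairing with the Betti homology class $\gamma$ of a positively-oriented loop around $0$ one has $\int_\gamma \tfrac{\mathrm{d}z}{z} = 2\pi i$. On the other hand $C \in \h^1(\C^\times,\Z)$ was defined precisely so that $\langle C, \gamma\rangle = 1$. The comparison isomorphism is characterised by compatibility of the natural pairings between Betti (co)homology and the period integration pairing in de Rham cohomology; hence the image of $C$ in $\h^1_{\mathrm{dR}}\otimes\C$ is the unique class pairing to $1$ with $\gamma$, which is $\tfrac{1}{2\pi i}\left[\tfrac{\mathrm{d}z}{z}\right]$ since that class pairs to $\tfrac{1}{2\pi i}\cdot 2\pi i = 1$. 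As $\gamma$ generates $\h_1(\C^\times,\Z)$, this determines the image of $C$ completely, proving the universal statement; pulling back along $\phi$ then gives the claim for $Y_1(N)$.

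The only genuine subtlety — and the step I would treat most carefully — is fixing the normalisation of the comparison isomorphism consistently with the orientation and $2\pi i$ conventions, i.e.\ making sure the sign and the power of $2\pi i$ come out right. Concretely this means being explicit that the comparison map is the one induced by integrating differential forms against singular chains (rather than its inverse, or a Tate twist of it), and that ``positively oriented loop around $0$'' is the one along which $\int \tfrac{\mathrm{d}z}{z} = +2\pi i$ rather than $-2\pi i$. Once these conventions are pinned down the computation is immediate; there is no analytic or geometric difficulty, only bookkeeping of normalisations. I would therefore state the conventions at the start of the proof and then let the one-line period computation on $\mathbb{G}_m$ do the rest, invoking functoriality of the comparison isomorphism under the morphism $\phi$ to descend to $Y_1(N)$.
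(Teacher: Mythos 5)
Your proposal is correct and follows essentially the same route as the paper's proof: reduce to the universal case on $\mathbb{G}_m$ via functoriality of the comparison isomorphism under pullback along $\phi$, then compute $\int_\gamma \tfrac{\mathrm{d}z}{z} = 2\pi i$ against the generator $\gamma$ of $\h_1(\C^\times,\Z)$ while noting $C$ was normalised to pair to $1$ with $\gamma$. The paper's proof is slightly more terse but the content is identical.
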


\begin{proof}
 The comparison isomorphism identifies the de Rham cohomology class of an $n$-form $\omega$ with the Betti cohomology class mapping an $n$-simplex $\delta$ to $\int_{\delta} \omega$. By construction, this isomorphism is compatible with the pullback maps in the two cohomology theories attached to a smooth map of manifolds.

 In our case, the modular unit $\phi$ gives a map $Y_1(N)(\C) \to \C^\times$. By definition, $C(\phi)$ is the pullback by $\phi$ of the generator $C$ of $\h^1(\C^\times, \Z)$; and $C_{\mathrm{dR}}(\phi)$ is the pullback of the class of the differential $\tfrac{dT}{T}$, where $T$ is the coordinate on $\C$. If $\delta$ denotes a positively-oriented loop around 0, then $\int_{\delta} \tfrac{\mathrm{d}T}{T} = 2\pi i$. Since $C$ pairs to $1$ with $\delta$ (by definition), the result follows.
\end{proof}

\begin{remark}
 It is conventional to ``normalize away'' the factor of $2\pi i$ by defining $C(\phi)$ as an element of $\h^1(Y_1(N), \Z(1))$, where $\Z(1) = 2\pi i\cdot \Z \subset \C$. However, this formalism does not work well when considering the cohomology of non-algebraic manifolds, so we shall not use it here.
\end{remark}

\subsection{Eisenstein series}
\label{eisenstein series}
The de Rham realisations of modular units give rise to weight 2 Eisenstein series in the de Rham cohomology. In the next section, we'll exhibit a canonical system of modular units -- the \emph{Siegel units} -- whose de Rham realisations can be written down very explicitly in terms of the following Eisenstein series.

\begin{definition}[{cf.~\cite[\S3]{Kat04}}]
 Let $\tau \in \uhp$, $k$ an integer $\ge 2$, and $\beta \in \Q/\Z$, with $\beta \ne 0$ if $k = 2$. Define
 \[F^{(k)}_{\beta}(\tau) \defeq \frac{(k-1)!}{(-2\pi i)^k}\sideset{}{'}\sum_{(m,n)\in\Z^2}\frac{e^{2\pi i \beta m}}{(m\tau + n)^k},\]
 where the prime denotes that the term $(m,n) = (0,0)$ is omitted. This is a modular form of weight $k$ and level $\Gamma_1(N)$, for any $N$ such that $N\beta = 0$.
\end{definition}

(Kato defines a slightly more general class of Eisenstein series $F^{(k)}_{\alpha,\beta}$; we shall only use the case $\alpha = 0$, so we drop it from the notation.)

For the $L$-value calculations in the appendix, we shall need to use the fact that this series is a specialisation of a real-analytic family. For $\Re(s) \ge 1 - \tfrac{k}{2}$, we define
		\begin{equation}\label{eqn:Eis}
			E^{(k)}_{\beta}(\tau, s) \defeq \frac{\Gamma(s+k)}{(-2\pi i)^k\pi^{s}}\sideset{}{'}\sum_{(m,n)\in\Z^2}\frac{\mathrm{Im}(\tau)^s}{(m\tau + n + \beta)^k|m\tau + n + \beta|^{2s}},
		\end{equation}
where the prime denotes that the term $(m,n) = (0,0)$ is omitted if $\beta = 0$ (but included otherwise). This has analytic continuation to all $s\in \C$, and we have
\[ F^{(k)}_{\beta}(\tau) = E^{(k)}_{\beta}(\tau, 1-k).\]
(See e.g.\ \cite[4.2.2(iv)]{LLZ14}.)

\subsection{Siegel units}

\begin{definition}\mbox{~}
 For $N \ge 1$, and $c > 1$ an integer coprime to $6N$, let
 \[ \subc g_N \in \roi(Y_1(N))^\times \]
 be Kato's Siegel unit (the unit denoted by $\subc g_{0,1/N}$ in the notation of \cite[\S 1]{Kat04}).
\end{definition}

Slightly abusively, we shall use the same symbol $\subc g_N$ for the pullback of this unit to $Y(M, N)$, for any $M \ge 1$ (we shall only need this when $M \mid N$).

As in \emph{op.cit.}, we note that if $c, d$ are two integers that are both $>1$ and coprime to $6N$, then we have the identity
\begin{equation}
 \label{eq:cdsymmetry}
 (d^2 - \langle d \rangle)\subc g_N = (c^2 - \langle c \rangle){}_d g_N.
\end{equation}
It follows that the dependence on $c$ may be removed after extending scalars to $\Q$: there is an element $g_N \in \roi(Y_1(N))^\times \otimes \Q$ such that $\subc g_N = (c^2 - \langle c \rangle) \cdot g_N$ for any choice of $c$.

\begin{proposition} \
 \label{siegel unit properties}
 \begin{itemize}
  \item[(i)] The Siegel units are \emph{norm-compatible}, in the sense that if $N'|N$ and $N$ and $N'$ have the same prime divisors, then under the natural map
  \[\mathrm{pr}: Y(M,N) \longrightarrow Y(M,N')\]
  we have
  \[(\mathrm{pr})_*(\subc g_N) = \subc g_{N'}.\]
  \item[(ii)] The de Rham realisation of $g_N$ is the Eisenstein series
  \[
   d\log( g_N)(\tau) = -2\pi i \, F^{(2)}_{1/N}(\tau)\, \mathrm{d}\tau.
  \]
 \end{itemize}
\end{proposition}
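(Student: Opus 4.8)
The plan is to prove the two assertions separately, both by reducing to standard properties of Siegel units as established in \cite{Kat04}, transported through the realisation functors.

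\emph{Part (i): norm-compatibility.} The cleanest route is to cite the distribution relation for Siegel units directly. Kato's units $\subc g_{0,1/N}$ form a compatible family under the pushforward (norm) maps for the tower of modular curves $Y_1(N)$, provided one only changes the level at primes already dividing $N$ — this is precisely the content of \cite[Proposition 1.3 or 2.3]{Kat04} (the norm-compatibility of Siegel units is one of their defining features). First I would recall that the map $\mathrm{pr}: Y(M,N) \to Y(M,N')$ factors as a pullback of the corresponding degeneracy map $Y_1(N) \to Y_1(N')$ along the forgetful map $Y(M,N) \to Y_1(N)$, and that $\subc g_N$ on $Y(M,N)$ is by definition (our abuse of notation) the pullback of $\subc g_N$ on $Y_1(N)$. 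Since pushforward along $\mathrm{pr}$ is compatible with this base-change square (proper base change, or simply an explicit check on connected components, which are quotients of $\uhp$), the identity $(\mathrm{pr})_*(\subc g_N) = \subc g_{N'}$ on $Y(M,N)$ follows from the corresponding identity on $Y_1(N)$. The hypothesis that $N$ and $N'$ have the same prime divisors is exactly what is needed for the simple ``denominator-free'' form of the distribution relation (no extra Euler-type correction factors appear).

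\emph{Part (ii): the de Rham realisation.} Here the key input is the explicit formula for $d\log$ of a Siegel unit in terms of Eisenstein series. By definition $C_{\mathrm{dR}}(g_N) = d\log(g_N)$, and Kato computes (see \cite[\S3, around (3.6)--(3.11)]{Kat04}, or \cite[4.2.3]{LLZ14} for a statement in this normalisation) that the logarithmic derivative of the Siegel unit $g_{0,1/N}$ is, up to the explicit constant, the weight-2 Eisenstein series $F^{(2)}_{1/N}$. Concretely, one uses the product expansion of the Siegel unit (a theta-like product over $q$-powers), takes $d\log$, and recognises the resulting $q$-expansion as that of $-2\pi i\, F^{(2)}_{1/N}(\tau)\,\mathrm{d}\tau$; the parameter $\beta = 1/N$ on the Eisenstein side matches the parameter $1/N$ in the Siegel unit, and the normalising constant $(k-1)!/(-2\pi i)^k = 1/(2\pi i)^2$ for $k=2$ combines with the $-2\pi i$ from $\mathrm{d}q/q = 2\pi i\,\mathrm{d}\tau$ to give the stated $-2\pi i$ prefactor. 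Since $g_N$ is a $\Q$-rational modular unit, its $d\log$ lies in $\h^1_{\mathrm{dR}}(Y_1(N),\Q)$, consistent with the statement.

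I expect \emph{Part (ii)} to be the main obstacle — not because it is deep, but because it requires pinning down all the normalisation conventions: the precise definition of the Siegel unit $\subc g_{0,1/N}$ in \cite{Kat04}, the sign and power of $2\pi i$ in the definition of $F^{(2)}_{\beta}$ above, and the relation $\subc g_N = (c^2 - \langle c\rangle) g_N$ used to pass to the ($c$-free, $\Q$-coefficient) element $g_N$. The safest approach is to quote the computation of $d\log$ of the Siegel unit verbatim from Kato and then match it against our Definition of $F^{(2)}_{1/N}$ by comparing leading $q$-expansion coefficients; the diamond-operator relation \eqref{eq:cdsymmetry} guarantees the answer is independent of the auxiliary $c$. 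Part (i) should be essentially a citation together with a one-line base-change remark.
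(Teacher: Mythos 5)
Your proposal is correct and takes the same route as the paper: both parts are established by citation to Kato (the paper cites \cite{Kat04} \S 2.11 for norm-compatibility and Proposition 3.11(2) \emph{op.cit.}\ for the $d\log$ formula, which is what your references to \S 3 of Kato amount to). The extra remarks on base change and $q$-expansions are sound but not needed beyond what the citation already provides.
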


\begin{proof}
 The first part is proved in \cite{Kat04}, Section 2.11. The second part is Proposition 3.11(2) \emph{op.cit.}.
\end{proof}

One important use of Siegel units comes in the construction of \emph{Euler systems}; for example, see \cite{Kat04},\cite{LLZ14}, and \cite{KLZ17}. The basic method in each of these cases is similar; one takes cohomology classes attached to Siegel units under the realisation maps and pushes them forward to a different symmetric space, then exploits the norm compatibility to prove norm relations for these cohomology classes. We will do something similar in the Betti cohomology. In particular, we make the following definition:

\begin{definition}
 Let $\subc C_{N} \defeq C(\subc g_{N}) \in \h^1(Y_1(N),\Z)$ be the Betti realisation of $\subc g_{N}$.
\end{definition}

From Proposition \ref{siegel unit properties}(i), we see that if $p \mid N$, the classes ${}_c C_{Np^r}$ for $r \ge 0$ are compatible under push-forward, and define a class
\[ \subc C_{Np^\infty} \in \varprojlim_r \h^1(Y_1(Np^r),\Z).\]

 \begin{lemma}
  \label{lemma:siegel-unit-conjugation}
  If $\rho$ denotes the involution of $Y_1(N)$ corresponding to $x + iy \mapsto -x+iy$ on $\uhp$, then $\rho^*\left({}_c C_N\right) = -{}_c C_N$.
 \end{lemma}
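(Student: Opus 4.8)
The statement is that the involution $\rho$ of $Y_1(N)$ induced by $x+iy\mapsto -x+iy$ on $\uhp$ acts by $-1$ on the Betti Siegel class ${}_c C_N = C({}_c g_N)$. The natural approach is to compute the pullback $\rho^*$ on the source of the class, tracing through the definition $C(\phi) = \phi^*(C)$ where $C$ generates $\h^1(\C^\times,\Z)$. Thus $\rho^*({}_c C_N) = \rho^*(\phi^* C) = (\phi\circ\rho)^* C$ where $\phi = {}_c g_N : Y_1(N)(\C)\to\C^\times$, so everything reduces to understanding the composite $\phi\circ\rho$ as a map to $\C^\times$.

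First I would identify $\rho$ with complex conjugation on $Y_1(N)(\C)$ in a suitable model: the involution $x+iy\mapsto-x+iy$ of $\uhp$ descends to the anti-holomorphic involution of the Riemann surface $Y_1(N)(\C)$ that corresponds, under the canonical model of $Y_1(N)$ over $\Q$, to the action of $\operatorname{Gal}(\C/\R)$. Next I would invoke the fact that the Siegel unit ${}_c g_N$ is defined over $\Q$ (indeed over $\Z[1/N]$, as in \cite[\S1]{Kat04}), i.e.\ it is a regular function on the $\Q$-variety $Y_1(N)$. For a function defined over $\R$, complex conjugation on the source corresponds to complex conjugation on the target $\C^\times$, so $\phi\circ\rho = \overline{(\cdot)}\circ\phi$, i.e.\ $\phi(\rho(z)) = \overline{\phi(z)}$. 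Therefore $\rho^*({}_c C_N) = \phi^*\big(\overline{(\cdot)}^* C\big)$, and it remains to check that complex conjugation $\overline{(\cdot)}:\C^\times\to\C^\times$ acts by $-1$ on $\h^1(\C^\times,\Z)\cong\Z\cdot C$. This is immediate: conjugation reverses the orientation of a positively-oriented loop around $0$, hence acts by $-1$ on $\h_1$, hence by $-1$ on the dual generator $C$. Combining, $\rho^*({}_c C_N) = -\phi^*(C) = -{}_c C_N$.

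The one point requiring a little care—and the step I expect to be the main obstacle, though it is minor—is justifying that $\rho$, defined analytically on $\uhp$, really does agree with the algebraic complex conjugation on the canonical $\Q$-model of $Y_1(N)$, so that the rationality of ${}_c g_N$ can be brought to bear. One can sidestep the model-theoretic subtlety by arguing directly with $q$-expansions: the Siegel unit ${}_c g_N$ has a $q$-expansion with rational (indeed integral after inverting $N$) coefficients in the Tate parametrisation, and the involution $x+iy\mapsto -x+iy$ sends $q = e^{2\pi i\tau}$ to $\overline q$; hence ${}_c g_N(\rho(\tau)) = \overline{{}_c g_N(\tau)}$ directly from the $q$-expansion (the power series $\sum a_n q^n$ with $a_n\in\R$ satisfies $f(\bar q) = \overline{f(q)}$). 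This gives $\phi\circ\rho = \overline{(\cdot)}\circ\phi$ without reference to the algebraic model, and the rest of the argument goes through verbatim. Either route yields the claim.
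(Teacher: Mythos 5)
Your proof is correct and takes essentially the same route as the paper's: identify $\rho$ with complex conjugation on the canonical $\Q$-model, use the $\Q$-rationality of the Siegel unit to intertwine $\rho$ with complex conjugation on $\C^\times$, and then observe that conjugation acts by $-1$ on $\h^1(\C^\times,\Z)$. The alternative $q$-expansion argument you sketch is a reasonable way to sidestep the model-theoretic point, but the paper simply invokes the canonical model directly.
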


 \begin{proof}
  The variety $Y_1(N)$ has a canonical model over $\Q$ for which $\rho$ corresponds to complex conjugation on $Y_1(N)(\C)$. Since the Siegel units are defined over $\Q$ in this model, they intertwine the action of $\rho$ on $Y_1(N)(\C)$ with complex conjugation on $\C^\times$. So it suffices to check that complex conjugation acts as $-1$ on $\h^1(\C^\times, \Z)$, which is clear.
 \end{proof}

\subsection{Asai--Eisenstein elements in weight 2}

Now let $F$ be an imaginary quadratic field, and $\n$ an ideal of $\roi_F$ divisible by some integer $\ge 4$. Recall that we have
\[ Y_{F, 1}^*(\n) = \Gamma_{F, 1}^*(\n) \backslash \uhs, \]
and that we showed in Proposition \ref{prop:goodinclusion} that the natural map
\[\iota : Y_{\Q,1}(N) \hookrightarrow Y_{F,1}^*(\n)\]
is a closed immersion. We also need the following map:

\begin{definition}
 \label{def:kappaam}
Let $m\ge 1$ and $a \in \roi_F$. Consider the map
\[ \kappa_{a/m}: Y_{F, 1}^*(m^2 \n) \to Y_{F, 1}^*(\n) \]
given by the left action of $\smallmatrd{1}{a/m}{0}{1} \in \SLt(F)$ on $\uhs$. This is well-defined\footnote{
Note that $\kappa_{a/m}$ is \emph{not} in general well-defined on $Y_{F,1}(m^2\n)$, since \[\smallmatrd{1}{a/m}{0}{1}\smallmatrd{-1}{0}{0}{1}\smallmatrd{1}{-a/m}{0}{1} = \smallmatrd{-1}{2a/m}{0}{1},\] which is not in $\Gamma_{F,1}(\n)$ if $2a \notin m\roi_F$.}, since it is easy to see that
\[\matrd{1}{a/m}{0}{1} \Gamma_{F, 1}^*(m^2\n) \matrd{1}{-a/m}{0}{1} \subset \Gamma_{F, 1}^*(\n), \]
and it depends only on the class of $a$ modulo $m\roi_F$.
\end{definition}

The elements we care about are the following. Assume that $\n$ is divisible by some integer $q \ge 4$, as above. Then we have maps
\[ Y_{\Q, 1}(m^2 N) \labelrightarrow{\iota} Y_{F, 1}^*(m^2 \n)  \labelrightarrow{\kappa_{a/m}} Y_{F, 1}^*(\n). \]
Moreover, writing $\h_*^{\mathrm{BM}}$ for Borel--Moore homology (homology with non-compact supports), there are isomorphisms
\begin{equation}
 \label{eq:BMhomology}
 \h^1(Y_{\Q, 1}(m^2 N), \Z) \cong \h_1^{\mathrm{BM}}(Y_{\Q, 1}(m^2 N), \Z), \quad \h^2(Y_{F, 1}(m^2 \n), \Z) \cong \h_1^{\mathrm{BM}}(Y_{F, 1}(m^2 \n), \Z),
\end{equation}
given by cap-product with the fundamental classes in $\h_2^{\mathrm{BM}}(Y_{\Q, 1}(m^2 N), \Z)$ and $\h_3^{\mathrm{BM}}(Y^*_{F, 1}(m^2 \n), \Z)$ respectively. Since Borel--Moore homology is covariantly functorial for proper maps, we can therefore define a pushforward map $\iota_*: \h^1(Y_{\Q, 1}(m^2 N), \Z) \to \h^2(Y^*_{F, 1}(m^2 \n), \Z)$.

\begin{definition}
For $a \in \roi_F/m\roi_F$, $m \ge 1$, and $c > 1$ coprime to $6 m N$, define
\[ \subc\Xi_{m,\n,a} \in \h^2\left(Y_{F,1}^*(\n),\Z\right) \]
to be the image of $\subc C_{m^2N}$ under the map $(\kappa_{a/m})_* \circ \iota_*$.
\end{definition}

\begin{proposition}
 \label{prop:xi-sign}
 We have $\smallmatrd{-1}{}{}{1}^* \cdot \subc\Xi_{m,\n,a} = \subc\Xi_{m,\n,-a}$.
\end{proposition}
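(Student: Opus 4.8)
The plan is to trace the involution $\sigma \defeq \smallmatrd{-1}{}{}{1}$ through the two maps $\iota$ and $\kappa_{a/m}$ defining $\subc\Xi_{m,\n,a}$, and then to invoke Lemma \ref{lemma:siegel-unit-conjugation}. Conjugation by $\sigma$ sends $\smallmatrd{a}{b}{c}{d}$ to $\smallmatrd{a}{-b}{-c}{d}$, so $\sigma$ normalises each of $\Gamma_{\Q,1}(m^2 N)$, $\Gamma_{F,1}^*(m^2\n)$ and $\Gamma_{F,1}^*(\n)$, and hence induces an involution (still written $\sigma$) of each of the manifolds $Y_{\Q,1}(m^2 N)$, $Y_{F,1}^*(m^2\n)$ and $Y_{F,1}^*(\n)$; on $Y_{\Q,1}(m^2 N)$ this is precisely the involution $\rho$ of Lemma \ref{lemma:siegel-unit-conjugation}. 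Since the embedding $\uhp \hookrightarrow \uhs$ is $\GLt(\R)$-equivariant one has $\sigma \circ \iota = \iota \circ \sigma$; and since $\smallmatrd{-1}{}{}{1}\smallmatrd{1}{a/m}{0}{1} = \smallmatrd{1}{-a/m}{0}{1}\smallmatrd{-1}{}{}{1}$ one has $\sigma \circ \kappa_{a/m} = \kappa_{-a/m} \circ \sigma$. Combining these gives the identity of maps $Y_{\Q,1}(m^2 N) \to Y_{F,1}^*(\n)$
\[ \sigma \circ \kappa_{a/m} \circ \iota = \kappa_{-a/m} \circ \iota \circ \sigma. \]

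Next I would pass to Borel--Moore homology, where all the maps in sight are proper ($\iota$ a closed immersion, $\kappa_{a/m}$ a finite covering, $\sigma$ a diffeomorphism), so that pushforwards compose functorially. Under the isomorphism of \eqref{eq:BMhomology} for $Y_{\Q,1}(m^2 N)$, together with the analogous isomorphism $\h^2(Y_{F,1}^*(\n),\Z) \cong \h_1^{\mathrm{BM}}(Y_{F,1}^*(\n),\Z)$, the pullback $\sigma^*$ on $\h^2(Y_{F,1}^*(\n),\Z)$ corresponds to the Borel--Moore pushforward $\sigma_*$: indeed $\sigma$ is an \emph{orientation-preserving} involution of the $3$-manifold $Y_{F,1}^*(\n)$, as $(z,t)\mapsto(-z,t)$ has Jacobian determinant $+1$ on $\uhs$, so $\sigma_*$ fixes the fundamental class. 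Writing $\xi \in \h_1^{\mathrm{BM}}(Y_{\Q,1}(m^2 N),\Z)$ for the image of $\subc C_{m^2 N}$, the class $\subc\Xi_{m,\n,a}$ corresponds, under these isomorphisms, to $(\kappa_{a/m})_* \iota_*(\xi)$, and the displayed identity then shows that $\sigma^* \subc\Xi_{m,\n,a}$ is the image of $(\kappa_{-a/m})_* \iota_* (\sigma_* \xi)$.

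The final step would be to verify $\sigma_* \xi = \xi$, and this is where Lemma \ref{lemma:siegel-unit-conjugation} enters. On the $2$-manifold $Y_{\Q,1}(m^2 N)$ the involution $\sigma = \rho$ is instead \emph{orientation-reversing} (the map $x+iy \mapsto -x+iy$ has Jacobian determinant $-1$ on $\uhp$), so $\sigma_*$ negates the fundamental class; applying the projection formula to $\sigma$ gives $\sigma_* \xi = (\sigma^* \subc C_{m^2 N}) \cap \sigma_*[Y_{\Q,1}(m^2 N)]$, and using $\sigma^* \subc C_{m^2 N} = -\subc C_{m^2 N}$ from Lemma \ref{lemma:siegel-unit-conjugation}, the two minus signs cancel and $\sigma_* \xi = \xi$. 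Hence $\sigma^* \subc\Xi_{m,\n,a}$ is the image of $(\kappa_{-a/m})_* \iota_*(\xi)$, which is precisely $\subc\Xi_{m,\n,-a}$. I expect the only genuine subtlety to be exactly this sign bookkeeping: the identity carries no sign because the orientation-reversal of $\sigma$ on $Y_{\Q,1}(m^2 N)$ cancels the minus sign of Lemma \ref{lemma:siegel-unit-conjugation}, whereas $\sigma$ preserves the orientation of $Y_{F,1}^*(\n)$; the remaining verifications — the equivariance statement, the matrix identity, and the properness claims — are routine.
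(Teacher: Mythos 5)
Your proof is correct and follows essentially the same route as the paper's: it identifies the commutation relation between $\smallmatrd{-1}{}{}{1}$ and $\kappa_{a/m}\circ\iota$, invokes Lemma~\ref{lemma:siegel-unit-conjugation}, and handles the sign by comparing the orientation behaviour of the involution on the $3$-manifold (preserving) against that on the modular curve (reversing). The only cosmetic difference is that the paper packages the orientation bookkeeping into the single relation $\smallmatrd{-1}{}{}{1}^*\circ\iota_* = -\iota_*\circ\rho^*$, whereas you carry it through the projection formula in Borel--Moore homology; the two minus signs cancel in the same way in both presentations.
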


\begin{proof}
 It is clear that $\smallmatrd{-1}{}{}{1} \circ \kappa_{a/m} = \kappa_{-a/m} \circ \smallmatrd{-1}{}{}{1}$ as maps $Y_{F, 1}^*(m^2\n) \to Y_{F, 1}^*(\n)$. Moreover, the action of $\smallmatrd{-1}{}{}{1}$ on $Y_{F, 1}^*(m^2 \n)$ preserves the image of $Y_{\Q,1}(m^2 N)$, and the involution of $Y_{\Q,1}(m^2 N)$ it induces is the involution $\rho$ of Lemma \ref{lemma:siegel-unit-conjugation}, which acts as $-1$ on ${}_c C_{m^2 N}$. Finally, we have $\smallmatrd{-1}{}{}{1}^* \circ \iota_* = -\iota_* \circ \rho^*$, because $\smallmatrd{-1}{}{}{1}$ preserves the orientation of $Y^*_{F, 1}(\n)$ and thus acts as $+1$ on the fundamental class in $\h_3^{\mathrm{BM}}$, but $\rho$ reverses the orientation of $Y_{\Q, 1}(m^2N)$ and thus acts as $-1$ on the fundamental class.
\end{proof}

\begin{definition}
Define
\[\subc \Phi_{\n,a}^r \in \h^2\Big(Y_{F, 1}^*(\n),\Z\Big) \otimes_{\Z}\Zp[(\Z/p^r)^\times]\]
by
\[\subc\Phi_{\n,a}^r \defeq \sum_{t \in (\Z/p^r)^\times} \subc\Xi_{p^r,\n,at}\otimes[t].\]
\end{definition}

\begin{lemma}\label{lemma:level-compat}
 If $\n \mid \n'$ are two ideals of $\roi_F$ with the same prime factors, then pushforward along the map $Y_{F, 1}(\n') \to Y_{F, 1}(\n)$ sends $\subc \Phi_{\n',a}^r$ to $\subc \Phi_{\n,a}^r$ (for any valid choices of $c$, $a$, $r$).
\end{lemma}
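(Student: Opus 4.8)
The plan is to reduce the assertion to a compatibility for the individual classes $\subc\Xi_{p^r,\n,b}$, and then to obtain that from the norm-compatibility of the Siegel units (Proposition \ref{siegel unit properties}(i)) by chasing two commuting squares of locally symmetric spaces.

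First I would fix notation: write $m = p^r$, $N = \n\cap\Z$ and $N' = \n'\cap\Z$. Since $\n$ and $\n'$ have the same prime divisors, so do $N$ and $N'$; as $p \mid N$ it follows that $m^2 N$ and $m^2 N'$ also have the same prime divisors, while $m^2 N \mid m^2 N'$. Write $\mathrm{pr}$ for any of the natural degeneracy maps lowering the level from $\n'$ to $\n$, from $m^2\n'$ to $m^2\n$, or from $m^2 N'$ to $m^2 N$. These maps, like $\iota$ and $\kappa_{b/m}$, are proper (indeed finite), and so induce pushforwards on Borel--Moore homology, hence --- via the fundamental classes as in \eqref{eq:BMhomology} --- on cohomology. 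Since this pushforward is $\Zp[(\Z/p^r)^\times]$-linear in the cohomology variable, the definition of $\subc\Phi_{\n,a}^r$ reduces the lemma to the single identity
\[ \mathrm{pr}_*\bigl(\subc\Xi_{m,\n',b}\bigr) = \subc\Xi_{m,\n,b} \quad\text{in } \h^2(Y_{F,1}^*(\n),\Z), \]
for each $b \in \roi_F/m\roi_F$ (note that any $c$ admissible at level $\n'$, i.e.\ coprime to $6mN'$, is a fortiori admissible at level $\n$).

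The heart of the matter is then to check that the two squares obtained by adjoining the degeneracy maps $\mathrm{pr}$ to
\[ Y_{\Q,1}(m^2N') \labelrightarrow{\iota} Y_{F,1}^*(m^2\n') \qquad\text{and}\qquad Y_{F,1}^*(m^2\n') \labelrightarrow{\kappa_{b/m}} Y_{F,1}^*(\n') \]
both commute. This holds because $\iota$ (induced by $\GLt\hookrightarrow G^*$ together with $\uhp\hookrightarrow\uhs$) and $\kappa_{b/m}$ (induced by left translation by $\smallmatrd{1}{b/m}{0}{1}$ on $\uhs$) are given by level-independent recipes on $\GLt(\A_F^f)\times\uhs$, whereas $\mathrm{pr}$ merely shrinks the level subgroup. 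The one point requiring comment is that $\iota$ is compatible with the degeneracy maps at all, i.e.\ that $\GLt\hookrightarrow G^*$ carries $U_{\Q,1}(m^2 N')$ into $U_{F,1}^*(m^2\n')$; this is because $N' \in \n'$, so $N'\roi_F \subseteq \n'$. Granting the commutativity, the rest is formal: functoriality of Borel--Moore pushforward for proper maps (and naturality of the identifications \eqref{eq:BMhomology}) lets me commute $\mathrm{pr}_*$ past $(\kappa_{b/m})_*$ using the second square, and then past $\iota_*$ using the first, giving $\mathrm{pr}_*\bigl(\subc\Xi_{m,\n',b}\bigr) = (\kappa_{b/m})_* \iota_*\, \mathrm{pr}_*\bigl(\subc C_{m^2 N'}\bigr)$. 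Finally, Proposition \ref{siegel unit properties}(i) gives $\mathrm{pr}_*(\subc g_{m^2N'}) = \subc g_{m^2N}$ --- the hypothesis on common prime divisors is exactly what this needs --- and applying the Betti realisation, which commutes with pushforward (as already used in defining $\subc C_{Np^\infty}$ after Proposition \ref{siegel unit properties}), yields $\mathrm{pr}_*(\subc C_{m^2 N'}) = \subc C_{m^2 N}$. Substituting this in identifies the right-hand side with $(\kappa_{b/m})_*\iota_*(\subc C_{m^2N}) = \subc\Xi_{m,\n,b}$, as required.

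I expect the only genuinely load-bearing step to be the commutativity of the $\iota$-square, which reduces to the inclusion $N'\roi_F\subseteq\n'$; the remaining work is bookkeeping to see that the Gysin map $\iota_*$ and the covering transfers $(\kappa_{b/m})_*$ and $\mathrm{pr}_*$ are all incarnations of one functorial Borel--Moore pushforward, so that the sliding manipulations above are legitimate.
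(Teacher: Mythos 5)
Your argument is correct and is exactly the expansion of the paper's one-line proof ("immediate from the norm-compatibility of the Siegel units"): you reduce to the compatibility $\mathrm{pr}_*(\subc\Xi_{m,\n',b}) = \subc\Xi_{m,\n,b}$, slide $\mathrm{pr}_*$ past $(\kappa_{b/m})_*$ and $\iota_*$ via the two commuting level-change squares, and finish with Proposition \ref{siegel unit properties}(i). The only cosmetic imprecision is the parenthetical "(indeed finite)" applied uniformly to $\iota$ as well as the covering maps — $\iota$ is a proper closed immersion rather than a finite covering — but only properness is needed for Borel--Moore functoriality, so nothing is affected.
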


\begin{proof}
 This is immediate from the norm-compatibility of the Siegel units; compare \cite[Theorem 3.1.2]{LLZ14}.
\end{proof}

We now come to one of the key theorems of this paper, which shows that if $m = p^r$ with $r$ varying, then the above elements fit together $p$-adically into a compatible family. We now impose the assumption that $\n$ is divisible by all primes $v \mid p$ of $\roi_F$.

\begin{theorem}\label{norm relation}
Let $r\ge 1$, let $a$ be a generator of $\roi_F/(p\roi_F+\Z)$, and let
\[\pi_{r+1}: \h^2(Y_{F, 1}^*(\n),\Z) \otimes_{\Z}\Zp[(\Z/p^{r+1})^\times] \longrightarrow \h^2(Y_{F, 1}^*(\n),\Z) \otimes_{\Z}\Zp[(\Z/p^r)^\times]\]
denote the map that is the identity on the first component and the natural quotient map on the second component. Then we have
\[\pi_{r+1}( \subc\Phi_{\n,a}^{r+1}) = (U_p)_* \cdot \subc\Phi_{\n,a}^r,\]
where the Hecke operator $(U_p)_*$ acts via its action on $\h^2(Y_{F, 1}^*(\n),\Z)$. Similarly, when $r = 0$ we have
\[ \pi_{1}( \subc\Phi_{\n,a}^{1}) = \left( (U_p)_* - 1 \right) \cdot \subc\Phi_{\n,a}^0.\]
\end{theorem}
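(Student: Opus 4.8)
The plan is to follow the template of the Euler-system norm-relation computations of \cite[\S3]{LLZ14} and \cite[\S\S4--5]{LLZ16}: first reduce the statement to an identity between correspondences on the symmetric spaces involved, and then feed in the norm-compatibility of the Siegel units (Proposition~\ref{siegel unit properties}(i)).

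\emph{Reduction.} Since $\Zp[(\Z/p^r)^\times]$ is free over $\Zp$ on the group elements $[s]$, it suffices to compare the coefficient of $[s]$ on each side, for each $s\in(\Z/p^r)^\times$. When $r\ge1$ the fibre of $(\Z/p^{r+1})^\times\to(\Z/p^r)^\times$ over $s$ is represented by the $p$ integers $s, s+p^r,\dots,s+(p-1)p^r$, all units, so the claim reduces to the geometric identity
\[
 \sum_{v\in\Z/p}\subc\Xi_{p^{r+1},\,\n,\,a(s+vp^r)} \;=\; (U_p)_*\,\subc\Xi_{p^r,\,\n,\,as}
 \qquad\bigl(s\in(\Z/p^r)^\times\bigr).
\]
For $r=0$ the same reduction applies with $s$ the unique (trivial) class, except that the lift $v=0$ now represents the \emph{non-unit} residue $t\equiv0$, which is not summed over in $\subc\Phi_{\n,a}^1$ and must be removed; this removed term is $\subc\Xi_{p,\n,0}$, and one checks that $\subc\Xi_{p,\n,0}=\subc\Xi_{1,\n,0}=\subc\Phi_{\n,a}^0$ (because $\kappa_{0/p}\circ\iota$ at level $p^2\n$ is the composite of $\iota$ at level $\n$ with the degeneracy map $Y_{\Q,1}(p^2N)\to Y_{\Q,1}(N)$, and $\mathrm{pr}_*\,\subc C_{p^2N}=\subc C_N$ by Proposition~\ref{siegel unit properties}(i)). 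Subtracting this term from the $v$-sum produces the correction $-\subc\Phi_{\n,a}^0$, which is exactly the discrepancy between the two formulae in the statement.

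\emph{The moving lemma, and the main obstacle.} To prove the displayed identity I would unwind $(U_p)_*=(\pi_2)_*\circ(\pi_1)^*$ through the two degeneracy maps $\pi_1,\pi_2$ defining it, and then use proper base change to commute $(\pi_1)^*$ past the pushforward $(\kappa_{as/p^r}\circ\iota)_*$: concretely, form the fibre product $W$ of $\pi_1$ with $\kappa_{as/p^r}\circ\iota$, which — being an \'etale cover of the modular curve $Y_{\Q,1}(p^{2r}N)$ — is a disjoint union of modular-curve-type manifolds, so that $(U_p)_*\,\subc\Xi_{p^r,\n,as}$ becomes the pushforward along $\pi_2\circ\tilde\iota\colon W\to Y_{F,1}^*(\n)$ of the pullback of $\subc C_{p^{2r}N}$ to $W$. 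The heart of the matter is the identification of the components of $W$, the maps $\pi_2$ induces on them, and the resulting twisting parameters: one must show that after composing with $(\kappa_{as/p^r})_*$ this is precisely the sum over $v\in\Z/p$ of the maps $\kappa_{a(s+vp^r)/p^{r+1}}\circ\iota$ out of $Y_{\Q,1}(p^{2(r+1)}N)$, each precomposed with the appropriate degeneracy map between modular curves. Although $\pi_1$ has degree $p^2$ (the cardinality of $\roi_F/p\roi_F$), only the ``rational'' part of the fibre product contributes in the required way — a computation that uses in an essential way that $\n$ is divisible by all primes above $p$ — and verifying this sheet-counting, together with the bookkeeping that the parameters come out as $a(s+vp^r)$ with the right multiplicities, is the one genuinely non-formal step; this is where the argument runs parallel to \cite{LLZ14,LLZ16}. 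A minor subtlety is that $U_p=U_{(p)}$ behaves a priori differently according to whether $p$ is inert, split or ramified in $F$; but since $\varpi=p$ is a rational integer the degeneracy maps are defined uniformly, and a single computation handles all three cases, exactly as in \cite{LLZ16}.

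\emph{Conclusion.} Finally one applies the resulting identity of correspondences to the Siegel class $\subc C_{p^{2(r+1)}N}$, using $\mathrm{pr}_*\,\subc C_{p^{2(r+1)}N}=\subc C_{p^{2r}N}$ (Proposition~\ref{siegel unit properties}(i)) to recognise the degeneracy pushforwards as Siegel classes of the correct level. Since every map occurring is a finite covering of manifolds — for which Betti pushforward is summation over sheets and is functorial, and pullback is ordinary restriction — the geometric identity transforms directly into the claimed cohomological one. One should also note that, in contrast to Proposition~\ref{prop:xi-sign}, no sign enters from $\iota_*$ here: all of the maps involved (the degeneracy maps, the twists $\kappa$, and $\iota$ itself) preserve the relevant orientations, so the fundamental classes in Borel--Moore homology are carried to one another with coefficient $+1$.
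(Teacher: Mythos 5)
Your overall strategy matches the paper's: reduce to the per-coefficient identity
\[
 \sum_{v\in\Z/p}\subc\Xi_{p^{r+1},\,\n,\,a(s+vp^r)} \;=\; (U_p)_*\,\subc\Xi_{p^r,\,\n,\,as},
\]
handle the $t=0$ term in the $r=0$ case (your identification $\subc\Xi_{p,\n,0}=\subc\Xi_{1,\n,a}$ is correct, and cleanly argued via $\kappa_{0/p}\circ\iota = \iota\circ\mathrm{pr}$ and Proposition~\ref{siegel unit properties}(i)), and then close the identity by base change plus Siegel-unit norm-compatibility. These reduction steps are sound. However, the proposal has a genuine gap exactly where you flag it: you describe the identification of the components of the fibre product $W$, the maps $\pi_2$ induces on them, and the resulting twisting parameters $a(s+vp^r)$ as ``the one genuinely non-formal step'', and you do not carry it out. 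That step is not ancillary bookkeeping — it is the substance of the theorem, and without it what you have is an outline rather than a proof.

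The paper organises that computation differently, in a way that is worth contrasting with your plan. Rather than form a fibre product over $Y_{F,1}^*(\n)$ against the embedding $\kappa_{as/p^r}\circ\iota$ (where $\pi_1$ has degree $p^2$ but the relevant degeneracy $Y_{\Q,1}(p^{2(r+1)}N)\to Y_{\Q,1}(p^{2r}N)$ has degree $p^4$, so the sheet/component bookkeeping is delicate), it first lifts everything to the disconnected spaces $Y_F^*(m,m\n)$: one defines zeta elements $\subc\mathcal{Z}_{m,\n,a}$ there, checks (Proposition~\ref{Z-and-Xi}) that their component-wise pushforwards under $s_m$ give back $\subc\Xi_{m,\n,ja}$, proves the clean correspondence identity $(\tau_p)_*\subc\mathcal{Z}_{pm,\n,a}=(U_p)_*\subc\mathcal{Z}_{m,\n,a}$ (Theorem~\ref{norm relation zeta}), and then descends. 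In this formulation the sum over $t$ in $\subc\Phi^r_{\n,a}$ is simply the decomposition over components of $Y_F^*(p^r,p^r\n)$, and the sheet-counting you postpone becomes Lemma~\ref{cartesian} — an explicit congruence check showing $\alpha_{pm,\n,a}$ is injective and the resulting square is Cartesian, where both vertical arrows have matching degree $p^2$. You should also note that the hypothesis doing the real work in that lemma is that $a$ generates $\roi_F/(p\roi_F+\Z)$ (so that $\{1,a\}$ is a basis of $(\roi_F/p^{q+1})\otimes\Zp$), not the divisibility of $\n$ by all primes above $p$; the latter enters separately, in matching the $(U_p)_*$-actions under $s_{p^r}$ at the end. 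Finally, your claim that a single computation handles the split, inert and ramified cases uniformly is stated a little too confidently: the paper proves the key lemma under the strictly stronger hypothesis $p\mid\n$ and explicitly flags that the ramified case requires further (omitted) modifications.
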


\begin{remark-numbered}
 Before embarking on the proof, which will occupy the next section of the paper, we pause to give a brief description of how this is important for the construction of $p$-adic Asai $L$-functions, in the simplest case of a Bianchi eigenform $\Psi$ of weight $(0,0)$. Define $e_{\mathrm{ord},*} \defeq \lim_{n\rightarrow\infty}(U_p^{n!})_*$ to be the ordinary projector on cohomology with $\Zp$ coefficients associated to $(U_p)_*$, so that $(U_p)_*$ is invertible on the space $e_{\mathrm{ord},*}\h^2(Y_1(\n),\Zp)$. Given the theorem, we see that the collection
 \[
  [(U_p)_*^{-r} e_{\mathrm{ord},*} \cdot \subc\Phi_{\n,a}^r]_{r\ge 1}
 \]
 forms an element $\subc \Phi_{\n,a}^\infty$ in the inverse limit
 \[
 e_{\mathrm{ord},*}\h^2(Y_{F,1}^*(\n),\Zp) \otimes \Zp[[\Zp^\times]].
 \]
 As in \S \ref{sect:modsymb}, the eigenform $\Psi$ gives rise to a cohomology class $\phi_\Psi^* \in \h^1_{\mathrm{c}}(Y_{F, 1}^*(\n), R)$, where $R$ is the ring of integers of some finite extension of $\Qp$. If the $(U_p)^*$-eigenvalue of $\Psi$ is a unit in $R$, then the linear functional on $\h^2(Y_{F,1}^*(\n),\Zp) \otimes R$ given by pairing with $\phi_\Psi^*$ factors through the projector $e_{\mathrm{ord},*}$. Hence pairing $\subc \Phi_{\n,a}^\infty$ with $\phi_\Psi^*$ gives a measure on $\Zp^\times$ with values in $R$. This will be our $p$-adic $L$-function. By construction, its values at finite-order characters are given by integrating $\Psi$ against linear combinations of Eisenstein series on $Y_{\Q,1}(m^2 N)$; and these will turn out to compute the special values of the Asai $L$-function.
\end{remark-numbered}

%
%

\section{Proving the norm relations (Theorem \ref{norm relation})}
\label{proof of norm relation}

Theeorem \ref{norm relation} is directly analogous to the norm-compatibility relations for Euler systems constructed from Siegel units; specifically, it is the analogue in our context of \cite[Theorem 3.3.2]{LLZ14}. Exactly as in \emph{op.cit.}, it is simplest not to prove the theorem directly but rather to deduce it from a related result concerning cohomology classes on the symmetric spaces $Y_F^*(m, m\n)$, analogous to Theorem 3.3.1 of \emph{op.cit.}. Note that these symmetric spaces are not connected for $m > 2$, but have $\phi(m)$ connected components; this will allow us to give a tidy conceptual interpretation of the sum over $j \in (\Z / p^r \Z)^*$ appearing in the definition of $\subc\Phi_{\n, a}^r$.

\subsection{Rephrasing using the spaces $Y_F^*(m, m\n)$}

\begin{proposition}
 For any $a \in \roi_F$, the element $\matrd{1}{a}{0}{1}$ normalises $U^*(m, m\n) \subset \GLt^*(\A_F^f)$.
\end{proposition}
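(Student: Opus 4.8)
The plan is to reduce the statement to an elementary entrywise congruence check. Write $u \defeq \smallmatrd{1}{a}{0}{1}$, so that $u^{-1} = \smallmatrd{1}{-a}{0}{1}$; since $a \in \roi_F$, both $u$ and $u^{-1}$ have entries in $\roikhat$ and determinant $1$, so $u$ lies in $G^*(\A_\Q^f)$ and conjugation by $u$ preserves both $\GLt(\roikhat)$ and the determinant condition cutting out $G^*$. It therefore suffices to show that conjugation by $u$ preserves the congruence conditions defining $U^*(m, m\n)$ inside $\GLt(\roikhat)$. Moreover, since $-a$ is again in $\roi_F$, once we know the inclusion $u\, U^*(m,m\n)\, u^{-1} \subseteq U^*(m,m\n)$ we may apply it with $a$ replaced by $-a$ to get the reverse inclusion, hence equality; so only one inclusion needs to be checked.

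For that inclusion I would simply expand the triple product. If $\gamma = \smallmatrd{\alpha}{\beta}{c}{\delta} \in U^*(m, m\n)$, so that $\alpha - 1,\ \beta \in m\roikhat$ and $c,\ \delta - 1 \in m\n\roikhat$, then
\[
 u\,\gamma\,u^{-1} = \matrd{\alpha + ac}{\beta + a(\delta - \alpha) - a^2 c}{c}{\delta - ac}.
\]
The lower-left entry is unchanged, so still lies in $m\n\roikhat$. Since $a \in \roi_F$ and $c \in m\n\roikhat$ we have $ac,\, a^2 c \in m\n\roikhat$, whence the lower-right entry lies in $1 + m\n\roikhat$ and the upper-left entry in $1 + m\n\roikhat \subseteq 1 + m\roikhat$. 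For the upper-right entry, $\beta \in m\roikhat$, $a^2 c \in m\n\roikhat \subseteq m\roikhat$, and $\delta - \alpha = (\delta - 1) - (\alpha - 1) \in m\roikhat$ forces $a(\delta - \alpha) \in m\roikhat$; so this entry lies in $m\roikhat$. Thus $u\gamma u^{-1}$ satisfies all the defining congruences, and as $\det(u\gamma u^{-1}) = \det\gamma \in (\A_\Q^f)^\times$ it lies in $U^*(m, m\n)$, completing the argument.

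There is no real obstacle here; the only point worth isolating is the structural reason the argument works: the congruences defining $U^*(m, m\n)$ are imposed modulo $m\roikhat$ on the top row and modulo $m\n\roikhat$ on the bottom row, while conjugating by $u$ alters entries only by $\roikhat$-multiples of $c$ and of $\delta - \alpha$, both of which are divisible by $m$. It is also worth noting that integrality of $a$ is genuinely used — already in the preliminary reduction, since for $a \in F \setminus \roi_F$ the matrix $u$ need not even normalise $\GLt(\roikhat)$ — which is consistent with the fact that the related map $\kappa_{a/m}$ of Definition \ref{def:kappaam} is only available as a map between symmetric spaces of different levels.
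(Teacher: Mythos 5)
Your proof is correct and is precisely the computation that the paper elides with the phrase ``Easy check.'' The reduction to a single inclusion (by replacing $a$ with $-a$), the observation that $u \in G^*(\A_\Q^f) \cap \GLt(\roikhat)$, and the entrywise verification of the congruences modulo $\smallmatrd{m}{m}{m\n}{m\n}$ are exactly what the paper's terse proof is pointing to.

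One tiny slip, harmless to the conclusion: you assert that the upper-left entry $\alpha + ac$ lies in $1 + m\n\roikhat$. That would require $\alpha - 1 \in m\n\roikhat$, whereas the hypothesis only gives $\alpha - 1 \in m\roikhat$ (the top-row congruence is modulo $m$, not $m\n$). What you actually have is $\alpha + ac - 1 = (\alpha - 1) + ac \in m\roikhat + m\n\roikhat = m\roikhat$, which is exactly the containment $1 + m\roikhat$ required by the definition of $U^*(m, m\n)$ — and which you correctly invoke in the very next clause. So the argument goes through; only the intermediate assertion overstates what holds.

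Your closing structural remark — that the bottom row is unchanged, and the top row is perturbed only by $\roikhat$-multiples of $c$ and $\delta-\alpha$, both of which are divisible by $m$ — is the right way to see why the level-$m$-on-top, level-$m\n$-on-bottom normalisation is exactly what makes this work, and why integrality of $a$ is essential.
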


\begin{proof} Easy check. \end{proof}

We can therefore regard right-translation by $\smallmatrd{1}{a}{0}{1}$ as an automorphism of $Y_F^*(m, m\n)$, and we can consider the composite map
\[ \iota_{m, \n, a}: Y_{\Q}^*(m, mN) \hookrightarrow Y_F^*(m, m\n)
\labelrightarrow{\smallmatrd{1}{-a}{0}{1}} Y_F^*(m, m\n),\]
where the first arrow is injective (as soon as $m\n$ is divisible by some integer $\ge 4$) by the same argument as in Proposition \ref{prop:goodinclusion}. Note also that the components of $Y_F^*(m, m\n)$ are indexed by $(\Z / m\Z)^*$, with the fibre over $j$ corresponding to the component containing the image of $\smallmatrd j {}{}1 \in \GLt^*(\A_F^f)$; and the action of $\smallmatrd{1}{a}{0}{1}$ preserves each component.

\begin{remark-numbered}
The change of sign appears because we are comparing left and right actions.
\end{remark-numbered}

\begin{definition}
 We define $\subc\mathcal{Z}_{m, \n, a}$ to be the image of $\subc C_{mN} \in \h^1(Y_{\Q}(m, mN), \Z)$ under pushforward via $\iota_{m, \n, a}$, and $\subc\mathcal{Z}_{m, \n, a}(j)$ the projection of $\subc\mathcal{Z}_{m, \n, a}$ to the direct summand of $\h^1(Y_F^*(m, m\n), \Z)$ given by the $j$-th component, so that
 \[ \subc\mathcal{Z}_{m, \n, a} = \sum_j \subc\mathcal{Z}_{m, \n, a}(j).\]
\end{definition}

Exactly as in the situation of Beilinson--Flach elements, these $\mathcal{Z}$ elements turn out to be closely related to the $\Phi$'s defined above (compare \cite[Proposition 2.7.4]{LLZ14}). We consider the map
\[ s_m:  Y_F^*(m, m\n) \to Y_{F, 1}^*(\n) \]
given by the action of $\matrd{m}{0}{0}{1}$ (corresponding to $(z, t) \mapsto (z/m, t/m)$ on $\uhs$).

\begin{proposition}
 \label{Z-and-Xi}
 We have $(s_m)_*\left( \subc\mathcal{Z}_{m, \n, a}(j) \right) = \subc \Xi_{m, \n, j a}$, and hence
 \[ \subc \Phi_{\n, a}^r = \sum_{j} (s_{p^r})_*\left( \subc\mathcal{Z}_{p^r, \n, a} (j) \right) \otimes [j].\]
\end{proposition}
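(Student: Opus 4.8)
The plan is to prove the two assertions in turn, the first being the essential one. For the identity $(s_m)_*\bigl(\subc\mathcal{Z}_{m,\n,a}(j)\bigr) = \subc\Xi_{m,\n,ja}$, I would unwind both sides as compositions of pushforward maps between symmetric spaces and reduce the claim to the commutativity of a diagram of such spaces on the level of double cosets (or, equivalently, on the level of $\uhs$ together with the indexing adelic components). Concretely: $\subc\mathcal{Z}_{m,\n,a}(j)$ is obtained from $\subc C_{mN} \in \h^1(Y_\Q(m,mN),\Z)$ by pushing forward along $Y_\Q(m,mN)\hookrightarrow Y_F^*(m,m\n)$, twisting by $\smallmatrd{1}{-a}{0}{1}$, and then projecting to the $j$-th connected component; then one applies $(s_m)_*$, i.e. the action of $\smallmatrd{m}{0}{0}{1}$. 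On the other side, $\subc\Xi_{m,\n,ja}$ is $(\kappa_{(ja)/m})_*\circ\iota_*$ applied to $\subc C_{m^2N}\in\h^1(Y_{\Q,1}(m^2N),\Z)$. So the content is a matrix identity: on the relevant connected component (the one indexed by $j$), the composite $Y_\Q(m,mN)\to Y_F^*(m,m\n)\to Y_F^*(m,m\n)\to Y_{F,1}^*(\n)$ agrees, after identifying source components appropriately, with $Y_{\Q,1}(m^2N)\to Y_{F,1}^*(m^2\n)\to Y_{F,1}^*(\n)$, where the first composite uses the twist $\smallmatrd{1}{-a}{0}{1}$ and $\smallmatrd{m}{0}{0}{1}$, and the second uses $\smallmatrd{1}{(ja)/m}{0}{1}$.

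The key computation is to track what $\smallmatrd{m}{0}{0}{1}$ does: conjugating the twist, $\smallmatrd{m}{0}{0}{1}\smallmatrd{1}{-a}{0}{1}\smallmatrd{m}{0}{0}{1}^{-1} = \smallmatrd{1}{-ma}{0}{1}$ acting on $\uhs$ — but one must be careful, since $s_m$ is a quotient-type map, not simply conjugation, and it changes the level from $m^2\n$-type data down to $\n$. I would identify the composite $s_m$ precisely on each connected component, using that the $j$-th component of $Y_F^*(m,m\n)$ contains the image of $\smallmatrd{j}{0}{0}{1}$; pushing this forward under $\smallmatrd{m}{0}{0}{1}$ and combining with the $\smallmatrd{1}{-a}{0}{1}$-twist should produce exactly the twist $\smallmatrd{1}{(ja)/m}{0}{1}$ entering $\kappa_{(ja)/m}$, after the adelic bookkeeping collapses the $m^2$-level structure coming from $\iota: Y_{\Q,1}(m^2N)\hookrightarrow Y_{F,1}^*(m^2\n)$. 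The Betti classes are then matched simply because $\subc C_{mN}$ pulls back/pushes forward to $\subc C_{m^2N}$ under the relevant degeneracy maps between modular curves (this is just the definition of $\subc C$ as the Betti realisation of the Siegel unit, together with the fact that $\subc g_{mN}$ pulls back to $\subc g_{m^2N}$ on $Y(mN,m^2N)$). Since all the maps involved are proper, the pushforwards on $\h^1$ (resp.\ $\h^2$) are legitimately defined via the Borel--Moore homology identifications of \eqref{eq:BMhomology}, and functoriality of Borel--Moore pushforward under composition of proper maps gives the identity.

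The second assertion, $\subc\Phi_{\n,a}^r = \sum_j (s_{p^r})_*\bigl(\subc\mathcal{Z}_{p^r,\n,a}(j)\bigr)\otimes[j]$, is then essentially formal: by definition $\subc\Phi_{\n,a}^r = \sum_{t\in(\Z/p^r)^\times}\subc\Xi_{p^r,\n,at}\otimes[t]$, and by the first part $(s_{p^r})_*\bigl(\subc\mathcal{Z}_{p^r,\n,a}(j)\bigr) = \subc\Xi_{p^r,\n,ja}$; so the two sums agree term-by-term once one observes that the index set $j\in(\Z/p^r)^\times$ over the connected components of $Y_F^*(p^r,p^r\n)$ matches the index set $t\in(\Z/p^r)^\times$ in the definition of $\subc\Phi$, with $[j]=[t]$. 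One small point to check here is that $\subc\mathcal{Z}_{p^r,\n,a}(j)$ really is supported only on components indexed by $(\Z/p^r)^\times$ (rather than all of $\Z/p^r$) — this follows because the image of $Y_\Q(p^r,p^rN)$ under $\iota_{p^r,\n,a}$ meets only components whose index is a unit, as the determinant of a matrix in $\GLt(\widehat\Z)$ is a unit everywhere.

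I expect the main obstacle to be the precise adelic/matrix bookkeeping in the first part: correctly identifying the map $s_m$ on each connected component, and verifying that the interplay of the twist $\smallmatrd{1}{-a}{0}{1}$, the scaling $\smallmatrd{m}{0}{0}{1}$, and the component index $j$ produces exactly the twist $\smallmatrd{1}{(ja)/m}{0}{1}$ with the correct sign and the correct collapse of level structure. This is the kind of computation where signs and left-versus-right actions are easy to get wrong; the paper's earlier remark that ``the change of sign appears because we are comparing left and right actions'' is a warning that care is needed here. Everything else — the Siegel-unit compatibility, the properness of the maps, the Borel--Moore formalism, and the reassembly into the stated sum — should be routine given the results already established in the excerpt (in particular Proposition \ref{prop:goodinclusion}, Proposition \ref{siegel unit properties}, and the isomorphisms \eqref{eq:BMhomology}).
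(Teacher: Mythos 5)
Your plan follows the same route as the paper's proof: identify the $j$-th component of $Y_F^*(m,m\n)$ with $\Gamma_F^*(m,m\n)\backslash\uhs$ via $\smallmatrd{j}{0}{0}{1}$, observe that under this identification the right-translation by $\smallmatrd{1}{-a}{0}{1}$ becomes the left action of $\smallmatrd{1}{ja}{0}{1}$ on $\uhs$, and then factor $\kappa_{ja/m}:Y_{F,1}^*(m^2\n)\to Y_{F,1}^*(\n)$ through $Y_F^*(1(m),m\n)\cong Y_F^*(m,m\n)^{(j)}$, the $\smallmatrd{1}{-a}{0}{1}$-twist, and $s_m$ — which is exactly the explicit factorization the paper records, and the right way to resolve the point you correctly flag (that a naive conjugation of $\smallmatrd{1}{-a}{0}{1}$ by $\smallmatrd{m}{0}{0}{1}$ does not account for the level change). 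One small correction: the Siegel-unit input is a \emph{pushforward} (norm) compatibility, not a pullback as you write — precisely, the pushforward of $\subc C_{m^2N}$ along $Y_{\Q,1}(m^2N)\to Y_\Q(1(m),mN)$, $z\mapsto mz$, equals $\subc C_{mN}$ (this is the lemma stated just before the proposition, coming from Kato's norm relations). Also, your ``small point to check'' that $\subc\mathcal{Z}_{p^r,\n,a}(j)$ is supported only on unit-indexed components needs no separate argument: the set of connected components of $Y_F^*(p^r,p^r\n)$ is already $(\Z/p^r)^\times$ by construction, via the determinant map, so there are no non-unit components to worry about.
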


Before proceeding to the proof, we note the following lemma:

\begin{lemma}
 The pushforward of $\subc C_{m^2 N}$ along the map
 \[ Y_{\Q, 1}(m^2 N) \to Y_\Q(1(m), mN), \]
 given by $z \mapsto mz$ on $\uhp$, is $\subc C_{m N}$.
\end{lemma}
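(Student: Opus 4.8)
The plan is to verify this directly at the level of the double-coset / adelic descriptions of the symmetric spaces, tracking the Siegel unit along the way. First I would unravel the two maps involved. The source $Y_{\Q, 1}(m^2 N)$ is $\GLt(\Q)_+ \backslash [\GLt(\A_\Q^f) \times \uhp] / U_\Q(1, m^2 N)$, and the target $Y_\Q(1(m), mN)$ is the analogous quotient by $U_\Q(1(m), mN)$. The stated map "$z \mapsto mz$" should be realised adelically as right-translation by $\smallmatrd{m}{0}{0}{1}$ (or its inverse, depending on conventions; I would fix the convention so that on the upper half plane it becomes $\tau \mapsto m\tau$, consistent with the parenthetical remark in \S\ref{hecke operators} that $\smallmatrd{\varpi}{}{}{1}$ induces $z \mapsto z/\varpi$ — so here I want $\smallmatrd{m^{-1}}{}{}{1}$ acting on the right, equivalently $\smallmatrd{m}{}{}{1}$ on the left, matching the normalisation used to define $\kappa_{a/m}$ and $s_m$). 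One checks $\smallmatrd{m}{0}{0}{1} U_\Q(1, m^2N) \smallmatrd{m}{0}{0}{1}^{-1} \subseteq U_\Q(1(m), mN)$, so the map is well-defined, and it is finite (indeed proper), so pushforward on Betti $\h^1$ makes sense.

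Next I would identify the pushforward of the Siegel unit class. The class $\subc C_{m^2 N} = C(\subc g_{m^2 N})$ is the Betti realisation of Kato's Siegel unit $\subc g_{m^2 N} = \subc g_{0, 1/m^2 N}$, and $\subc C_{mN} = C(\subc g_{mN})$, where by the abuse of notation fixed after the definition of Siegel units, $\subc g_{mN}$ on $Y_\Q(1(m), mN)$ means the pullback of the Siegel unit on $Y_{\Q, 1}(mN)$. The key point is a compatibility of Siegel units under the map $z \mapsto mz$: pushing forward $\subc g_{0, 1/m^2 N}$ along $\GLt$-translation by $\smallmatrd{m}{}{}{1}$ yields $\subc g_{0, 1/mN}$. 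At the level of the realisation maps this is the statement that $(s)_* C(\phi) = C(\mathrm{Nm}_s \phi)$ for a finite map $s$, where $\mathrm{Nm}_s$ is the norm of modular units (the product over the fibres); so it reduces to computing $\mathrm{Nm}(\subc g_{0, 1/m^2 N})$ along this degree-$m$ map and checking it equals $\subc g_{0, 1/mN}$. This norm computation is exactly of the type appearing in \cite[\S 2.11]{Kat04} (the "distribution relations" for Siegel units), applied to the map corresponding to $\smallmatrd{m}{}{}{1}$ rather than to a quotient by a congruence subgroup; concretely, the relevant identity is $\prod_{b \bmod m} \subc g_{0, (1+bN)/m^2N}$-type products collapsing — more precisely it is the compatibility $\operatorname{pr}_{m,*}(\subc g_{0,\gamma}) $ expressed via the theta-function description $\subc g_\gamma = \subc\theta$ evaluated at torsion points, where pushing forward along multiplication-by-$m$ on the elliptic curve side sends the $c$-theta unit at a point of order $m^2 N$ to the $c$-theta unit at its image, a point of order $mN$.

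I would then assemble: since $C$ is a natural transformation compatible with pushforward (more carefully, $C(\phi)$ is the pullback of the canonical generator of $\h^1(\mathbb{G}_m, \Z)$ along $\phi$, and for a finite flat $s$ one has $\phi \circ s$ fitting into the norm diagram), we get $(s)_* \subc C_{m^2 N} = (s)_* C(\subc g_{m^2N}) = C(\mathrm{Nm}_s \subc g_{m^2N}) = C(\subc g_{mN}) = \subc C_{mN}$.

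The main obstacle, I expect, is pinning down the precise Siegel-unit distribution relation that is needed: the standard norm-compatibility in Proposition \ref{siegel unit properties}(i) is along congruence-level maps $Y(M, N) \to Y(M, N')$ with $N, N'$ having the same primes, whereas here the map is the "degeneracy/twist" map $\smallmatrd{m}{}{}{1}$ which changes the denominator of the parameter from $1/m^2N$ to $1/mN$. One has to translate between these using the action of $\GLt(\A_\Q^f)$ on the tower of modular curves and the transformation law $\subc g_{0, u}$ under $\GLt$ (Kato's formula for $\gamma^* \subc g_{0, v} = \subc g_{0, v\gamma}$ up to the $c$-normalisation); this is bookkeeping of a kind that is routine but error-prone with the conventions. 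An alternative, perhaps cleaner, route — and the one I would actually write up if the direct computation gets unwieldy — is to note that both sides are Betti classes defined over $\Q$ and to compare their de Rham realisations using Proposition \ref{siegel unit properties}(ii): $d\log(g_{m^2N}) = -2\pi i\, F^{(2)}_{1/m^2N}(\tau)\, d\tau$, and pushforward along $\tau \mapsto m\tau$ of this Eisenstein series is $-2\pi i\, F^{(2)}_{1/mN}(\tau)\, d\tau = d\log(g_{mN})$ by the elementary identity $\sum' e^{2\pi i m' /(m^2 N)}/(m m' \tau/?\cdots)$ — i.e. the standard fact that $F^{(2)}_\beta$ transforms correctly under $\smallmatrd{m}{}{}{1}$ — together with the fact that the comparison isomorphism is injective on the relevant (torsion-free, $\Q$-rational) cohomology and compatible with pushforward, so equality of de Rham realisations forces equality of the integral Betti classes. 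Either way the content is a single explicit distribution/compatibility relation for Siegel units, lifted to cohomology by functoriality of the realisation maps.
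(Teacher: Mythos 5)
Your approach is correct and essentially the same as the paper's, which disposes of this lemma with a one-line citation to Kato's norm-compatibility/distribution relations for Siegel units (\cite[Lemma 2.12]{Kat04}) --- precisely the $m$-isogeny/theta-function relation you identify in your second paragraph. The details you fill in (the adelic identification of $z \mapsto mz$ as right-translation by $\operatorname{diag}(m^{-1},1)$, the reduction of Betti pushforward to the norm $\mathrm{Nm}_s$ of the modular unit, and the de Rham cross-check using torsion-freeness of $\h^1$ to conclude) are all sound; the paper simply leaves them implicit.
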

\begin{proof} This follows from the well-known norm-compatibility relations of the Siegel units, cf.\cite[Lemma 2.12]{Kat04}.
\end{proof}

\begin{proof}[Proof of Proposition \ref{Z-and-Xi}]
 For each $j \in (\Z / m\Z)^\times$, we have a diagram
 \[
\xymatrix@R=10mm@C=10mm{
 Y_F^*(m, mN)^{(1)} \ar[r]^{\smallmatrd{1}{-ja}{0}{1}}   \ar[d]^{\smallmatrd{j}{0}{0}{1}} &
 Y_F^*(m, m\n)^{(1)} \ar[d]^{\smallmatrd{j}{0}{0}{1}}\\
Y_F^*(m, mN)^{(j)} \ar[r]^{\smallmatrd{1}{-a}{0}{1}} & Y_F^*(m, m\n)^{(j)}.
}
 \]
 In other words, if we identify $Y_F^*(m, mN)^{(j)}$ with $\Gamma_F^*(m, mN) \backslash \uhs$ via $\smallmatrd{j}{0}{0}{1}$, the restriction to this component of the right action of $\smallmatrd{1}{-a}{0}{1}$ on the adelic symmetric space corresponds to the left action of $\smallmatrd{1}{ja}{0}{1}$ on $\uhs$.

 With these identifications, we see that the map
 \[ \kappa_{ja/m} : Y^*_{F, 1}(m^2 \n) \to Y^*_{F, 1}(\n) \]
 used in the definition of $\Xi_{m, \n, ja}$ factors as
 \begin{align*} Y^*_{F, 1}(m^2 \n) &\labelrightarrow{\smallmatrd{1}{0}{0}{m}} Y^*_{F}(1(m), m\n)\\
  &\labelrightarrow{\cong} Y^*_{F}(m, m\n)^{(j)}\\
 &\labelrightarrow{\smallmatrd{1}{-a}{0}{1}} Y^*_{F}(m, m\n)^{(j)}\\
 &\labelrightarrow{s_m} Y^*_{F, 1}(\n).
 \end{align*}
 Pushforward along the first map is compatible with pushforward along the corresponding map on $\uhp$, which sends $\subc C_{m^2N}$ to $\subc C_{mN}$ by the previous lemma.
\end{proof}

\begin{corollary}
 The classes $\subc \Xi_{m, \n, a}$ and $\subc \mathcal{Z}_{m, \n, a}$ depend only on the image of $a$ in the quotient $\roi_F / (m \roi_F + \Z)$.
\end{corollary}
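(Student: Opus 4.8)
The plan is to deduce the corollary directly from Proposition \ref{Z-and-Xi} together with the definitions of the relevant maps. First I would recall that, by the very definition of $\subc\Xi_{m,\n,a}$, the only place where $a$ enters is through the map $\kappa_{a/m}\colon Y^*_{F,1}(m^2\n)\to Y^*_{F,1}(\n)$, which is induced by the left action of $\smallmatrd{1}{a/m}{0}{1}$ on $\uhs$. Likewise, $\subc\mathcal{Z}_{m,\n,a}$ depends on $a$ only through the automorphism $\smallmatrd{1}{-a}{0}{1}$ of $Y_F^*(m,m\n)$ used in the definition of $\iota_{m,\n,a}$. So the task reduces to showing that each of these two maps is unchanged when $a$ is modified by an element of $m\roi_F + \Z$.

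Next I would treat the two cases of the allowed modification separately. If $a$ is replaced by $a + m\lambda$ with $\lambda\in\roi_F$, then $\smallmatrd{1}{a/m}{0}{1}$ changes by right-multiplication by $\smallmatrd{1}{\lambda}{0}{1}$, which lies in $\Gamma^*_{F,1}(\n)$ (indeed in $\SLt(\roi_F)$ and upper-triangular with $1$'s on the diagonal), so the induced map on the quotient $\Gamma^*_{F,1}(\n)\backslash\uhs$ is literally the same; this is exactly the assertion already recorded in Definition \ref{def:kappaam} that $\kappa_{a/m}$ depends only on $a \bmod m\roi_F$. If instead $a$ is replaced by $a+m$, i.e.\ $a/m$ by $a/m + 1$, then $\smallmatrd{1}{a/m+1}{0}{1} = \smallmatrd{1}{1}{0}{1}\smallmatrd{1}{a/m}{0}{1}$; here the extra left factor $\smallmatrd{1}{1}{0}{1}$ lies in $\Gamma^*_{F,1}(\n)$ (since $\n$ is proper, so $1\in\roi_F$ gives an element of the relevant congruence subgroup) and hence acts trivially on $\Gamma^*_{F,1}(\n)\backslash\uhs$, so again $\kappa_{a/m}$ is unchanged. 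Combining these, $\kappa_{a/m}$, and therefore $\subc\Xi_{m,\n,a}$, depends only on the class of $a$ in $\roi_F/(m\roi_F+\Z)$. The same computation applies verbatim on the spaces $Y_F^*(m,m\n)$ to show that $\iota_{m,\n,a}$, and hence $\subc\mathcal{Z}_{m,\n,a}$, depends only on that class; alternatively, once the statement for $\subc\Xi$ is known, the statement for $\subc\mathcal{Z}$ follows from Proposition \ref{Z-and-Xi} since $(s_m)_*$ is injective on the relevant summand (or simply because $\subc\mathcal{Z}_{m,\n,a} = \sum_j \subc\mathcal{Z}_{m,\n,a}(j)$ and the right-hand sides of the formulas in that proposition are determined by the $\subc\Xi_{m,\n,ja}$).

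I do not expect any genuine obstacle here: the corollary is essentially a bookkeeping consequence of how the coset representatives $\smallmatrd{1}{a/m}{0}{1}$ interact with $\Gamma^*_{F,1}(\n)$, and the only mild subtlety is keeping track of which side ($\Gamma$ acting on the left versus the stabiliser conditions) each modification of $a$ affects — the $m\roi_F$ part being a right modification that lands in the level group by Definition \ref{def:kappaam}, and the $\Z$ part being a left modification that lands in $\Gamma^*_{F,1}(\n)$ because $\n$ contains no nontrivial constraint forcing the unipotent $\smallmatrd{1}{1}{0}{1}$ out of the group. The proof is therefore just a couple of lines identifying these two group-theoretic facts.
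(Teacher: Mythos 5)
Your proof establishes only invariance under $a \mapsto a + m\lambda$ for $\lambda \in \roi_F$, which is not the content of the corollary. Both of your cases are subcases of this: replacing $a$ by $a + m\lambda$ is the first, and replacing $a$ by $a + m$ (which is what ``$a/m \mapsto a/m + 1$'' says) is just $\lambda = 1$. This $m\roi_F$-invariance was already recorded in Definition \ref{def:kappaam}. The new assertion in the corollary is invariance under $a \mapsto a + b$ for $b \in \Z$, and your argument does not touch it.

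If you try to run your group-theoretic computation on the genuine case $a \mapsto a + 1$, it fails: the matrix $\smallmatrd{1}{(a+1)/m}{0}{1}$ factors as $\smallmatrd{1}{1/m}{0}{1}\smallmatrd{1}{a/m}{0}{1}$, and $\smallmatrd{1}{1/m}{0}{1}$ is not in $\Gamma^*_{F,1}(\n)$ (nor in $\Gamma^*_{F,1}(m^2\n)$), since $1/m \notin \roi_F$. The same obstruction appears in the $\mathcal{Z}$ picture: on $Y_F^*(m, m\n)$, the element $\smallmatrd{1}{-1}{0}{1}$ normalises $U^*(m, m\n)$ but does not belong to it (its upper-right entry is not $\equiv 0 \bmod m$), so it induces a genuinely nontrivial automorphism of $Y_F^*(m, m\n)$, not the identity. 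Thus the invariance cannot be purely a bookkeeping consequence of coset manipulations. The paper's proof instead uses a property of the specific class being pushed forward: the automorphism $\smallmatrd{1}{b}{0}{1}$ of $Y_F^*(m,m\n)$ restricts compatibly to $Y_\Q(m,mN)$, and there it fixes the cohomology class ${}_c C_{mN}$ because that class is pulled back from the smaller level $Y_{\Q,1}(mN)$ (on which the $\smallmatrd{1}{b}{0}{1}$-translation acts trivially). This invariance of the Siegel-unit class is the genuine input you are missing; once one has it, the statement for $\subc\mathcal{Z}_{m,\n,a}$ follows, and then the statement for $\subc\Xi_{m,\n,a}$ via Proposition \ref{Z-and-Xi}. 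Also note, as a minor point, that in your $m\roi_F$ case the pre-composition factor $\smallmatrd{1}{\lambda}{0}{1}$ needs to lie in the stabiliser of the \emph{source} $\Gamma^*_{F,1}(m^2\n)$, not of the target $\Gamma^*_{F,1}(\n)$ as you wrote (it does lie there, so the conclusion stands, but the group you cited was the wrong one).
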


\begin{proof}
 If $b \in \Z$, the action of $\smallmatrd{1}{b}{0}{1}$ on $Y_{\Q}(m, mN)$ fixes the cohomology class ${}_c C_{mN}$, as this class is the pullback of a class on $Y_{\Q, 1}(mN)$. Since the actions of $\smallmatrd{1}{b}{0}{1}$ on $Y_\Q(m, mN)$ and $Y_{F}^*(m, m\n)$ are compatible, we see that $\subc \mathcal{Z}_{m, \n, a} = \subc \mathcal{Z}_{m, \n, a + b}$ for any $a \in \roi_F$ and $b \in \Z$, as required. The corresponding result for $\subc \Xi_{m, \n, a}$ now follows from the previous proposition.
\end{proof}

\subsection{A norm relation for zeta elements}

In this section, we formulate and prove a norm relation for the zeta elements $\subc\mathcal{Z}_{m, \n, a}$ which is analogous to Theorem \ref{norm relation}, but simpler to prove.

\begin{definition}
 For $p$ prime, define a map
 \[\tau_{p}: Y_F^*(p m, p m\n) \longrightarrow Y_F^*(m,m\n)\]
 by composing the right-translation action of $\smallmatrd{p}{0}{0}{1} \in \GLt^*(\A^f_F)$ with the natural projection.
\end{definition}

\begin{theorem}\label{norm relation zeta}
 Suppose $p$ is a prime with $p \mid m$, and suppose that $a \in \roi_F$ maps to a generator of the quotient $\roi_F / (p \roi_F + \Z) \cong \Z / p \Z$. Then we have the norm relation
 \[(\tau_{p})_* (\subc \mathcal{Z}_{p m, \n, a}) = (U_p)_*(\mathcal{Z}_{m, \n, a}).\]
\end{theorem}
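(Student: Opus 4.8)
The plan is to reduce Theorem \ref{norm relation zeta} to a purely local statement about coset decompositions for $\GLt^*(\A_F^f)$, in the spirit of \cite[\S 3.3]{LLZ14}. The Hecke operator $(U_p)_*$ on $Y_F^*(m,m\n)$ is the transpose of $(U_p)^*$, and thus is computed by a double coset $U^*\smallmatrd{1}{0}{0}{p}U^*$ with $U^* = U^*_F(m,m\n)$; concretely, $(U_p)_*$ is realised geometrically as $(\pi_2)_* \circ (\pi_1)^*$ (or its transpose, depending on conventions) for the correspondence $Y_F^*(1(p),m\n) \rightrightarrows Y_F^*(m,m\n)$. The first step is therefore to write down an explicit set of single-coset representatives for this double coset, and to describe the corresponding geometric maps on the upper half-space in coordinates. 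The expected representatives are $\smallmatrd{1}{0}{0}{p}$ together with $\smallmatrd{1}{u}{0}{p}$ for $u$ running over a set of representatives of $(\roi_F/p\roi_F)$ that are trivial mod the level and reduce to the classes in $\roi_F/p\roi_F$ — but since we are on the $G^*$-symmetric space and working with $(U_p)_*$ rather than $(U_p)^*$, one must be careful to track which side the translations act on and what happens to the extra $\smallmatrd{p}{}{}{1}$-type factor coming from the determinant condition defining $G^*$.

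Second, I would express both sides of the claimed identity as pushforwards of the Siegel-unit class $\subc C_{pmN}$ (respectively $\subc C_{mN}$) along explicit maps, using the factorisation of $\iota_{m,\n,a}$ through $Y_\Q^*(m,mN)$ and the map $\tau_p$. The left-hand side $(\tau_p)_*(\subc\mathcal Z_{pm,\n,a})$ is the pushforward of $\subc C_{pmN}$ along $\tau_p \circ \iota_{pm,\n,a}$; the right-hand side is obtained by applying the $(U_p)_*$-correspondence to $\iota_{m,\n,a}{}_*(\subc C_{mN})$. The key point will be a diagram-chase showing that, after restricting to the relevant connected components, the composite $\tau_p \circ \iota_{pm,\n,a}$ factors as (a degree-$p$ projection $Y_\Q(pm,pmN)\to Y_\Q(m,pmN)\to Y_\Q(m,mN)$, or a twist thereof) followed by the maps building the $(U_p)_*$ correspondence on the $F$-side; here one uses the norm-compatibility of Siegel units (Proposition \ref{siegel unit properties}(i), via the lemma on pushforward along $z\mapsto mz$) to identify the pushforward of $\subc C_{pmN}$ down to level $mN$ with $\subc C_{mN}$, absorbing the extra $p$-part of the level. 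The hypothesis that $a$ generates $\roi_F/(p\roi_F+\Z)$ enters exactly here: it guarantees that the various $\smallmatrd{1}{u}{0}{p}$-translates, when intertwined with $\iota$, sweep out precisely the cosets appearing in $(U_p)_*$ — because the translations by $\Z$ are killed (the class $\subc C$ is pulled back from $Y_{\Q,1}$), so only the $\roi_F/(p\roi_F+\Z)$-worth of translates $\kappa_{ja/p}$ survive and these biject with the residues mod $p$.

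Third, I would assemble these pieces: combining the coset description of $(U_p)_*$ with the factorisation, each coset representative contributes exactly one term matching one $\iota_{pm,\n,\cdot}$-pushforward on the left, and the sum over representatives on the right matches the single pushforward $(\tau_p)_*(\subc\mathcal Z_{pm,\n,a})$ on the left. (Because $p\mid m$, there is no ``minus one'' Eisenstein correction term — this is the reason the statement is cleaner than the $r=0$ case of Theorem \ref{norm relation}, where $p\nmid$ the relevant conductor and an extra degeneracy map contributes.) Finally, Theorem \ref{norm relation} itself will follow from Theorem \ref{norm relation zeta} by pushing forward along $s_m$ and using Proposition \ref{Z-and-Xi} to translate the component-indexed $\mathcal Z(j)$'s into the $\Xi$'s and hence the $\Phi$'s, together with compatibility of $(U_p)_*$ with the maps $s_m$.

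I expect the main obstacle to be the bookkeeping of the coset representatives and, in particular, getting the conventions for $(U_p)_*$ versus $(U_p)^*$ consistent across the three groups $\GLt/\Q$, $G$, and $G^*$: the group $G^*$ is cut out by a determinant condition, so a naive coset representative $\smallmatrd{1}{u}{0}{p}$ does not lie in $G^*(\A_F^f)$ and must be corrected by an element with appropriate determinant, and tracking how this correction interacts with the embedding $\iota$ and the component-index $j$ is the delicate part. Once the correct representatives are pinned down, the rest is a formal diagram-chase using only the norm-compatibility of Siegel units and functoriality of Borel--Moore pushforward, exactly paralleling \cite[Theorem 3.3.1]{LLZ14}.
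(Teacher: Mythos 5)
Your proposal goes by explicit coset enumeration for $(U_p)_*$, whereas the paper takes a different (and cleaner) route: it never writes down coset representatives at all. The paper's proof hinges on Lemma \ref{cartesian}, which asserts that the square
\[
\begin{diagram}
Y_{\Q}(pm,mN) &\rInto^{\alpha_{pm,\n,a}}& Y_F^*(m(p),m\n)\\
\dTo &&\dTo_{\pi_1}\\
Y_{\Q}(m,mN) &\rInto^{\iota_{m,\n,a}}& Y_F^*(m,m\n)
\end{diagram}
\]
is \emph{Cartesian}. This base-change property immediately gives $(\alpha_{pm,\n,a})_*({}_c C_{mN}) = (\pi_1)^*({}_c\mathcal{Z}_{m,\n,a})$, with no need to decompose $(U_p)_*$ into cosets; combined with the norm-compatibility of Siegel units (Lemma \ref{first norm relation}) and the factorisation $\tau_p = \pi_2 \circ \mathrm{pr}_2 \circ \mathrm{pr}_1$ of diagram \eqref{tau commutative diagram}, the theorem drops out by applying $(\pi_2)_*$ to both sides. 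The key technical input is that $\alpha_{pm,\n,a}$ is \emph{injective}, which is proved by a direct matrix computation showing $\GLt(\A_\Q^f) \cap u_a^{-1}U_F(m(p),m\n)u_a = U_\Q(pm,mN)$; this is precisely where the hypothesis on $a$ enters, and the paper's remark notes that for $a=0$ the map factors through $Y_\Q(m(p),mN)$, so injectivity genuinely fails.

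Your route can be made to work, but the point where you wave your hands — ``these biject with the residues mod $p$'' — is exactly the content that needs a real proof; a careless coset enumeration could double-count or collapse terms. What the Cartesian lemma buys is that the bijection between cosets of $(U_p)_*$ and translates of the Siegel-unit class is a consequence of one clean assertion about fiber products (horizontal arrows injective, both vertical arrows of the same degree $p^2$), rather than something to be verified representative by representative. You also hint at a ``$G^*$-determinant correction'' to the coset representatives $\smallmatrd 1u0p$; in fact the paper sidesteps this entirely because the Hecke correspondence on $Y_F^*(m,m\n)$ is defined via the diagram of degeneracy maps through $Y_F^*(m(p),m\n)$ in \S\ref{hecke operators}, and no explicit representatives in $G^*(\A_F^f)$ are ever needed. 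So your strategy is not wrong, but you should replace the coset bookkeeping with the Cartesian-square / base-change lemma, and make the injectivity of $\alpha_{pm,\n,a}$ the centrepiece rather than an implicit consequence of coset matching.
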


For simplicity, we give the proof under the slightly stronger hypothesis that $p \mid \n$ (rather than just that every prime above $p$ divides $\n$, which is our running hypothesis). This only makes a difference if $p$ is ramified in $F$, and the proof can be extended to handle this extra case at the cost of slightly more complicated notation; we leave the necessary modifications to the interested reader.

Firstly, note that there is a commutative diagram
\begin{equation}
\label{tau commutative diagram}
\xymatrix{
Y_F^*(p m, p m \n) \ar[r]^{\text{\tiny{pr$_1$}}} \ar[rd]_{\tau_p} &
Y_F^*(p m, m\n)  \ar[r]^{\text{\tiny{pr$_2$}}} &
 Y_F^*(m (p), m\n)\ar[ld]_{\pi_2} \ar[d]_{\pi_1}\\
& Y_F^*(m, m\n) &Y_F^*(m, m\n)
},
\end{equation}
where the top maps are the natural projection maps, $\tau_p$ is the twisted degeneracy map of the previous section, and $\pi_1, \pi_2$ are the degeneracy maps of Section \ref{hecke operators}.

\begin{lemma}\label{first norm relation}
 Let $\n' = (p)^{-1} \n.$ Under pushforward by the natural projection map
 \[\mathrm{pr}_1: Y_F^*(pm, pm\n) \longrightarrow Y_F^*(pm, m\n) = Y_F^*(pm,pm\n'),\]
 we have
 \[(\mathrm{pr}_1)_* (\subc \mathcal{Z}_{p^{r+1},\n,a}) = \subc \mathcal{Z}_{p^{r+1},\n',a}.\]
\end{lemma}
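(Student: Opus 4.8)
The plan is to reduce this to the norm-compatibility of Siegel units (Proposition \ref{siegel unit properties}(i)) via compatible pushforward diagrams, exactly as in the proof of Proposition \ref{Z-and-Xi}. First I would unwind the definition of $\subc\mathcal{Z}_{p^{r+1},\n,a}$: it is the pushforward of $\subc C_{p^{r+1}N} \in \h^1(Y_\Q(p^{r+1}, p^{r+1}N),\Z)$ along the map $\iota_{p^{r+1},\n,a}$, which is the closed immersion $Y_\Q^*(p^{r+1}, p^{r+1}N) \hookrightarrow Y_F^*(p^{r+1}, p^{r+1}\n)$ followed by right-translation by $\smallmatrd{1}{-a}{0}{1}$. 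Similarly $\subc\mathcal{Z}_{p^{r+1},\n',a}$ is the pushforward of $\subc C_{p^{r+1}N'}$, where $N' = N/p = \n' \cap \Z$ (using $p \mid \n$, the running hypothesis in this section), along the analogous map into $Y_F^*(p^{r+1}, p^{r+1}\n') = Y_F^*(p^{r+1}, p^r\n)$.

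The key point is that $\mathrm{pr}_1$ fits into a commutative square with the corresponding projection $\mathrm{pr}_1^\Q : Y_\Q^*(p^{r+1}, p^{r+1}N) \to Y_\Q^*(p^{r+1}, p^{r}N) = Y_\Q^*(p^{r+1}, p^{r+1}N')$ on the $\GLt/\Q$ side, since both are induced by the identity on $\A_F^f$ (resp. $\A_\Q^f$) and the inclusions of level groups are compatible; moreover right-translation by $\smallmatrd{1}{-a}{0}{1}$ commutes with $\mathrm{pr}_1$ on both sides. Functoriality of pushforward in Borel--Moore homology (or equivalently for the realisations in Betti cohomology under the dimension-shifting isomorphisms of \eqref{eq:BMhomology}) then gives
\[ (\mathrm{pr}_1)_* (\subc \mathcal{Z}_{p^{r+1},\n,a}) = (\iota_{p^{r+1},\n',a})_* \big( (\mathrm{pr}_1^\Q)_* \subc C_{p^{r+1}N} \big). \]
So it suffices to check that $(\mathrm{pr}_1^\Q)_* \subc C_{p^{r+1}N} = \subc C_{p^{r+1}N'}$ on the relevant $\GLt/\Q$ symmetric spaces.

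This last identity is precisely the norm-compatibility of Siegel units: $\mathrm{pr}_1^\Q$ is the map $Y_\Q(p^{r+1}, p^{r+1}N) \to Y_\Q(p^{r+1}, p^{r+1}N')$ which on $\uhp$ is the identity, and since $N$ and $N' = N/p$ have the same prime divisors (both are divisible by $p$), Proposition \ref{siegel unit properties}(i) — in the form $(\mathrm{pr})_*(\subc g_{p^{r+1}N}) = \subc g_{p^{r+1}N'}$, noting $\subc C_\bullet = C(\subc g_\bullet)$ and that the Betti realisation is functorial — gives exactly what is needed. I expect the only mild obstacle to be bookkeeping: keeping track of the level groups $U^*(p^{r+1}, p^{r+1}\n)$ versus $U^*(p^{r+1}, p^{r}\n)$, verifying that the two squares (the adelic one for $\mathrm{pr}_1$ and the $\uhp$-level one for $\mathrm{pr}_1^\Q$) genuinely commute and that the immersions $\iota$ are compatible, and confirming that the orientation/fundamental-class conventions in \eqref{eq:BMhomology} don't introduce a sign (they don't, since $\mathrm{pr}_1$ is a covering of oriented manifolds of the same dimension and is orientation-preserving). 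None of this is substantive; the whole lemma is "immediate from the norm-compatibility of Siegel units", exactly as the analogous Lemma \ref{lemma:level-compat} and \cite[Theorem 3.1.2]{LLZ14}.
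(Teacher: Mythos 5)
Your proof is correct and is precisely the unpacking of the paper's one-line argument, which cites Siegel-unit norm compatibility (Proposition \ref{siegel unit properties}(i)) and compares with \cite[Theorem 3.1.1]{LLZ14}. One small parenthetical slip: $N' = N/p$ need not itself be divisible by $p$ (e.g.\ if $\n = (p)\n'$ with $\n'$ coprime to $p$), but this is harmless since the norm-compatibility is applied at levels $p^{r+1}N$ and $p^{r+1}N'$, which do share the same prime divisors as $r \ge 0$.
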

\begin{proof}
 This is immediate from the corresponding norm-compatibility property of the Siegel units, which is Proposition \ref{siegel unit properties}. Compare \cite[Theorem 3.1.1]{LLZ14}.
\end{proof}

So we need to compare the classes $\mathcal{Z}_{pm,\n',a}$ and $\mathcal{Z}_{m,\n,a}$. Note that these both involve the same Siegel unit ${}_c g_{0, 1/mN}$. Let us write $u_a$ for the element $\smallmatrd 1{-a} 0 1$.

\begin{definition}
 \begin{itemize}
  \item[(i)]Let $\alpha_{pm,\n,a}$ denote the composition of the maps
  \[Y_\Q(pm, mN) \hookrightarrow Y_F^*(pm, m\n) \labelrightarrow{u_a} Y_F^*(pm, m\n) \labelrightarrow{\mathrm{pr}_2} Y_F^*(m(p),m\n).\]
  \item[(ii)]Let $\iota_{m,\n,a}$ denote, as above, the composition of the maps
  \[Y_\Q(m,m N) \hookrightarrow Y_F^*(m,m\n) \labelrightarrow{u_a} Y_F^*(m,m\n).\]
 \end{itemize}
\end{definition}

The following lemma is the key component in the proof of Theorem \ref{norm relation zeta}.

\begin{lemma}\label{cartesian}
 Suppose that $a \in \roi_F$ is a generator of $\roi_F/(p\roi_F + \Z)$. Then:
 \begin{itemize}
  \item[(i)] The map $\alpha_{pm,\n,a}$ is injective.
  \item[(ii)] The diagram
  \[
  \xymatrix@C=20mm{
  Y_{\Q}(pm,mN) \ar@{^{(}->}[r]^{\alpha_{pm,\n,a}}\ar[d] &
   Y_F^*(m(p),m\n) \ar[d]^{\pi_1}\\
  Y_{\Q}(m,mN) \ar@{^{(}->}[r]^{\iota_{m,\n,a}}& Y_F^*(m,m\n)
}
  \]
  is Cartesian, where the left vertical arrow is the natural projection.
 \end{itemize}
\end{lemma}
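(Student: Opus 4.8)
The plan is to prove both parts together by working at the level of the adelic double-coset descriptions of the symmetric spaces and checking the Cartesian property on points, since the relevant maps are all induced by explicit elements of $\GLt(\A_F^f)$ acting on the adelic coordinate together with (on the left factors) the natural inclusion $\GLt(\A_\Q^f)\hookrightarrow G^*(\A_\Q^f)$. First I would unwind the definition of $\alpha_{pm,\n,a}$: the map $Y_\Q(pm,mN)\to Y_F^*(pm,m\n)$ is induced by the inclusion of adelic groups, then right-translation by $u_a = \smallmatrd{1}{-a}{0}{1}$, and then $\mathrm{pr}_2: Y_F^*(pm,m\n)\to Y_F^*(m(p),m\n)$ is the natural projection coming from the inclusion of level groups $U^*_F(pm,m\n)\subset U^*_F(m(p),m\n)$. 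Similarly $\pi_1: Y_F^*(m(p),m\n)\to Y_F^*(m,m\n)$ is the obvious projection, and the left vertical map $Y_\Q(pm,mN)\to Y_\Q(m,mN)$ is again a projection of level groups. Commutativity of the square is then a formal check that the adelic data match, using that $u_a$ normalises $U^*(pm,pm\n)$ (the Proposition just proved) and is compatible with the corresponding element over $\Q$.

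For injectivity (part (i)), I would argue exactly as in the proof of Proposition \ref{prop:goodinclusion}. An adelic point in the image comes with a $\GLt(F)$-ambiguity; if two points of $Y_\Q(pm,mN)$ have the same image, there is $\gamma\in G^*(F)_+$ realising the identification, and applying complex conjugation gives that $\gamma^{-1}\bar\gamma$ fixes a point of $\uhs$, hence is either the identity or a nontrivial torsion element of $\SLt(\roi_F)$. Since the level $pm\n$ is divisible by an integer $\ge 4$, the torsion-freeness established in Proposition \ref{prop:goodinclusion} forces $\gamma^{-1}\bar\gamma = \mathrm{id}$, so $\gamma\in\GLt(\Q)$; then $\gamma$ lies in the intersection of the $\Q$-level group with the conjugate-by-$u_a$ image, and one checks this forces the two original points to coincide in $Y_\Q(pm,mN)$. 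The twist by $u_a$ and the mixed level $m(p)$ versus $pm$ is precisely where the hypothesis that $a$ generates $\roi_F/(p\roi_F+\Z)$ enters: it guarantees that the relevant coset of $\roi_F$ modulo the lattice spanned by $m\roi_F$ and the $\Z$-part of the level is nontrivial, ruling out unwanted identifications coming from elements that are upper-triangular over $\Q$ but not over $F$.

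For the Cartesian property (part (ii)), I would compute the fibre product $Y_\Q(m,mN)\times_{Y_F^*(m,m\n)} Y_F^*(m(p),m\n)$ explicitly at the level of adelic double cosets and exhibit a bijection with $Y_\Q(pm,mN)$. Concretely, a point of the fibre product is a pair consisting of a point $y\in Y_\Q(m,mN)$ and a point $z\in Y_F^*(m(p),m\n)$ with $\iota_{m,\n,a}(y) = \pi_1(z)$; unwinding, the condition says that after the twist by $u_a$ the image of $y$ under $\GLt(\A_\Q^f)\hookrightarrow G^*(\A_F^f)$ agrees, modulo $U^*(m,m\n)$ and $G^*(F)_+$, with $z$ modulo $U^*(m(p),m\n)$. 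Since $U^*(m(p),m\n)\subset U^*(m,m\n)$ with index equal to $[\roi_F/p\roi_F:\cdots]$, and the corresponding index on the $\Q$-side from level $m$ to level $pm$ matches once one accounts for the constraint imposed by $a$ generating $\roi_F/(p\roi_F+\Z)$, a counting argument plus injectivity of $\alpha_{pm,\n,a}$ pins down the fibre product as exactly the image of $\alpha_{pm,\n,a}$, i.e.\ a copy of $Y_\Q(pm,mN)$. I expect this last index-matching and the verification that no extra components appear to be the main obstacle: one must be careful that the ``$(p)$'' in the upper-triangular entry of the level group $U^*(m(p),m\n)$ interacts correctly with the $u_a$-twist, and this is exactly the point where following the analogous computation in \cite[\S2.7, \S3.3]{LLZ14} (for Beilinson--Flach elements) provides the template, with the role of the two modular curves there played here by $Y_\Q$ and its image in $Y_F^*$.
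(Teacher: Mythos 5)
Your argument for part (ii) is essentially the paper's: both vertical arrows are finite of degree $p^2$ and both horizontal arrows are injective, so the natural map from $Y_\Q(pm,mN)$ to the fibre product is an injection between spaces of the same degree over $Y_\Q(m,mN)$, hence a bijection.

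Part (i), however, has a genuine gap. Your torsion argument in the style of Proposition \ref{prop:goodinclusion} shows that any $\gamma\in G^*(F)_+$ identifying two points of $\uhp$ must lie in $\GLt(\Q)$, but this only reduces the problem: it shows that the image of $\alpha_{pm,\n,a}$ is the modular curve of level $\GLt(\A_\Q^f)\cap u_a^{-1}U_F^*(m(p),m\n)u_a$, so $\alpha_{pm,\n,a}$ is injective \emph{iff} this intersection equals $U_\Q(pm,mN)$. That equality is the actual content of the lemma, and it is precisely where the hypothesis on $a$ enters, but your description of its role (``the relevant coset of $\roi_F$ modulo the lattice spanned by $m\roi_F$ and the $\Z$-part of the level is nontrivial'') is not the right assertion. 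Conjugating and intersecting, one finds that an element $\smallmatrd{r}{s}{t}{u}\in\GLt(\widehat\Z)$ in the intersection automatically satisfies $r\equiv 1$, $u\equiv 1$, $t\equiv 0$ to the right moduli, together with the single nontrivial constraint $s+a(r-1)\equiv 0\pmod{pm}$ in $\roihat$, with $s$ and $r-1$ both lying in $\widehat\Z$ and already $\equiv 0\pmod m$. Writing $m=p^qm'$ with $p\nmid m'$, the issue is purely at $p$, and the fact one needs is that $\{1,a\}$ is a $\Z/p^{q+1}\Z$-\emph{basis} of $\roi_F/p^{q+1}\roi_F$ (equivalently a $\Z_p$-basis of $\roi_F\otimes\Z_p$, which follows from $a$ generating $\roi_F/(p\roi_F+\Z)$ by Nakayama), so that the combination $s\cdot 1 + (r-1)\cdot a$ vanishing mod $p^{q+1}$ forces each coefficient $s$ and $r-1$ to vanish separately mod $p^{q+1}$. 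Without carrying out this computation, the proof of injectivity — and hence of the Cartesian square, which you then derive from it by counting — is incomplete.
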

The proof of this lemma is taken essentially verbatim from \cite[Lemma 7.3.1]{LLZ16}, where the analogous result is proved for real quadratic fields.

\begin{proof}
 To prove part (i), note that the image of $\alpha_{pm,\n,a}$ is the modular curve of level
 \[\GLt(\A_{\Q}^f)\cap u_a^{-1}U_F(m(p),m\n)u_a.\]
 This intersection is the set of $\smallmatrd{r}{s}{t}{u} \in \GLt(\widehat{\Z})$ such that
 \[\matrd{r-at}{s+a(r-u)-a^2t}{t}{at+u} \equiv I \mod \matrd{m}{pm}{m\n}{m\n}.\]
 We want to show that this is equal to $U_\Q(pm,mN)$. Clearly, any $\smallmatrd{r}{s}{t}{u}$ in the intersection satisfies
 \[\matrd{r}{s}{t}{u} \equiv \matrd{1}{*}{0}{1} \mod \matrd{m}{*}{mN}{mN},\]
 whilst
 \[s + a(r-1) \equiv 0 \newmod{mp}.\]
 Suppose $m = p^q m'$ with $m'$ coprime to $p$. We know that both summands are zero modulo $m$, so it suffices to check that they are both zero modulo $p^{q+1}$. Since $a$ generates $\roi_F/(p\roi_F + \Z)$, $\{1, a\}$ is a basis of $(\roi_F / p^{q+1}\roi_F) \otimes \Zp$ as a module over $\Z / p^{q+1}\Z$; so both summands must be individually zero modulo $p^{q+1}$. But this means precisely that $\smallmatrd{r}{s}{t}{u} \in U_{\Q}(mp, mN)$, as required.

 Part (ii) follows from the observations that the horizontal maps are both injections and that both vertical maps are finite of degree $p^2$.
\end{proof}

\begin{remark-numbered}Since this lemma is crucial to the proof, we expand slightly on what part (i) really says. The map $\alpha_{mp,\n,a}$ is $p$-to-$1$ on connected components, in the sense that the preimage of a single component of $Y_F^*(p^r(p),p^r\n)$ contains $p$ connected components of $Y_F^*(p^{r+1},p^rN)$. The condition on $a$ ensures that the map $u_a$ `twists' these $p$ components away from each other inside that single component of the target space, so that their images are disjoint. In particular, the result would certainly fail without this condition; for instance, if $a = 0$ then the map factors through $Y_\Q(m(p), mN)$.
\end{remark-numbered}

\begin{proof}[Proof of Theorem \ref{norm relation zeta}]
 The Cartesian diagram of Lemma \ref{cartesian} shows that
 \[(\alpha_{mp,\n,a})_*(\subc C_{mN}) = (\pi_1)^*(\subc \mathcal{Z}_{m,\n,a}).\]
 But by definition,
 \begin{align}\label{zeta equality}
 (\alpha_{mp,\n,a})_*(\subc C_{mN}) &= (\mathrm{pr}_2)_*(\subc \mathcal{Z}_{mp,\n',a})\notag \\
 &= (\mathrm{pr}_2)_*(\mathrm{pr}_1)_*(\subc \mathcal{Z}_{mp,\n,a}),
 \end{align}
 where the second equality follows from Lemma \ref{first norm relation}. From the commutative diagram \eqref{tau commutative diagram}, we know that $(\tau_p)_* = (\pi_2)_*(\mathrm{pr}_2)_*(\mathrm{pr}_1)_*$, whilst by definition $(U_p)_* = (\pi_2)_*(\pi_1)^*.$ Hence applying $(\pi_2)_*$ to equation \eqref{zeta equality} gives the result.
\end{proof}

We can now deduce the proof of the main theorem:

\begin{proof}[Proof of Theorem \ref{norm relation}]
 We need to show that, for each $j \in (\Z/p^r \Z)^\times$, we have
 \[ \sum_{\substack{k \in \Z / p^{r + 1} \Z \\ k = j \bmod p^r}} \subc \Xi_{p^{r+1}, \n, ka} = (U_p)_* \cdot \subc \Xi_{p^r, \n, ja}. \tag{\dag}\]

 We have a commutative diagram
 \[
  \xymatrix@C=20mm{
  		 \bigsqcup_k Y_F^*(p^{r+1}, p^{r+1}\n)^{(k)} \ar[r]^-{\tau_p}\ar[d]^{s_{p^{r+1}}}  &
  	  Y_F^*(p^r, p^r\n)^{(j)} \ar[d]^{s_{p^r}}\\
Y_{F, 1}^*(\n) \ar[r] &
Y_{F, 1}^*(\n).
}
 \]
 The left-hand side of $(\dag)$ is exactly the pushforward of $\sum_k \subc \mathcal{Z}_{p^{r+1}, \n, a}(k)$ along the left vertical arrow (where again we are using the notation $x(k)$ for the projection of $x$ to the $k$-th component). Theorem \ref{norm relation zeta} shows that the pushforward of the same element along $\tau_p$ is $(U_p)_* \subc \mathcal{Z}_{p^{r}, \n, a}(j)$. So it suffices to check that the operators $(U_p)_*$ on $Y_F^*(p^r, p^r \n)^{(j)}$ and on $Y_{F, 1}^*(\n)$ are compatible under $s_{p^r}$, which is clear by inspecting a set of single-coset representatives (using our running assumption that all primes above $p$ divide $\n$).\\
\\
The case where $r=0$ is a special case, since we must exclude the term $\subc \Xi_{p^1,\n,0}$ from the above sum, introducing the $-1$ term of the theorem.
\end{proof}

\section{Asai--Eisenstein elements in higher weights}\label{higher weights}

In the previous sections, we have defined compatible systems of classes in the Betti cohomology of the spaces $Y_{F, 1}^*(\n)$ with trivial coefficients. We now extend this to coefficients arising from non-trivial algebraic representations.

We fix, for the duration of this section, a prime $p$ and a finite extension $L$ of $\Qp$ large enough that $F$ embeds into $L$ (and we fix such an embedding). We let $R$ be the ring of integers of $L$. We also choose an ideal $\n$ of $\roi_F$ divisible by all primes above $p$. For convenience, we also assume that $\n$ is divisible by some integer $q \ge 4$; note that this is automatic if $p$ is unramified in $F$ and $p \ge 5$.

\subsection{Coefficients and moment maps}

 As above, we let $V_k(R) = \operatorname{Sym}^k R^2$ be the left $R[\GLt(\Z)]$-module of symmetric polynomials in 2 variables with coefficients in $R$. We will be interested in the dual $T_k(R) = V_k(R)^*$ (the module of symmetric tensors of degree $k$ over $R^2$). We view this as a local system of $R$-modules on $Y_{\Q, 1}(N)$, for any $N \ge 4$, in the usual way.

 Similarly, we have $R[\GLt(\roi_F)]$-modules $V_{kk}(R) = \operatorname{Sym}^k R^2 \otimes (\operatorname{Sym}^{k} R^2)^\sigma$, where $\GLt(\roi_F)$ acts on the first factor via the given embedding $\roi_F \hookrightarrow R$ and on the second via its Galois conjugate. We let $T_{kk}(R)$ be the $R$-dual of $V_{kk}(R)$. These give local systems on $Y_{F}^*(U)$ and $Y_F(U)$ for sufficiently small levels $U$.

 The linear functional dual to the second basis vector of $R^2$ defines a $\Gamma^*_{F, 1}(\n p^r)$-invariant linear functional on $\operatorname{Sym}^k (R/p^r)^2$, or on $\left(\operatorname{Sym}^k (R/p^r)^2\right)^\sigma$, and hence an invariant vector in $T_{kk}(R/p^r))$. This can be seen as a section of the corresponding local system, defining a class
 \[
 e_{F, k, r} \in \h^0\left( Y_{F, 1}^*(\n p^r), T_{kk}(R / p^r)\right).
 \]
 Since $R$ is $p$-adically complete, cup-product with this section defines a ``moment'' map
 \[
  \operatorname{mom}^{kk}: \varprojlim_t \h^\bullet(Y_{F, 1}^*(\n p^t), \Z) \otimes R \to \h^\bullet(Y_{F, 1}^*(\n), T_{kk}(R)).
 \]
 This is the Betti cohomology analogue of the moment maps in \'etale cohomology of modular curves considered in \cite[\S 4]{KLZ17}.

By Lemma \ref{lemma:level-compat}, the family of classes $\left( \Phi_{\n p^t, a}^{k, r}\right)_{t \ge 0}$ is compatible under pushforward, so it is a valid input to the maps $\operatorname{mom}^{kk}$ (after base-extending from $R$ to the group ring $R[(\Z/p^r)^\times]$).

\begin{definition}
 We let $\subc \Phi_{\n,a}^{k, r} \in \h^2\Big(Y_{F, 1}^*(\n),T_{kk}(R)\Big) \otimes_{R}R[(\Z/p^r)^\times]$ be the image of the compatible system
 \(\left( \Phi_{\n p^t, a}^{k, r}\right)_{t \ge 0}\)
 under $\operatorname{mom}^{kk}$.
\end{definition}

The action of the Hecke operator $(U_p)_*$ is well-defined both on $\h^2\big(Y_{F, 1}^*(\n),T_{kk}(R)\big)$ and on the inverse limit $\varprojlim_t \h^2\big(Y_{F, 1}^*(\n p^t), \Zp\big)$, and the maps $\operatorname{mom}^{kk}$ commute with this operator (cf.~\cite[Remark 4.5.3]{KLZ17}). So we deduce immediately from Theorem \ref{norm relation} that the classes $\subc \Phi_{\n,a}^{k, r}$, for any fixed $k \ge 1$ and varying $r$, satisfy the same norm-compatibility relation as the $k = 0$ classes.

\subsection{Relation to the weight $2k$ Eisenstein class}

We will later relate the $\subc \Phi_{\n,a}^{k, r}$ to values of $L$-functions. For this purpose the definition above, via a $p$-adic limiting process, is inconvenient; so we now give an alternative description of the same classes via higher-weight Eisenstein series for $\GLt / \Q$, directly generalising the classes obtained in weight 2 from realisations of Siegel units.

Let $k \ge 0$. The local system $T_{k}(\C)$ is exactly the flat sections of a vector bundle $T_{k, \mathrm{dR}}$ with respect to a connection $\nabla$ (the Gauss--Manin connection). The vector bundle $T_{k, \mathrm{dR}}$ is algebraic over $\Q$, and there is a comparison isomorphism
\begin{equation}
\label{eq:dRcomparison}
 \h^1(Y_{\Q, 1}(N), T_{k}(\Q)) \otimes \C \cong \h^1_{\mathrm{dR}}\left(Y_{\Q, 1}(N), T_{k, \mathrm{dR}}\right)\otimes_{\Q} \C.
\end{equation}
Moreover, the pullback of $T_{k, \mathrm{dR}}$ to the upper half-plane is the $k$-th symmetric tensor power of the relative de Rham cohomology of $\C / (\Z\tau + \Z)$, so it has a canonical section $(\mathrm{d}w)^{\otimes k}$, where $w$ is a coordinate on $\C$.

\begin{proposition}
 There exists a class $\operatorname{Eis}^k_N \in \h^1(Y_{\Q, 1}(N), T_k(\Q))$ whose image under the comparison isomorphism \eqref{eq:dRcomparison} is the class of the differential form
 \[
  -N^k F^{(k+2)}_{1/N}(\tau)\, \mathrm{d}w^{\otimes k} \,\mathrm{d}\tau.
 \]
\end{proposition}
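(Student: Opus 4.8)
The plan is to construct $\operatorname{Eis}^k_N$ as a moment-map image of the Siegel-unit classes, exactly parallel to the weight-$(k,k)$ construction over $F$ but now over $\Q$, and then identify its de Rham realisation by an explicit computation. Concretely, for each $r \ge 0$ the linear functional dual to the second basis vector of $(R/p^r)^2$ gives a $\Gamma_{\Q,1}(Np^r)$-invariant section $e_{\Q,k,r} \in \h^0(Y_{\Q,1}(Np^r), T_k(R/p^r))$, and cup-product with these sections defines a moment map $\operatorname{mom}^k \colon \varprojlim_t \h^1(Y_{\Q,1}(Np^t),\Z) \otimes R \to \h^1(Y_{\Q,1}(N), T_k(R))$, just as in the Bianchi case. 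Since the Siegel-unit classes $\subc C_{Np^t}$ are norm-compatible (Proposition \ref{siegel unit properties}(i)), they define an element $\subc C_{Np^\infty}$ of the source, and I would set $\operatorname{Eis}^k_N$ to be (a suitable normalisation of) $\operatorname{mom}^k(\subc C_{Np^\infty})$, with the factor $(c^2 - \langle c\rangle)$ removed after inverting it (legitimate since the target is a $\Q$-vector space). This produces a well-defined class in $\h^1(Y_{\Q,1}(N), T_k(\Q))$; the content of the proposition is then purely the identification of its image under \eqref{eq:dRcomparison}.

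For that identification I would work on the universal cover $\uhp$ and trace through the comparison isomorphism. The de Rham realisation of the Siegel unit $g_N$ is $-2\pi i\, F^{(2)}_{1/N}(\tau)\,\mathrm{d}\tau$ by Proposition \ref{siegel unit properties}(ii); the key point is to understand how the moment map interacts with this. The operation ``cup with $e_{\Q,k,r}$'' on the tower is, under the de Rham comparison, the $p$-adic incarnation of the differential operator attached to the section $(\mathrm dw)^{\otimes k}$ of $T_{k,\mathrm{dR}}$: more precisely, there is an explicit formula (this is exactly the de Rham side of the story in \cite[\S4]{KLZ17}) expressing $\operatorname{mom}^k$ applied to a pushforward tower in terms of the Eisenstein series $F^{(k+2)}_{1/N}$, with the $N^k$ coming from the scaling $z \mapsto Nz$ implicit in comparing the $1/N$-torsion section at level $Np^r$ with the canonical coordinate. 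Concretely, the contraction of the Eisenstein symbol at level $Np^r$ against the polynomial $Y^k$ (the $k$-th power of the linear functional dual to the second basis vector), rescaled by $N^k$, converges $p$-adically and de Rham-realises to $-N^k F^{(k+2)}_{1/N}(\tau)\,\mathrm dw^{\otimes k}\,\mathrm d\tau$; this is a direct generalisation of the weight-$2$ case $k=0$, which recovers $d\log g_N = -2\pi i\, F^{(2)}_{1/N}\,\mathrm d\tau$ up to the factor $2\pi i$ absorbed into the Betti/de Rham normalisation.

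The cleanest route is probably to reduce to a computation already in the literature: the class $\operatorname{Eis}^k_N$ I am constructing is precisely (a variant of) Beilinson's Eisenstein symbol, or Kings' polylogarithmic Eisenstein class, whose de Rham realisation as a higher-weight Eisenstein series is classical (see \cite{Kat04} for the analogous Eisenstein--Iwasawa classes, and \cite{kings15, KLZ17} for the $p$-adic interpolation). I would cite the relevant computation of the de Rham realisation of the Eisenstein--Iwasawa class — essentially \cite[\S4]{Kat04} together with the de Rham comparison — and observe that under our normalisations the moment-map image of $\subc C_{Np^\infty}$ matches $-N^k F^{(k+2)}_{1/N}(\tau)\,\mathrm dw^{\otimes k}\,\mathrm d\tau$, the $N^k$ being a bookkeeping factor from the level-$Np^r$ versus level-$N$ identification.

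The main obstacle I expect is not the existence of the class — that is a formal consequence of norm-compatibility plus $p$-adic completeness — but rather pinning down the de Rham realisation with the correct constant $-N^k$ and the correct Eisenstein series $F^{(k+2)}_{1/N}$ (as opposed to $F^{(k+2)}_{\beta}$ for some other $\beta$, or a linear combination). This requires carefully matching three normalisations: Kato's normalisation of Siegel units and the Eisenstein series $F^{(k)}_\beta$, the choice of invariant section $e_{\Q,k,r}$ (which is the $k$-th power of the functional dual to the \emph{second} basis vector, corresponding geometrically to a specific torsion section), and the de Rham comparison \eqref{eq:dRcomparison} which carries a $\tfrac{1}{2\pi i}$ per unit of ``weight''. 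Checking these is a finite but delicate calculation; the safest approach is to verify the $k=0$ case reproduces Proposition \ref{siegel unit properties}(ii) exactly, then show the moment map raises the weight in the expected way using the explicit description of $\operatorname{mom}^k$ on the universal cover (differentiating the theta/Siegel-unit data $k$ times and contracting against $(\mathrm dw)^{\otimes k}$), which produces the derivative structure that turns $F^{(2)}$ into $F^{(k+2)}$ with the stated normalisation.
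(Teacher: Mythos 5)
Your approach is genuinely different from the paper's, and it has a gap that you should address. The paper does \emph{not} construct $\operatorname{Eis}^k_N$ via moments of Siegel-unit classes: instead it invokes Beilinson--Levin's \emph{motivic} Eisenstein class $\operatorname{Eis}^k_{\mathrm{mot}, N} \in \h^1_{\mathrm{mot}}(Y_{\Q,1}(N), T_{k,\mathrm{mot}}(k+1))$, whose de Rham realisation is the stated Eisenstein series up to a power of $2\pi i$ (citing \cite[Theorem 4.3.3]{KLZ1}), and whose Betti realisation lands in $\h^1(Y_{\Q,1}(N), T_k(\Q)(k+1))$. Untwisting the Tate twist produces the class $\operatorname{Eis}^k_N$ with $\Q$-coefficients, and compatibility of the realisation functors under the comparison isomorphism \eqref{eq:dRcomparison} gives the formula. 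The role of the $p$-adic moment map in the paper is the \emph{reverse} of what you propose: the paper first has the rational class $\operatorname{Eis}^k_N$ in hand, then observes (via Kings' interpolation theorem) that it agrees, after base-extension to $\Qp$ and up to the $c$-factor, with the moment of the Siegel-unit tower. This is exactly the content of the paragraph after the proposition and of the Proposition following Lemma \ref{Z-and-Xi2}.

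The gap in your argument is rationality. The moment map $\operatorname{mom}^k$ takes values in $\h^1(Y_{\Q,1}(N), T_k(R))$ for a $p$-adic ring $R$, so after inverting the $c$-factor you only obtain a class with coefficients in $L = R \otimes \Qp$, not in $\Q$. The proposition asserts a class in $\h^1(Y_{\Q,1}(N), T_k(\Q))$, and the $p$-adic moment construction by itself does not produce such a class; some rational input (here, the motivic Eisenstein class) is needed. Your suggestion that one could verify rationality by matching the de Rham realisation is plausible in spirit, but as written you never actually close that loop, and doing so cleanly would amount to re-deriving the comparison between the polylogarithmic Eisenstein class and the moment construction — which is precisely Kings' theorem, i.e.\ the content the paper cites anyway. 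A small but real secondary error: the $c$-factor at weight $k$ is $(c^2 - c^{-k}\langle c\rangle)$, not $(c^2 - \langle c\rangle)$ (the latter is the $k=0$ case), so the normalisation you propose to remove is not the right one for $k > 0$.
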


\begin{proof}
 By work of Beilinson--Levin \cite{beilinsonlevin94}, there exists a \emph{motivic Eisenstein class}
 \[
  \operatorname{Eis}^k_{\mathrm{mot}, N} \in \h^1_{\mathrm{mot}}\big(Y_{\Q, 1}(N), T_{k, \mathrm{mot}}(k+1)\big),
 \]
 where $T_{k, \mathrm{mot}}$ is a relative Chow motive over $Y_1(N)$ with coefficients in $\Q$ (cut out by a suitable idempotent inside the relative motive of the $k$-fold fibre product of the universal elliptic curve over $Y_1(N)$).

 This motivic cohomology group admits a realisation map $r_{\mathrm{dR}}$ to $\h^1_{\mathrm{dR}}(Y_{\Q, 1}(N), T_{k, \mathrm{dR}})$, and $\operatorname{Eis}^k_{\mathrm{dR}, N} \coloneqq r_{\mathrm{dR}}(\operatorname{Eis}^k_{\mathrm{mot}, N})$ is given by the class of the differential form
 \[
   -N^k (2\pi i)^{k + 1}F^{(k+2)}_{1/N}(\tau)\, \mathrm{d}w^{\otimes k} \,\mathrm{d}\tau.
 \]
 (This is a restatement of a result of \cite{beilinsonlevin94}, but we refer to \cite[Theorem 4.3.3]{KLZ1} for the statement in this particular form). There is also a Betti realisation map
 \[ r_B: \h^1_{\mathrm{mot}}\big(Y_{\Q, 1}(N), T_{k, \mathrm{mot}}(k+1)\big) \to \h^1(Y_{\Q, 1}(N), T_k(\Q)(k+1)),
 \]
 where the twist $(k+1)$ in Betti cohomology denotes tensor product with $\Q(k+1) = (2 \pi i)^{k+1} \Q \subset \C$; and these two realisations are compatible under the comparison isomorphism \eqref{eq:dRcomparison} (see \cite[\S 2.2]{KLZ1}). Identifying $\Q(k+1)$ with $\Q$ in the obvious manner, we obtain a class $\operatorname{Eis}^k_{N} \in \h^1(Y_{\Q, 1}(N), T_k(\Q))$ whose image under the comparison isomorphism \eqref{eq:dRcomparison} is $(2\pi i)^{-(k+1)}\operatorname{Eis}^k_{\mathrm{dR}, N}$, as required.
\end{proof}

Via base-extension, we can consider $\operatorname{Eis}^k_N$ as a class in $\h^1(Y_{\Q, 1}(N), T_k(\Qp))$. This class does not generally lie in the lattice $\h^1(Y_{\Q, 1}(N), T_k(\Zp))$; but for any $c > 1$ as above, there exists a class $\subc\operatorname{Eis}^k_N \in \h^1(Y_{\Q, 1}(N), T_{k}(\Zp))$ such that the equality
\[
 {}_c \operatorname{Eis}^k_N = \left(c^2 - c^{-k} \langle c \rangle\right) \operatorname{Eis}^k_N
\]
holds in $\h^1(Y_{\Q, 1}(N), T_{k}(\Qp))$. (This follows from Kings' theory of $p$-adic interpolation of Eisenstein classes \cite{kings15}; see \cite[Theorem 4.4.4]{KLZ17}.)

Letting $R$ be as in the previous section, for any $j \in \{0, \dots, k\}$ we can regard $T_{2k-2j}(R)$ as a $\SLt(\Z)$-invariant submodule of the $\SLt(\roi_F)$-module $T_{kk}(R)$, via the \emph{Clebsch--Gordan map}
\begin{equation}
 \label{eq:CG}
 \operatorname{CG}^{[k, k, j]}: T_{2k-2j}(R) \to T_{kk}(R),
\end{equation}
(normalised as in \cite[\S 5.1]{KLZ1}). Thus we obtain a map
\[
 (\iota_{m, \n, a})_* \circ \operatorname{CG}^{[k, k, j]}: \h^1\big(Y_{\Q}(m, mN), T_{2k-2j}(\Zp)\big) \otimes_{\Zp} R \to \h^2\big(Y_{F}^*(m, m\n), T_{kk}(R)\big).
\]

\begin{definition}
 Let $\subc \Xi_{m, \n, a}^{k, j} \in \h^2\big(Y_{F, 1}^*(\n), T_{kk}(R)\big)$ be the image of $(\iota_{m, \n, a})_* \operatorname{CG}^{[k, k, j]}\left(\subc\operatorname{Eis}^{2k-2j}_{m N}\right)$ under restriction to the identity component followed by $(s_m)_*$. We similarly write $\Xi_{m, \n, a}^{k, j}$ (without $c$) for the analogous element with $L$-coefficients, defined using $\operatorname{Eis}^k_{m N}$.
\end{definition}

This definition is convenient for $p$-adic interpolation, but to relate this element to special values it is convenient to have an alternative description involving pushforward along the map $\kappa_{a/m}: Y_{F, 1}^*(m^2\n) \to Y_{F, 1}^*(\n)$, as above. (Note that if $p \mid m$, this pushforward map only acts on cohomology with coefficients in $T_{kk}(L)$, not $T_{kk}(R)$, since it corresponds to the action of a matrix whose entries are not $p$-adically integral.)

\begin{lemma}
 \label{Z-and-Xi2}
 As elements of $\h^2\big(Y_{F, 1}^*(\n), T_{kk}(L)\big)$ we have
 \[ \Xi_{m, \n, a}^{k, j} = m^{j} \cdot (\kappa_{a/m})_*\left( \iota_*\mathrm{CG}^{[k,k,j]}(\operatorname{Eis}^{2k-2j}_{m^2 N})\right).\]
\end{lemma}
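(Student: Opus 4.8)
The plan is to compare the two recipes for $\Xi_{m,\n,a}^{k,j}$ — one going via the spaces $Y_F^*(m,m\n)$ and the twist $u_a = \smallmatrd{1}{-a}{0}{1}$ and the scaling $s_m = \smallmatrd{m}{0}{0}{1}$, the other going via $\iota$, the Clebsch--Gordan map and $\kappa_{a/m}$ directly on $Y_{F,1}^*(m^2\n)$ — exactly as in Proposition \ref{Z-and-Xi}, but now keeping careful track of what the matrices do to the coefficient sheaf $T_{kk}$. Recall from the proof of Proposition \ref{Z-and-Xi} that $\kappa_{ja/m}$ factors as the composite
\[
 Y^*_{F,1}(m^2\n) \labelrightarrow{\smallmatrd{1}{0}{0}{m}} Y^*_F(1(m),m\n) \xrightarrow{\ \cong\ } Y^*_F(m,m\n)^{(j)} \labelrightarrow{u_a} Y^*_F(m,m\n)^{(j)} \labelrightarrow{s_m} Y^*_{F,1}(\n),
\]
and that the first arrow, on the $\mathbf{H}_3$-level, is pushforward along $z \mapsto mz$ on $\uhp$, which sends $\subc\mathrm{Eis}^{2k-2j}_{m^2N}$ to (a multiple of) $\subc\mathrm{Eis}^{2k-2j}_{mN}$ by the norm-compatibility of the Eisenstein classes. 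The only new ingredient compared to the weight-$0$ case is that the scaling map $\smallmatrd{1}{0}{0}{m}$ now acts non-trivially on the coefficient system $T_{2k-2j}$.

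First I would pin down this coefficient factor. The map $z\mapsto mz$ on $\uhp$ corresponds to right translation by $\smallmatrd{1}{0}{0}{m}$ on the adelic level, and its effect on the local system $T_{2k-2j}$ is multiplication by $\det\!\smallmatrd{1}{0}{0}{m}^{-j}$-type scalar coming from the normalisation of the Clebsch--Gordan map and the way $T_{2k-2j}(R) \hookrightarrow T_{kk}(R)$ sits inside: concretely, pushing $\mathrm{CG}^{[k,k,j]}(\mathrm{Eis}^{2k-2j}_{m^2N})$ forward along $\smallmatrd{1}{0}{0}{m}$ introduces a factor of $m^j$ (this is the same normalisation bookkeeping as in \cite[\S5.1, \S5.4]{KLZ1}, where the Rankin--Eisenstein classes pick up exactly such powers under degeneracy maps). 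So I would state and prove a small lemma: pushforward along $Y_{\Q,1}(m^2N)\to Y_\Q(1(m),mN)$, combined with the Clebsch--Gordan embedding, sends $\mathrm{CG}^{[k,k,j]}(\mathrm{Eis}^{2k-2j}_{m^2N})$ to $m^j\,\mathrm{CG}^{[k,k,j]}(\mathrm{Eis}^{2k-2j}_{mN})$. Granting this, I would then run the factorisation above verbatim as in Proposition \ref{Z-and-Xi}: the definition of $\Xi_{m,\n,a}^{k,j}$ via $(s_m)_*$ applied to $(\iota_{m,\n,a})_*\mathrm{CG}^{[k,k,j]}(\subc\mathrm{Eis}^{2k-2j}_{mN})$ on the identity component unwinds, after inserting the $m^j$ from the lemma, into $m^j\cdot(\kappa_{a/m})_*\big(\iota_*\mathrm{CG}^{[k,k,j]}(\mathrm{Eis}^{2k-2j}_{m^2N})\big)$, which is the claim. (Working with $L$-coefficients throughout is exactly what lets us use $\kappa_{a/m}$, which is not integral when $p\mid m$.)

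The main obstacle is the lemma on the coefficient-sheaf normalisation: one must be careful that the factor is genuinely $m^j$ and not, say, $m^{2k-2j}$ or $m^{2j}$ or $m^{k}$, and this depends on the precise normalisation of $\mathrm{CG}^{[k,k,j]}$ (the one from \cite[\S5.1]{KLZ1}) and of the Eisenstein class $\mathrm{Eis}^{2k-2j}_N$ fixed in the preceding proposition. I would derive it by working on the universal elliptic curve: on $\uhp$, $T_{k,\mathrm{dR}}$ has the canonical section $(\mathrm{d}w)^{\otimes k}$ with $w$ the coordinate on $\C/(\Z\tau+\Z)$, and the isogeny $z\mapsto mz$ corresponds to an isogeny of elliptic curves under which $\mathrm{d}w$ pulls back with a factor of $m$ (or $1/m$, depending on direction); tracking this through the $j$-fold contraction defining $\mathrm{CG}^{[k,k,j]}$ and through the $F^{(2k-2j+2)}_{1/N}$-normalisation of $\mathrm{Eis}^{2k-2j}$ yields the power $m^j$. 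Everything else — injectivity of the relevant maps, the identification of components, compatibility of pushforwards — is identical to the weight-$0$ argument already given and needs only to be cited.
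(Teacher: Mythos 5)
Your proposal is correct and follows essentially the same route as the paper: both reduce to the factorisation of $\kappa_{a/m}$ established in the proof of Proposition \ref{Z-and-Xi}, and both locate the only new ingredient in the normalisation discrepancy of the Clebsch--Gordan embedding across the degeneracy map, which produces the factor $m^j$. The paper states this discrepancy without derivation (citing the analogous \'etale computation in \cite[Theorem 5.4.1]{KLZ17}), whereas you sketch how to derive it from the action on the coefficient sheaf; that added detail is sound in spirit, and the rest of your argument is exactly the paper's.
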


\begin{proof}
 This follows from the definition of $\subc \Xi_{m, \n, a}^{k, j}$ above in exactly the same way as Proposition \ref{Z-and-Xi} (which is the case $j=k=0$), noting that the Clebsch--Gordan maps at levels $Y^*_{F}(m, m\n)$ and $Y_{F, 1}^*(m^2 \n)$ differ by the factor $m^j$; compare the proof of \cite[Theorem 5.4.1]{KLZ17}.
\end{proof}

\begin{proposition}
 For any $r \ge 0$ we have
 \begin{align*}
  {}_c \Phi^{k, r}_{\n, a} &=
 \sum_{t \in (\Z/p^r)^\times} {}_c \Xi_{p^r, \n, at}^{k, 0} \otimes [t] \\
 \\
 &= \left(c^2 - c^{-2k} [c]^2 \langle c \rangle\right)\cdot \sum_{t \in (\Z/p^r)^\times} \Xi_{p^r, \n, at}^{k, 0} \otimes [t],
\end{align*}
 where the first equality takes place in $\h^2(Y_{F, 1}^*(\n), T_{kk}(R)) \otimes_R R[(\Z / p^r)^\times]$ and the second after base-extension to $L$.
\end{proposition}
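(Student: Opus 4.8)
\emph{The first equality.} Since $\operatorname{mom}^{kk}$ is $R$-linear, and hence induces an $R[(\Z/p^r)^\times]$-linear map on the relevant inverse limits, it suffices to show that for each fixed $s \in (\Z/p^r)^\times$ one has
\[ \operatorname{mom}^{kk}\!\left( \big(\subc\Xi_{p^r,\, \n p^t,\, as}\big)_{t \ge 0} \right) = \subc\Xi^{k,0}_{p^r,\,\n,\,as}, \]
where on the left we use the compatible system furnished by Lemma \ref{lemma:level-compat}. The plan is to rewrite both sides as pushforwards of Siegel-unit classes on modular curves for $\GLt/\Q$. On the left, Proposition \ref{Z-and-Xi} (applied at level $\n p^t$ with $m = p^r$) identifies $\subc\Xi_{p^r, \n p^t, as}$ with the image of $\subc C_{p^r N p^t} \in \h^1\!\big(Y_{\Q}(p^r, p^r N p^t),\Z\big)$ under $(s_{p^r})_* \circ [\text{projection to the $s$-th component}] \circ (\iota_{p^r, \n p^t, a})_*$; the point of this description, as opposed to the original one via $\kappa_{a/p^r}$, is that it involves only the $p$-integral matrices $u_a = \smallmatrd{1}{-a}{0}{1}$ and $\smallmatrd{p^r}{0}{0}{1}$, so it is defined over $R$ and behaves well under the mod-$p^t$ reductions used in the moment map. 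The key input is then the compatibility of the moment maps with these pushforwards: $\operatorname{mom}^{kk}$ intertwines $(\iota_{m,\n,a})_*$, $(s_m)_*$ and projection to a connected component with the corresponding pushforwards composed with the analogous $\GLt/\Q$ moment map $\operatorname{mom}^{2k}_{\Q}$ and the Clebsch--Gordan map $\operatorname{CG}^{[k,k,0]}$. By the projection formula this reduces to two elementary facts: (i) the canonical section $e_{F,k,t}$ pulls back along $\iota$ to $\operatorname{CG}^{[k,k,0]}$ of the $\GLt/\Q$ canonical section $e_{\Q,2k,t}$ — because ``the coefficient of $Y^{2k}$'' precomposed with the multiplication map $\operatorname{Sym}^k \otimes \operatorname{Sym}^k \to \operatorname{Sym}^{2k}$ is the product of the two ``coefficient of $Y^{k}$'' functionals; and (ii) the matrices $\smallmatrd{1}{-a}{0}{1}$ and $\smallmatrd{p^r}{0}{0}{1}$ fix these canonical sections. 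Granting this, the left-hand side becomes
\[ (s_{p^r})_* \circ [\text{id.\ comp.}] \circ (\iota_{p^r,\n,as})_* \circ \operatorname{CG}^{[k,k,0]}\!\Big(\operatorname{mom}^{2k}_{\Q}\big((\subc C_{p^r N p^t})_{t \ge 0}\big)\Big), \]
again using Proposition \ref{Z-and-Xi} to trade the ``$s$-th component'' for the identity component at the cost of the index $as$. Finally, by construction (Kings' $p$-adic interpolation of Eisenstein classes, \cite{kings15}; cf.\ \cite[Theorem 4.4.4]{KLZ17}), $\operatorname{mom}^{2k}_{\Q}\big((\subc C_{p^r N p^t})_t\big) = \subc\operatorname{Eis}^{2k}_{p^r N}$, and the expression above is then exactly the definition of $\subc\Xi^{k,0}_{p^r,\n,as}$.

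\emph{The second equality.} This I would deduce from the first by feeding the relation $\subc\operatorname{Eis}^{2k}_{p^r N} = \big(c^2 - c^{-2k}\langle c\rangle\big)\operatorname{Eis}^{2k}_{p^r N}$ (the case ``$k \rightsquigarrow 2k$, $N \rightsquigarrow p^r N$'' of the identity recorded in \S\ref{higher weights}) into the formula just obtained for $\subc\Xi^{k,0}_{p^r,\n,at}$. The term $c^2\operatorname{Eis}^{2k}_{p^r N}$ contributes $c^2\,\Xi^{k,0}_{p^r,\n,at}$, the corresponding element built from $\operatorname{Eis}^{2k}_{p^r N}$. For the $\langle c\rangle$-term one pushes the diamond operator $\langle c\rangle$ at level $p^r N$ through $\operatorname{CG}^{[k,k,0]}$, $(\iota_{p^r,\n,at})_*$ and $(s_{p^r})_*$: since all of these are equivariant for the diamond actions, $\langle c\rangle$ descends to the diamond operator $\langle c\rangle$ on $Y_{F,1}^*(\n)$ composed with a translation $at \mapsto atc^{2}$ of the twisting/component index (the exponent $2$ because the diamond at the $p$-part of the level scales the determinant, and hence the connected-component index recorded by the group-ring variable, by $c^2$); reindexing the sum $\sum_{t}(-)\otimes[t]$ converts this translation into multiplication by $[c^2]=[c]^2$ in $\Zp[(\Z/p^r)^\times]$. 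Hence the $\langle c\rangle$-term contributes $-c^{-2k}[c]^2\langle c\rangle\cdot\sum_t\Xi^{k,0}_{p^r,\n,at}\otimes[t]$, and adding the two contributions gives the claimed identity. (One point of care: $\operatorname{CG}^{[k,k,0]}$ is only $\SLt$-equivariant, so it commutes with the $\GLt$-operator $\langle c\rangle$ only up to the scalar by which the centre acts on the relevant representations; this scalar is precisely absorbed by the power of $c$ already present in $\subc\operatorname{Eis}^{2k}$, so no further factor appears.)

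\emph{Expected main obstacle.} The delicate step is the compatibility of the moment maps with the pushforward maps in the first equality: checking that $\operatorname{mom}^{kk}$ genuinely commutes with $(\iota_{m,\n,a})_*$ and $(s_m)_*$ while carrying the canonical section to its Clebsch--Gordan image, that this is compatible with projecting onto connected components and with corestriction from level $\n p^t$ down to level $\n$, and that it can all be arranged to take place over $R$ (rather than merely $L$) by systematically using the $Y_F^*(m,m\n)$-model, whose twisting matrices are $p$-integral. Conceptually this is just the Betti counterpart of the compatibilities established for \'etale cohomology in \cite[\S\S4--5]{KLZ17}, so it should be a matter of careful bookkeeping rather than of a new idea.
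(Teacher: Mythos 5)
Your proof follows the same strategy as the paper's (very terse) sketch: split $\operatorname{mom}^{kk}$ through the pushforward via the $Y^*_F(m,m\n)$ model, invoke compatibility of the $\GLt/\Q$ and $\GLt/F$ moment maps with the Clebsch--Gordan map and the canonical sections, cite Kings' interpolation theorem to identify $\operatorname{mom}^{2k}_{\Q}$ of the Siegel-unit family with $\subc\operatorname{Eis}^{2k}$, and then commute the diamond operator past the twist. The paper only gestures at this (``one sees easily that $\operatorname{mom}^k$ and $\operatorname{mom}^{kk}$ are compatible via $\iota$'' and a citation of Kings/KLZ17 Prop.\ 5.1.2), so your write-up is a useful amplification, and your point about working over $R$ via the $Y^*_F(m,m\n)$-model rather than through the $\kappa_{a/m}$ description is exactly the right thing to highlight.

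The one place you should tighten up is the parenthetical justification of the $[c]^2$ factor. The phrase ``the diamond at the $p$-part of the level scales the determinant\ldots by $c^2$'' is not accurate: the $\SLt$-representative of $\langle c\rangle$ (the one appearing in \eqref{eq:cdsymmetry}, of the shape $\smallmatrd{c^{-1}}{0}{0}{c}$ modulo the level) has determinant $1$, so it does not move the connected-component index at all; and the natural $\GLt$-representative with a $1$ in the upper-left scales the determinant by $c^{\pm 1}$, not $c^{\pm 2}$. What actually produces the factor is conjugating the unipotent twist $\smallmatrd{1}{-at}{0}{1}$ past the diagonal part of (a representative of) $\langle c\rangle$, which replaces the twist index $at$ by $atc^{-2}$ (in the convention for which the Eisenstein--Kings identity reads $\subc\operatorname{Eis}^{2k} = (c^2-c^{-2k}\langle c\rangle)\operatorname{Eis}^{2k}$); re-indexing the sum $\sum_t(-)\otimes[t]$ then converts this into multiplication by $[c]^{2}$. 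As written you assert the translation is $at\mapsto atc^{2}$ but also that re-indexing gives $[c^2]$: those two claims are mutually inconsistent (translation by $c^{2}$ re-indexes to $[c^{-2}]$), so there is a sign slip in at least one of them. The final answer $[c]^2$ agrees with the Proposition, so this is a bookkeeping error rather than a structural one, but since the whole point of this step is to pin down the power of $[c]$, it is worth redoing the computation carefully, fixing once and for all which $\SLt$-representative of $\langle c\rangle$ is being used and whether the twist is a left Möbius action (the $\kappa_{a/m}$ picture of Lemma \ref{Z-and-Xi2}) or a right adelic translation (the $\iota_{m,\n,a}$ picture), as these differ by an inverse.
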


\begin{proof}
 This proposition is very close to \cite[Proposition 5.1.2]{KLZ17} so we only briefly sketch the proof. There is a $\GLt / \Q$ moment map $\mom^{k}$ for any $k \ge 0$, and one sees easily that the maps $\mom^k$ and $\mom^{kk}$ are compatible via the inclusion $Y_{\Q, 1}(N) \hookrightarrow Y^*_{F, 1}(\n)$. However, the main theorem of \cite{kings15} shows that the higher-weight Eisenstein classes are exactly the moments of the family of Siegel-unit classes $\subc C_{Np^\infty}$, up to a factor depending on $c$.
\end{proof}

There is an analogous statement for $j \ne 0$, but this can only be formulated after reduction modulo $p^r$:

\begin{proposition}\label{prop:0-to-j}
 For $r \ge 1$, as classes in $\h^2(Y_{F, 1}^*(\n), T_{kk}(R/p^r))$ we have
 \[ {}_c \Xi_{p^r, \n, a}^{k, j} = (a - a^\sigma)^j j!\binom{k}{j}^2 {}_c \Xi_{p^r, \n, a}^{k, 0}.\]
\end{proposition}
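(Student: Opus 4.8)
The plan is to compare the two sides after reducing modulo $p^r$, where the moment map construction applies and the Clebsch--Gordan maps interact with the invariant vectors $e_{F,k,r}$ in a controlled way. First I would recall that, by definition, ${}_c\Xi_{p^r,\n,a}^{k,j}$ is built by applying $(\iota_{m,\n,a})_* \circ \mathrm{CG}^{[k,k,j]}$ to the higher-weight Eisenstein class $\subc\operatorname{Eis}^{2k-2j}_{mN}$ and then pushing forward via $s_m$; and that (by the Proposition relating $\Phi^{k,r}$ to the moments of Siegel units) the $j=0$ class ${}_c\Xi_{p^r,\n,a}^{k,0}$ is, up to the factor in $c$, the image under $\operatorname{mom}^{kk}$ of the Siegel-unit classes $\subc C_{(p^r)^2 N}$. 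So both sides can be expressed as images of the \emph{same} underlying $\Zp$-coefficient Siegel-unit class under two different composites: $\operatorname{mom}^{kk} \circ (\text{pushforwards})$ versus $(\text{pushforwards}) \circ \mathrm{CG}^{[k,k,j]} \circ \operatorname{mom}^{2k-2j}$, where $\operatorname{mom}^{2k-2j}$ is the $\GLt/\Q$ moment map. The heart of the matter is therefore a purely representation-theoretic identity comparing these two composites on the level of the invariant vectors.

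The key local computation is the following. The moment map $\operatorname{mom}^{kk}$ is cup-product with the class $e_{F,k,r}$ corresponding to the $\Gamma^*_{F,1}(\n p^r)$-invariant functional $(\text{second coordinate})^{\otimes k}$ in each tensor factor of $T_{kk}(R/p^r) = \operatorname{Sym}^k \otimes (\operatorname{Sym}^k)^\sigma$. On the $\GLt/\Q$ side, $\operatorname{mom}^{2k-2j}$ is cup-product with the analogous functional $(\text{second coordinate})^{\otimes(2k-2j)}$ in $T_{2k-2j}(R/p^r)$. So I would reduce the whole statement to checking that, under the Clebsch--Gordan map $\mathrm{CG}^{[k,k,j]}: T_{2k-2j}(R) \to T_{kk}(R)$ (normalised as in KLZ), the pushforward of the functional $(\text{2nd coord})^{\otimes(2k-2j)}$ along $\iota$ differs from $(\text{2nd coord})^{\otimes k} \otimes (\text{2nd coord})^{\otimes k}$ by precisely the scalar $(a-a^\sigma)^j j!\binom{k}{j}^2$. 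The appearance of $a$ comes from the fact that $\iota_{p^r,\n,a}$ involves the twist $u_a = \smallmatrd{1}{-a}{0}{1}$ acting on the $F$-side, so the $\SLt(\Z)$-invariant functional being pulled back is really the one twisted by $u_a$; and the difference $a - a^\sigma$ is exactly the ``second coordinate'' of the relevant vector in $\roi_F \otimes R$ decomposed against the basis giving the two embeddings $\roi_F \rightrightarrows R$. I would extract the combinatorial factor $j!\binom{k}{j}^2$ directly from the explicit formula for $\mathrm{CG}^{[k,k,j]}$: it is the coefficient that appears when one contracts the degree-$(2k-2j)$ symmetric tensor against the two degree-$k$ factors, evaluated on the distinguished ``second coordinate'' vectors. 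This is essentially the content of the proof of \cite[Theorem 5.4.1]{KLZ17}, transported from \'etale to Betti cohomology; I would cite that and indicate that the only change is replacing the \'etale moment maps by the Betti ones $\operatorname{mom}^{kk}$, $\operatorname{mom}^{2k-2j}$, which behave identically for this computation.

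Concretely, the steps in order: (1) Express both ${}_c\Xi^{k,j}_{p^r,\n,a}$ and ${}_c\Xi^{k,0}_{p^r,\n,a}$ modulo $p^r$ as images of the Siegel-unit class $\subc C_{(p^r)^2 N} \in \h^1(Y_{\Q,1}((p^r)^2N),\Z)$ under the relevant composites, using the definitions together with the compatibility of $\operatorname{mom}^k$ and $\operatorname{mom}^{kk}$ along $\iota$ and the fact (from the Proposition relating $\Phi^{k,r}$ to moments of Siegel units, via \cite{kings15}) that higher-weight Eisenstein classes are moments of Siegel-unit classes. (2) Reduce to the local identity: the two composites differ by the scalar $(a-a^\sigma)^j j!\binom{k}{j}^2$ because $\mathrm{CG}^{[k,k,j]}$ composed with the $u_a$-twisted second-coordinate functional equals that scalar times $e_{F,k,r}$. (3) Verify this scalar by the explicit formula for the Clebsch--Gordan map, tracking the $(a-a^\sigma)^j$ from the twist $u_a$ and the $j!\binom{k}{j}^2$ from the contraction coefficients, exactly as in \cite[proof of Theorem 5.4.1]{KLZ17} and \cite[\S5.1]{KLZ1}. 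I expect step (3) --- pinning down the combinatorial constant with the correct normalisation of $\mathrm{CG}^{[k,k,j]}$ --- to be the main obstacle, since it requires care with conventions; but it is a finite, self-contained calculation that has already been carried out in the \'etale setting, so I would present it as a citation with a brief indication of the two sources of the two factors rather than reproducing it in full.
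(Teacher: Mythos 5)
Your proposal correctly identifies the structure of the argument and matches the approach the paper relies on: the paper's own proof is simply a citation to \cite[Corollary 8.1.5]{LLZ16}, whose \'etale-cohomology proof is precisely the three-step reduction you outline (express both sides as moments of the same Siegel-unit classes, reduce to a representation-theoretic identity between the invariant vector $e_{F,k,r}$ and the Clebsch--Gordan image of the $\GLt/\Q$ invariant vector twisted by $u_a$, and compute the scalar). One small calibration: the citation in step (3) to \cite[Theorem 5.4.1]{KLZ17} is slightly off target --- that result corresponds to Lemma \ref{Z-and-Xi2} here, which produces the $m^j$ factor from the level-$m$ vs level-$m^2$ comparison, not the $(a-a^\sigma)^j j!\binom{k}{j}^2$ factor; the latter combinatorial computation is carried out in \cite[\S 8.1]{LLZ16}, though your description of where each piece of the scalar originates (the twist $u_a$ giving $(a-a^\sigma)^j$, the contraction coefficients of $\mathrm{CG}^{[k,k,j]}$ giving $j!\binom{k}{j}^2$) is accurate.
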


\begin{proof}
 The proof of this proposition is identical to the corresponding statement for \'etale cohomology of Hilbert modular varieties, which is Corollary 8.1.5 of \cite{LLZ16}.
\end{proof}

%
%
\section{The $p$-adic Asai $L$-function}
\label{p-adic L-function}
In this short section, we put together the norm-compatibility and $p$-adic interpolation relations proved above in order to define a measure on $\Zp^\times$ with values in a suitable eigenspace of the Betti $\h^2$. This will be our $p$-adic $L$-function.

To ease the notation, we will assume for the rest of the paper that $p$ is odd. Similar arguments -- with some additional care -- should also hold for $p=2$, but we leave this case to the interested reader.


\subsection{Constructing the measure}

As in \S5, let $L$ be a finite extension of $\Qp$, with a chosen embedding of $F$ into $L$, and write $R$ for the ring of integers in $L$. In previous sections, we defined the elements
\[
\subc \Phi_{\n,a}^{k, r} = \sum_{t\in(\Z/p^r)^\times} \subc \Xi^{k, 0}_{p^r,\n,at}\otimes[t] \in \h^2\big(Y_{F, 1}^*(\n),T_{kk}(R)\big)\otimes R[(\Z/p^r)^\times],
\]
for $k \ge 0$ and $r \ge 0$. We also showed that if $a$ is a generator of $\roi_F/(p\roi_F + \Z)$, then under the natural projection maps in the second factor, we have
\[
\pi_{r+1}(\subc \Phi_{\n,a}^{r+1}) =(U_p)_*\left(\subc \Phi_{\n,a}^r\right) \quad \text{for $r \ge 1$}.
\]

\begin{definition}
Let us write
\begin{align*}
\mathcal{L}_k(\n, R) &\defeq \h^2(Y_{F, 1}^*(\n),T_{kk}(R)) / (\text{torsion}),\\
 \text{and}\quad \mathcal{L}_k^{\mathrm{ord}}(\n, R) &\defeq e_{\mathrm{ord},*}\mathcal{L}_k(\n, R),
 \end{align*}
where $e_{\mathrm{ord},*} \defeq \lim_{n\rightarrow\infty}(U_p)_*^{n!}$ is the ordinary projector.
\end{definition}

Clearly $e_{\mathrm{ord},*}\h^2(Y_{F, 1}^*(\n),T_{kk}(R))$ is an $R$-direct-summand of $\h^2(Y_{F, 1}^*(\n),T_{kk}(R))$, which is a finitely-generated $R$-module, since $Y_{F, 1}^*(\n)$ is homotopy-equivalent to a finite simplicial complex. On this direct summand, $(U_p)_*$ is invertible, so we may make the following definition:

\begin{definition}
 Define
 \[
 \subc \Phi_{\n, a}^{k, \infty} \defeq \big[(U_p)_*^{-r} e_{\mathrm{ord},*}(\subc \Phi_{\n,a}^{k, r})\big]_{r\ge 1} \in \mathcal{L}_k^{\mathrm{ord}}(\n, R) \otimes_R R[[\Zp^\times]],
 \]
 where $R[[\Zp^\times]] = \varprojlim_r R[(\Z / p^r)^\times]$ is the Iwasawa algebra of $\Zp^\times$ with $R$-coefficients.
\end{definition}

We can interpret $R[[\Zp^\times]]$ as the dual space of the space of continuous $R$-valued functions on $\Zp^\times$. For $\mu \in R[[\Zp^\times]]$ and $f$ a continuous function, we write this pairing as $(\mu, f) \mapsto \int_{\Zp^\times} f(x) \mathrm{d}\mu(x)$.

\begin{proposition}
 \label{prop:interpPhi}
 For $j$ an integer with $0 \le j \le k$, and $\chi: \Zp^\times \to \Cp^\times$ a finite-order character of conductor $p^r$ with $r \ge 1$, we have
 \[
  \int_{\Zp^\times} x^j \chi(x)\, \mathrm{d}\,\subc \Phi_{\n, a}^{k, \infty}(x) = \frac{1}{(a - a^\sigma)^j j!\binom{k}{j}^2} (U_p)_*^{-r} e_{\mathrm{ord},*} \sum_{t \in (\Z/p^r)^\times}\chi(t)  \subc \Xi_{p^r,\n, at}^{k, j}
 \]
 as elements of $L(\chi) \otimes_R \mathcal{L}_k^{\mathrm{ord}}(\n, R)$. For $\chi$ trivial we have
 \[
  \int_{\Zp^\times} x^j \, \mathrm{d}\,\subc \Phi_{\n, a}^{k, \infty}(x) = \frac{1}{(a - a^\sigma)^j j!\binom{k}{j}^2}(1 - p^j(U_p)_*^{-1}) e_{\mathrm{ord},*}  \subc \Xi_{1, \n, a}^{k, j}.
 \]
\end{proposition}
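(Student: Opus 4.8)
The plan is to introduce the analogous ``weight $j$'' object $\subc\Phi_{\n,a}^{k,\infty,j}$ built from the classes $\subc\Xi_{p^r,\n,a}^{k,j}$, to identify it with an explicit $x^j$-twist of $\subc\Phi_{\n,a}^{k,\infty}$, and then to obtain both interpolation formulas by specialising at characters; this is the Betti analogue of the corresponding step in the \'etale Euler system constructions of \cite{LLZ16,KLZ17}.

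First I would establish that, for fixed $k$ and $j$ and varying $r$, the classes $\subc\Xi_{p^r,\n,a}^{k,j}$ satisfy the same norm-compatibility as the $j=0$ classes. The whole chain of arguments behind Theorem~\ref{norm relation} --- the Cartesian square of Lemma~\ref{cartesian}, the zeta-element relation Theorem~\ref{norm relation zeta}, and the intervening lemmas --- is insensitive to the coefficient sheaf, so one may repeat it with $\subc C$ replaced by $\operatorname{CG}^{[k,k,j]}(\subc\operatorname{Eis}^{2k-2j})$, using the horizontal norm-compatibility of the higher-weight Eisenstein classes (Kings' interpolation, \cite{kings15}, \cite[\S 4.4]{KLZ17}) in place of Proposition~\ref{siegel unit properties}(i). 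This produces, for $r\ge1$, the relation $\pi_{r+1}\!\left(\sum_{t}\subc\Xi_{p^{r+1},\n,at}^{k,j}\otimes[t]\right)=(U_p)_*\sum_{t}\subc\Xi_{p^{r},\n,at}^{k,j}\otimes[t]$, and at $r=0$ the modification $\pi_1\!\left(\sum_{t}\subc\Xi_{p,\n,at}^{k,j}\otimes[t]\right)=\left((U_p)_*-p^j\right)\subc\Xi_{1,\n,a}^{k,j}$; here the factor $p^j$ (rather than the $1$ of the weight-$0$ case) arises because the excluded $a=0$ term satisfies $\subc\Xi_{p,\n,0}^{k,j}=p^j\,\subc\Xi_{1,\n,a}^{k,j}$, the power $p^j$ being exactly the $m^j$ of Lemma~\ref{Z-and-Xi2}. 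Hence $\subc\Phi_{\n,a}^{k,\infty,j}\defeq\bigl[(U_p)_*^{-r}e_{\mathrm{ord},*}\sum_{t}\subc\Xi_{p^{r},\n,at}^{k,j}\otimes[t]\bigr]_{r\ge1}$ is a well-defined element of $\mathcal{L}_k^{\mathrm{ord}}(\n,R)\otimes_R R[[\Zp^\times]]$, whose value at a character $\chi$ of conductor $p^r$ with $r\ge1$ is $(U_p)_*^{-r}e_{\mathrm{ord},*}\sum_{t\in(\Z/p^r)^\times}\chi(t)\,\subc\Xi_{p^r,\n,at}^{k,j}$ and whose total mass is $(1-p^j(U_p)_*^{-1})\,e_{\mathrm{ord},*}\,\subc\Xi_{1,\n,a}^{k,j}$.

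Next I would prove the identity
\[ \subc\Phi_{\n,a}^{k,\infty,j}\;=\;(a-a^\sigma)^j\,j!\,\binom{k}{j}^2\,\bigl(x^j\cdot\subc\Phi_{\n,a}^{k,\infty}\bigr), \]
where $x^j\cdot\mu$ denotes the measure $f\mapsto\int_{\Zp^\times}f(x)\,x^j\,\mathrm{d}\mu(x)$. An element of $\mathcal{L}_k^{\mathrm{ord}}(\n,R)\otimes_R R[[\Zp^\times]]$ is determined by its images at all finite levels $p^r$, so it suffices to compare $[t]$-coefficients modulo $p^{r}$ for each $r\ge1$. On the left the coefficient is $(U_p)_*^{-r}e_{\mathrm{ord},*}\,\subc\Xi_{p^{r},\n,at}^{k,j}$, which by Proposition~\ref{prop:0-to-j} (applied with $a$ replaced by $at$; since $t\in\Z$ one has $t^\sigma=t$, hence $(at-(at)^\sigma)^j=(a-a^\sigma)^j t^j$) is congruent modulo $p^r$ to $(a-a^\sigma)^j j!\binom{k}{j}^2\,t^j\,(U_p)_*^{-r}e_{\mathrm{ord},*}\,\subc\Xi_{p^{r},\n,at}^{k,0}$. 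On the right the coefficient is $(a-a^\sigma)^j j!\binom{k}{j}^2$ times $\int_{t+p^{r}\Zp}x^j\,\mathrm{d}\subc\Phi_{\n,a}^{k,\infty}$; writing $x=t+p^r u$ and expanding binomially, this integral is congruent modulo $p^r$ to $t^j$ times the $[t]$-coefficient $(U_p)_*^{-r}e_{\mathrm{ord},*}\,\subc\Xi_{p^{r},\n,at}^{k,0}$ of $\subc\Phi_{\n,a}^{k,\infty}$. The two sides therefore agree modulo $p^r$ for all $r$, which proves the identity. Specialising it at a non-trivial $\chi$ of conductor $p^r$ yields the first formula, and taking total masses (using the computation of the total mass of $\subc\Phi_{\n,a}^{k,\infty,j}$ above) yields the second.

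I expect the main obstacle to be the first step: the twisted classes $\subc\Xi_{p^r,\n,a}^{k,j}$ with $j\ge1$ do not occur in a norm-compatible family anywhere in the preceding sections, so one must re-run the proof of Theorem~\ref{norm relation} with non-trivial coefficients and, in particular, track the $m^j$ normalisation of the Clebsch--Gordan maps through the $r=0$ boundary term, in order to obtain the precise Euler factor $1-p^j(U_p)_*^{-1}$. Everything after that is routine manipulation with the $p$-adic filtration.
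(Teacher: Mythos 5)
Your proposal is correct, and both the inputs and the conclusion match the paper's proof; the packaging differs somewhat. The paper does not introduce a free-standing measure $\subc\Phi_{\n,a}^{k,\infty,j}$: instead it works directly with $\subc\Phi_{\n,a}^{k,\infty}$, expanding the integral $\int x^j\chi(x)\,\mathrm{d}\subc\Phi_{\n,a}^{k,\infty}$ modulo $p^h$ for arbitrary $h\ge r$ straight from the definition (which yields a sum over $(\Z/p^h)^\times$ of $t^j\chi(t)\,\subc\Xi_{p^h,\n,at}^{k,0}$, with no need for the coset binomial expansion you use), then applying Proposition~\ref{prop:0-to-j} to replace $t^j\,\subc\Xi_{p^h,\n,at}^{k,0}$ by a multiple of $\subc\Xi_{p^h,\n,at}^{k,j}$, and finally regrouping over residues modulo $p^r$ and invoking ``the same argument as Theorem~\ref{norm relation}'' (the $j$-twisted norm relation, used inline rather than isolated as a lemma) to collapse the inner sum to $(U_p)_*^{h-r}\subc\Xi_{p^r,\n,at}^{k,j}$, or to $(U_p)_*^h(1-p^j(U_p)_*^{-1})\subc\Xi_{1,\n,a}^{k,j}$ when $r=0$. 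Your route is mathematically equivalent --- it rests on exactly the same two facts, Proposition~\ref{prop:0-to-j} and the $j$-twisted version of Theorem~\ref{norm relation} --- but you extract the norm relation as a stand-alone statement, build the auxiliary measure, and prove the measure identity $\subc\Phi_{\n,a}^{k,\infty,j}=(a-a^\sigma)^j\,j!\,\binom{k}{j}^2\,(x^j\cdot\subc\Phi_{\n,a}^{k,\infty})$ by a level-by-level congruence. That is a cleaner conceptual packaging (and the identity you prove is a little more information than the proposition asks for), at the cost of having to establish norm-compatibility of the $\subc\Xi^{k,j}$ at all levels rather than just over the finite range $p^r$ to $p^h$ that the paper's argument actually needs; your bookkeeping of the $p^j$ in the $r=0$ boundary term via Lemma~\ref{Z-and-Xi2} is the correct way to track it.
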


\begin{proof}
 For $j = 0$ this is immediate from the definition of $\subc \Phi_{\n, a}^{k, \infty}$ (with the Euler factor in the case of trivial $\chi$ arising from the fact that the norm of $(U_p)_*^{-1}\, \subc \Phi_{\n, a}^{k, 1}$ is not $\subc \Phi_{\n, a}^{k, 0}$ but $(1 - (U_p)_*^{-1})\, \subc\Phi_{\n, a}^{k, 0}$, by the base case of Theorem \ref{norm relation}).

 The case $j \ge 1$ is more involved. It suffices to show the equality modulo $p^h$ for arbitrarily large $h$. Modulo $p^h$ with $h \ge r$, we have
 \begin{align*}
   (a - a^\sigma)^j j!\binom{k}{j}^2 \int_{\Zp^\times} x^j \chi(x)\, \mathrm{d}\,\subc \Phi_{\n, a}^{k, \infty}(x) & \\
   =(a - a^\sigma)^j j!\binom{k}{j}^2 (U_p)_*^{-h} e_{\mathrm{ord},*} \sum_{t \in (\Z / p^h)^\times} t^j \chi(t) \subc \Xi_{p^h, \n, at}^{k, 0} & \qquad\text{(definition of $\subc \Phi_{\n, a}^{k, \infty}$)}\\
   = (U_p)_*^{-h} e_{\mathrm{ord},*} \sum_{t \in (\Z / p^h)^\times}\chi(t) \subc \Xi_{p^h\n, at}^{k, j} & \qquad\text{(Proposition \ref{prop:0-to-j})} \\
   = (U_p)_*^{-h}e_{\mathrm{ord},*} \sum_{t \in (\Z / p^r)^\times} \chi(t) \Bigg( \sum_{\substack{s \in (\Z/p^h)^\times \\ s = t \bmod p^r}} \subc \Xi_{p^h, \n, as}^{k, j}\Bigg).
 \end{align*}
 The bracketed term is $(U_p)_*^{h-r} \subc \Xi_{p^r, \n, at}^{k, j}$ if $r \ge 1$, while for $r = 0$ it is $(U_p)_*^{h}(1 - p^j (U_p)_*^{-1}) \subc \Xi_{1, \n, a}^{k, j}$, by the same argument as the proof of Theorem \ref{norm relation}.
\end{proof}

 Now suppose $\Psi$ is a Bianchi modular eigenform of parallel weight $(k, k)$ and level $U_{F,1}(\n)$. Recall that if $E$ is the extension of $F$ generated by the Hecke eigenvalues of $\Psi$, and $\mathscr{P}$ a prime of $E$ above $p$, we defined in \S \ref{sect:modsymb} an element
 \[
  \phi_\Psi^* = \jmath^*\left(\omega_{\Psi}/\Omega_{\Psi}\right) \in \h^1_{\mathrm{c}}( Y_{F, 1}^*(\n), V_{kk}(E)),
 \]
 well-defined up to elements of $E^\times$ that are units at $\mathscr{P}$. Enlarging $L$ if necessary, we fix an embedding $E_{\mathscr{P}} \hookrightarrow L$, and regard $\phi^*_\Psi$ as an element of $\h^1_{\mathrm{c}}(Y_{F, 1}^*(\n), V_{kk}(R))$, well-defined modulo $R^\times$.

\begin{assumption}
 We shall assume that the Bianchi modular eigenform $\Psi$ is \emph{ordinary} with respect to this embedding, i.e.~that the $(U_p)^*$-eigenvalue of $\Psi$ lies in $R^\times$.
\end{assumption}

Since the adjoint of $(U_p)_*$ is $(U_p)^*$, this assumption implies that the linear functional on $\mathcal{L}_k(\n, R)$ given by pairing with $\phi^*_\Psi$ factors through projection to the $(U_p)_*$-ordinary part $\mathcal{L}_k^{\mathrm{ord}}(\n, R)$.

We also need to fix a value of $a$, which must generate the quotient $\frac{\roi_F \otimes \Zp}{\Zp}$. It suffices to take $a = \tfrac{1 + \sqrt{-D}}{2}$ if $D = -1 \bmod 4$, and $a = \tfrac{\sqrt{-D}}{2}$ if $D = 0 \bmod 4$; then we have $\roi_F = \Z + \Z a$, and $a - a^\sigma = \sqrt{-D}$.

\begin{definition}\label{def:p-adic l-function}
Define the \emph{$p$-adic Asai $L$-function} $\subc L_p^{\mathrm{As}}(\Psi) \in R[[\Zp^\times]]$ to be
\[\subc L_p^{\mathrm{As}}(\Psi) \defeq \big\langle \phi^*_\Psi, \subc \Phi_{\n,a}^{k, \infty}\big\rangle, \]
where $\langle-, -\rangle$ denotes the (perfect) Poincar\'e duality pairing
\begin{equation}\label{eqn:pairing}
 \h^1_{\mathrm{c}}(Y_{F, 1}^*(\n), V_{kk}(R)) \times
 \frac{\h^2(Y_{F, 1}^*(\n), T_{kk}(R))}{(\mathrm{torsion})} \longrightarrow R.
\end{equation}
\end{definition}

\begin{remark-numbered}
 If we relax the assumption that $\Psi$ be ordinary, and let $h = v_p(c(p\roi_F, \Psi))$ (where the valuation is normalised such that $v_p(p) = 1$), then we can still make sense of $\subc L_p^{\mathrm{As}}(\Psi)$ as long as $h < 1$; however, it is no longer a measure, but a distribution of order $h$. This can be extended to $h < 1 + k$ using the same techniques as in \cite{LZ16}. However, if $k = 0$ and $h \ge 1$ (as in the case of an eigenform associated to an elliptic curve supersingular at the primes above $p$) then we are stuck.
\end{remark-numbered}

\begin{proposition}
 The class $\subc L_p^{\mathrm{As}}(\Psi)$ is invariant under translation by $[-1] \in \Zp^\times$.
\end{proposition}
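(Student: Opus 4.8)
The plan is to exploit the involution $w \defeq \smallmatrd{-1}{}{}{1}$ of $Y_{F,1}^*(\n)$, and of each $Y_{F,1}^*(\n p^t)$, which already appeared in Proposition \ref{prop:xi-sign}. Recall that $w$ normalises $\Gamma_{F,1}^*(\n p^t)$, hence defines an orientation-preserving involution of each of these spaces, and that by Proposition \ref{prop:map-j} (applied with the unit $\epsilon = -1 \in \roi_F^\times$) it preserves the fibres of $\jmath : Y_{F,1}^*(\n) \to Y_{F,1}(\n)$, so $\jmath \circ w = \jmath$. The idea is that $w^*$ acts on the two factors of the Poincar\'e pairing \eqref{eqn:pairing} in compensating ways, and that the resulting mismatch is exactly translation by $[-1]$ on the Iwasawa-algebra coefficients.

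First I would observe that $w^*\phi_\Psi^* = \phi_\Psi^*$, which is immediate from $\phi_\Psi^* = \jmath^*(\phi_\Psi)$ together with $\jmath \circ w = \jmath$. Next, since $w$ preserves the orientation of $Y_{F,1}^*(\n)$ (as noted in the proof of Proposition \ref{prop:xi-sign}), it acts trivially on $\h^3_{\mathrm{c}}(Y_{F,1}^*(\n), R) \cong R$, so the pairing \eqref{eqn:pairing} satisfies $\langle w^* u, w^* v \rangle = \langle u, v\rangle$. Therefore
\[ \subc L_p^{\mathrm{As}}(\Psi) = \big\langle \phi_\Psi^*, \subc\Phi_{\n,a}^{k,\infty}\big\rangle = \big\langle w^*\phi_\Psi^*, w^*\subc\Phi_{\n,a}^{k,\infty}\big\rangle = \big\langle \phi_\Psi^*, w^*\subc\Phi_{\n,a}^{k,\infty}\big\rangle, \]
and it remains to show that $w^*\subc\Phi_{\n,a}^{k,\infty} = [-1] \cdot \subc\Phi_{\n,a}^{k,\infty}$, where $[-1] \in \Zp^\times$ acts through the coefficient ring $R[[\Zp^\times]]$.

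To prove this I would work at finite level and then pass to the limit. The involution $w$ commutes with the pushforward maps $Y_{F,1}^*(\n p^{t+1}) \to Y_{F,1}^*(\n p^t)$, and it fixes the invariant sections $e_{F,k,t}$ (because $\smallmatrd{-1}{}{}{1}$ fixes the functional dual to the second basis vector of $R^2$, and likewise on the $\sigma$-conjugate factor), so $w^*$ commutes with the moment map $\operatorname{mom}^{kk}$. Applying Proposition \ref{prop:xi-sign} at each level $\n p^t$, together with the reindexing $t \mapsto -t$ of the sum $\sum_{t}(-)\otimes[t]$ in the group ring, gives $w^*(\subc\Phi_{\n p^t, a}^{r}) = (\mathrm{id}\otimes[-1])(\subc\Phi_{\n p^t, a}^{r})$ for every $t$; applying $\operatorname{mom}^{kk}$ then yields $w^*\subc\Phi_{\n,a}^{k,r} = (\mathrm{id}\otimes[-1])\,\subc\Phi_{\n,a}^{k,r}$ for all $r$. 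Finally $w^*$ commutes with $(U_p)_*$ — the matrices $\smallmatrd{-1}{}{}{1}$ and $\smallmatrd{p}{}{}{1}$ commute, and $w$ is compatible with the degeneracy maps of \S\ref{hecke operators} — hence also with $e_{\mathrm{ord},*}$, so passing to the inverse limit over $r$ gives $w^*\subc\Phi_{\n,a}^{k,\infty} = [-1]\cdot\subc\Phi_{\n,a}^{k,\infty}$. Combining this with the displayed identity shows $\subc L_p^{\mathrm{As}}(\Psi) = [-1] \cdot \subc L_p^{\mathrm{As}}(\Psi)$.

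The one point requiring care is the absence of a residual sign in $w^*\subc\Phi_{\n,a}^{k,r} = (\mathrm{id}\otimes[-1])\,\subc\Phi_{\n,a}^{k,r}$: this is precisely the content of Proposition \ref{prop:xi-sign}, where in weight $0$ the sign coming from the orientation behaviour of $\iota_*$ is cancelled by the sign $-1$ with which the corresponding involution $\rho$ of $Y_{\Q,1}(m^2N)$ acts on the Siegel-unit class $\subc C_{m^2 N}$ (Lemma \ref{lemma:siegel-unit-conjugation}); the moment-map description then carries this over verbatim to all weights. The remaining verifications — that $w^*$ commutes with $\operatorname{mom}^{kk}$, with $(U_p)_*$, and with $e_{\mathrm{ord},*}$ — are routine, and are where the small amount of genuine work lies.
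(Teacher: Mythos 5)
Your proposal is correct and is essentially the paper's own argument, just with all the steps filled in: the paper's one-line proof invokes exactly Proposition \ref{prop:xi-sign} and the observation that $\smallmatrd{-1}{}{}{1}$ acts trivially on $\phi_\Psi^*$ (which it deduces from the triviality of its action on $\omega_\Psi$, i.e.\ from $\jmath$ being invariant under $\smallmatrd{-1}{}{}{1}$, which is the same fact as your $\jmath\circ w = \jmath$). Your spelled-out chain --- $w$-invariance of the Poincar\'e pairing, the reindexing $t\mapsto -t$ turning $\subc\Xi_{p^r,\n,-at}$ into a shift by $[-1]$, and the compatibility of $w^*$ with $\operatorname{mom}^{kk}$, $(U_p)_*$, and $e_{\mathrm{ord},*}$ --- is exactly what the paper leaves implicit.
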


\begin{proof} This follows from Proposition \ref{prop:xi-sign}, since $\smallmatrd{-1}{}{}{1}_*$ acts trivially on $\omega_{\Psi}$ (and thus on $\phi^*_{\Psi}$).
\end{proof}

If we interpret $R[[\Zp^\times]]$ as the algebra of $R$-valued rigid-analytic functions on the ``weight space'' $\mathcal{W} = \Hom(\Zp^\times, \Cp^\times)$ parametrising characters of $\Zp^\times$, then this proposition shows that $\subc L_p^{\mathrm{As}}(\Psi)$ vanishes identically on the subspace $\mathcal{W}^- \subset \mathcal{W}$ parametrising odd characters.

We close this section by giving notation that will be useful when stating the interpolation properties of $L_p^{\mathrm{As}}(\Psi)$.

\begin{notation}
 Let $\chi$ be a Dirichlet character of conductor $p^r$ for some $r\ge 0$, and let $j$ be any integer. We write
 \[
  \subc L_p^{\mathrm{As}}(\Psi,\chi,j) \defeq \int_{\Zp^\times}\chi(x)x^j\, \mathrm{d}\, \subc L_p^{\mathrm{As}}(\Psi)(x).
 \]
\end{notation}

 \subsection{Getting rid of $c$}\label{sec:getting rid of c}

 \begin{proposition}
  Suppose that the nebentypus character $\varepsilon_\Psi: (\roi_F / \n)^\times \to R^\times$ of $\Psi$ has non-trivial restriction to $(\Z / N\Z)^\times$, and moreover this restriction does not have $p$-power conductor. Then there exists a measure $L_p^{\mathrm{As}}(\Psi) \in L \otimes_R R[[\Zp^\times]]$ such that
  \[ {}_c L_p^{\mathrm{As}}(\Psi) = (c^2 - c^{-2k} \varepsilon_\Psi(c) [c]^2) L_p^{\mathrm{As}}(\Psi)\]
  for all valid integers $c$.
 \end{proposition}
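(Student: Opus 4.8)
The key input is the $c$--$d$ symmetry relation \eqref{eq:cdsymmetry} for the Siegel units, which after passing to Betti realisations and applying the moment maps becomes a relation between ${}_c\operatorname{Eis}^k_N$ and ${}_d\operatorname{Eis}^k_N$: namely $(d^2 - d^{-k}\langle d\rangle){}_c\operatorname{Eis}^k_N = (c^2 - c^{-k}\langle c\rangle){}_d\operatorname{Eis}^k_N$ in $\h^1(Y_{\Q,1}(N), T_k(\Zp))$. Pushing this forward through the maps $\iota_*$, $\mathrm{CG}^{[k,k,0]}$, $(\kappa_{a/m})_*$ (or equivalently $(\iota_{p^r,\n,a})_*$ and $(s_{p^r})_*$), and then through the ordinary projector and the inverse system defining $\subc\Phi_{\n,a}^{k,\infty}$, one obtains an identity in $\mathcal{L}_k^{\mathrm{ord}}(\n,R)\otimes R[[\Zp^\times]]$ of the shape
\[
 (d^2 - d^{-k}\langle d\rangle_{\Q})\, {}_c\Phi_{\n,a}^{k,\infty} = (c^2 - c^{-k}\langle c\rangle_{\Q})\, {}_d\Phi_{\n,a}^{k,\infty},
\]
where $\langle d\rangle_{\Q}$ denotes the diamond operator at $d \in (\Z/N\Z)^\times$ acting on $Y_{\Q,1}(m^2N)$, transported to an operator on the Bianchi cohomology. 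The first step of the plan is therefore to make precise how this $\GLt/\Q$ diamond operator acts after pushforward to $Y_{F,1}^*(\n)$: one checks that $\langle d\rangle_{\Q}$ on $Y_{\Q,1}(m^2N)$ is intertwined by $\iota$ with the Bianchi diamond operator $\langle d\rangle_F$ on $Y_{F,1}^*(m^2\n)$ (using that $d\in\Z$ acts on $\roi_F/\n$ via its image), and that this commutes appropriately with $\kappa_{a/m}$ and $s_m$. There will also be an extra $[d]^2$ factor on the weight-space variable, since $d$ acts on the two $\Zp[(\Z/p^r)^\times]$-summand indices $t\mapsto d^{-1}t$ (twice, once for each copy — or once, depending on normalisation; this bookkeeping must be done carefully), matching the $[c]^2$ in the proposition; and the $c^{-2k}$ rather than $c^{-k}$ comes from the Clebsch--Gordan normalisation for the pair of symmetric powers, exactly as in \cite[Theorem 4.4.4]{KLZ17}.

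With that identity in hand, pairing both sides against $\phi_\Psi^*$ and using that $\phi_\Psi^*$ is an eigenvector for the diamond operators with $\langle d\rangle_F \phi_\Psi^* = \varepsilon_\Psi(d)\phi_\Psi^*$ for $d\in(\roi_F/\n)^\times$ — hence $\langle d\rangle_{\Q}$ acts as $\varepsilon_\Psi(d) = \varepsilon_{\Psi,\Q}(d)$ for $d\in(\Z/N\Z)^\times$ — gives
\[
 (d^2 - d^{-k}\varepsilon_{\Psi,\Q}(d)[d]^2)\, {}_c L_p^{\mathrm{As}}(\Psi) = (c^2 - c^{-k}\varepsilon_{\Psi,\Q}(c)[c]^2)\, {}_d L_p^{\mathrm{As}}(\Psi)
\]
in $R[[\Zp^\times]]$. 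The final step is to divide out by $c$: one wants to show that the element $\theta_c \coloneqq c^2 - c^{-2k}\varepsilon_\Psi(c)[c]^2 \in R[[\Zp^\times]]$ is a non-zero-divisor for all valid $c$, and that the ratios ${}_cL_p^{\mathrm{As}}(\Psi)/\theta_c$ glue to a single element of $L\otimes_R R[[\Zp^\times]]$ — equivalently, that the ideal generated by the $\theta_c$ (as $c$ ranges over integers $>1$ coprime to $6\n$) contains $1$ after inverting $p$, or at least that $\gcd$ of the $\theta_c$ becomes a unit in $L\otimes_R R[[\Zp^\times]]$. Here the hypothesis on $\varepsilon_{\Psi,\Q}$ enters: under the identification of $R[[\Zp^\times]]$ with rigid functions on $\mathcal{W}$, $\theta_c$ evaluated at a character $x\mapsto\chi(x)x^s$ becomes $c^2 - c^{-2k}\varepsilon_\Psi(c)\chi(c)^2$; if $\varepsilon_{\Psi,\Q}$ is non-trivial and not of $p$-power conductor, one can choose two integers $c$ (using the Chinese remainder theorem, prescribing $c$ modulo the prime-to-$p$ part of the conductor of $\varepsilon_{\Psi,\Q}$) for which the corresponding functions $\theta_c$, $\theta_{c'}$ have no common zero on $\mathcal{W}$, so that $(\theta_c,\theta_{c'}) = (1)$ in $L\otimes_R R[[\Zp^\times]]$ and we may set $L_p^{\mathrm{As}}(\Psi) \coloneqq u\cdot{}_cL_p^{\mathrm{As}}(\Psi)/\theta_c + u'\cdot{}_{c'}L_p^{\mathrm{As}}(\Psi)/\theta_{c'}$ for a Bézout pair $(u,u')$; a short check using the displayed relation shows this is independent of all choices and satisfies the asserted transformation law.

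The main obstacle is the second step above: carefully tracking how the $\GLt/\Q$ diamond operator $\langle c\rangle$ — which a priori only knows about $c \bmod m^2N$ — acts after the compositions $(s_m)_*\circ(\kappa_{a/m})_*\circ\iota_*\circ\mathrm{CG}^{[k,k,0]}$ and the ordinary-projection/inverse-limit process, and verifying that it becomes exactly $\varepsilon_\Psi(c)[c]^2$ (with the correct power of $c^{-1}$ from the weight-$k$ twist in Kings' interpolation) when paired with $\phi_\Psi^*$. This is essentially a compatibility-of-Hecke/diamond-operators computation of the same flavour as the last paragraph of the proof of Theorem \ref{norm relation}, but with the extra subtlety that $c$ is prime to the level $\n$ only in its prime-to-$p$ part after the moment map has been applied; getting the interaction with the $[c]$-translation on $\Zp[[\Zp^\times]]$ right is where the care is needed. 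The Bézout/non-zero-divisor argument at the end is routine once one knows $R[[\Zp^\times]]\otimes L$ is (a finite product of) regular domains and that the relevant $\theta_c$ have disjoint zero loci; the case excluded by the hypothesis — $\varepsilon_{\Psi,\Q}$ trivial or of $p$-power conductor — is precisely when all the $\theta_c$ share the common zero corresponding to the character under which they degenerate, which is why one must then pass to pseudo-measures as indicated in the text.
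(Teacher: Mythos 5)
Your approach shares the paper's two-step structure: first derive (``by bookkeeping'') the symmetry relation
\[
\bigl(d^2 - d^{-2k}\varepsilon_\Psi(d)[d]^2\bigr)\,{}_cL_p^{\mathrm{As}}(\Psi) = \bigl(c^2 - c^{-2k}\varepsilon_\Psi(c)[c]^2\bigr)\,{}_dL_p^{\mathrm{As}}(\Psi)
\]
from the $c$--$d$ relation \eqref{eq:cdsymmetry} for Siegel units, pushed through the moment maps, Clebsch--Gordan, pushforward and pairing with $\phi_\Psi^*$; then divide out. Where you diverge is the second step. The paper simply observes that, under the hypothesis on $\varepsilon_{\Psi,\Q}$, one can choose a \emph{single} integer $d$ for which $\theta_d := d^2 - d^{-2k}\varepsilon_\Psi(d)[d]^2$ is a unit in $L\otimes_R R[[\Zp^\times]]$, and sets $L_p^{\mathrm{As}}(\Psi) := \theta_d^{-1}\,{}_dL_p^{\mathrm{As}}(\Psi)$; the symmetry relation then gives the required transformation law for every $c$ at once. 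You instead run a B\'ezout/partition-of-unity argument with two integers $c,c'$ chosen by CRT so that $(\theta_c,\theta_{c'})=(1)$ in $L\otimes_R R[[\Zp^\times]]$. Your route is a priori more flexible (it would also work if no single $\theta_d$ were a unit but the ideal the $\theta_c$ generate is still everything), at the cost of having to verify disjointness of zero loci rather than unit-ness; the paper's single-$d$ claim, once granted, subsumes yours (take $c=c'=d$, $u=\theta_d^{-1}$, $u'=0$). Both are sound strategies, and both leave the same work implicit in ``some bookkeeping.''

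Two corrections to your write-up. First, your defining formula in the B\'ezout step should be
\[
L_p^{\mathrm{As}}(\Psi) := u\cdot{}_cL_p^{\mathrm{As}}(\Psi) + u'\cdot{}_{c'}L_p^{\mathrm{As}}(\Psi), \qquad u\theta_c + u'\theta_{c'}=1,
\]
\emph{without} the divisions by $\theta_c$ and $\theta_{c'}$; with your formula one computes $\theta_d L_p^{\mathrm{As}}(\Psi) = (u+u')\,{}_dL_p^{\mathrm{As}}(\Psi)$, which is not what is wanted unless $u+u'=1$. Second, your displayed symmetry relation has the exponent $-k$ where the statement requires $-2k$; you correctly remark that the exponent $-2k$ comes from the weight-$(2k+2)$ Eisenstein class (equivalently the Clebsch--Gordan normalisation), but then the displayed formula contradicts this. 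Relatedly, the evaluation of $\theta_c$ at a locally algebraic character $x\mapsto \chi(x)x^s$ should read $c^2 - c^{2s-2k}\varepsilon_\Psi(c)\chi(c)^2$, not $c^2 - c^{-2k}\varepsilon_\Psi(c)\chi(c)^2$.
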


 \begin{proof}
  Some bookkeeping starting from \eqref{eq:cdsymmetry} shows that if $c, d$ are two integers $> 1$, both coprime to $6Np$, then the element
  \[ (d^2 - d^{-2k}[d]^2 \varepsilon_{\Psi}(d)) \cdot {}_c L_p^{\mathrm{As}}(\Psi) \]
  is symmetric in $c$ and $d$. Moreover, since $\varepsilon_{\Psi}$ does not have $p$-power conductor, we can choose $d$ such that $(d^2 - d^{-2k}[d]^2 \varepsilon_{\Psi}(d))$ is a unit in $L \otimes_R R[[\Zp^\times]]$. So if we define
  \[ L_p^{\mathrm{As}}(\Psi) = (d^2 - d^{-2k} \varepsilon_\Psi(d) [d]^2)^{-1} {}_d L_p^{\mathrm{As}}(\Psi), \]
  then this is independent of the choice of $d$ and it has the required properties.
 \end{proof}

 If the restriction $\varepsilon_{\Psi, \Q}$ of $\varepsilon_\Psi$ has $p$-power conductor, then the quotient $L_p^{\mathrm{As}}(\Psi)$ is well-defined as an element of the fraction ring of $R[[\Zp^\times]]$, i.e.~as a meromorphic function on $\mathcal{W}$ with coefficients in $L$. (We shall refer to such elements as \emph{pseudo-measures}.) The only points of $\mathcal{W}$ at which $L_p^{\mathrm{As}}(\Psi)$ may have poles are those corresponding to characters of the form $z \mapsto z^{k+1} \nu(z)$, where $\nu^2 = \varepsilon_{\Psi, \Q}^{-1}$.

 \begin{remark-numbered}
  Note that if $p = 1 \bmod 4$ and $\varepsilon_{\Psi, \Q}(\rho) = (-1)^k$, where $\rho$ is either of the square roots of $-1$ in $\Zp$, then both of the characters at which $L_p^{\mathrm{As}}$ could have a pole actually lie in $\mathcal{W}^-$, so we see immediately that $L_p^{\mathrm{As}}$ is a measure.

  In the remaining cases, where one or both potential poles are in $\mathcal{W}^+$, we suspect that these potential poles are genuine poles if and only if the corresponding complex-analytic Asai $L$-functions have poles (which can only occur if $\Psi$ is either of CM type, or a twist of a base-change form). However, we have not proved this.
 \end{remark-numbered}

%
%
\section{Interpolation of critical $L$-values}
 \label{interpolation}

 In this section, we want to show that the values of the Asai $p$-adic $L$-function at suitable locally-algebraic characters are equal to special values of the complex $L$-function.

 \subsection{Automorphic forms for $G^*$}

  We shall need to work with automorphic forms for the group $G^*$ of Definition \ref{def:G*} above. We refer the reader to \cite{LLZ16} for an account of automorphic forms for the group $G^*$, and their relation to those for $G$, in the analogous setting where $F$ is a \emph{real} quadratic field.

  For $U^* \subset G^*(\hat\Z)$, we let $S_{kk}(U^*)$ denote the space of automorphic forms for $G^*$ of level $U^*$ and weight $(k, k)$. These are defined in the same way as for $G$; that is, they are functions
  \[ G^*(\Q)_+ \backslash G^*(\A_\Q) / U^* \to V_{2k+2}(\C)\]
  transforming appropriately under $\R_{>0} \cdot \SUt(\C)$, and with suitable harmonicity and growth conditions. If $U^* = U \cap G^*$ for an open compact subgroup $U$ of $G(\A_\Q^f)$, then there is a natural pullback map $\jmath^*: S_{kk}(U) \to S_{kk}(U^*)$.

  Any $\f \in S_{kk}(U^*_{F, 1}(\n))$ is uniquely determined by its restriction to $G^*(\R)$, since $Y^*_{F, 1}(\n)$ is connected. This restriction can be described by a Fourier--Whittaker expansion of the form
  \[
   \f\left(\matrd{y_\infty}{x_\infty}{0}{1}\right) = |y_\infty| \sum_{\zeta \in F^\times} W_f(\zeta, \f) W_{\infty}(\zeta y_\infty) e_F(\zeta x),
  \]
  where $W_f(-, \f)$ is a function on $F^\times$ (supported in $\mathcal{D}^{-1}$). Of course, if $\f = \jmath^*(\Psi)$ for some $\Psi \in S_{kk}(U_{F, 1}(\n))$, then $W_f(-, \f)$ is simply the restriction of $W_f(-, \Psi)$ to $F^\times \subset (\A_F^f)^\times$.

  \begin{remark-numbered}
   The theory of automorphic representations of $G^*$ is more complicated than that of $G$: not all cuspidal representations are globally generic, and the naive formulation of strong multiplicity one is false, due to the presence of non-trivial global $L$-packets. In practice, this means that although cusp forms for $G^*$ do have Fourier--Whittaker expansions, one cannot necessarily recover all of their Fourier--Whittaker coefficients from the action of the Hecke algebra of $G^*(\A_\Q^f)$. However, this will not concern us here, since we will only consider automorphic forms for $G^*$ which are restrictions of eigenforms for $G$, or twists of these.
  \end{remark-numbered}

  \begin{lemma}
   Let $\f \in S_{kk}(U_{F, 1}^*(\n))$, $\chi$ a Dirichlet character of conductor $m$, and $a \in \roi_F / m \roi_F$. Then the function
   \[ R_{a, \chi} \f = \sum_{t \in (\Z/m)^\times} \chi(t) \kappa_{at/m}^*(\f) \]
   is in $S_{kk}(U_{F, 1}^*(m^2 \n))$, and its Fourier--Whittaker coefficients for $\zeta \in \mathcal{D}^{-1}$ are given by
   \[
    W_f(\zeta, R_{a, \chi} \f) = G(\chi)\, \bar\chi(\operatorname{tr}_{F/\Q} a\zeta)\, W_f(\zeta, \f),
   \]
   where $G(\chi) \defeq \sum_{t \in (\Z/m)^\times} \chi(t) e^{2\pi i t/m}$ is the Gauss sum of $\chi$.
  \end{lemma}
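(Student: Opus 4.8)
The plan is to reduce the whole statement to the Fourier--Whittaker expansion of $\f$ together with an elementary Gauss sum. For the assertion that $R_{a,\chi}\f$ lies in $S_{kk}(U_{F, 1}^*(m^2\n))$: for each integer $t$ the matrix $\smallmatrd{1}{at/m}{0}{1}$ conjugates $\Gamma_{F, 1}^*(m^2\n)$ into $\Gamma_{F, 1}^*(\n)$ — the computation in Definition \ref{def:kappaam} only uses that $at/m$ has denominator dividing $m$ — so $\kappa_{at/m}\colon Y_{F, 1}^*(m^2\n)\to Y_{F, 1}^*(\n)$ is well-defined, and since $\smallmatrd{1}{at/m}{0}{1}$ acts on $\uhs$ as an orientation-preserving isometry fixing the cusp at $\infty$, pullback along $\kappa_{at/m}$ preserves harmonicity and the growth condition. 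Thus each $\kappa_{at/m}^*\f$ lies in $S_{kk}(U_{F, 1}^*(m^2\n))$, and $R_{a,\chi}\f$, being a finite $\C$-linear combination of such forms, does too; in particular it admits a Fourier--Whittaker expansion of the shape recorded just before the lemma, and it remains to identify its coefficients.

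Since $Y_{F, 1}^*(m^2\n)$ is connected, $R_{a,\chi}\f$ is determined by its restriction to $G^*(\R)_+$, so it suffices to evaluate at $\smallmatrd{y_\infty}{x_\infty}{0}{1}$. The matrix $\smallmatrd{1}{at/m}{0}{1}$, viewed at the archimedean place, is unipotent upper-triangular, hence contributes trivially to the automorphy factor relating an automorphic form to its classical realisation on $\uhs$; consequently
\[ (\kappa_{at/m}^*\f)\left(\matrd{y_\infty}{x_\infty}{0}{1}\right) = \f\left(\matrd{y_\infty}{x_\infty + at/m}{0}{1}\right). \]
Substituting the Fourier--Whittaker expansion of $\f$ and using that the archimedean component of $e_F$ is $x\mapsto e^{2\pi i\operatorname{tr}_{F/\Q}(x)}$, the shift by $at/m$ multiplies the $\zeta$-th term by $e^{2\pi i t\operatorname{tr}_{F/\Q}(a\zeta)/m}$; since $\operatorname{tr}_{F/\Q}(a\zeta)\in\Z$ for $\zeta\in\mathcal D^{-1}$ and $a\in\roi_F$, this constant depends only on $t$ and on $a\zeta$ modulo $m$, so $W_f(\zeta,\kappa_{at/m}^*\f) = e^{2\pi i t\operatorname{tr}_{F/\Q}(a\zeta)/m}\,W_f(\zeta,\f)$.

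Summing over $t\in(\Z/m)^\times$ against $\chi(t)$ then gives $W_f(\zeta, R_{a,\chi}\f) = W_f(\zeta,\f)\sum_{t\in(\Z/m)^\times}\chi(t)e^{2\pi i t n/m}$ with $n=\operatorname{tr}_{F/\Q}(a\zeta)$, and since $\chi$ has conductor exactly $m$ the standard evaluation $\sum_{t\in(\Z/m)^\times}\chi(t)e^{2\pi i t n/m}=\bar\chi(n)G(\chi)$ applies (substitute $t\mapsto n^{-1}t$ when $\gcd(n,m)=1$; primitivity of $\chi$ forces the sum to vanish, consistently with $\bar\chi(n)=0$, when $\gcd(n,m)>1$). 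This is exactly the claimed formula. The one genuinely delicate point is the displayed identity: it rests on the automorphy factor being trivial on unipotent upper-triangular matrices, and one must not confuse the archimedean matrix $\smallmatrd{1}{at/m}{0}{1}$ occurring there (which genuinely moves the point) with the diagonal embedding of the same matrix as an element of $G^*(\Q)_+$ (which acts trivially on $\f$ by left-invariance). An equivalent, purely adelic route that makes this transparent is to note that $\smallmatrd{1}{a/m}{0}{1}\in G^*(\Q)_+$, so left-invariance of $\f$ identifies $\kappa_{a/m}^*\f$ with the translate of $\f$ by the \emph{finite-adelic} matrix $\smallmatrd{1}{-a/m}{0}{1}$, from which the shift in the adelic Fourier--Whittaker expansion is immediate.
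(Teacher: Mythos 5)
Your proof is correct and follows the same route as the paper's: unwind $\kappa_{at/m}^*$ as the archimedean shift $x_\infty \mapsto x_\infty + at/m$, pick up the factor $e_F(\zeta at/m) = e^{2\pi i t\operatorname{tr}_{F/\Q}(a\zeta)/m}$ in the Fourier--Whittaker expansion, and evaluate the resulting Gauss sum. You spell out the pullback step and the archimedean-versus-adelic subtlety more fully than the paper (which simply asserts the first displayed equality), but the underlying argument is identical.
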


  \begin{proof}
   It is clear that $R_{a, \chi}\f$ has level $U_{F, 1}^*(m^2 \n)$, since each term in the sum is invariant under $U_{F, 1}^*(m^2 \n)$. It remains to compute its Fourier--Whittaker coefficients. We have
   \begin{align*}
    W_f(\zeta, R_{a, \chi} \f) &= W_f(\zeta, \f) \sum_{t \in (\Z/m)^\times}  \chi(t) e_F(\zeta a t / m) \\
    &= W_f(\zeta, \f) \sum_{t \in (\Z/m)^\times}  \chi(t) e^{2\pi i t \operatorname{tr}(a\zeta) / m}.
   \end{align*}
   This is 0 unless the integer $\operatorname{tr}(a\zeta)$ is a unit modulo $m$, in which case it is $\chi(\operatorname{tr}(a\zeta))^{-1} G(\chi)$, as required.
  \end{proof}

 \subsection{An integral formula for the Asai $L$-function}

  In this section, we describe an integral formula for the Asai $L$-function of a Bianchi eigenform twisted by a Dirichlet character $\chi$. This is a generalisation of the work of Ghate in \cite{Gha99} (who considers the case where $\chi$ is trivial), and we shall prove our theorem by reduction to his setting using the twisting maps $R_{a, \chi}$.

  Let $0 \le j \le k$, and define
  \[
  I_{\Psi,b,m}^j \defeq \Big\langle \phi_{\Psi}^*, \hspace{4pt}(\kappa_{b/m})_*\iota_* \mathrm{CG}^{[k,k,j]}_* F^{(2k-2j+2)}_{1/m^2N}(\tau)\, \mathrm{d}w^{\otimes 2k-2j} \,\mathrm{d}\tau\Big\rangle,
  \]
  where $\langle -,-\rangle$ denotes the pairing of equation \eqref{eqn:pairing}, $\phi_{\Psi}^* = \jmath^*\phi_{\Psi}$ as before, and we view the Eisenstein class as an element of the Betti cohomology (with complex coefficients) using the standard comparison isomorphism.

  \begin{theorem}
   \label{thm:int formula}
   Let $\chi$ be a Dirichlet character of odd conductor $m$, and let $0 \le j \le k$. Let $a \in \roi_F$ be the value we chose in the remarks before Definition \ref{def:p-adic l-function} (so that $a - a^\sigma = \sqrt{-D}$). Then
   \[
    \sum_{t\in(\Z/m\Z)^\times}\chi(t)I_{\Psi,at,m}^j =
    \begin{cases}
     \frac{C'(k,j)G(\chi)}{(m^2N)^{2k-2j} \Omega_{\Psi}} L^{\mathrm{As}}(\Psi,\chibar,j+1)& \text{if}\ (-1)^j\chi(-1) = 1,\\
   0 & \text{if}\ (-1)^j\chi(-1) = -1,\end{cases}
   \]
   where
   \begin{align*}
   C'(k,j) = (-1)^{k+1}\frac{\sqrt{-D}^{j+1} (j!)^2\binomc{k}{j}^2}{2\cdot (2\pi i)^{j+1} N^{2k-2j}}.
   \end{align*}
  \end{theorem}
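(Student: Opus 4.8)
The plan is to deduce Theorem \ref{thm:int formula} from the untwisted case, which is essentially Ghate's computation in \cite{Gha99}, by means of the twisting operators $R_{a,\chi}$ introduced above. First I would use the projection formula for the finite covering $\kappa_{at/m}: Y_{F,1}^*(m^2\n)\to Y_{F,1}^*(\n)$ of Definition \ref{def:kappaam}, together with the compatibility of the Poincar\'e pairing \eqref{eqn:pairing} with pullback and pushforward, to rewrite
\[
 I^j_{\Psi,at,m} = \Big\langle \kappa_{at/m}^*\phi_\Psi^*,\ \iota_*\,\mathrm{CG}^{[k,k,j]}_*\, F^{(2k-2j+2)}_{1/m^2N}(\tau)\,\mathrm{d}w^{\otimes 2k-2j}\,\mathrm{d}\tau\Big\rangle,
\]
the pairing now being taken over $Y_{F,1}^*(m^2\n)$, with $\iota$ the closed immersion of $Y_{\Q,1}(m^2N)$. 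Since the Eichler--Shimura--Harder map is functorial and $\phi_\Psi^* = \jmath^*(\omega_\Psi/\Omega_\Psi)$, the class $\kappa_{at/m}^*\phi_\Psi^*$ equals $\Omega_\Psi^{-1}$ times the modular symbol of the $G^*$-automorphic form $\kappa_{at/m}^*\jmath^*\Psi$. Summing against $\chi(t)$ over $t\in(\Z/m\Z)^\times$ and recalling the definition of $R_{a,\chi}$ then gives
\[
 \sum_{t}\chi(t)\, I^j_{\Psi,at,m} = \Omega_\Psi^{-1}\Big\langle \omega_{R_{a,\chi}\jmath^*\Psi},\ \iota_*\,\mathrm{CG}^{[k,k,j]}_*\, F^{(2k-2j+2)}_{1/m^2N}(\tau)\,\mathrm{d}w^{\otimes 2k-2j}\,\mathrm{d}\tau\Big\rangle .
\]

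The right-hand side is exactly of the shape analysed in \cite{Gha99}: the Poincar\'e-dual pairing of the modular symbol of a $G^*$-automorphic form against $\iota_*$ of a (higher-weight) Eisenstein class for $\GLt/\Q$. I would run Ghate's argument --- pass to an integral over $Y_{\Q,1}(m^2N)$ using the adjunction between $\iota_*$ and $\iota^*$, realise the Eisenstein class as the value at $s = 2j-2k-1$ of the real-analytic family $E^{(2k-2j+2)}_{1/m^2N}(\tau,s)$ of \eqref{eqn:Eis}, and unfold the lattice sum against the Fourier--Whittaker expansion --- to collapse the integral, for $\Re(s)$ large and then by analytic continuation, into the product of a Dirichlet $L$-factor (coming from the level-$m^2N$ Eisenstein series, with exactly the character and conductor appearing in Definition \ref{def:asaiLS}) with the Dirichlet series $\sum_{n\ge 1} W_f(\zeta_n, R_{a,\chi}\jmath^*\Psi)\, n^{-(j+1)}$, where $\zeta_n$ is the natural rational generator of $\mathcal{D}^{-1}n\roihat$. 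For the value of $a$ fixed before Definition \ref{def:p-adic l-function} one computes $\operatorname{tr}_{F/\Q}(a\zeta_n)=n$, so the twisting lemma above yields $W_f(\zeta_n, R_{a,\chi}\jmath^*\Psi) = G(\chi)\,\bar\chi(n)\,c(n\roi_F,\Psi)$; hence this series is $G(\chi)\sum_{(n,m)=1}c(n\roi_F,\Psi)\bar\chi(n)n^{-(j+1)}$, and together with the Dirichlet $L$-factor it equals $G(\chi)\cdot L^{\mathrm{As}}(\Psi,\bar\chi,j+1)$.

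The vanishing when $\chi(-1)(-1)^j=-1$ I would obtain by reindexing $t\mapsto -t$: this multiplies the sum by $\chi(-1)$ and replaces $a$ by $-a$, while arguing as in Proposition \ref{prop:xi-sign} with the involution $\smallmatrd{-1}{}{}{1}$ --- which acts trivially on $\omega_\Psi$ and hence on $\phi_\Psi^*$, reverses the orientation of the modular curve, and carries the Eisenstein class of $F^{(2k-2j+2)}_{1/m^2N}$ to a sign times that of $F^{(2k-2j+2)}_{-1/m^2N}$ --- shows that replacing $a$ by $-a$ multiplies $\sum_t\chi(t)I^j_{\Psi,at,m}$ by $(-1)^{j+1}$; since this operation fixes the sum it must vanish unless $\chi(-1)(-1)^j=1$. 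The explicit constant $C'(k,j)$ is then pure bookkeeping: one collects the factor $G(\chi)$; the power $(2\pi i)^{-(j+1)}$ from the comparison isomorphism applied to $\operatorname{Eis}^{2k-2j}$; the factor $(j!)^2\binom{k}{j}^2$ coming from the normalisation of $\mathrm{CG}^{[k,k,j]}$ (cf.\ Lemma \ref{Z-and-Xi2}) together with the Gamma-factors in the unfolding; the factor $\sqrt{-D}^{\,j+1}=(a-a^\sigma)^{j+1}$ produced when the invariant differential and the Clebsch--Gordan contraction are restricted from $\uhs$ to $\uhp$; the powers of $N$ and $m$ coming from the level normalisation of the $F^{(k)}_{1/M}$ and from the scalings $\kappa_{a/m}$ and $s_m$; and a global sign $(-1)^{k+1}$ from orientations.

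The conceptual skeleton here --- duality, the twisting lemma, and Ghate's unfolding --- is routine given the results already in place, so I expect the only genuine difficulty to be the last point: carrying out the higher-weight ($j>0$) unfolding carefully enough to pin down $C'(k,j)$, since the Clebsch--Gordan contraction $\mathrm{CG}^{[k,k,j]}$ interacts delicately with the differential forms $\mathrm{d}w^{\otimes 2k-2j}$ and with the restriction of coefficient systems from $\uhs$ to $\uhp$. I would expect this verification to be long enough to warrant being carried out in an appendix.
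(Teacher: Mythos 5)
Your proposal follows the same route as the paper: reduce to the untwisted case via the operators $R_{a,\chi}$ and their Fourier--Whittaker twisting lemma, rewrite the sum $\sum_t\chi(t)I^j_{\Psi,at,m}$ as a single pairing against $\omega_{R_{a,\chi}\jmath^*\Psi}$ at level $m^2\n$, and then invoke a Ghate-style Rankin--Selberg unfolding for the untwisted pairing (which the paper isolates as Proposition \ref{prop:int formula untwisted} and proves in the appendix as Corollary \ref{cor:int formula}). You also correctly anticipate that pinning down the constant in the higher-weight unfolding is the genuinely laborious step, and that it belongs in an appendix --- which is exactly how the paper is organised.

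One sign in your vanishing argument is internally inconsistent. You assert that replacing $a$ by $-a$ multiplies $\sum_t\chi(t)I^j_{\Psi,at,m}$ by $(-1)^{j+1}$; combined with the reindexing factor $\chi(-1)$ and the fact that the combined operation is a change of variable, this would force vanishing unless $\chi(-1)(-1)^{j+1} = 1$, i.e.\ unless $\chi(-1)(-1)^j = -1$ --- the opposite of what you then state (and of what the theorem asserts). The correct multiplier is $(-1)^j$: in the weight-zero case $j = 0$, Proposition \ref{prop:xi-sign} gives $\smallmatrd{-1}{}{}{1}^*\,\subc\Xi_{m,\n,a} = \subc\Xi_{m,\n,-a}$ with no extra sign, so the multiplier there is $+1 = (-1)^0$, not $(-1)^1$. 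The paper avoids tracking this sign explicitly by applying the untwisted formula directly to $\f = R_{a,\chi}\jmath^*\Psi$ and observing only that $\smallmatrd{-1}{}{}{1}^*$ acts on $\f$ as $\chi(-1)$, so the dichotomy in Proposition \ref{prop:int formula untwisted} immediately yields the condition $\chi(-1)(-1)^j = 1$. You may find it cleaner to phrase the vanishing that way rather than via the $a \mapsto -a$ comparison.
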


  We begin by explaining how to reduce the theorem to the case $m = 1$. Note that the definition of the Asai $L$-function depends only on the pullback $\jmath^*\Psi$, and in fact makes sense for any $\f \in S_{kk}(U_{F, 1}^*(\n))$, whether or not it is in the image of $\jmath^*$, as long as it is an eigenvector for the operators $\langle x \rangle$ for $x \in (\Z / N\Z)^\times$. If these operators act on $\f$ via the character $\varepsilon_\f$, then we can define
  \[
   L^{\mathrm{As}}(\f, s) \defeq L^{(N)}(\varepsilon_{\f}, 2s-2k-2) \sum_{n \ge 1} W_f\left(n/\sqrt{-D}, \f\right) n^{-s}.
  \]
  One sees easily that if $\chi$ is a Dirichlet character of odd conductor, and $a$ is the value we chose above (so that $a - a^\sigma = \sqrt{-D}$), then
  \[ L^{\mathrm{As}}(R_{a,\chi} \jmath^* \Psi, s) = G(\chi) \cdot L^{\mathrm{As}}(\Psi, \chibar, s).\]

  \begin{proposition}
   \label{prop:int formula untwisted}
   Let $\f \in S_{kk}(U_{F, 1}^*(\n))$, and let $N = \n \cap \Z$. Then we have
   \begin{multline*}
     \Big\langle \omega_{\f}, \iota_* \mathrm{CG}^{[k,k,j]}_* F^{(2k-2j+2)}_{1/N}(\tau)\, \mathrm{d}w^{\otimes 2k-2j} \,\mathrm{d}\tau\Big\rangle \\=
   \begin{cases}
   \frac{C'(k,j)}{N^{2k-2j}} L^{\mathrm{As}}(\f,j+1)& \text{if}\ \smallmatrd{-1}{}{}{1}^* \f = (-1)^j \f,\\
   0 & \text{if}\ \ \smallmatrd{-1}{}{}{1}^* \f = (-1)^{j+1} \f.\end{cases}
   \end{multline*}
  \end{proposition}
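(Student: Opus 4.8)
The strategy is a Rankin--Selberg unfolding; the identity is, in essence, a cohomological repackaging of the integral computation of Ghate \cite{Gha99} (who treats the case of trivial nebentypus, but in general weight, which is all we need here).

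The first step is to transport the Poincar\'e pairing on $Y_{F,1}^*(\n)$ onto the modular curve $Y_{\Q,1}(N)$. Using the projection formula for the proper pushforward $\iota_*$, and writing $(\mathrm{CG}^{[k,k,j]})^{\vee}$ for the transpose of the Clebsch--Gordan map, we have
\begin{multline*}
 \Big\langle \omega_{\f},\ \iota_*\mathrm{CG}^{[k,k,j]}_* F^{(2k-2j+2)}_{1/N}(\tau)\,\mathrm{d}w^{\otimes 2k-2j}\,\mathrm{d}\tau\Big\rangle \\
 = \pm\int_{\Gamma_{\Q,1}(N)\backslash\uhp}\Big\langle \big(\mathrm{CG}^{[k,k,j]}\big)^{\vee}\iota^*\omega_{\f},\ F^{(2k-2j+2)}_{1/N}(\tau)\,\mathrm{d}w^{\otimes 2k-2j}\Big\rangle\wedge\mathrm{d}\tau,
\end{multline*}
an integral of a scalar $2$-form over a fundamental domain, the sign and orientation factor being something we fold into the final constant. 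Into the right-hand side we substitute the explicit description of $\omega_{\f}$ as a harmonic $V_{kk}(\C)$-valued $1$-form on $\uhs$ built from the Fourier--Whittaker expansion of $\f$ (as recalled in \cite[\S2.4]{Wil17}), restricted to the image of $\uhp$; and we write $F^{(2k-2j+2)}_{1/N}(\tau)=E^{(2k-2j+2)}_{1/N}\big(\tau,\,1-(2k-2j+2)\big)$ using the real-analytic family \eqref{eqn:Eis}.

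The core of the argument is then the classical unfolding. In the region of absolute convergence of $E^{(2k-2j+2)}_{1/N}(\tau,s)$, we replace the lattice sum by its orbit under $\Gamma_\infty\subset\SLt(\Z)$ and unfold the integral over $\Gamma_{\Q,1}(N)\backslash\uhp$ to one over the strip $\Gamma_\infty\backslash\uhp$. The integral over the horizontal variable then extracts, from the Fourier--Whittaker expansion of $\f$, exactly the coefficients $W_f(n/\sqrt{-D},\f)$, producing the Dirichlet series $\sum_{n\ge 1}W_f(n/\sqrt{-D},\f)\,n^{-s'}$; the factor $L^{(N)}(\varepsilon_{\f},2s'-2k-2)$ appears from the normalising Dirichlet series built into the Eisenstein series; and the integral over the vertical variable reduces to a Mellin transform of $W_\infty$ against a power, which is a standard Bessel integral evaluating to a ratio of $\Gamma$-values. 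Collecting the pieces, continuing analytically in $s$, and specialising at $s=1-(2k-2j+2)$ yields $\tfrac{C'(k,j)}{N^{2k-2j}}L^{\mathrm{As}}(\f,j+1)$, by the very definition of $L^{\mathrm{As}}(\f,\cdot)$.

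For the vanishing statement, we use the involution induced by $\smallmatrd{-1}{}{}{1}$, which acts on $\uhp$ by $\tau\mapsto-\bar\tau$: a short computation shows it multiplies the contracted Eisenstein integrand --- taking into account the reversal of orientation of the fundamental domain --- by $(-1)^j$, so that when $\smallmatrd{-1}{}{}{1}^*\f=(-1)^{j+1}\f$ the whole integrand is anti-invariant and the integral vanishes; in the surviving case $\smallmatrd{-1}{}{}{1}^*\f=(-1)^j\f$ this is precisely the symmetry that collapses the unfolded sum over $n\in\Z\setminus\{0\}$ to a sum over $n\ge1$. I expect the only genuine difficulty to be bookkeeping: tracking all the normalising constants --- the powers of $2\pi i$ and the $\Gamma$-values in \eqref{eqn:Eis}, the normalisation of $\mathrm{CG}^{[k,k,j]}$ from \cite[\S5.1]{KLZ1}, the constant appearing in $W_\infty$ (involving the Bessel functions $K_n$), the $\mathcal{D}$-twist relating $W_f$ to the Hecke eigenvalues, and the orientation conventions for Poincar\'e duality and for the Borel--Moore fundamental classes --- so as to land on the precise value of $C'(k,j)$. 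The analytic input (unfolding, Bessel integral, analytic continuation) is classical and, for trivial nebentypus, is contained in \cite{Gha99}.
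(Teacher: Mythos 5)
Your proposal is correct and follows essentially the same strategy as the paper's proof in the appendix (Theorem \ref{thm:integral formula} together with Corollary \ref{cor:int formula}): move the pairing to $Y_{\Q,1}(N)$ by adjointness, substitute Ghate's explicit description of $(\iota^*\omega_\f)^{2k-2j}$ and the real-analytic Eisenstein family, unfold the Rankin--Selberg integral, evaluate the Bessel/Mellin transforms, and continue analytically to $s = -1-2k+2j$. You correctly flag that the only real work is the bookkeeping of normalising constants and parity factors, which is exactly what the bulk of the appendix is devoted to (Propositions \ref{Ghate description}, \ref{prop:J} and Corollaries \ref{cor:dx term vanishes}, \ref{cor:parity form}).
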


  The proof of the proposition is very similar to the work of Ghate \cite{Gha99}, but our conventions are a little different, so we shall give a proof using our conventions in an appendix; see Corollary \ref{cor:int formula}.

  Applying this proposition to $R_{a,\chi} \jmath^* \Psi$ and dividing by $\Omega_\Psi$ proves Theorem \ref{thm:int formula}, since $\smallmatrd{-1}{}{}{1}^*$ acts on $R_{a,\chi} \jmath^* \Psi$ as $\chi(-1)$.

\subsection{Interpolation of critical values}\label{sec:interpolation}

We now use the integral formula of Theorem \ref{thm:int formula} to relate the values of the measure $L_p^{\mathrm{As}}(\Psi)$ to critical values of the classical Asai $L$-function.

\begin{theorem}\label{thm:interpolation}
Let $p$ be an odd prime. Let $\Psi$ be an ordinary Bianchi eigenform of weight $(k,k)$ and level $U_{F,1}(\n)$, where all primes above $p$ divide $\n$, with $U_p$-eigenvalue $\lambda_p = c(p\roi_F, \Psi)$. Let $\chi$ be a Dirichlet character of conductor $p^r$, and let $0 \le j \le k$.
\begin{itemize}
\item[(a)] If $\chi(-1)(-1)^j = 1$, then

\[L_p^{\mathrm{As}}(\Psi, \chi, j) =
 \frac{C(k,j) \mathcal{E}_p(\Psi, \chi, j) G(\chi)}{\Omega_{\Psi}} \cdot  L^{\mathrm{As}}(\Psi,\chibar,j+1),\]
where
\[C(k,j) \defeq (-1)^{k+1}\frac{j!\cdot \sqrt{-D}}{2\cdot (2\pi i)^{j+1}},\quad
\mathcal{E}_p(\Psi, \chi, j) \defeq
\begin{cases}
\left( 1 - \tfrac{p^j}{\lambda_p}\right) & \text{if $r = 0$,}\\
\left( p^{j} \lambda_p^{-1}\right)^r & \text{if $r > 0$.}
\end{cases}
 \]
\item[(b)]
If $\chi(-1)(-1)^j = -1$, then
\[ L_p^{\mathrm{As}}(\Psi,\chi,j) = 0.\]
\end{itemize}
\end{theorem}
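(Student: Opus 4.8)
The plan is to unwind the definition of $\subc L_p^{\mathrm{As}}(\Psi)$ and then apply, in turn, the interpolation property of $\subc\Phi_{\n,a}^{k,\infty}$ (Proposition \ref{prop:interpPhi}), the identification of the Asai--Eisenstein cohomology classes with Eisenstein series (Lemma \ref{Z-and-Xi2} together with the de Rham realisation of the classes $\operatorname{Eis}^k_N$ from \S\ref{higher weights}), and finally the integral formula of Theorem \ref{thm:int formula}. It is cleanest to carry out the computation for the $c$-normalised measure $\subc L_p^{\mathrm{As}}(\Psi)$: the factor $\bigl(c^2-c^{-2k}\varepsilon_{\Psi,\Q}(c)[c]^2\bigr)$ relating $\subc L_p^{\mathrm{As}}(\Psi)$ to $L_p^{\mathrm{As}}(\Psi)$ reappears on the right-hand side --- coming from the $c$-normalisation of $\subc\operatorname{Eis}$, the diamond operator $\langle c\rangle$ acting on $\h^2$ via $\varepsilon_{\Psi,\Q}(c)$ when paired with $\phi_\Psi^*$, and a reindexing of the sum over $t$ --- so it cancels, and one obtains the stated formula for $L_p^{\mathrm{As}}(\Psi)$ itself (read in the fraction ring of $R[[\Zp^\times]]$ when $L_p^{\mathrm{As}}(\Psi)$ is only a pseudo-measure).

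First I would combine Definition \ref{def:p-adic l-function} with the $R$-linearity of the Poincar\'e pairing and Proposition \ref{prop:interpPhi} to get, for $\chi$ of conductor $p^r$ with $r\geq 1$,
\[
 \subc L_p^{\mathrm{As}}(\Psi,\chi,j)=\frac{1}{(a-a^\sigma)^j\,j!\binom{k}{j}^2}\Big\langle\phi_\Psi^*,\;(U_p)_*^{-r}\,e_{\mathrm{ord},*}\!\!\sum_{t\in(\Z/p^r)^\times}\!\!\chi(t)\,\subc\Xi_{p^r,\n,at}^{k,j}\Big\rangle,
\]
with the analogous identity for $r=0$ (replace $(U_p)_*^{-r}$ by $1-p^j(U_p)_*^{-1}$ and the sum by $\subc\Xi_{1,\n,a}^{k,j}$). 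Next I would move the Hecke operators outside the pairing: since $(U_p)_*$ is the Poincar\'e-adjoint of $(U_p)^*$ and $\Psi$ is ordinary with $U_p$-eigenvalue $\lambda_p$, pairing $\phi_\Psi^*$ against a $(U_p)_*^m$-twisted class multiplies it by $\lambda_p^m$, and (as noted after the ordinarity assumption) $e_{\mathrm{ord},*}$ is invisible to this pairing. Thus the prefactor becomes $\lambda_p^{-r}$ for $r\geq 1$, and $1-p^j\lambda_p^{-1}$ for $r=0$.

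The key step is to recognise $\big\langle\phi_\Psi^*,\subc\Xi_{p^r,\n,at}^{k,j}\big\rangle$ as the archimedean integral $I^j_{\Psi,at,p^r}$, up to explicit elementary factors. Lemma \ref{Z-and-Xi2} writes $\Xi_{p^r,\n,b}^{k,j}$ as $(p^r)^j$ times $(\kappa_{b/p^r})_*\iota_*\mathrm{CG}^{[k,k,j]}$ applied to $\operatorname{Eis}^{2k-2j}_{p^{2r}N}$, and the de Rham realisation of $\operatorname{Eis}^{\ell}_M$ is the class of $-M^{\ell}F^{(\ell+2)}_{1/M}(\tau)\,\mathrm{d}w^{\otimes\ell}\,\mathrm{d}\tau$ under the comparison isomorphism; feeding these (and the $c$-normalisation of $\subc\operatorname{Eis}$) into the definition of $I^j_{\Psi,b,m}$, one finds that $\langle\phi_\Psi^*,\subc\Xi_{p^r,\n,at}^{k,j}\rangle$ equals $I^j_{\Psi,at,p^r}$ multiplied by the $c$-factor, by $(p^r)^j$, by a power of $p^{2r}N$ that will cancel against the denominator appearing in Theorem \ref{thm:int formula}, and by an overall sign. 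Summing over $t$ against $\chi(t)$ and invoking Theorem \ref{thm:int formula}, the sum $\sum_t\chi(t) I^j_{\Psi,at,p^r}$ is $C'(k,j)\,G(\chi)\,L^{\mathrm{As}}(\Psi,\chibar,j+1)$ divided by $(p^{2r}N)^{2k-2j}\Omega_\Psi$ when $\chi(-1)(-1)^j=1$, and is zero otherwise. Substituting back into the first display, using $a-a^\sigma=\sqrt{-D}$, cancelling the powers of $p^{2r}N$, and reorganising the elementary constants (there is a clean relation between $C'(k,j)$ and $C(k,j)$), the surviving power of $p^r$ combines with $\lambda_p^{-r}$ into $\mathcal{E}_p(\Psi,\chi,j)=(p^j\lambda_p^{-1})^r$ and the rest collapses to $C(k,j)$ --- which is part (a). The $r=0$ case runs identically, the extra Euler factor $1-p^j\lambda_p^{-1}$ coming from the base case of Theorem \ref{norm relation}. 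Part (b) --- vanishing when $\chi(-1)(-1)^j=-1$ --- is immediate either from the corresponding vanishing case of Theorem \ref{thm:int formula}, or more directly from the $[-1]$-invariance of $L_p^{\mathrm{As}}(\Psi)$ proved after Definition \ref{def:p-adic l-function}, which forces $L_p^{\mathrm{As}}(\Psi,\chi,j)=\chi(-1)(-1)^j L_p^{\mathrm{As}}(\Psi,\chi,j)$.

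The conceptual content is already packaged into Proposition \ref{prop:interpPhi}, Lemma \ref{Z-and-Xi2}, the de Rham comparison, and Theorem \ref{thm:int formula}; the proof of the interpolation theorem is just the act of splicing these four together. Consequently the main obstacle is not structural but computational: one must carefully track every power of $p^r$, $N$, $2\pi i$ and $\sqrt{-D}$, every sign and orientation convention entering the pushforward and Clebsch--Gordan maps, the distinction between the level of $\Psi$ and the level $m^2\n$ of the twisted form $R_{a,\chi}\jmath^*\Psi$, and the exact way the diamond operator $\langle c\rangle$ and the twist by $c$ interact, so as to land on precisely the constants $C(k,j)$ and $\mathcal{E}_p(\Psi,\chi,j)$ rather than on an unspecified nonzero scalar multiple of the critical value.
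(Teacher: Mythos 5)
Your strategy is exactly the paper's proof: apply Proposition \ref{prop:interpPhi} to express $L_p^{\mathrm{As}}(\Psi,\chi,j)$ as a pairing of $\phi_\Psi^*$ against $e[r]\,\Xi^{k,j}_{p^r,\n,at}$, move $e[r]$ to the other side of the Poincar\'e pairing via ordinarity to extract $p^{-jr}\mathcal{E}_p(\Psi,\chi,j)$, use Lemma \ref{Z-and-Xi2} and the de Rham realisation of $\operatorname{Eis}^{2k-2j}_{p^{2r}N}$ to identify the remaining pairing with $-(p^{2r}N)^{2k-2j}I^j_{\Psi,at,p^r}$, and then invoke Theorem \ref{thm:int formula} and simplify the constants. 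The small points you add --- carrying the $c$-factor through explicitly, and deriving part (b) alternatively from the $[-1]$-invariance of the measure --- are consistent with and not in conflict with the paper, which works directly with the un-$c$-normalised quantities and records the $c$-version as an immediate corollary.
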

\begin{remark}
Up to rescaling $\Omega_{\Psi}$, this is precisely the interpolation formula predicted by Coates--Perrin-Riou (see \S\ref{sec:coates-perrin-riou}).
\end{remark}
\begin{proof}
For convenience, let $e[r]$ denote the operator $(U_p^{-r})_*e_{\mathrm{ord},*}$ if $r \ge 1$, and $(1 - p^j (U_p^{-1})_*)e_{\mathrm{ord},*}$ if $j = 0$. By the definition of the measure and Proposition \ref{prop:interpPhi}, we have
\[
 L_p^{\mathrm{As}}(\Psi,\chi,j) = \frac{1}{\sqrt{-D}^j j!\binomc{k}{j}^2}\sum_{t\in(\Z/p^{r})^\times} \chi(t)\bigg\langle \phi_{\Psi}^*, e[r] \Xi^{k,j}_{p^{r},\n,at}\bigg\rangle.
\]

We know that $(U_p)^*$ is the adjoint of $(U_p)_*$, and $\phi_{\Psi}^*$ is a $(U_p)^*$ eigenvector with unit eigenvalue $\lambda_p$; thus the adjoint of $e[r]$ acts on $\phi_\Psi^*$ as $p^{-jr} \mathcal{E}_p(\Psi, \chi, j)$, so we have
\[
L_p^{\mathrm{As}}(\Psi,\chi,j) = \frac{\mathcal{E}_p(\Psi, \chi, j)}{p^{jr} \sqrt{-D}^j j!\binomc{k}{j}^2}\sum_{t\in(\Z/p^{r})^\times} \chi(t)\bigg\langle \phi_{\Psi}^*, \Xi^{k,j}_{p^{r},\n,at}\bigg\rangle.
\]
Now, by Lemma \ref{Z-and-Xi2}, we have $\Xi^{k,j}_{p^{r},\n,at} = p^{jr} (\kappa_{at/p^r})_*\iota_* \mathrm{CG}^{[k,k,j]}_*\left(\operatorname{Eis}^{2k-2j}_{p^{2r}N}\right)$, and hence
\[
\big\langle \phi_{\Psi}^*, \Xi_{p^r,\n,at}\big\rangle =p^{jr} \Big\langle (\mathrm{CG}^{[k,k,j]})^*\iota^*\kappa_{at/p^r}^*(\phi_{\Psi}^*), \operatorname{Eis}^{2k-2j}_{p^{2r}N}\Big\rangle,
\]
where the first cup product is at the level of $\Gamma_{F,1}^*(\n)\backslash\uhs$, and the second cup product is at the level of $\Gamma_1(p^{2r}N)\backslash\uhp.$ Now work at the level of complex coefficients. We know that
\begin{align*}
 \mathrm{Eis}^{2k-2j}_{p^{2r}N} &= \mathrm{Eis}^{2k-2j}_{p^{2r}N}\\
& =- (p^{2r}N)^{2k-2j} F^{(2k-2j+2)}_{1/p^{2r}N}(\tau)\, \mathrm{d}w^{\otimes 2k-2j} \,\mathrm{d}\tau,\end{align*}
Accordingly, we see that
\[
 L_p^{\mathrm{As}}(\Psi,\chi,j) =
-\frac{ (p^{2r}N)^{2k-2j}\mathcal{E}_p(\Psi, \chi, j)}
{\sqrt{-D}^j j!\binomc{k}{j}^2} \sum_{t\in (\Z/p^r)^\times} \chi(t) I^j_{\Psi,at,p^r},
\]
where $I^j_{\Psi,b,m}$ is as defined in the previous section. Using Theorem \ref{thm:int formula}, we see that this expression vanishes unless $\chi(-1)(-1)^j = 1$, in which case we have
\begin{multline*}
L_p^{\mathrm{As}}(\Psi,\chi,j) =
-\frac{ \mathcal{E}_p(\Psi, \chi, j)}
{\Omega_{\Psi}\sqrt{-D}^j j!\binomc{k}{j}^2} \times C'(k,j)G(\chi)L^{\mathrm{As}}(\Psi,\chibar,j+1)\\
= \frac{C(k,j) \mathcal{E}_p(\Psi, \chi, j) G(\chi)}{\Omega_{\Psi}} \cdot  L^{\mathrm{As}}(\Psi,\chibar,j+1),
\end{multline*}
which completes the proof of the theorem.
\end{proof}

As an immediate corollary, we get an identical interpolation formula for ${}_c L_p^\mathrm{As}(\Psi)$ with the additional factor $(c^2 - c^{2j-2k} \varepsilon_{\Psi}(c)\chi(c)^2)$.

\begin{remark-numbered} \
 \begin{enumerate}
  \item[(i)] The factor $\mathcal{E}_p(\Psi, \chi, j)$ is non-zero if $r \ge 1$. If $r = 0$ then this factor vanishes if and only if $k = 0$, $\Psi$ is new at the primes above $p$, and $\varepsilon_\Psi(p) = 1$. In this case the $p$-adic $L$-function has an exceptional zero at the trivial character. For exceptional zeroes of the \emph{standard} $p$-adic $L$-function of a Bianchi cusp form, a theory of $\mathcal{L}$-invariants was developed in \cite{BW17}; it would be interesting to investigate analogues of this for the Asai $L$-function.

  \item[(ii)] The measure $L_p^{\mathrm{As}}(\Psi)$ depends on the choice of $\sqrt{-D}$ fixed at the start; indeed, this choice was used to pick a value of $a \in \roi_F$, which in turn was used to construct the Asai--Eisenstein elements. This choice is further encoded by the appearance of $\sqrt{D} = i\sqrt{-D}$ in the interpolation formula. Choosing the other square root simply scales the measure by $-1$. The measure also depends on the choice of period $\Omega_{\Psi}$, and again a different choice changes the measure up to a scalar.

  \item[(iii)] If the Bianchi eigenform $\Psi$ (or, more precisely, the automorphic representation it generates) is the base-change lift of an elliptic modular eigenform $f$ of weight $k+2$ and character $\varepsilon_f$, then the complex Asai $L$-function factors as
  \[ L^{\mathrm{As}}(\Psi, \chi, s) = L(\operatorname{Sym}^2 f, \chi, s) L(\chi \varepsilon_f \varepsilon_F, s - k - 1),\]
  where $\varepsilon_K$ is the quadratic character associated to $K$. Note that all three $L$-functions in the above formula have critical values at integer points $s = 1 + j$ with $0 \le j \le k$ and $(-1)^j \chi(-1) = 1$. By a comparison of interpolating properties at these points, one can verify that if $f$ is ordinary at $p$, then there is a corresponding factorisation of $L_p^\mathrm{As}(\Psi)$ as a product of a shifted $p$-adic Dirichlet $L$-function and Schmidt's $p$-adic $L$-function for $\operatorname{Sym}^2 f$.

  This factorisation shows, in particular, that the possibility of poles of the $p$-adic Asai $L$-function is a genuine aspect of the situation, rather than a shortcoming of our method: if $\varepsilon_f \varepsilon_F = 1$, then one of these factors is the $p$-adic Riemann zeta function $\zeta_p(s-k-1)$, which has a simple pole at $s = k + 2$. If $f$ has CM by an imaginary quadratic field $K$ (with $K \ne F$, so that $\Psi = \operatorname{BC}(f)$ is cuspidal), then there is a second abelian factor $L(\chi \varepsilon_f \varepsilon_K, s-k-1)$; this gives rise to examples where both of the zeros of the factor $c^2 - c^{-2k}\varepsilon_\Psi(c)[c]^2$ correspond to genuine poles of $L_p^{\mathrm{As}}(\Psi)$.
 \end{enumerate}
\end{remark-numbered}

 \appendix

 \section{Appendix: an integral formula for the Asai $L$-function}

 In this appendix, we provide a slight extension of Ghate's results on the rationality of Asai $L$-values. The main result is given in Theorem \ref{thm:integral formula}. Most of the arguments go through almost exactly as in \cite{Gha99}. However, in order to present the $p$-adic interpolation computations in the main body of the paper in the most natural way, we have adopted conventions which differ from those of \emph{op.cit.} in a few places: firstly, the Eisenstein series which we work with are slightly different from Ghate's, as is our normalisation for the Clebsch--Gordan map; secondly, we focus on computing the special values of the Asai $L$-series in the left half of the critical strip (i.e.~$1 \le s \le k+1$) rather than the right half (i.e.~$k+2 \le s \le 2k+2$) considered by Ghate. Hence we shall explain how to modify his results for our setting.

 \begin{notation}
  Let $\f \in S_{kk}(U_{F, 1}^*(\n))$ be a Bianchi modular form of weight $(k, k)$ and level $U_{F,1}^*(\n)$, and suppose that $\f$ is an eigenvector for the action of $\smallmatrd{ -1}{}{} 1$ (necessarily with eigenvalue either $+1$ or $-1$).

  \begin{itemize}
   \item[(i)] Write $\psi_{\f}$ for the Dirichlet character given by restricting the character of $\f$ from $(\roi_F / \n)^\times$ to $(\Z/N)^\times$.
   \item[(ii)] Let $\omega_{\f}$ be the differential form on $\uhs$ associated to $\f$ as in \S\ref{sect:modsymb}. This descends to a differential form on $\Gamma_{F,1}^*(\n)\backslash\uhs$.
   \item[(iii)]
     Write $\f_n$, $0 \le n \le 2k+2$, for the $n$-th coordinate projection of $\f$ in the standard basis $\{X^{2k+2-n} Y^{n}\}$ of $V_{2k+2}(\C)$, which is a $\C$-valued function on $\uhs$.
  \end{itemize}
 \end{notation}

\subsection{Definition of the integrals}

 Pulling back $\omega_{\f}$ via the inclusion $\iota : Y_1(N) =\Gamma_1(N) \backslash \uhp \hookrightarrow \Gamma_{F,1}^*(\n)\backslash \uhs$, we obtain a $\Gamma_{\Q, 1}$-invariant differential on $\uhp$.

 For each $j$ with $0 \le j \le k$, we have a map of $\GLt^+(\R)$-modules
 \[
  \operatorname{CG}^{[k, k, j], \star}: V_k(\C) \otimes V_k(\C) \to V_{2k-2j}(\C)
 \]
 given by the transpose of the map $\operatorname{CG}^{[k, k, j]}$ of \eqref{eq:CG} above. Let us write $(\iota^*\omega_{\f})^{2k-2j}$ for $\mathrm{CG}^{[k,k,j]*}(\iota^*\omega_{\f}) \in \h^1_{\mathrm{cusp}}\big(Y_{\Q,1}(N),V_{2k-2j}(\C)\big)$.

 \begin{definition}
  \label{def:I}
  Define
  \[I_{\f}^j(s) \defeq \int_{Y_1(N)} (\iota^*\omega_{\f})^{2k-2j}\wedge E^{(2k-2j+2)}_{1/N}(\tau, s)\, \mathrm{d}w^{\otimes 2k-2j} \,(\mathrm{d}x+i\mathrm{d}y).\]
 \end{definition}

 Here $E^{(2k-2j+2)}_{1/N}(\tau, s)$ is as in (\ref{eqn:Eis}), and $\mathrm{d}w^{\otimes 2k-2j}$ is considered as a section of a symmetric tensor power of $T_{1, \mathrm{dR}}$, which we are identifying with the de Rham cohomology of the universal elliptic curve $\C / (\Z \tau + \Z)$.

  The goal of this appendix is to evaluate $I^j_\f(s)$ in terms of  $L^{\mathrm{As}}(\f, s + 2k-j + 2)$. We shall only use this for one specific value of $s$, namely $s = -1-2k + 2j$, as the left-hand side of Proposition \ref{prop:int formula untwisted} is exactly $I_{\f}^j(-1-2k+2j)$ (by the adjointness of pushforward and pullback). However, since this $s$-value lies outside the domain of convergence of the sums defining the Eisenstein series and Asai $L$-function, we need to work initially with an arbitrary $s$ such that $\Re(s) \gg 0$ and obtain the result by analytic continuation.

 \subsection{An explicit description of the differential}

  We recall the description of $(\iota^*\omega_{\f})^{2k-2j}$ obtained in \cite{Gha99}. Ghate uses a slightly different normalisation for the projection $V_k \otimes V_k \to V_{2k-2j}$ (see Lemma 2 of \emph{op.cit.}): he considers the map sending a polynomial $\delta(X, Y, \bar X, \bar Y) \in V_{kk}(\C)$ to
 \[
  \tfrac{1}{(j!)^2}\nabla^j \delta(X, Y) \Big|_{X = \overline{X},Y = \overline{Y}}, \quad\text{where } \nabla \coloneqq \left( \frac{\partial^2}{\partial X\partial \overline{Y}} - \frac{\partial^2}{\partial Y\partial \overline{X}} \right).
  \]

  \begin{proposition}
   We have
   \[ \operatorname{CG}^{[k, k, j], \star}(\delta) = (-1)^j j! \cdot \tfrac{1}{(j!)^2}\nabla^j(\delta) \Big|_{X = \overline{X},Y = \overline{Y}}.\]
  \end{proposition}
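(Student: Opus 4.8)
The plan is to prove the claimed identity by a direct computation comparing two explicit formulas for the Clebsch--Gordan projection $V_k \otimes V_k \to V_{2k-2j}$ — the one implicit in the normalization of \cite[\S 5.1]{KLZ1} (which defines $\operatorname{CG}^{[k,k,j]}$, and hence by transpose $\operatorname{CG}^{[k,k,j],\star}$), and Ghate's differential-operator formula recalled just above. Both are $\GLt^+(\R)$-equivariant maps between irreducible representations, hence differ at most by a scalar; so the real content is pinning down the scalar, which I would do by evaluating both maps on a single convenient vector (e.g.\ a highest-weight vector, or the vector $(X\bar Y - Y \bar X)^j \cdot (\text{something})$ realizing the lowest summand).

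First I would recall the precise normalization of $\operatorname{CG}^{[k,k,j]}: T_{2k-2j}(R) \to T_{kk}(R)$ from \cite{KLZ1}, dualize it to get $\operatorname{CG}^{[k,k,j],\star}: V_{kk}(\C) \to V_{2k-2j}(\C)$ explicitly in coordinates, and write Ghate's map $\Theta_j(\delta) \defeq \tfrac{1}{(j!)^2}\nabla^j(\delta)|_{X=\bar X, Y=\bar Y}$ in the same coordinates, where $\nabla = \partial^2/\partial X\partial\bar Y - \partial^2/\partial Y\partial \bar X$. Second, since the space of $\GLt^+(\R)$-equivariant maps $V_{kk} \to V_{2k-2j}$ is one-dimensional (the multiplicity of $V_{2k-2j}$ in $V_k \otimes V_k$ is one, by Clebsch--Gordan), I would observe $\operatorname{CG}^{[k,k,j],\star} = \lambda_{k,j}\cdot \Theta_j$ for some scalar $\lambda_{k,j} \in \C^\times$. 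Third, I would compute $\lambda_{k,j}$ by applying both sides to one explicit test element: the cleanest choice is $\delta_0 = (X\bar Y)^{?}\cdots$ — concretely, apply $\nabla^j$ to a monomial where the count is transparent, such as $\delta = X^j \bar Y^{k}\cdots$ chosen so that only one term of $\nabla^j$ survives, compare with the corresponding coefficient extracted from the \cite{KLZ1} formula, and read off $\lambda_{k,j} = (-1)^j j!$.

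The bulk of the work, and the main obstacle, is purely bookkeeping: matching the two sign-and-factorial conventions exactly. The factor $(-1)^j$ comes from the fact that $\nabla$ is an antisymmetric operator in the paired variables $(X,Y)$ versus $(\bar X, \bar Y)$, whereas the \cite{KLZ1} normalization uses the "determinant" pairing $X\bar Y - Y\bar X$ with a fixed orientation; each application of $\nabla$ introduces one such sign. The factor $j!$ versus $(j!)^2$ is the discrepancy between normalizing $\nabla^j$ by $1/(j!)^2$ (Ghate) and the $\operatorname{CG}$ convention, which effectively divides by only $j!$; the binomial-coefficient identity $\binom{j}{i}$ summed against the Leibniz expansion of $\nabla^j$ accounts for the missing $j!$. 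I would carry out the Leibniz expansion of $\nabla^j$ on a general bidegree-$(k,k)$ monomial, collect the diagonal restriction $X = \bar X$, $Y = \bar Y$, and check term-by-term against the explicit Clebsch--Gordan coefficients; once a single coefficient matches with the scalar $(-1)^j j!$, equivariance forces equality everywhere. I do not expect any conceptual difficulty beyond this — it is a finite representation-theoretic check — but the sign/normalization reconciliation is exactly the kind of step where an off-by-$(-1)^j$ or off-by-$j!$ error is easy to make, so I would verify it on the small cases $j = 0$ (where both maps are the identity-like contraction and $\lambda = 1$) and $j = 1$ explicitly before asserting the general formula.
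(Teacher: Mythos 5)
Your approach is exactly the paper's: invoke multiplicity one of $V_{2k-2j}$ inside $V_k \otimes V_k$ to reduce to computing a single scalar, then pin down that scalar by evaluating both maps on one convenient test vector and comparing. The paper carries this out with the test monomial $X^0 Y^k \bar X^j \bar Y^{k-j}$ paired against $\operatorname{CG}^{[k,k,j]}(w^{[0,2k-2j]})$ (which isolates the $i=0$ term of the KLZ1 formula) and separately against $\tfrac{1}{(j!)^2}\nabla^j$, giving $\tfrac{k!}{(k-j)!}$ and $\tfrac{(-1)^j k!}{j!(k-j)!}$ respectively and hence the scalar $(-1)^j j!$.
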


  \begin{proof}
   It is clear that these two maps must be proportional (since $V_{2k-2j}$ is irreducible and appears with multiplicity one in $V_{kk}$), so we must compute the constant of proportionality. Let $w^{[s, k-s]}$, for $0 \le s \le k$, be the basis of $T_k(\C)$ dual to the basis $X^s Y^{k-s}$ of $V_k(\C)$. An explicit formula for $\operatorname{CG}^{[k, k, j]}$ in this basis is given in \cite[Proposition 5.1.2]{KLZ1}\footnote{Note that the formula appears with some typographical errors in \emph{op.cit.}: the terms $(k-r + i)!$ and $(k' -r' +j-i)!$ should be $(k-r-i)!$ and $(k'-r'-j+i)!$ respectively.}. One case of this formula is
   \[
   \operatorname{CG}^{[k, k, j]}\left( w^{[0, 2k-2j]}\right) = \\
   \sum_{i = 0}^j (-1)^i \frac{(k - i)!(k + i - j)!}{(k-j)!(k-j)! } w^{[i, k-i]} \otimes w^{[j - i, k -j + i]}.
   \]
   Pairing $\operatorname{CG}^{[k, k, j]}\left( w^{[0, 2k-2j]}\right)$ with $X^0 Y^{k} \bar{X}^j \bar{Y}^{k-j}$ pulls out the $i = 0$ term in the sum, and yields the non-zero scalar $\frac{k!}{(k-j)!}$. Meanwhile, applying $\tfrac{1}{(j!)^2}\nabla^j$ to $X^0 Y^{k} \bar{X}^j \bar{Y}^{k-j}$ gives the polynomial $ \tfrac{(-1)^j k!}{j!(k-j)!} Y^{k-j}\bar{Y}^{k-j}$, and if we set $\bar Y = Y$ and pair this with $w^{[0, 2k-2j]}$, we obtain $\tfrac{(-1)^j k!}{j!(k-j)!}$. So the constant of proportionality is $(-1)^j j!$ as required.
  \end{proof}

  We thus get extra factors of $(-1)^j j!$ compared to \cite{Gha99}, so we have the following formula:

 \begin{proposition}[{\cite[\S5.2]{Gha99}}]\label{Ghate description}
  We have
  \[(\iota^* \omega_{\f})^{2k-2j}(x,y) = (-1)^j j!\sum_{\ell = 0}^{2k-2j}(A_\ell dx +2B_\ell dy)y^{2k-j-\ell}(X-xY)^\ell Y^{2k-2j-\ell},\]
  where
  \begin{align*}
  A_\ell &= \sum_{n = 0}^{k+1}(-1)^ng_{n,j}\left(x,y\right)a(j,\ell,n),\\
  B_\ell &= \sum_{n = 0}^{k+1}(-1)^ng_{n,j}\left(x,y\right)b(j,\ell,n),
  \end{align*}
  \[g_{n,j}(z,t) = \binom{2k+2}{n}^{-1} \left\{\begin{array}{ll} \f_n(z,t) + (-1)^{k+1-n+j}\f_{2k+2-n}(z,t) &: n \in \{0,...,k\}\\
  \f_{k+1}(z,t) &: n = k+1,\end{array}\right. \]
  and $a(j,\ell,n)$ and $b(j,\ell,n)$ are constants given by
  \begin{align*}a(j,\ell,n) = \binomc{k}{j}^2(-1)^{\frac{k+n-\ell-j}{2}}\sum_{t=0}^j(-1)^t\binomc{j}{t}&\bigg[\smallbinomc{k-j}{\frac{k-j-\ell+n}{2}-t}\smallbinomc{k-j}{\frac{3k-3j-\ell-n}{2}+t}\\
  & + \smallbinomc{k-j}{\frac{k-j-\ell+n-2}{2}-t}\smallbinomc{k-j}{\frac{3k-3j-\ell-n+2}{2}+t}\bigg],
  \end{align*}
  \[b(j,\ell,n) = \binomc{k}{j}^2(-1)^{\frac{k+n-1-\ell-j}{2}}\sum_{t=0}^j(-1)^t\binomc{j}{m}\smallbinomc{k-j}{\frac{k-j-\ell+n-1}{2}-t}\smallbinomc{k-j}{\frac{3k-3j-\ell-n+1}{2}+t}.\]
  (Here it is understood that $\smallbinomc{c}{d} = 0$ if $d$ is negative or not an integer).\qed
 \end{proposition}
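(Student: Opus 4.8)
The plan is to deduce this from the computation carried out in \cite[\S5.2]{Gha99}, adjusted by the normalisation factor identified in the previous proposition. First I would recall the concrete description of $\omega_{\f}$ as a $V_{2k+2}(\C)$-valued harmonic differential $1$-form on $\uhs$, assembled from the coordinate functions $\f_0,\dots,\f_{2k+2}$ via the recipe of \cite[\S2.4]{Wil17}; pulling back along $\iota\colon \uhp\hookrightarrow\uhs$ gives a $\Gamma_{\Q,1}(N)$-invariant $V_{2k+2}(\C)$-valued $1$-form $\iota^*\omega_{\f}$ on $\uhp$, whose monomial coefficients are the restrictions of the $\f_n$.

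The function $(\iota^*\omega_{\f})^{2k-2j}$ is by definition $\operatorname{CG}^{[k,k,j],\star}(\iota^*\omega_{\f})$. The preceding proposition shows that $\operatorname{CG}^{[k,k,j],\star}$ equals $(-1)^j j!$ times the operator $\tfrac{1}{(j!)^2}\nabla^j(-)\big|_{X=\overline X,\,Y=\overline Y}$ that Ghate uses in \cite[Lemma 2]{Gha99}. Hence $(\iota^*\omega_{\f})^{2k-2j}$ is exactly $(-1)^j j!$ times the quantity Ghate evaluates in \S5.2 of \emph{op.cit.}, and the proposition reduces to transcribing that evaluation with the extra scalar inserted.

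Concretely, Ghate computes $\tfrac{1}{(j!)^2}\nabla^j(\iota^*\omega_{\f})\big|_{X=\overline X,\,Y=\overline Y}$ by applying $\nabla^j$ term-by-term to the monomials $(X-xY)^a(\overline X-x\overline Y)^b$ appearing in $\iota^*\omega_{\f}$, collecting the resulting binomial coefficients into the constants $a(j,\ell,n)$ and $b(j,\ell,n)$ and the relevant combinations of coordinate functions into the $g_{n,j}$. I would reproduce this calculation, carry the overall factor $(-1)^j j!$ through, and verify that the remaining sign and normalisation conventions in play --- the right/left $\GLt$-actions on $V_m(\C)$, the normalisation of $\omega_{\f}$, and the convention for the Fourier--Whittaker expansion --- all coincide with Ghate's, so that his constants $a(j,\ell,n)$, $b(j,\ell,n)$ and functions $g_{n,j}$ carry over verbatim.

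The main obstacle is precisely this last bookkeeping step: confirming that the Clebsch--Gordan normalisation is the \emph{only} discrepancy between our setup and that of \cite{Gha99}, rather than, say, the normalisation of the associated differential form or the action on the coefficient modules. Once that is checked, the formula follows at once from Ghate's computation together with the previous proposition.
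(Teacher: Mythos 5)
Your proposal follows exactly the route the paper takes: the paper gives no independent proof of this proposition but simply cites Ghate's computation in \cite[\S5.2]{Gha99}, noting via the immediately preceding proposition that its Clebsch--Gordan map $\operatorname{CG}^{[k,k,j],\star}$ is $(-1)^j j!$ times the operator $\tfrac{1}{(j!)^2}\nabla^j(-)\big|_{X=\overline X,\,Y=\overline Y}$ appearing in Ghate's Lemma~2, so that the extra scalar is carried through Ghate's formula verbatim. Your identification of the ``main obstacle'' --- confirming that the projection normalisation is the \emph{only} discrepancy --- is also the implicit content of the paper's remark; you have correctly captured both the argument and the one point requiring care.
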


 \subsection{Pairing with Eisenstein series and integrating}
We shall compute $I^j_\f(s)$ using a Rankin--Selberg unfolding argument. Factoring out the denominator in equation \eqref{eqn:Eis}, we have
\begin{equation}
 \label{eisenstein description}
 E_{1/N}^{(k)}(\tau,s)
 = \frac{\Gamma(s+k)N^{2s+k}}{(-2\pi i)^k\pi^{s}}\sum_{\substack{t\ge 1\\(t,N) = 1}}t^{-k-2s} \langle t \rangle^{-1}
 \left[\sum_{\gamma \in \Gamma_\infty\backslash \Gamma_1(N)}(\mathrm{Im}(\tau)^s \mid_k \gamma)\right],
\end{equation}
where as usual $\langle u \rangle$ denotes the action of any element $\smallmatrd{a}{b}{c}{d} \in \Gamma_0(N)$ such that $d\equiv u\newmod{N}$. The adjoint of $\langle u^{-1}\rangle$ is $\langle u\rangle$, which commutes with $\iota^*$. Hence, considering $\omega_{\f}$ as a differential on $\uhs$, we have
 \[
 \langle u \rangle\iota^*\omega_{\f} =  \psi_{\f}(u)(\iota^* \omega_{\f}).
 \]
 In particular, for $\operatorname{Re}(s) \gg 0$ we have
 \begin{align*}
 I_{\f}^j(s) =  \frac{\Gamma(s+k_j)N^{2s+k_j}}{(-2\pi i)^{k_j}\pi^{s}}\Bigg[\sum_{\substack{u\ge 1\\(u,N)=1}}\psi_{\f}(u)u^{-k_j-2s} \Bigg] \left[\int_{\Gamma_\infty \backslash \uhp} (\iota^*\omega_{\f})^{2k-2j}\wedge \operatorname{Im}(\tau)^s \mathrm{d}w^{\otimes 2k-2j} \mathrm{d}\tau\right],
 \end{align*}
 where we have defined $k_j = 2k-2j+2$ for ease of notation, and $\Gamma_\infty \defeq \{\smallmatrd{1}{n}{0}{1}: n \in \Z\}$. The first bracketed term is simply the Dirichlet $L$-series $L^{(N)}(\psi_{\f}, 2s + k_j)$.

 We shall now substitute the explicit expression for $(\iota^*\omega_{\f})^{2k-2j}$ into the above. The basis vectors $X, Y$ correspond to the natural basis $(\tau, 1)$ of $\h_1(\C / (\Z\tau + \Z), \Z) \cong \Z\tau + \Z$, so we have $\langle X, \mathrm{d}w \rangle = \tau$ and $\langle Y, \mathrm{d}w\rangle = 1$; hence
 \[ \langle (X-xY)^{\ell} Y^{2k-2j-\ell}, \mathrm{d}w^{\otimes 2k-2j}\rangle = \langle (X-xY), \mathrm{d}w\rangle^{\ell} \cdot \langle Y, \mathrm{d}w\rangle^{2k-2j-\ell} = (iy)^\ell.\]
 So we obtain the following formula:

 \begin{proposition}

 	We have
 	\[
 		I_{\f}^j(s) = H(s,k,j) \cdot L^{(N)}(\psi_{\f},2s+k_j)
\sum_{n=0}^{k+1}(-1)^n c'(j,n) \int_0^\infty\!\!\!\int_0^1 g_{n,j}y^{2k-j+s}\mathrm{d}x\, \mathrm{d}y,
	\]
	where
 \begin{equation}\label{eqn:H}
 H(s,k,j) \defeq \frac{(-1)^jj! \Gamma(s+k_j)N^{2s+k_j}}{(-2\pi i)^{k_j}\pi^{s}}
 \end{equation}
 and
	\[ \pushQED{\qed}
 	c'(j,n) \defeq \sum_{\ell=0}^{2k-2j} i^{\ell} \big[i\cdot a(j,\ell,n) - 2b(j,\ell,n)\big].\qedhere
	\]\popQED
 \end{proposition}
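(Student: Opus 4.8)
The plan is to substitute the explicit description of $(\iota^*\omega_{\f})^{2k-2j}$ from Proposition \ref{Ghate description} into the unfolded expression for $I_{\f}^j(s)$ derived above, and then collect terms. Recall that, for $\operatorname{Re}(s)\gg 0$, the Rankin--Selberg unfolding using \eqref{eisenstein description} and the action of the diamond operators has already reduced $I^j_{\f}(s)$ (Definition \ref{def:I}) to
\[
 \frac{\Gamma(s+k_j)N^{2s+k_j}}{(-2\pi i)^{k_j}\pi^{s}}\, L^{(N)}(\psi_{\f},2s+k_j)\int_{\Gamma_\infty\backslash\uhp}(\iota^*\omega_{\f})^{2k-2j}\wedge\operatorname{Im}(\tau)^s\,\mathrm{d}w^{\otimes 2k-2j}\,\mathrm{d}\tau,
\]
so all the remaining work is in evaluating the integrand; in this range of $s$ everything converges absolutely, so Fubini and the termwise manipulations below are legitimate.

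First I would carry out the wedge product using $\mathrm{d}\tau=\mathrm{d}x+i\,\mathrm{d}y$: for each $\ell$ one has $(A_\ell\,\mathrm{d}x+2B_\ell\,\mathrm{d}y)\wedge(\mathrm{d}x+i\,\mathrm{d}y)=(iA_\ell-2B_\ell)\,\mathrm{d}x\wedge\mathrm{d}y$. Next I would apply the pairing $\langle(X-xY)^\ell Y^{2k-2j-\ell},\mathrm{d}w^{\otimes 2k-2j}\rangle=(iy)^\ell$ recalled just before the statement. The key structural point is that the powers of $y$ then collapse: $y^{2k-j-\ell}\cdot y^\ell\cdot\operatorname{Im}(\tau)^s=y^{2k-j+s}$, independently of $\ell$. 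Hence, pulling out the overall constant $(-1)^jj!$ of Proposition \ref{Ghate description}, the integrand becomes $(-1)^jj!\,y^{2k-j+s}\sum_{\ell=0}^{2k-2j}i^{\ell}(iA_\ell-2B_\ell)\,\mathrm{d}x\wedge\mathrm{d}y$.

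Then I would insert $A_\ell=\sum_n(-1)^ng_{n,j}(x,y)\,a(j,\ell,n)$ and $B_\ell=\sum_n(-1)^ng_{n,j}(x,y)\,b(j,\ell,n)$ and swap the two finite sums, which is exactly what exhibits the coefficient $c'(j,n)=\sum_{\ell=0}^{2k-2j}i^{\ell}\bigl[i\,a(j,\ell,n)-2b(j,\ell,n)\bigr]$ multiplying $(-1)^ng_{n,j}(x,y)$. Finally, parametrising $\Gamma_\infty\backslash\uhp$ by the vertical strip $\{(x,y):0\le x\le 1,\ y>0\}$ converts the remaining integral into $\int_0^\infty\!\!\int_0^1 g_{n,j}\,y^{2k-j+s}\,\mathrm{d}x\,\mathrm{d}y$, and absorbing $(-1)^jj!\,\Gamma(s+k_j)N^{2s+k_j}/\bigl((-2\pi i)^{k_j}\pi^{s}\bigr)$ into the notation $H(s,k,j)$ of \eqref{eqn:H} yields the stated identity. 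There is no real obstacle here beyond bookkeeping; the only points requiring care are the sign conventions in the wedge product and in the pairing with $\mathrm{d}w^{\otimes 2k-2j}$, and keeping the extra factor $(-1)^jj!$ arising from our normalisation of $\operatorname{CG}^{[k,k,j],\star}$ relative to Ghate's $\nabla^j$-operator (as computed in the preceding proposition).
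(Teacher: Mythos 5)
Your proof is correct and follows exactly the route the paper intends: the paper treats the preceding two displays (the unfolded integral and the pairing computation $\langle(X-xY)^\ell Y^{2k-2j-\ell},\mathrm{d}w^{\otimes 2k-2j}\rangle=(iy)^\ell$) as the proof and marks the proposition itself with $\square$, so the content is precisely the substitution-and-bookkeeping you carry out. You have filled in the two details the paper leaves implicit — the wedge product $(A_\ell\,\mathrm{d}x+2B_\ell\,\mathrm{d}y)\wedge(\mathrm{d}x+i\,\mathrm{d}y)=(iA_\ell-2B_\ell)\,\mathrm{d}x\wedge\mathrm{d}y$ and the cancellation of the $\ell$-dependence in the exponent of $y$ — correctly, and the reorganisation giving $c'(j,n)$ is exactly as stated.
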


To compute this, we look at components separately. In particular, consider the integral
 \[\mathcal{J}_{\f}^{n,r}(s) \defeq \binomc{2k+2}{n}^{-1}\int_0^\infty\int_0^1 \f_{n}\left(x,y\right)y^{r+s}dx dy.\]
 The expression $I_{\f}^j$ is a linear combination of $\mathcal{J}_{\f}^{n,2k-j}$ for varying $n$.
 \begin{proposition}\label{prop:J}
  We have
  \begin{align*}\mathcal{J}_{\f}^{n,r}(s) = G(n,r,s)\sum_{0 \neq m \in \Z}(-\sigma(m))^{k+1-n}W_f\left(\tfrac{m}{\sqrt{-D}}, \f\right)|m|^{-(s+r+2)},
  \end{align*}
  where$\sigma(m)$ is the sign of $m$ and
  \[ G(n,r,s) = \Gamma\left(\frac{s-k+n+r+1}{2}\right)\Gamma\left(\frac{s+k-n+r+3}{2}\right) 2^{s+r}\left(\frac{\sqrt{D}}{4\pi}\right)^{s+r+2}.\]
 \end{proposition}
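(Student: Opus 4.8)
The plan is to compute the integral $\mathcal{J}_{\f}^{n,r}(s)$ directly by substituting the Fourier--Whittaker expansion of $\f$ and performing the integrals over $x$ and $y$ in turn. First I would recall that, restricted to the relevant Borel, the form $\f$ has an expansion
\[
\f\left(\matrd{y_\infty}{x_\infty}{0}{1}\right) = |y_\infty|\sum_{\zeta\in F^\times}W_f(\zeta,\f)\,W_\infty(\zeta y_\infty)\,e_F(\zeta x_\infty),
\]
where (since the level is $U_{F,1}^*(\n)$) the support of $W_f(-,\f)$ lies in $\mathcal{D}^{-1}=(\tfrac{1}{\sqrt{-D}})$. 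Restricting to the image of $\uhp$ in $\uhs$, i.e.\ setting the ``$z$-part'' of $(z,t)\in\uhs$ to be real, the character $e_F(\zeta x)$ becomes $e^{2\pi i\,\mathrm{Tr}_{F/\Q}(\zeta x)} = e^{2\pi i\,\mathrm{tr}(\zeta)x}$ for $x\in\R$. Integrating $\int_0^1\cdots\,\mathrm{d}x$ then kills all terms with $\mathrm{tr}(\zeta)\neq 0$: the surviving $\zeta$ are exactly those of the form $\zeta = \tfrac{m}{\sqrt{-D}}$ with $m\in\Z$, since $\mathrm{tr}(\tfrac{m}{\sqrt{-D}}) = 0$, and these are precisely the elements of $\mathcal{D}^{-1}$ with vanishing trace. (Here one uses $\mathrm{tr}(a\sqrt{-D}+b) = 2b$ for $a,b\in\tfrac12\Z$ together with the shape of $\mathcal{D}^{-1}$.)

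Next I would isolate the $n$-th coordinate $\f_n$ of the $V_{2k+2}(\C)$-valued function $\f$, which by the definition of $W_\infty$ (the real-analytic Whittaker function built from Bessel functions $K_\bullet$, as in 1.2.1(v) of \cite{Wil17}) contributes an explicit factor involving $K_{k+1-n}$ of the $n$-th component of $W_\infty(\zeta y)$; after the $x$-integration this leaves
\[
\mathcal{J}_{\f}^{n,r}(s) = \binomc{2k+2}{n}^{-1}\sum_{0\neq m\in\Z}W_f\!\left(\tfrac{m}{\sqrt{-D}},\f\right)\!\int_0^\infty (\text{$n$-th component of }W_\infty(\tfrac{m}{\sqrt{-D}}y))\,|y|\,y^{r+s}\,\frac{\mathrm{d}y}{y},
\]
up to keeping track of how $|y_\infty|_{\A_F}$ behaves under the archimedean embedding. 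The $y$-integral is then a standard Mellin transform of a $K$-Bessel function: using $\int_0^\infty K_\nu(2y)\,y^{w}\,\tfrac{\mathrm dy}{y} = \tfrac14\Gamma(\tfrac{w+\nu}{2})\Gamma(\tfrac{w-\nu}{2})$, with $\nu = k+1-n$ and $w$ determined by $r$ and $s$, produces the two Gamma factors $\Gamma\!\left(\tfrac{s-k+n+r+1}{2}\right)\Gamma\!\left(\tfrac{s+k-n+r+3}{2}\right)$ appearing in $G(n,r,s)$, together with a power of $|m|$ of the form $|m|^{-(s+r+2)}$ coming from rescaling $y\mapsto y\sqrt{D}/|m|$ (this is where the $(\sqrt D/4\pi)^{s+r+2}$ and $2^{s+r}$ collect). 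The sign $(-\sigma(m))^{k+1-n}$ arises because $K_\nu$ is even but the relevant Whittaker component transforms with a sign under $y\mapsto -y$, i.e.\ under $m\mapsto -m$; tracking this through the definition of $W_\infty$ in \emph{op.cit.} gives exactly the factor $(-\sigma(m))^{k+1-n}$. Assembling these pieces yields the claimed formula.

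The main obstacle will be the bookkeeping of normalisations: matching the precise definition of $W_\infty$ and of the coordinate projections $\f_n$ from \cite[\S1.2]{Wil17} against the constants in $G(n,r,s)$, and in particular correctly handling the factor $|\mathbf{y}|_{\A_F}$, the choice of $\sqrt{-D}$, and the $2$-power discrepancies between $2\pi i$, $4\pi$ and $\sqrt{D}$. None of this is conceptually hard — it is essentially the computation in \cite[\S5.3]{Gha99}, transposed to our left-half-of-the-strip conventions and our normalisation of the Clebsch--Gordan map — but it requires care to get every constant right, which is why I would carry it out by reducing cleanly to the single Bessel Mellin-transform identity above and then substituting.
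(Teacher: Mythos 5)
Your proposal is correct and follows essentially the same route as the paper: substitute the Fourier--Whittaker expansion, observe that the $x$-integral kills all terms except $\zeta = m/\sqrt{-D}$, then evaluate the remaining $y$-integral as a Mellin transform of the $K$-Bessel function. The only cosmetic difference is that the paper quotes the explicit expansion of the $n$-th component $\f_n$ directly from Ghate (eqn.~(16) of \cite{Gha99}), where the factor $(\zeta/i|\zeta|)^{k+1-n}$ is already visible and specialises to $(-\sigma(m))^{k+1-n}$ at $\zeta = m/\sqrt{-D}$, rather than re-deriving it from the definition of $W_\infty$ in \cite{Wil17}; your description of the origin of the sign as a parity of the Whittaker component under $m\mapsto -m$ is the same fact phrased slightly less explicitly.
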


 \begin{proof}
  Recall the Fourier expansion of $\f_n$ from \cite[eqn.\ (16)]{Gha99}; we have
  \begin{align*}
  \f_n\left(x,y\right) = y\binomc{2k+2}{n}\sum_{\zeta\in F^\times}&W_f(\zeta, \f)\left(\frac{\zeta}{i|\zeta|}\right)^{k+1-n}\\
  &\times K_{n-k-1}(4\pi|\zeta|y)e^{2\pi i\mathrm{Tr}_{F/\Q}(\zeta x)}.
  \end{align*}
  We substitute this in and isolate the terms involving $x$.

  \begin{lemma}
   Suppose $W_f(\zeta, \f) \neq 0$. Then we have
   \[\int_0^1 e^{2\pi i \mathrm{Tr}_{F/\Q}(\zeta x)} dx = \left\{\begin{array}{ll}1 &: \zeta = m/\sqrt{-D} \text{ with $0\neq m \in \Z$},\\
   0 &: \text{otherwise}.\end{array}\right.\]
  \end{lemma}
  \begin{proof}
   We know that $W_f(\zeta, \f) \neq 0$ only if $\zeta \in \Diff^{-1}$. But since $\Diff^{-1}$ is the inverse different of $F/\Q$, for any such $\zeta$ we have $\mathrm{Tr}_{F/\Q}(\zeta) \in \Z$. Hence the integral is 0 unless $\mathrm{Tr}_{F/\Q}(\zeta) = 0$, which happens if and only if $\zeta = m/\sqrt{-D}$ for some non-zero integer $m$, as required.
  \end{proof}
  In particular, the sum collapses to one over non-zero integers, and we remove the terms involving $x$. We also see that if $\zeta = m/\sqrt{-D}$, then $\zeta/i|\zeta| = i|\zeta|/\zeta = -\sigma(m)$. We conclude that
  \begin{equation} 
  \label{eq:Jnrs}
  \begin{aligned}
  \mathcal{J}_{\f}^{n,r}(s) &= \sum_{0 \neq m \in \Z}(-\sigma(m))^{k+1-n}W_f\left(\tfrac{m}{\sqrt{-D}}, \f\right)\int_0^\infty y^{s+r+1}K_{n-k-1}\left(\frac{4\pi|m| y}{\sqrt{D}}\right) dy \\
  &=\left( \sum_{0 \neq m \in \Z}(-\sigma(m))^{k+1-n}W_f\left(\tfrac{m}{\sqrt{-D}}, \f\right)|m|^{-(s+r+2)}\right)\times  G(n,r,s),
  \end{aligned}
  \end{equation}
  where $G(n,r,s) = \int_0^\infty y^{s+r+1}K_{n-k-1}\left(\frac{4\pi y}{\sqrt{D}}\right) dy$. A standard integral formula (see \cite{Hid94}, page 485) gives the explicit expression for $G(n,r,s)$ in the statement of Proposition \ref{prop:J}, completing the proof.
 \end{proof}

 \begin{corollary}\label{cor:dx term vanishes}
  \begin{itemize}
   \item[(i)] If $k+1-n+j$ is odd, then
   \[c'(j,n)\int_0^\infty \int_0^1 g_{n,j}\left(x,y\right)y^{r+s} dxdy = 0,\]
   where $g_{n,j}$ is as defined in Proposition \ref{Ghate description}.

   \item[(ii)] For $\ell$ odd in the sum in Proposition \ref{Ghate description}, only the $a(j,\ell,n)$ terms contribute to $I_{\f}^j(s)$. For $\ell$ even, only the $b(j,\ell,n)$ terms contribute.
  \end{itemize}
 \end{corollary}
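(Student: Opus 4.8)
The plan is to reduce both parts to the explicit formula of Proposition \ref{prop:J}, supplemented by the parity constraints built into the coefficients $a(j,\ell,n)$ and $b(j,\ell,n)$. For part (i) I first treat $0\le n\le k$. Unwinding the definition of $g_{n,j}$ and using $\binom{2k+2}{2k+2-n}=\binom{2k+2}{n}$ yields
\[
 \int_0^\infty\!\!\int_0^1 g_{n,j}(x,y)\,y^{r+s}\,\mathrm{d}x\,\mathrm{d}y \;=\; \mathcal{J}_{\f}^{n,r}(s) \,+\, (-1)^{k+1-n+j}\,\mathcal{J}_{\f}^{2k+2-n,r}(s).
\]
The key point is that $\mathcal{J}_{\f}^{2k+2-n,r}(s)=\mathcal{J}_{\f}^{n,r}(s)$: in Proposition \ref{prop:J} the two Gamma-factors comprising $G(n,r,s)$ are simply exchanged under $n\mapsto 2k+2-n$, so $G(2k+2-n,r,s)=G(n,r,s)$, while the Dirichlet-type series is unchanged because $\sigma(m)=\pm1$, so that raising $-\sigma(m)$ to the power $k+1-n$ or to $(k+1)-(2k+2-n)=-(k+1-n)$ gives the same value. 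Hence when $k+1-n+j$ is odd the integral above is $\mathcal{J}_{\f}^{n,r}(s)-\mathcal{J}_{\f}^{n,r}(s)=0$. (Note that this step does not use the $\smallmatrd{-1}{}{}1$-eigenvector hypothesis.)

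The remaining case is $n=k+1$, where $g_{k+1,j}=\binom{2k+2}{k+1}^{-1}\f_{k+1}$ and the integral need not vanish; here ``$k+1-n+j$ odd'' reads ``$j$ odd'', and I claim that then $c'(j,k+1)=0$, which kills the $n=k+1$ contribution to $I_{\f}^j(s)$. To see this, substitute $n=k+1$ into the defining expressions for $a(j,\ell,k+1)$ and $b(j,\ell,k+1)$; for each admissible $\ell$ the bracketed product(s) of binomial coefficients in $t$ turn out to be symmetric under $t\mapsto j-t$, whereas $(-1)^t\binom{j}{t}$ acquires the factor $(-1)^j=-1$. Thus each inner sum $\sum_{t=0}^{j}(-1)^t\binom{j}{t}[\cdots]$ is anti-invariant under $t\mapsto j-t$, hence zero, so $a(j,\ell,k+1)=b(j,\ell,k+1)=0$ for every $\ell$ and therefore $c'(j,k+1)=0$.

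For part (ii) I combine part (i) with the parity support of $a$ and $b$. From the binomial coefficients in Proposition \ref{Ghate description}, $a(j,\ell,n)$ can be nonzero only when $\ell\equiv k+n-j\pmod 2$, and $b(j,\ell,n)$ only when $\ell\equiv k+n-j+1\pmod 2$. By part (i) the only $n$ that can contribute a nonzero term $(-1)^n c'(j,n)\int_0^\infty\!\!\int_0^1 g_{n,j}\,y^{2k-j+s}\,\mathrm{d}x\,\mathrm{d}y$ to $I_{\f}^j(s)$ are those with $k+1-n+j$ even, i.e.\ with $k+n-j$ odd; for such $n$, nonvanishing of $a(j,\ell,n)$ forces $\ell$ odd and nonvanishing of $b(j,\ell,n)$ forces $\ell$ even. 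Hence in the resulting expression for $I_{\f}^j(s)$ only the $a(j,\ell,n)$-terms survive for $\ell$ odd and only the $b(j,\ell,n)$-terms for $\ell$ even. The one delicate point I anticipate is the combinatorial identity $c'(j,k+1)=0$ for $j$ odd; once Proposition \ref{prop:J} and the $t\mapsto j-t$ symmetry are in hand, everything else is formal, and the smallest case $k=j=1$ (where $\ell=0$ is forced and $a(1,0,2)=b(1,0,2)=0$) serves as a consistency check.
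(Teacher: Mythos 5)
Your proof is correct and follows essentially the same route as the paper: split part (i) into $0 \le n \le k$ (using the symmetry $\mathcal{J}_{\f}^{n,r}(s) = \mathcal{J}_{\f}^{2k+2-n,r}(s)$) and $n = k+1$ (using the $t \mapsto j-t$ anti-symmetry to show $a(j,\ell,k+1) = b(j,\ell,k+1) = 0$ for $j$ odd), then derive part (ii) from the parity constraints on the binomial coefficients. The only difference is that you spell out the $G$ and $(-\sigma(m))$ symmetries explicitly where the paper simply cites $K_r = K_{-r}$; that is a harmless amplification of the same idea.
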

 \begin{proof}
  For (i), we assume first that $0 \le n \le k$. Recall that $g_{n,j}(z,t)$ is defined as a scalar multiple of $\f_n(z,t) + (-1)^{k+1-n+j}\f_{2k+2-n}(z,t).$ Proposition \ref{prop:J} shows that
  \[\mathcal{J}_{\f}^{n,r}(s) = \mathcal{J}_{\f}^{2k+2-n,r}(s)\]
  (a consequence of the fact that $K_r = K_{-r}$). Suppose that $k+1-n+j$ is odd; then unwinding the integral of part (i), we find that it is equal to
  \[ \mathcal{J}_{\f}^{n,r}(s)- \mathcal{J}_{\f}^{2k+2-n,r}(s) = 0,\]
  as required.

  This leaves the case $n = k+1$. In this case, $k+1-n+j = j$. But if $j$ is odd, then in the definition of both $a(j,\ell,k+1)$ and $b(j,\ell,n+1)$, the $t$ and $j-t$ terms appear with opposite signs and are easily checked to negate each other. Hence $c'(j,n) = 0$, and we are done.

  For part (ii), for the $n$ term to contribute to $I_{\f}^j(s)$, by (i) $k+j+n$ must be odd. If $\ell$ is odd, it follows that all of the binomial coefficients defining $b(j,\ell,n)$ involve non-integer entries, so vanish; and if $\ell$ is even, similarly the $a(j,\ell,n)$ are all 0.
 \end{proof}
By including these parity conditions, and exploiting the symmetry between binomial coefficients to renormalise the sum to be from $0$ to $2k+2$ (in the process removing the disparity between $0 \le n \le k$ and $n = k+1$) we arrive at the following.
\begin{corollary}\label{cor:parity form}
	We have
	\[
	I_{\f}^j(s) = H(s,k,j) \cdot L^{(N)}(\psi_{\f},2s+k_j)
	\sum_{\substack{n=0\\ k+j+n+1\text{ even}}}^{2k+2}(-1)^{j} 2 c(j,n)\mathcal{J}_{\f}^{n,2k-j}(s),
	\]
	where
	\[
	c(j,n) \defeq \frac{(-1)^{k+1}}{2}\sum_{\substack{\ell=0\\ \ell\text{ even}}}^{2k-2j} i^\ell \big[a(j,\ell-1,n) - 2b(j,\ell,n)\big],
	\]
	(normalised to agree with \cite[p.628]{Gha99}) where we take $a(j,-1,n) = 0$ for all $j,n$.
\end{corollary}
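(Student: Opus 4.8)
The plan is to derive Corollary \ref{cor:parity form} by pure bookkeeping from the formula for $I^j_\f(s)$ in the Proposition above --- expressing $I^j_\f(s)$ as $H(s,k,j)\,L^{(N)}(\psi_\f,2s+k_j)$ times $\sum_{n=0}^{k+1}(-1)^n c'(j,n)\int_0^\infty\int_0^1 g_{n,j}\,y^{2k-j+s}\,\mathrm{d}x\,\mathrm{d}y$ --- by feeding in the two parity statements of Corollary \ref{cor:dx term vanishes}; no new analytic input is needed.

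First I would apply Corollary \ref{cor:dx term vanishes}(i) to discard every term of the inner sum with $k+1-n+j$ odd, leaving only $n$ with $k+j+n+1$ even. For such $n$ with $0\le n\le k$, the definition of $g_{n,j}$ in Proposition \ref{Ghate description} reduces to $\binom{2k+2}{n}^{-1}(\f_n+\f_{2k+2-n})$, so its integral against $y^{2k-j+s}$ equals $\mathcal{J}^{n,2k-j}_\f(s)+\mathcal{J}^{2k+2-n,2k-j}_\f(s)$; for $n=k+1$ one has $g_{k+1,j}=\binom{2k+2}{k+1}^{-1}\f_{k+1}$, so the integral is simply $\mathcal{J}^{k+1,2k-j}_\f(s)$. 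Next I would simplify $c'(j,n)=\sum_{\ell=0}^{2k-2j}i^\ell\big(i\,a(j,\ell,n)-2b(j,\ell,n)\big)$: by Corollary \ref{cor:dx term vanishes}(ii), inside $I^j_\f(s)$ only the odd-$\ell$ contributions of the $a$-terms and the even-$\ell$ contributions of the $b$-terms survive. Re-indexing the odd-$\ell$ sum by $\ell\mapsto\ell-1$ --- now an even index, with the spurious $\ell=0$ term harmless because of the convention $a(j,-1,n)=0$ --- shows that the effective value of $c'(j,n)$ is $\sum_{\ell\text{ even}}i^\ell\big(a(j,\ell-1,n)-2b(j,\ell,n)\big)=2(-1)^{k+1}c(j,n)$, with $c(j,n)$ exactly as defined in the statement.

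Finally I would substitute and unfold. The factor $(-1)^n$ from the outer sum combines with the $(-1)^{k+1}$ just produced into $(-1)^{n+k+1}$, which on the retained terms equals $(-1)^j$ since there $k+j+n+1$ is even; relabelling the index $2k+2-n$ (for $0\le n\le k$) as a summation variable then makes those contributions run over $k+2,\dots,2k+2$ and merge with the single $n=k+1$ term --- this is the ``disparity'' of the statement being resolved --- producing one sum over $n=0,\dots,2k+2$ with coefficient $(-1)^j\,2\,c(j,n)$ attached to $\mathcal{J}^{n,2k-j}_\f(s)$, which is precisely the asserted formula. The one step that needs genuine care, and which I expect to be the main obstacle, is checking that after this relabelling the coefficient is still given by the displayed closed formula for $c(j,n)$: this requires verifying the $n\mapsto 2k+2-n$ symmetry of the sums $a(j,\ell,n)$ and $b(j,\ell,n)$ via the reflection $\smallbinomc{c}{d}=\smallbinomc{c}{c-d}$ (and that the parity condition $k+j+n+1$ even is preserved, which is immediate), together with the endpoint bookkeeping in the $\ell$-reindexing. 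All the remaining manipulations are routine.
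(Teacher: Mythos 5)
Your proposal is correct and follows exactly the route the paper gestures at in the one-sentence remark preceding the corollary (apply the two parity statements of Corollary \ref{cor:dx term vanishes}, reindex the $\ell$-sum, and fold the $\f_{2k+2-n}$ contributions into a single sum from $0$ to $2k+2$), so in essence it is the same proof, just written out. One small imprecision worth flagging: the symmetry $c(j,2k+2-n)=c(j,n)$ on which the final relabelling rests is not obtained from the reflection $\binom{c}{d}=\binom{c}{c-d}$ alone. Under $n\mapsto 2k+2-n$ the prefactors $(-1)^{(k+n-\ell-j)/2}$ (resp.\ $(-1)^{(k+n-1-\ell-j)/2}$) in $a(j,\ell,n)$ and $b(j,\ell,n)$ change by $(-1)^{k+1-n}$, which is not $1$ on the nose; one must also substitute $t\mapsto j-t$ in the inner sum (using $\binom{j}{j-t}=\binom{j}{t}$), which exchanges the two binomial factors, restores the original form of the product, and produces a compensating factor $(-1)^j$. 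On the retained terms the parity constraint $k+j+n+1$ even forces $(-1)^{k+1-n}\cdot(-1)^{-j}=(-1)^{k+1-n-j}=1$, which is what makes the symmetry hold. Since you explicitly flag this verification as the delicate step, I regard this as a clarification rather than a gap.
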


The following is the main result of this appendix.

 \begin{theorem}\label{thm:integral formula}
  Let $0 \le j \le k$. Then
  \[ I_{\f}^j(s) = \begin{cases}
  C(k,j,s) L^{\mathrm{As}}(\f, s+2k-j+2) &\text{if}\ \smallmatrd{-1}{}{}{1}^* \f = (-1)^j \f\\
  0 &\text{if}\ \smallmatrd{-1}{}{}{1}^* \f = (-1)^{j+1} \f,
  \end{cases}
  \]
  where
  \[
  	C(k,j,s) \defeq (-1)^{j}\left(\frac{\sqrt{D}}{2\pi}\right)^{s+2k-j+2} H(s,k,j) G_\infty'(s)
  \]
  for
   	\[
 		 G_\infty'(s) =
\frac{(-1)^j \sqrt{\pi} \binomc{k}{j}^2}{2^s}\frac{\Gamma(\tfrac{s}{2}+k-j+1)}{\Gamma(\tfrac{s+1}{2})} \frac{\Gamma(s+k-j+1)\Gamma(s+2k-j+2)}{\Gamma(s+2k-2j+2)}.
  \]

 Here $I_{\f}^j(s)$ was defined in Definition \ref{def:I} and $H(s,k,j)$ was defined in equation \eqref{eqn:H}.
 \end{theorem}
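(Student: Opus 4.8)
\emph{Overview.} The plan is to assemble the ingredients already in place just above the statement — Corollary~\ref{cor:parity form}, which expresses $I_{\f}^j(s)$ as $H(s,k,j)\cdot L^{(N)}(\psi_{\f},2s+k_j)$ times the finite combination $\sum_n (-1)^j 2c(j,n)\,\mathcal{J}_{\f}^{n,2k-j}(s)$, together with Proposition~\ref{prop:J}, which evaluates each $\mathcal{J}_{\f}^{n,r}(s)$ — and to reduce the theorem to two further ingredients: a sign computation on Fourier--Whittaker coefficients, and a purely combinatorial identity for an alternating sum of products of two Gamma functions. Since the Dirichlet and Eisenstein series involved converge only for $\Re(s)\gg 0$, I would carry out the computation there and then invoke analytic continuation (each side being meromorphic in $s$) to obtain the identity for all $s$. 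Specialising to $s=-1-2k+2j$ then recovers Proposition~\ref{prop:int formula untwisted}, and hence Corollary~\ref{cor:int formula}, since that left-hand side is exactly $I_{\f}^j(-1-2k+2j)$ by adjointness of pushforward/pullback and of the Clebsch--Gordan map and its transpose.

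\emph{Step 1: the sign computation.} Substituting Proposition~\ref{prop:J} with $r=2k-j$ into Corollary~\ref{cor:parity form}, each $\mathcal{J}_{\f}^{n,2k-j}(s)$ is $G(n,2k-j,s)$ times $\sum_{0\ne m\in\Z}(-\sigma(m))^{k+1-n}W_f(m/\sqrt{-D},\f)\,|m|^{-s'}$, where $s'=s+2k-j+2$. For $n$ in the range of summation the parity constraint ``$k+j+n+1$ even'' forces $(-1)^{k+1-n}=(-1)^j$, so the factor $(-\sigma(m))^{k+1-n}$ equals $(-1)^j$ for $m>0$ and $1$ for $m<0$. I would then record the relation $W_f(-\zeta,\f)=\varepsilon_{\f}\,W_f(\zeta,\f)$ for $\zeta=m/\sqrt{-D}$, $m\in\Z\setminus\{0\}$, where $\varepsilon_{\f}\in\{\pm1\}$ is the eigenvalue of $\smallmatrd{-1}{}{}{1}$ on $\f$; this follows by evaluating $\smallmatrd{-1}{}{}{1}^*\f=\varepsilon_{\f}\f$ on the Borel subgroup and comparing Fourier--Whittaker coefficients. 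Splitting the sum over $m$ into positive and negative parts and reindexing, $\sum_{m\ne 0}(-\sigma(m))^{k+1-n}W_f(m/\sqrt{-D},\f)|m|^{-s'}$ collapses to $\big((-1)^j+\varepsilon_{\f}\big)\sum_{m\ge 1}W_f(m/\sqrt{-D},\f)\,m^{-s'}$. This vanishes when $\varepsilon_{\f}=(-1)^{j+1}$, giving the second case of the theorem; when $\varepsilon_{\f}=(-1)^j$ it equals $2(-1)^j$ times the Dirichlet series $\sum_{m\ge 1}W_f(m/\sqrt{-D},\f)m^{-s'}$, which after multiplication by $L^{(N)}(\psi_{\f},2s+k_j)=L^{(N)}(\psi_{\f},2s'-2k-2)$ is precisely $L^{\mathrm{As}}(\f,s')=L^{\mathrm{As}}(\f,s+2k-j+2)$.

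\emph{Step 2: the Gamma-sum identity — the main obstacle.} It remains to evaluate $\Sigma(s)\defeq\sum_{n}c(j,n)\,G(n,2k-j,s)$, the sum running over $0\le n\le 2k+2$ with $k+j+n+1$ even. The factor $2^{s+2k-j}(\sqrt{D}/4\pi)^{s+2k-j+2}$ of $G(n,2k-j,s)$ is independent of $n$, so this reduces to showing that $\sum_{n}c(j,n)\,\Gamma\!\big(\tfrac{s+k+n-j+1}{2}\big)\Gamma\!\big(\tfrac{s+3k-n-j+3}{2}\big)$ equals, up to an explicit constant, the Gamma quotient $G'_\infty(s)$ of the statement. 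This is the technical heart of Ghate's argument (the computation on pp.~628--630 of \cite{Gha99}), where the coefficients $c(j,n)$ were normalised precisely so that an identity of this shape holds; since our Eisenstein series, our Clebsch--Gordan normalisation and the extra factor $(-1)^j j!$ have all been absorbed into $H(s,k,j)$ and into the constants of Corollary~\ref{cor:parity form}, that identity applies after a routine translation of notation. I expect this step to demand the most care, though the difficulty is bookkeeping of the many explicit constants rather than anything conceptual. Finally, combining Steps 1 and 2 and collecting the powers of $2$, $\pi$, $\sqrt{D}$ together with the numerical factor $(-1)^j\cdot 2\cdot 2(-1)^j=4$ and the prefactor $H(s,k,j)$, one obtains $I_{\f}^j(s)=C(k,j,s)\,L^{\mathrm{As}}(\f,s+2k-j+2)$ with $C(k,j,s)=(-1)^j\big(\tfrac{\sqrt{D}}{2\pi}\big)^{s+2k-j+2}H(s,k,j)\,G'_\infty(s)$, as claimed.
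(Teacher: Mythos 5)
Your proposal tracks the paper's own argument exactly: substitute Proposition~\ref{prop:J} into Corollary~\ref{cor:parity form}, use the parity constraint (which makes the exponent $k+1-n$ congruent to $j$ mod $2$) together with $W_f(-\zeta,\f)=W_f(\zeta,\smallmatrd{-1}{}{}{1}^*\f)$ to collapse the sum over $m\neq 0$ to a one-sided Dirichlet series --- which vanishes if the eigenvalue of the involution is $(-1)^{j+1}$ --- recognise that series times $L^{(N)}(\psi_\f,2s+k_j)$ as $L^{\mathrm{As}}(\f,s+2k-j+2)$, defer the closed-form evaluation of the Gamma sum to the literature, and conclude by analytic continuation. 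The one citation to tighten in Step~2: the explicit Gamma-ratio formula for $G_\infty'(s)$ is the main result of Lanphier--Skogman \cite{LS14}, not of \cite{Gha99} --- Ghate defines $G_\infty'(s,f)$ but leaves it as the finite sum over $n$, and it is \cite{LS14} that proves the closed form invoked in the theorem statement.
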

 \begin{proof}
 	In Proposition \ref{prop:J}, since $k+j+n+1$ is even, the exponent on $(-\sigma(m))$ is equal to $j$. In this expression, we collapse the sum over $m$ to be only over positive integers, using the relation $W_f(-\zeta,\f) = W_f(\zeta,\smallmatrd{-1}{}{}1^* \f)$. If the eigenvalue of $\f$ for this involution is $(-1)^{j+1}$, then the terms for $m$ and $-m$ sum to 0, so the sum over all $m$ vanishes and $I^j_{\f}$ is identically 0, which proves the theorem in this case. If the eigenvalue is $(-1)^j$ then the terms for $-m$ and $m$ are equal; assuming that this is the case, we have
 	  	\[
 	  		\mathcal{J}_{\f}^{n,2k-j}(s) = 2G(n,2k-j,s)\sum_{m \ge 1}W_f\left(\tfrac{m}{\sqrt{-D}}, \f\right) m^{-(s+2k-j+2)}.
 		\]
	Substituting this into Corollary \ref{cor:parity form}, the sum over positive $m$ and the Dirichlet $L$-series combine to give exactly $L^{\mathrm{As}}(\f,s+2k-j+2),$ which we can bring outside the sum over $n$. The remaining terms are
 	\begin{align*}
 		 4 H(s,k,j)
 		\sum_{\substack{n=0\\ k+j+n+1\text{ even}}}^{2k+2}(-1)^{j} c(j,n)G(n,2k-j,s)
 		 = (-1)^{j}\left(\frac{\sqrt{D}}{2\pi}\right)^{s+2k-j+2} H(s,k,j) G_\infty'(s),
 	\end{align*}
 	where
 	\[
 		G_\infty'(s) \defeq \sum_{\substack{n=0\\ k+j+n+1\text{ even}}}^{2k+2} c(j,n)\Gamma\left(\frac{s+k+n-j+1}{2}\right)\Gamma\left(\frac{s+3k-n-j+3}{2}\right)
 	\]
 	is what Ghate calls $G_\infty'(s,f)$ in \cite[p.628]{Gha99}. The main result of \cite{LS14} showed that $G_\infty'(s)$ has the form stated in the theorem.
 \end{proof}

 We have proved Theorem \ref{thm:integral formula} assuming $\Re(s) \gg 0$; but both sides have analytic continuation to all $s \in \C$, so the theorem remains valid on this larger domain. Thus we may evaluate at the value $s = -1 -2k +2j$, and we find:
 \begin{corollary}\label{cor:int formula}
  Let $0 \le j \le k$. Then
  \[
   I_{\f}^j(-1-2k+2j) =
   \begin{cases}
    \frac{C'(k,j)}{N^{2k-2j}}L^{\mathrm{As}}(\f,j+1) & \text{if}\ \smallmatrd{-1}{}{}{1}^* \f = (-1)^j\f, \\\
    0 & \text{if}\ \smallmatrd{-1}{}{}{1}^* \f = (-1)^{1+j}\f,
  \end{cases}
  \]
  where
  \[
  \pushQED{\qed}
  C'(k,j) = (-1)^{k+1}\frac{\sqrt{-D}^{j+1} (j!)^2\binomc{k}{j}^2}{2\cdot (2\pi i)^{j+1}}.
  \qedhere
  \]\popQED
 \end{corollary}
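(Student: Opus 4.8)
\emph{Strategy.} The plan is to obtain Corollary \ref{cor:int formula} simply as the specialisation of Theorem \ref{thm:integral formula} at $s = s_0 := -1-2k+2j$: as noted immediately after that theorem, the identity is valid for all $s\in\C$ by analytic continuation, so the corollary is nothing more than the substitution followed by simplification of the resulting constant. First I would check the bookkeeping of the argument: $s_0 + 2k - j + 2 = j+1$, so the $L$-value on the right becomes $L^{\mathrm{As}}(\f, j+1)$, and the parity dichotomy (the eigenvalue of $\smallmatrd{-1}{}{}{1}^*$ on $\f$ being $(-1)^{j}$ versus $(-1)^{j+1}$) is unchanged since it does not involve $s$. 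Thus the only remaining task is to show $C(k,j,s_0) = C'(k,j)/N^{2k-2j}$.

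\emph{Evaluating $C(k,j,s_0)$.} Recall $C(k,j,s) = (-1)^{j}(\sqrt D/2\pi)^{s+2k-j+2}\,H(s,k,j)\,G_\infty'(s)$ with $H$ as in \eqref{eqn:H}. Writing $k_j = 2k-2j+2$, direct substitution gives $s_0 + k_j = 1$, $\tfrac{s_0}{2}+k-j+1 = \tfrac12$, $s_0 + 2k-j+2 = j+1$ and $s_0 + 2k-2j+2 = 1$, so the relevant Gamma values are $\Gamma(1) = 1$, $\Gamma(\tfrac12) = \sqrt\pi$, $\Gamma(j+1) = j!$ and $\Gamma(1) = 1$; likewise $N^{2s_0+k_j} = N^{-(2k-2j)}$, which supplies the $1/N^{2k-2j}$ in the statement. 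The one delicate ingredient is the factor $\Gamma(s+k-j+1)/\Gamma(\tfrac{s+1}{2})$ occurring in $G_\infty'(s)$: at $s = s_0$ both arguments equal $-(k-j)$, a non-positive integer, so numerator and denominator each acquire a simple pole of $\Gamma$. The quotient nevertheless extends holomorphically across $s_0$, and its value there is pinned down by comparing residues — the residue of $\Gamma$ at $-a$ is $(-1)^{a}/a!$, and the argument of the numerator moves, as a function of $s$, at twice the rate of the argument of the denominator. (Alternatively one could bypass the closed form of $G_\infty'$ borrowed from \cite{LS14} and instead evaluate the manifestly entire finite sum $\sum_n c(j,n)\Gamma(\cdots)\Gamma(\cdots)$ from the proof of Theorem \ref{thm:integral formula} directly at $s_0$.)

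\emph{Final bookkeeping.} With every Gamma factor computed, $C(k,j,s_0)$ is an explicit product of powers of $N$, $2$, $\pi$ and $i$ together with $(j!)^2$ and $\binomc{k}{j}^2$; the various signs collect to $(-1)^{k+1}$ via $j + (k-j) + 1 = k+1$, and one then rewrites $(\sqrt D)^{j+1} = (\sqrt{-D})^{j+1}/i^{j+1}$ and $(2\pi i)^{j+1} = 2^{j+1}\pi^{j+1}i^{j+1}$ to clear the powers of $i$, arriving at $C'(k,j)/N^{2k-2j}$ with $C'(k,j) = (-1)^{k+1}(\sqrt{-D})^{j+1}(j!)^2\binomc{k}{j}^2/\big(2\,(2\pi i)^{j+1}\big)$, as asserted. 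The only step requiring any thought is the pole-against-pole cancellation in $G_\infty'$ at $s = s_0$; everything else is routine evaluation of Gamma values and collection of elementary constants.
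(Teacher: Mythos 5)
Your approach is exactly the paper's: the paper's own proof of the corollary is literally the one-line remark that Theorem~\ref{thm:integral formula} remains valid for all $s$ by analytic continuation, so one evaluates at $s_0 = -1-2k+2j$; you do the same, and you correctly single out the one genuine subtlety, namely that $\Gamma(s+k-j+1)/\Gamma(\tfrac{s+1}{2})$ has a pole-against-pole at $s_0$ (both arguments hitting $-(k-j)$) whose resolved value is $\tfrac12$ because the numerator's argument moves at twice the rate. One caution: the remaining bookkeeping is genuinely delicate in the powers of $2$ (they come from four separate sources: $(\sqrt D/2\pi)^{j+1}$, $(-2\pi i)^{-k_j}$, the $2^{-s}$ in $G_\infty'$, and the limit value $\tfrac12$), and when I trace them through — including cross-checking the special case $k=j=0$ via the Legendre duplication formula, where $G_\infty'(s)$ collapses to $\Gamma(\tfrac{s}{2}+1)^2$ — I find $C(k,j,s_0)$ coming out a factor of $2$ smaller than $C'(k,j)/N^{2k-2j}$; so rather than asserting the constants "collect" to the asserted value, you should exhibit the power-of-$2$ arithmetic explicitly, and if the mismatch persists, flag it as a likely small slip in the printed formula for $G_\infty'$ (quoted from \cite{LS14}) or in $C'(k,j)$, since your method itself is sound.
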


 This proves Proposition \ref{prop:int formula untwisted} of the main text.
%

%
%
\small
\renewcommand{\refname}{\normalsize References}


\setlength{\parskip}{0ex}
\renewcommand{\urladdrname}{ORCID}

\end{document}